\documentclass[11pt, dvipsnames]{amsart}
\usepackage[utf8]{inputenc}
\usepackage{amsmath}
\usepackage{amsfonts}
\usepackage{amssymb}
\usepackage{amsthm}
\usepackage{graphicx}
\usepackage{hyperref}
\usepackage{xcolor}
\usepackage{todonotes}
\usepackage[T1]{fontenc}
\usepackage[utf8]{inputenc}
\allowdisplaybreaks
\usepackage[left=2.5cm,top=3cm,right=2.5cm,bottom=3cm,bindingoffset=0.5cm]{geometry}
\definecolor{darkpastelgreen}{rgb}{0.01, 0.75, 0.24}

\usepackage{comment}
\title{
Rapid stabilization of general linear systems with 
$F$-equivalence
}
\author[A.~Hayat]{Amaury Hayat}
\thanks{CERMICS, \'{E}cole des Ponts ParisTech, 6 - 8, Avenue Blaise Pascal, Cité Descartes—Champs sur Marne, 77455 Marne la Vall\'{e}e, France. E-mail: \texttt{amaury.hayat@enpc.fr.}}

\author[E.~Loko]{Epiphane Loko}
\thanks{CERMICS, \'{E}cole des Ponts ParisTech, 6 - 8, Avenue Blaise Pascal, Cité Descartes—Champs sur Marne, 77455 Marne la Vall\'{e}e, France \& L2S- Paris Saclay University, CentraleSupelec, bât. Bréguet, 3, rue Joliot Curie, 91190 Gif-sur-Yvette. E-mail: \texttt{dagbegnon.loko@enpc.fr.}}

\date{}

\newtheorem{theorem}{THEOREM}[section]
\newtheorem{rmk}[theorem]{REMARK}
\newtheorem{lem}[theorem]{LEMMA}
\newtheorem{defn}[theorem]{DEFINITION}
\newtheorem{prop}[theorem]{PROPOSITION}
\newtheorem{corollary}[theorem]{COROLLARY}
\newtheorem{claim}[theorem]{CLAIM}

\begin{document}
\maketitle

\begin{abstract}
We study the rapid stabilization of general linear systems, when the differential operator $\mathcal{A}$ has a Riesz basis of eigenvectors.
We find simple sufficient conditions for the rapid stabilization and the construction of a relatively explicit feedback operator. We use an $F$-equivalence 
approach \textcolor{black}{relying on Fredholm transformation}
to show a stronger result:
under these sufficient conditions 
the system is equivalent to a simple exponentially stable system, with arbitrarily large decay rate. In 
particular, our conditions improve the existing conditions of rapid stabilization for non-parabolic operators such as skew-adjoint systems. 

\end{abstract}

\section{Introduction}

Stabilization is one of the three main problems in control theory, together with controllability and optimal control. The goal is to drive a system of differential equations to a given long-term behavior by acting on it with a control, with the specificity that the control has to be a function of the state. This creates a feedback loop (the control influences the system which in turn influences the control), which can make the achieved stability relatively robust. However, this feedback loop makes the mathematical problem of finding a suitable control difficult and even the well-posedness can be a challenge.
From a practical point of view, stabilization has many applications \cite{BastinCoronBook}, in engineering \cite{astrom1995pid}, transportation \cite{delle2019feedback,hayat2023dissipation,Goatin_stabilization_2024}, chemistry \cite{viel1997global}, biology \cite{pouchol2018global}, population dynamics \cite{caponigro2013sparse}, fluid mechanics \cite{BastinCoronBook,HS}, economics \cite{kellett2019feedback} etc.

From an abstract point of view, the stabilization problem in a linear framework is the following: given a system
\begin{equation}
\label{eq:sys0}
    \partial_{t}u = \mathcal{A}u + Bw(t),
\end{equation}
where $\mathcal{A}$ is a differential operator and $B$ is a given operator, we would like to find a control feedback law $w(t)=K(u(t,\cdot))$ such that the system \eqref{eq:sys0} is exponentially stable. That is, we want all solutions of the system to converge exponentially quickly to 0. 
A more ambitious problem is to require that for any $\lambda>0$ there exists a control feedback law $w(t)=K_{\lambda}(u(t,\cdot))$ such that all solutions of the system converge exponentially quickly to 0 with decay rate at least $\lambda>0$. This is the so-called \emph{rapid stabilization} (or \emph{complete stabilization}). Solving this problem is all the more challenging when $w$ is finite-dimensional (in particular this implies that $w$ is not a function of the space variable $x$) and $B$ is unbounded. This is the case, for instance, when the control is located at a boundary, or when the control is a distributed force where only the amplitude can be controlled (see for instance \cite{beauchard2010local,Coron2014-local,alabau2021superexponential,Coron2022-stabilization}
for particular examples). This problem is studied since (at least) the work of Slemrod in 1972 \cite{slemrod1972linear} and powerful results concerning the exponential stabilization of generic linear systems were obtained \textcolor{black}{either using frequency-domain criterion method (see for instance \cite{Rebarber2007frequency,Pruss1984spectrum, Huang1985characteristic} and the references therein, see also the frequency-Lyapunov method of \cite{xiang2024quantitative}), or} tools from optimal control and LQ theory by, among others, Lions, Barbu, Lasiecka and Triggiani \cite{lions1971optimal,barbu2018controllability,LT1,LT2}\textcolor{black}{. Several of these results can be extended to rapid stabilization and an unbounded operator $B$ \cite{urquiza2005rapid,vest2013rapid,badra2014fattorini,trelat2019characterization,liu2022characterizations,ma2023feedback,kunisch2025frequency}} and semilinear systems \cite{badra2011stabilization,breiten2014riccati,barbu2018controllability,nguyen2024stabilization,badra2020local}. Nevertheless, the control feedback laws obtained often rely either on the knowledge of the semigroup $e^{\mathcal{A}^{*}t}$, which makes it not always explicit, or on solving a minimization problem and an algebraic Riccati equation \textcolor{black}{or a Hamilton-Jacobi-Bellman equation}, 
which can be sometimes complicated to achieve \cite{komornik1997rapid}, \textcolor{black}{especially for non-parabolic systems}.
For this reason, many works on the stabilization problem have focused on particular systems, especially when $B$ is unbounded, see for instance (such as \cite{BastinCoronBook,komornik1997rapid,zuazua1990uniform,raymond2007feedback}).\\

In recent years, a new approach has been introduced to tackle this problem more generally: the 
\textcolor{black}{\emph{feedback equivalence ($F$-equivalence)}}. 
Instead of trying directly to find a feedback $K$, this method consists in solving a different mathematical problem: finding
an isomorphism-feedback pair $(T,K)$ such that $T$ maps the original system of interest to an exponentially stable target system. If such a pair exists, then the original system with feedback $K$ is exponentially stable, thanks to the isomorphism property of $T$. In the particular case where $T$ is a Volterra \textcolor{black}{transformation} of the second kind, this method corresponds to the well investigated \emph{backstepping method} introduced by Krstic and his collaborators in \cite{BaloghKrstic,BBK2003,KrsticBook} (and inspired from the finite-dimensional method \cite{backstepping1,backstepping2,backstepping3} and the adaptation \cite{backsteppingadapt}) which has been very successful in the last 20 years for many different classes of systems (see for instance \cite{hu2015control}). For this reason the $F$-equivalence \textcolor{black}{in infinite dimension} is sometimes called \emph{generalized backstepping} \textcolor{black}{or \emph{Fredholm backstepping}, when relying on Fredholm transformation, although no proper backstepping phenomena occur.}

\textcolor{black}{For finite-dimensional systems, the concept of $F$-equivalence goes back to Brunosvky \cite{brunovsky1970classification} for linear systems and was extended to nonlinear systems in many subsequent work (see for instance \cite{jakubczyk1993remarks,fliess1999lie,martin2001flat,cheng2005feedback}). For infinite-dimensional systems, a similar approach}
relying on Fredholm transformations was introduced 
in \cite{Coron2014-local} for the special case of the Korteweg-de Vries equation and the Kuramoto-Sivashinsky equation \cite{CoronLu15}. Other systems with boundary controls were investigated using \textcolor{black}{Fredholm transformations} as a generalization of the Volterra backstepping in \cite{coron2017finite,deutscher2019fredholm,redaud2022stabilizing}. Since then,
several works in a similar spirit have successfully extended and formalized the $F$-equivalence to many different frameworks: the (linear) Schrödinger equation in \cite{Coron2018-rapid}, a degenerate parabolic equation \cite{gagnon2021fredholm,lissy2023rapid}, the 1-D heat equation in \cite{Gagnon2022-fredholm-laplacien}, the transport equation in \cite{zhang2022internal}, the linearized Saint-Venant equations in \cite{Coron2022-stabilization}. The hope that this approach could be used in a general framework, rather than in specific cases, materialized in \cite{Gagnon2022-fredholm} where the authors introduced a compactness-duality method to generalize the approach to any (linear) skew-adjoint system when the differential operator has eigenvalues 
$\lambda_{n}$ scaling as $n^{\alpha}$ with $\alpha >1$. 
This impressive result resulted from overcoming the structural limitations of the original method in \cite{Coron2014-local} and as a side result allowed to deal with systems like the capillary gravity water-wave equations that was an open question presented in \cite{CoronCDF}. Nevertheless, these results are restricted to skew-adjoint systems which enjoy very nice properties (such as a basis of orthonormal eigenvectors, pure imaginary eigenvalues, generates a $C^{0}$ group, etc.)

In this paper we generalize the $F$-equivalence to a much more generic class of systems: 
any system where $\mathcal{A}$ generates a $C^{0}$ semigroup, has a Riesz basis of eigenvectors with eigenvalues with finite multiplicities and satisfying a growth assumption (see \eqref{ln+1_cond}--\eqref{ln-lp_cond}). 
We illustrate these results on several linear and nonlinear (semilinear) examples: Schrödinger equation, parabolic systems and a general diffusion equation, \textcolor{black}{Burgers} equation, and a Gribov system that is neither self nor skew adjoint.

The main point of this approach is to give explicit feedback controls and conditions under which the PDE system under consideration is equivalent to a simpler PDE system, rather than trying to improve the sufficient conditions of stabilizability coming from the successful existing abstract approaches (such as \cite{LT1,LT2,trelat2019characterization,vest2013rapid,ma2023feedback,liu2022characterizations,nguyen2024stabilization}). Nevertheless, in some cases, our method results in very weak and generic sufficient conditions for rapid stabilization 
which, to the best of our knowledge, were yet unknown.

In particular, for skew-adjoint systems we show that the system can be rapidly stabilized even if the control operator $B$ is \textbf{not} admissible \textbf{and} the system is \textbf{not} exactly null controllable \textcolor{black}{in the space of stabilization}. This is discussed in Section \ref{sec:discussion}.

\section{Notation and assumptions}
\label{sec:notations}
{\color{black}
\noindent \textbf{Notation}\;\;
In what follows, 
$\mathbb{N}^{*}$ stands for the set of positive integers
that is
$\mathbb{N}^*=\mathbb{N}\setminus\{0\}$ and $\mathbb{Z}^*=\mathbb{Z}\setminus\{0\}.$ 
Similarly we denote $\mathbb{R}^*=\mathbb{R}\setminus\{0\}$ and $\mathbb{R}_+=[0, +\infty)$. 
The symbol $\lesssim$ means lower or equal up to a multiplicative constant.}\\

Consider the control system described by \eqref{eq:sys0}. Let $(\textcolor{black}{X},\langle\cdot,\cdot\rangle)$ be a Hilbert space. The operator $\mathcal{A}: \mathcal{D}(\mathcal{A})\rightarrow \textcolor{black}{X}$ is assumed to be the generator of a $C^0$ semigroup $\{e^{At}\}_{t\geq 0}$ with finite growth bound in $\mathcal{D}(\mathcal{A})$ where $\mathcal{D}(\mathcal{A})$ is defined as 
\begin{equation*}
    \mathcal{D}(\mathcal{A}):=\{f\in \textcolor{black}{X}, \ \mathcal{A}f\in \textcolor{black}{X}\}.
\end{equation*} 
We assume the following
\begin{itemize}
\item[($A_{1}$)] 
$\mathcal{A}$ is diagonalizable, i.e. there exists a Riesz basis of $\textcolor{black}{X}$ consisting of eigenvectors of $\mathcal{A}$ (see \cite{weiss2011eigenvalues}), and the associated eigenvalues have bounded multiplicities.
\end{itemize}
 Let us denote by $m\in\mathbb{N}^{*}$ the highest multiplicity, one can decompose the space $\textcolor{black}{X}$ into
\begin{equation}
\label{eq:sumHi}
    \textcolor{black}{X} = \textcolor{black}{X}_{1}+ ... + \textcolor{black}{X}_{m},
\end{equation}
such that for any $t\geq 0$, $e^{t \mathcal{A}}\textcolor{black}{X}_{i}\subset \textcolor{black}{X}_{i}$, and on which $\mathcal{A}$ has a spectral decomposition with simple eigenvalues (see \cite[Appendix G]{Gagnon2022-fredholm} for an explicit construction of such a decomposition\footnote{the only difference being that \eqref{eq:sumHi} is not a direct sum when the basis of eigenvectors of $\mathcal{A}$ is not orthogonal and the $(\varphi_{n}^{i+1})$ are chosen outside $Span((\varphi_{n}^{i})_{n\in\mathbb{N}^{*}})$ but not necessarily orthogonal}).
 In other words, on each of the $\textcolor{black}{X}_i$, there exists a Riesz basis (of $\textcolor{black}{X}_{i}$) of eigenvectors $(\varphi^{i}_{n})_{n\in\mathbb{N}^{*}}$ with associated eigenvalues $(\lambda_{n}^{i})_{n\in\mathbb{N}^{*}}$ such that $\lambda_{n}^{i}\neq \lambda_{p}^{i}$ for any $n\neq p$, and $(\varphi_{n}^{i})_{(i,n)\in\{1,...,m\}\times\mathbb{N}^{*}}$ is a Riesz basis of $\textcolor{black}{X}$.
In what follow, we assume without loss of generality that any $\textcolor{black}{X}_i$ is generated by \textcolor{black}{an infinite} number of $\varphi_n^i.$ Otherwise the spaces $\textcolor{black}{X}_i$ generated by finite number of $\varphi_n^i,$ can be easily treated as finite dimension spaces \cite{coron2015stabilization} and the following argument works with a finite set $\mathcal{I}$ instead of $\mathbb{N}^{*}$.
We define for any $s\in \mathbb{R},$ the space
\begin{equation*}
\mathcal{H}^{s}:=\mathcal{H}_{1}^{s}+...+\mathcal{H}_{m}^{s},
\end{equation*}
where
\begin{equation*}
\mathcal{H}_{i}^{s}=\left\{f = \sum\limits_{n\in\mathbb{N}^{*}}f_{n}^i
\varphi_{n}^{i}
,\ 
\sum_{n\in \mathbb{N^*}}n^{2s}|f_{n}^i|^{2} 
<\infty\right\}.
\end{equation*} 

These spaces can be endowed with a natural Hilbert space structure (the formal proof is given in \ref{app:Riesz}):
\begin{lem}
\label{lem:structure}
For any $s \in\mathbb{R}$, and $i \in \{1,...,m\}$ , the 
function
\begin{equation}
\|\cdot\|_{\mathcal{H}^{s}_{i}}\;:\;f\rightarrow \|f\|_{\mathcal{H}^{s}_{i}} := \left(\sum_{n\in \mathbb{N^*}}n^{2s}|f_{n}^i|^{2}\right)^{1/2},
\end{equation}
is a norm of $\mathcal{H}^{s}_{i}$ and ($\mathcal{H}^{s}_{i}$,$\|\cdot\|_{\mathcal{H}^{s}_{i}}$) is a Hilbert space, with inner product
\begin{equation}
\label{eq:inner}
    \langle f,g\rangle _{\mathcal{H}^s_{i}} :=\sum_{n\in \mathbb{N^*}}n^{2s}f_{n}^{i}\overline{g_{n}^{i}},\;\forall f =\sum\limits_{n\in\mathbb{N}^{*}}f_{n}^{i}\varphi_{n}^{i},\; g = \sum\limits_{n\in\mathbb{N}^{*}}g_{n}^{i}\varphi_{n}^{i}\in \mathcal{H}^s_{i}.
\end{equation}
\end{lem}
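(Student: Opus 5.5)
The plan is to reduce everything to the weighted sequence space $\ell^2_s:=\{(a_n)_{n\in\mathbb{N}^*}:\ \sum_n n^{2s}|a_n|^2<\infty\}$, which is a Hilbert space for the inner product $\sum_n n^{2s}a_n\overline{b_n}$. Indeed, it is isometrically isomorphic to $\ell^2$ via $(a_n)\mapsto(n^{s}a_n)$. The key point is that the coefficient map
\[
\Phi_i:\ f=\sum_n f_n^i\varphi_n^i\ \longmapsto\ (f_n^i)_{n\in\mathbb{N}^*}
\]
is a well-defined linear bijection from $\mathcal{H}^s_i$ onto $\ell^2_s$. Once this is established, $\|\cdot\|_{\mathcal{H}^s_i}=\|\Phi_i(\cdot)\|_{\ell^2_s}$ and $\langle f,g\rangle_{\mathcal{H}^s_i}=\langle\Phi_i f,\Phi_i g\rangle_{\ell^2_s}$. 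The norm axioms, the inner-product axioms, the identity $\langle f,f\rangle=\|f\|^2$, and completeness then all transfer from $\ell^2_s$ through the isometric bijection $\Phi_i$.

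The step requiring care is making sense of $\mathcal{H}^s_i$ and the uniqueness of coefficients, since for $s<0$ the series $\sum f_n^i\varphi_n^i$ need not converge in $X$. I would handle the two regimes as follows.

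\textbf{Case $s\ge 0$.} Here $\sum n^{2s}|f_n|^2<\infty$ implies $(f_n)\in\ell^2$. Since $(\varphi_n^i)_n$ is a Riesz basis of $X_i$, the series converges in $X_i$. Uniqueness of coefficients follows from the Riesz basis property, via the biorthogonal family $(\psi_n^i)$ with $f_n^i=\langle f,\psi_n^i\rangle$. So $\Phi_i$ is well defined and injective, and it is surjective onto $\ell^2_s$ by definition of $\mathcal{H}^s_i$.

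\textbf{Case $s<0$.} Here $\mathcal{H}^s_i$ must be understood as a space of formal series, or equivalently as the completion of $\mathrm{span}(\varphi_n^i)$, or as a dual of $\mathcal{H}^{-s}$ built on the biorthogonal basis. In that setting I would simply \emph{define} $f$ through its coefficient sequence, which makes $\Phi_i$ a bijection by construction.

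Completeness then follows directly. If $(f^k)$ is Cauchy in $\mathcal{H}^s_i$, then $(\Phi_i f^k)$ is Cauchy in $\ell^2_s$, so it converges to some $a\in\ell^2_s$. Setting $f=\sum a_n\varphi_n^i\in\mathcal{H}^s_i$ gives $\|f^k-f\|_{\mathcal{H}^s_i}\to 0$. Sesquilinearity and positivity of the inner product are immediate termwise. The series defining it converges absolutely by Cauchy--Schwarz in $\ell^2_s$, since $\sum n^{2s}|f_n\overline{g_n}|\le\|f\|\,\|g\|$.

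I expect the only genuine obstacle to be this well-definedness and uniqueness of coefficients, which is where the Riesz basis assumption $(A_1)$ is used. Everything else is the standard isometry with a weighted $\ell^2$ space.
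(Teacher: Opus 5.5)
Your proposal is correct and follows essentially the paper's argument. The paper checks the norm axioms directly, using the Riesz basis property for definiteness. It then gets completeness from the isometric isomorphism $\iota:(f_n)_{n}\mapsto\sum_n f_n n^{-r}\varphi_n$ from $\ell^2$ onto $\mathcal{H}^r$, which is the inverse of your coefficient map $\Phi_i$ composed with the isometry $\ell^2_s\cong\ell^2$. Your explicit treatment of how to read $\mathcal{H}^s_i$, and of why coefficients are unique, when $s<0$ covers a point the paper's proof passes over silently.
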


As a consequence, for any $s=(s_{1},...,s_{m})\in\mathbb{R}^{m}$ we can define the Hilbert space
\begin{equation*}
    \mathcal{H}^{\vec s}:=\mathcal{H}_1^{s_1}+ ... +\mathcal{H}_m^{s_m}.
\end{equation*}
These spaces are intimately related to the operator $\mathcal{A}$. Indeed, if the assumption \ref{ln+1_cond} (see below) holds, then $\mathcal{H}^{\alpha} = D(\mathcal{A})$, and for any $s\in\mathbb{Z}$, $\mathcal{H}^{s\alpha} = D(\mathcal{A}^{s})$ (this can even be extended to $s\in\mathbb{R}$ if $\mathcal{A}$ is sectorial, see \cite[Section 2.6]{PazyBook} for more details). These spaces also coincide with the usual Sobolev spaces $H^{s}$ whenever $(\varphi_{n})_{n\in\mathbb{N}}$ is an orthonormal basis of $L^{2}$. Similarly to the usual Sobolev spaces, these spaces satisfy the following embedding property
\begin{lem}
\label{lem:embedding}
    For any $s\in\mathbb{R}$ and $\varepsilon>0$,
    \begin{equation*}
    \mathcal{H}^{s+\varepsilon}\text{ is compactly embedded in }\mathcal{H}^{s}.
    \end{equation*}
\end{lem}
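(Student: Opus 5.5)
We first treat each component $\mathcal{H}^{s}_{i}$ and then assemble the finite sum. The norm of $\mathcal{H}^{s}_{i}$ given in Lemma \ref{lem:structure} makes the coordinate map $f\mapsto (f^i_n)_{n}$ an isometric isomorphism from $\mathcal{H}^{s}_{i}$ onto the weighted sequence space $\ell^2_{s}:=\{(a_n):\sum n^{2s}|a_n|^2<\infty\}$. So for a single index $i$ the claim reduces to a statement about sequences: the inclusion $\ell^2_{s+\varepsilon}\hookrightarrow \ell^2_{s}$ is compact. We prove this by the standard diagonal-truncation argument. Take the finite-rank operators $P_N a:=(a_1,\dots,a_N,0,0,\dots)$. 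For $\|a\|_{\ell^2_{s+\varepsilon}}\le 1$ we have
\begin{equation*}
\|a-P_Na\|_{\ell^2_s}^2=\sum_{n>N}n^{2s}|a_n|^2\le N^{-2\varepsilon}\sum_{n>N}n^{2(s+\varepsilon)}|a_n|^2\le N^{-2\varepsilon}.
\end{equation*}
Hence the inclusion is a norm limit of finite-rank operators, and is therefore compact. This gives the continuous embedding $\mathcal{H}^{s+\varepsilon}_i\hookrightarrow\mathcal{H}^{s}_i$ (with constant $1$) and its compactness. This step holds for any real $s$, including negative values; there, elements are understood as formal series, which the isometry handles uniformly.

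Next we pass to $\mathcal{H}^{s}=\mathcal{H}^{s}_1+\dots+\mathcal{H}^{s}_m$. Since $(\varphi^i_n)_{(i,n)}$ is a Riesz basis of $X$, every element has unique coefficients $(f^i_n)_{i,n}$. The natural norm on the sum is therefore $\|f\|^2_{\mathcal{H}^s}=\sum_i\|f^{(i)}\|^2_{\mathcal{H}^s_i}$, where $f^{(i)}=\sum_n f^i_n\varphi^i_n$. This is presumably the structure meant when $\mathcal{H}^{\vec s}$ is called a Hilbert space; if the paper instead uses the quotient norm $\inf\{\sum\|g_i\|\}$, uniqueness of the coefficients makes the two equivalent. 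With this norm, $\mathcal{H}^{s}$ is isometric to the product $\prod_{i=1}^m\mathcal{H}^{s}_i$. The embedding $\mathcal{H}^{s+\varepsilon}\to\mathcal{H}^s$ is then the direct sum of the $m$ componentwise inclusions, so it is compact because a finite sum of compact operators is compact. Equivalently, we can use $P_N$ applied on every component simultaneously, which gives the same bound $N^{-\varepsilon}$.

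The main point needing care is the identification in the second step. The sum \eqref{eq:sumHi} need not be direct when the basis is not orthogonal, as the paper's footnote notes. So we must check that the coefficients, and hence the decomposition $f=\sum_i f^{(i)}$, are uniquely determined. We would get this from the fact that the whole family $(\varphi^i_n)_{i,n}$ is a Riesz basis, i.e. has a biorthogonal family. For negative $s$ the elements are defined through coefficient sequences, so uniqueness holds by construction. Once this is settled, the rest is the elementary estimate above.
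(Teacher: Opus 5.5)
Your proof is correct and follows the paper's route. The paper observes that the statement holds on each $\mathcal{H}^{s}_{i}$ separately and defers to the standard weighted-$\ell^{2}$ argument of the reference it cites; your truncation estimate $\|a-P_Na\|_{\mathcal{H}^{s}}\le N^{-\varepsilon}\|a\|_{\mathcal{H}^{s+\varepsilon}}$ is exactly that argument. Your passage to the finite sum is also right: because the whole family $(\varphi^{i}_{n})_{(i,n)}$ is a Riesz basis, the decomposition $f=\sum_i f^{(i)}$ is unique (the sum is non-orthogonal, not non-direct), so the embedding is a finite direct sum of compact inclusions.
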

This actually holds for the $\mathcal{H}^{s}_{i}$ taken separately and the proof is identical to \cite[Section 1.8.1]{Gagnon2022-fredholm}.
 Recall that if $s=0$, $\mathcal{H}^{s} = \textcolor{black}{X}$. Since $(\varphi_{n})_{n\in\mathbb{N}}$ is not necessarily an orthonormal basis of $\textcolor{black}{X}$ with the canonical norm $\|\cdot\|_{\textcolor{black}{X}}$, then $\|\cdot\|_{\textcolor{black}{X}}$ is different from $\|\cdot\|_{\mathcal{H}^{0}}$. However, we have the following
\begin{lem}
\label{lem:equivnorm}
    If $s=0$, then $\|\cdot \|_{\mathcal{H}^{s}}$ and $\|\cdot\|_{\textcolor{black}{X}}$ are equivalent norms.
\end{lem}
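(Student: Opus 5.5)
The claim follows from the definition of a Riesz basis: it is the image of an orthonormal basis under a bounded isomorphism of $X$. The one subtlety is what $\|\cdot\|_{\mathcal{H}^{0}}$ means on the sum $\mathcal{H}^{0}=\mathcal{H}^{0}_{1}+\dots+\mathcal{H}^{0}_{m}$. I take it to be the norm obtained by identifying $X$ with coefficient sequences in the full Riesz basis $(\varphi_n^i)_{(i,n)\in\{1,\dots,m\}\times\mathbb{N}^*}$, namely
\begin{equation*}
\|f\|_{\mathcal{H}^0}^2=\sum_{i=1}^m\sum_{n\in\mathbb{N}^*}|f_n^i|^2, \qquad f=\sum_{i,n}f_n^i\varphi_n^i .
\end{equation*}
With the footnoted construction of the $X_i$, this is the natural structure on the (possibly non-direct) sum once each $f$ is decomposed uniquely along the full basis.

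\textbf{Step 1 (realise the Riesz basis as an isomorphism).} Enumerate $(\varphi_n^i)_{(i,n)}$ as a single sequence $(\varphi_k)_{k\in\mathbb{N}^*}$. By the definition of a Riesz basis there are an orthonormal basis $(e_k)$ of $X$ and a bounded invertible operator $T:X\to X$ with $Te_k=\varphi_k$. Equivalently, there are constants $0<c\le C$ such that, for all finitely supported sequences $(a_k)$,
\begin{equation*}
c\sum_k|a_k|^2\le\Big\|\sum_k a_k\varphi_k\Big\|_X^2\le C\sum_k|a_k|^2 .
\end{equation*}
One can take $c=\|T^{-1}\|^{-2}$ and $C=\|T\|^2$. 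This follows from Parseval for $(e_k)$:
\begin{equation*}
\Big\|\sum_k a_k\varphi_k\Big\|_X=\Big\|T\sum_k a_ke_k\Big\|_X
\end{equation*}
is bounded above by $\|T\|(\sum_k|a_k|^2)^{1/2}$ and below by $\|T^{-1}\|^{-1}(\sum_k|a_k|^2)^{1/2}$.

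\textbf{Step 2 (extend from finite sums to all of $X$).} Take $f\in X$ and set $(a_k)=T^{-1}f$ expanded in $(e_k)$, so that $f=\sum_k a_k\varphi_k$. Basis expansions are unique, so the $a_k$ are exactly the coefficients $f_n^i$. The series converges in $X$, hence the partial sums satisfy the inequalities of Step 1 uniformly. Passing to the limit gives
\begin{equation*}
c\,\|f\|_{\mathcal{H}^0}^2\le\|f\|_X^2\le C\,\|f\|_{\mathcal{H}^0}^2 .
\end{equation*}
As a byproduct, every $f\in X$ has square-summable coefficients, so $X\subset\mathcal{H}^0$. Conversely, any square-summable coefficient sequence produces a convergent series in $X$, namely $T$ applied to an $\ell^2$ combination of the $e_k$, so $\mathcal{H}^0=X$ as sets.

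\textbf{Main obstacle.} The analysis is routine. The only real point is bookkeeping: the sum $X_1+\dots+X_m$ is not direct, so one must make sure that $\|\cdot\|_{\mathcal{H}^0}$ is computed from the unique coefficients in the combined basis $(\varphi_n^i)_{i,n}$, which is a Riesz basis of $X$ by assumption, and not from some non-unique splitting $f=\sum_i f_i$ with $f_i\in\mathcal{H}^0_i$.

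If instead $\|\cdot\|_{\mathcal{H}^0}$ is meant as the quotient norm $\inf\{\sum_i\|f_i\|_{\mathcal{H}^0_i}\}$ over such splittings, I would compare both norms to the coefficient norm above. One direction is the triangle inequality together with the Riesz bound applied within each family $(\varphi^i_n)_n$; each such family is a subfamily of a Riesz basis, hence a Riesz sequence. The other direction uses the canonical splitting along the full basis.
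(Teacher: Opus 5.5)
Your proposal is correct and follows the paper's argument: the two-sided Riesz inequality $c\left(\sum|f_n|^2\right)^{1/2}\le\|\sum f_n\varphi_n\|_X\le C\left(\sum|f_n|^2\right)^{1/2}$, combined with the fact that $\|\cdot\|_{\mathcal{H}^0}$ is exactly the $\ell^2$ norm of the coefficients. Your Steps 1--2 derive this inequality from the ``image of an orthonormal basis'' definition, whereas the paper takes it as the definition of a Riesz basis. One correction to your side remark: the full family $(\varphi_n^i)_{(i,n)}$ is a Riesz basis of $X$, so expansions are unique and the sum $X_1+\dots+X_m$ is in fact direct (only not orthogonal); splittings are therefore unique, and your two readings of $\|\cdot\|_{\mathcal{H}^0}$ differ by at most a factor $\sqrt{m}$.
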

This is a direct consequence of the fact that $\varphi_{n}$ is a Riesz basis. Indeed, by definition there exists positive constants $c$ and $C$ such that for any $(f_{n})_{n\in\mathbb{N}^{*}}\in l^{2}$,
\begin{equation}
c\left(\sum\limits_{n\in\mathbb{N}}|f_{n}|^{2}\right)^{1/2}\leq \|\sum\limits_{n\in\mathbb{N}}f_{n}\varphi_{n}\|_{\textcolor{black}{X}}\leq C\left(\sum\limits_{n\in\mathbb{N}}|f_{n}|^{2}\right)^{1/2}.
\end{equation}
Since  $\|\sum\limits_{n\in\mathbb{N}}f_{n}\varphi_{n}\|_{\mathcal{H}^{0}}=(\sum\limits_{n\in\mathbb{N}}|f_{n}|^{2})^{1/2}$, Lemma \ref{lem:equivnorm} follows.
\\

 From \cite{beauchard2010local}, for each $i\in\{1,...,m\}$, the family $(\varphi_n^{i})_{n\in \mathbb{N^*}}$ admits a unique bi-orthonormal family $(\widetilde{\varphi_n}^{i})_{n\in \mathbb{N^*}}$ in $\textcolor{black}{X}_{i}$ which forms a Riesz basis of $\textcolor{black}{X}_{i}$ and corresponds to the family of eigenvectors of the adjoint $\mathcal{A}^*$ of $\mathcal{A}$ (with respect to the scalar product $\langle \cdot,\cdot \rangle$ of $\textcolor{black}{X}$). Thus any $f\in \textcolor{black}{X}_{i}$ admits a unique decomposition in $\textcolor{black}{X}_{i}$ as follows
\begin{equation*}
    f=\sum_{n\in \mathbb{N^*}}f_n^i\varphi_n^{i}=\sum_{n\in \mathbb{N^*}}\langle f,\widetilde{\varphi_n}^{i}\rangle \varphi_n^{i},
\end{equation*}
and, as a consequence $\langle f,\widetilde{\varphi_n}^{i}\rangle = \langle f,\varphi_n^{i}\rangle_{\mathcal{H}^{0}_{i}}$ and this extends to any function $f\in \mathcal{H}_{i}^{s}$ for $s\in \mathbb{R}$. 

\noindent \textbf{Assumptions}\;\; Through the paper, we assume the following assumptions:
there exists $(\alpha_1,\cdots,\alpha_m)\in (1,+\infty)^m,$ such that
\begin{itemize}
    \item 
    \begin{align}
n^{\alpha_i}\lesssim|\lambda_n^i|+1\lesssim 
        n^{\alpha_i},\quad \forall n\in \mathbb{N}^*, i\in\{1,\cdots,m\} \label{ln+1_cond}
    \end{align}
    \item there exists $C>0$ 
    such that for any $n,p\in \mathbb{N}^*, i\in\{1,\cdots,m\},$
    \begin{equation}\label{ln-lp_cond}
        |\lambda_n^i-\lambda_p^i|\geq C n^{\alpha_i-1}|n-p|.
    \end{equation}
    
\end{itemize}

{\color{black}
We now introduce the definition of exponential stability 
\begin{defn}[Exponential stability]
\label{def:expstab}
Let $\mathcal{A}\in \mathcal{L}(D(\mathcal{A}),X)$ satisfying assumptions $(A_{1})$ and \eqref{ln+1_cond}--\eqref{ln-lp_cond}, $B\in \mathcal{H}_{1}^{-\alpha_{1}}\times...\times \mathcal{H}_{m}^{-\alpha_{m}}$
and $K\in \mathcal{L}(D(A);\mathbb{C}^{m})$. The system 
\begin{equation}
\label{eq:sysdef}
    \partial_{t}u = \mathcal{A}u+BK(u),
\end{equation}
is said exponentially stable with decay rate $\mu>0$ in $X$ if the system is well-posed in $X$, i.e. $\mathcal{A}+BK$ generates a $C^{0}$-semigroup on $X$, and if there exists $C>0$ such that for any $u_{0}\in X$, the unique solution $u\in C^{0}([0,+\infty),X)$ with initial condition $u_{0}$ to the system \eqref{eq:sysdef} satisfies
\begin{equation}
    \|u(t)\|_{X}\leq Ce^{-\mu t}\|u_{0}\|_{X},\;\forall t\in[0,+\infty).
\end{equation}
\end{defn}
\begin{rmk}
    Note that it is not assumed a priori that $K$ belongs to $\mathcal{L}(X,\mathbb{C}^{m})$, and there could be cases where $K$ does not belong to this space. This situation may happen, for instance, with controls depending only on point-wise or boundary values (see for instance \cite{BastinCoronBook}) where typically $X=L^{2}(0,1)$, $K\in \mathcal{L}(H^{1}(0,1);\mathbb{R}^{m})$ but $K\notin \mathcal{L}(L^{2}(0,1),\mathbb{R}^{m})$.
\end{rmk}
}

\section{Main results}
Our main result generates the exponential stabilization of system \eqref{eq:sys0} for any operator $\mathcal{A}$ satisfying ($A_{1}$), \eqref{ln+1_cond}--\eqref{ln-lp_cond}.

\begin{theorem}\label{Theo_main}
Consider the control system \eqref{eq:sys0} with operator $\mathcal{A}$ satisfying the assumptions $(A_{1})$, \eqref{ln+1_cond} and \eqref{ln-lp_cond}. If $B=(B_1,\cdots,B_m)\in\mathcal{H}_{\textcolor{black}{1}}^{-\alpha_1/2}\times \cdots\times \mathcal{H}_{\textcolor{black}{m}}^{-\alpha_m/2}$ is such that 
\begin{align}
  c_1\leq |\langle B_i,\widetilde{\varphi_n}^i\rangle|\leq c_2 n^{\gamma_{i}}, \quad \forall n\in \mathbb{N}^*,\ i\in\{1,\cdots,m\} \label{control_cond} 
\end{align} 
for some constant $c_1,c_2>0$, and some $\gamma_{i}\in [0,(\alpha_{i}-1)/2)$ for $i\in\{1,...,m\}$
and set \textcolor{black}{$\vec{\alpha}=(\alpha_{1},...,\alpha_{m})$.} 
Then, 
for any $\lambda_{0}>0,$ 
there exist $\lambda>\lambda_{0}$ and a bounded linear feedback $K\in \mathcal{L}(\textcolor{black}{\mathcal{H}^{\vec{\alpha}/2}},\mathbb{C}^m)$ and a mapping $T
$ that is an isomorphism from $\mathcal{H}^{\textcolor{black}{\vec r}}$ to itself \textcolor{black}{for any $\vec r=(r_{1},..., r_{m})$} with \textcolor{black}{$r_{i}\in(1/2-\alpha_{i}+\gamma_{i},\alpha_{i}-1/2-\gamma_{i})$} such that $T$ maps the system
\begin{equation}
    \partial_t u=\mathcal{A}u+BK(u)\label{closed-loop-system}
\end{equation} to the system
\begin{equation}
    \partial_t v=\mathcal{A}v-\lambda v.\label{target-system}
\end{equation}
In particular, for any $\mu>0$, $\lambda_{0}$ can be chosen sufficiently large such that the closed loop system \eqref{closed-loop-system} is exponentially stable with decay rate $\mu>0$ in $\mathcal{H}^{\vec r}$ for any 
\textcolor{black}{$\vec r=(r_{1},..., r_{m})$} with \textcolor{black}{$r_{i}\in(1/2-\alpha_{i}+\gamma_{i},\alpha_{i}-1/2-\gamma_{i})$}.
\end{theorem}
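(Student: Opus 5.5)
Following the $F$-equivalence/Fredholm strategy of \cite{Coron2014-local,Gagnon2022-fredholm}, we look for $T$ and $K$ satisfying the operator equalities
\begin{equation*}
T(\mathcal{A}+BK) = (\mathcal{A}-\lambda I)T,\qquad TB = B,
\end{equation*}
the second being the usual ``$TB=B$'' condition that allows the decoupling. Formally, combining the two gives $T\mathcal{A} + BK = \mathcal{A}T - \lambda T$. Since the $\mathcal{H}_i$ are invariant, we work on each component $i$ separately, writing $T\varphi^i_p = \sum_n T^i_{np}\varphi^i_n$ and $B_i=\sum_n b^i_n\varphi^i_n$ with $b^i_n=\langle B_i,\widetilde{\varphi_n}^i\rangle$. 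Projecting on $\widetilde{\varphi_n}^i$ gives $T^i_{np}(\lambda^i_p-\lambda^i_n+\lambda) = -b^i_n K(\varphi^i_p)$. Setting $k_p := -K(\varphi^i_p)$, we obtain the explicit ansatz
\begin{equation*}
T\varphi^i_p = k_p\sum_{n} \frac{b^i_n}{\lambda^i_p-\lambda^i_n+\lambda}\,\varphi^i_n ,
\end{equation*}
provided $\lambda\notin\{\lambda^i_n-\lambda^i_p\}$ (possible for a.e. $\lambda>\lambda_0$ by countability). The condition $TB=B$ then becomes $\sum_p b^i_p k_p\, b^i_n/(\lambda^i_p-\lambda^i_n+\lambda)=b^i_n$, i.e. the scalar equations $\sum_p \frac{b^i_p k_p}{\lambda^i_p-\lambda^i_n+\lambda}=1$ for all $n$. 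These determine $(k_p)$, and hence $K$.

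The key steps, in order, are as follows.
\begin{enumerate}
\item We show that $T$ is bounded on $\mathcal{H}^{\vec r}$. Write $T = \tilde T\circ \mathrm{diag}(k_p)$, where $\tilde T$ has kernel $b_n/(\lambda_p-\lambda_n+\lambda)$. Using \eqref{ln-lp_cond}, the bound $|\lambda_p-\lambda_n+\lambda|\gtrsim \max(n,p)^{\alpha-1}|n-p|$ holds off the diagonal and is $\ge$ a positive constant on it. Combined with $|b_n|\lesssim n^{\gamma}$, a Schur test or Hilbert-type inequality on weighted $\ell^2$ spaces yields boundedness, with exactly the window $r\in(1/2-\alpha+\gamma,\alpha-1/2-\gamma)$ appearing as the range where the weighted series converge. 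This is where $\gamma<(\alpha-1)/2$ enters.
\item We solve for $k_p$. Rather than inverting the infinite Cauchy-like system directly, we follow the compactness--duality method of \cite{Gagnon2022-fredholm}. We show $\tilde T = D + \mathcal{K}$, where $D$ is the diagonal part $b_n/\lambda$ (invertible since $|b_n|\ge c_1$) and $\mathcal{K}$ is compact on the relevant $\mathcal{H}^r$, using Lemma \ref{lem:embedding} and the decay of off-diagonal entries. Then $\tilde T$ is Fredholm of index zero. We show injectivity for generic large $\lambda$ via a duality argument: an element of the kernel of $\tilde T^*$ yields an entire/meromorphic function in $\lambda$ vanishing at too many points, which contradicts the lower bound $c_1$. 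Hence $\tilde T$ is invertible. We then define $(k_p)$ by $\tilde T(k)=B$ in the appropriate sense, and check $k_p$ is bounded above and below, which gives both $K\in\mathcal{L}(\mathcal{H}^{\vec\alpha/2},\mathbb{C}^m)$ (from $|k_p|\lesssim 1$ together with $B\in\mathcal{H}^{-\alpha/2}$ pairing) and invertibility of $T$.
\item We verify rigorously that the operator identities hold in $\mathcal{L}(\mathcal{H}^{\vec r+\vec\alpha},\mathcal{H}^{\vec r})$ and that $K$ makes sense on $D(\mathcal{A}+BK)$. We then deduce that $\mathcal{A}+BK = T^{-1}(\mathcal{A}-\lambda)T$ generates a $C^0$ semigroup conjugate to $e^{(\mathcal{A}-\lambda)t}$. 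Consequently, $\|u(t)\|_{\mathcal{H}^{\vec r}}\le \|T^{-1}\|\|T\|Me^{(\omega-\lambda)t}\|u_0\|$, and choosing $\lambda_0>\mu+\omega$ (with $\omega$ the growth bound, uniform across $\mathcal{H}^{\vec r}$ by the diagonal structure) gives decay rate $\mu$.
\end{enumerate}

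The main obstacle is step 2: proving that $\tilde T$ (equivalently $T$) is invertible, and that $(k_p)$ has the right size, for some $\lambda>\lambda_0$. Fredholmness should follow from the spectral gap, but excluding a nontrivial kernel requires the delicate duality/analyticity argument in $\lambda$. I would first try to adapt \cite{Gagnon2022-fredholm}'s argument, replacing orthonormality by the Riesz/biorthogonal structure $\langle f,\widetilde{\varphi_n}^i\rangle=\langle f,\varphi_n^i\rangle_{\mathcal{H}^0_i}$, and choosing $\lambda$ outside a discrete exceptional set.
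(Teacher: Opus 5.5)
Your ansatz for $T$ and the scalar equations $\sum_p b_pk_p/(\lambda_p-\lambda_n+\lambda)=1$ agree with the paper. However, your Steps~1 and~2 are the $\gamma=0$ argument of \cite{Gagnon2022-fredholm}. They fail in exactly the new regime of this theorem: $\gamma>0$ with $(b_n)$ unbounded, which \eqref{control_cond} allows.

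In your factorization $T=\tilde T\circ\mathrm{diag}(k_p)$, the diagonal of $\tilde T$ is $b_n/\lambda$. So $D=\mathrm{diag}(b_n/\lambda)$ is unbounded on every $\mathcal{H}^r$, and $\tilde T$ is not a bounded Fredholm operator there. Writing $\tau=\mathrm{diag}(b_n)$, one has $\tilde T=\tau S$, an isomorphism from $\mathcal{H}^r$ onto the strictly larger space $\tau(\mathcal{H}^r)$.

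Consequently, if $(k_p)$ were bounded above and below, $T$ would map $\mathcal{H}^r$ onto $\tau(\mathcal{H}^r)\not\subset\mathcal{H}^r$. It would then be neither bounded on $\mathcal{H}^r$ nor an isomorphism of it. No Schur test can repair Step~1, because the diagonal entries $k_nb_n/\lambda$ must themselves be bounded. In fact your target is false: the solution satisfies $k_pb_p\to\lambda$, so $k_p\to0$ wherever $b_p\to\infty$. Note also that your equations read $\tilde T\big((k_pb_p)_p\big)=B$, not $\tilde T(k)=B$.

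The missing idea is to normalize by $c_p:=k_pb_p$ and factor $T=(\tau S\tau^{-1})\circ\mathrm{diag}(c_p)$. The operator $\tau S\tau^{-1}$, with kernel $b_n/\big(b_p(\lambda_p-\lambda_n+\lambda)\big)$, is the paper's $\tilde\tau$ (Lemma~\ref{lem-tilde-tau}).
\begin{itemize}
\item Its diagonal is exactly $\lambda^{-1}$.
\item Its off-diagonal part $\tau S_c\tau^{-1}$ is compact on $\mathcal{H}^r$, because $|b_p|\ge c_1$, $|b_n|\lesssim n^\gamma$, and $S_c$ gains any $\varepsilon<\min\{(\alpha-1)/2,\alpha+r-1/2\}$ (Lemma~\ref{lemma-compact-sum}). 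This is where $\gamma<(\alpha-1)/2$ and $r>1/2-\alpha+\gamma$ actually enter.
\item Injectivity of $\tilde\tau$ on $\mathcal{H}^r$ follows from invertibility of $S$, since $(\tau(n^{-r}q_n))_n$ is a Riesz basis of $\tau(\mathcal{H}^r)\supset\mathcal{H}^r$. Invertibility of $S$ involves no $b_n$ at all, so the lower bound $c_1$ plays no role there.
\end{itemize}

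It then remains to show that $(c_p)$ is bounded above and below, and your proposal gives no mechanism for either.
\begin{itemize}
\item Upper bound: solving the equations only yields $(c_pp^{-1/2-\varepsilon})_p\in l^2$, because $B$ corresponds to the constant sequence in $\tau$-scaled coordinates. The paper upgrades this by iterating $e_m=-\lambda\sum_{n\neq m}(\lambda+e_n)/(\lambda_n-\lambda_m+\lambda)$ for $e_n:=c_n-\lambda$. Lemma~\ref{lem:tech} gives a gain of $\alpha-1$ per iteration, until $(e_pp^{\varepsilon})\in l^\infty$.
\item Lower bound: this comes from $e_p\to0$ together with $c_p\neq0$ for every $p$. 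That last fact is where $b_n\neq0$ is really used. $T$ is Fredholm of index $0$, and $\ker T^*$ is finite-dimensional and invariant under $(\mathcal{A}^*+\bar\rho)^{-1}$, so it would contain some $\varphi_{n_0}$. Then $TB=B$ forces $b_{n_0}=0$, a contradiction.
\end{itemize}
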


\begin{rmk}[Regularity of the feedback operator]
While the linear feedback $K$ belongs \emph{a priori} to $\mathcal{L}(\textcolor{black}{\mathcal{H}^{\vec{\alpha}/2}},\mathbb{C}^m)$, the feedback we construct is actually more regular and can actually be extended on $\mathcal{L}(\textcolor{black}{\mathcal{H}^{1/2+\varepsilon}},\mathbb{C}^{m})$ for any $\varepsilon>0$. (see \eqref{K_n-expression} and Lemma \ref{Lemma-K_n})
\end{rmk}

In fact, Theorem \ref{Theo_main} can be generalized as follows:
\begin{theorem}\label{Theo_main_general}
Consider the control system \eqref{eq:sys0} with operator $\mathcal{A}$ satisfying the assumptions $(A_{1})$ \eqref{ln+1_cond} and \eqref{ln-lp_cond}. Let $\vec{\beta}=(\beta_1,\cdots,\beta_m)\in\mathbb{R}^m$, $(\gamma_{1},...,\gamma_{m})\in [0,(\alpha_{1}-1)/2)\times ...\times [0,(\alpha_{m}-1)/2)$, and $B=(B_1,\cdots,B_m)\in\mathcal{H}_{\textcolor{black}{1}}^{\beta_1-\frac{\alpha_1}{2}}\times \cdots\times \mathcal{H}_{\textcolor{black}{m}}^{\beta_m-\frac{\alpha_m}{2}}$ such that there exist $c_1, c_2>0$ satisfying
\begin{align}
    c_1n^{-\beta_i}\leq |\langle B_i,\widetilde{\varphi_n}^i\rangle|\leq c_2 n^{-\beta_i+\gamma_{i}}, \quad \forall n\in \mathbb{N}^*,\ i\in\{1,\cdots,m\}.\label{control-cond-general}
\end{align}
 Then, for any $\lambda_0>0$ there exist $\lambda>\lambda_{0}$ and a bounded linear feedback $K\in \mathcal{L}(\textcolor{black}{\mathcal{H}^{\vec{\beta}+\vec{\alpha}/2}},\mathbb{C}^m)$ and a mapping $T$ that is an isomorphism from $\mathcal{H}^{\textcolor{black}{\vec{\beta}+}\vec r}$ to itself with 
\textcolor{black}{$\vec{r} := (r_{1},...,r_{m})$} for any
 \begin{align}
 \label{eq:condr}
     r_i\in(1/2-\alpha_i+\gamma_{i},\alpha_i-1/2-\gamma_{i}).
 \end{align}
Moreover $T$ maps the system
\begin{equation*}
    \partial_t u=\mathcal{A}u+BK(u)
\end{equation*} to the system
\begin{equation*}
    \partial_t v=\mathcal{A}v-\lambda v.
\end{equation*}
In particular, for any $\mu>0$, $\lambda_{0}$ can be chosen sufficiently large such that the closed loop system \eqref{closed-loop-system} is exponentially stable with decay rate $\mu$ in $\mathcal{H}^{\textcolor{black}{\vec{\beta}+}\vec r},$ for any $\vec r$ satisfying \eqref{eq:condr}. 
\end{theorem}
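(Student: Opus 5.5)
The plan is to reduce Theorem \ref{Theo_main_general} to Theorem \ref{Theo_main} by a diagonal change of variables that shifts the regularity scale by $\vec\beta$. On each $X_i$ define the rescaling operator
\[
S:\ \sum_{i=1}^m\sum_{n\in\mathbb{N}^*} f_n^i\varphi_n^i\ \longmapsto\ \sum_{i=1}^m\sum_{n\in\mathbb{N}^*} n^{\beta_i} f_n^i\varphi_n^i .
\]
By Lemma \ref{lem:structure}, $S$ is an isometric isomorphism from $\mathcal{H}^{\vec\beta+\vec s}$ onto $\mathcal{H}^{\vec s}$ for every $\vec s\in\mathbb{R}^m$. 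Since $S$ is diagonal in the eigenbasis, it commutes with $\mathcal{A}$ and with $e^{t\mathcal{A}}$ on each $X_i$. Indeed, $\mathcal{A}\varphi_n^i=\lambda_n^i\varphi_n^i$, and the assumptions $(A_1)$, \eqref{ln+1_cond} and \eqref{ln-lp_cond} involve only the eigenvalues, so they are unchanged.

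Set $\tilde u = Su$ and $\tilde B = SB$, so that $\tilde B_i = S B_i \in \mathcal{H}_i^{-\alpha_i/2}$. Its coefficients satisfy $\langle \tilde B_i,\widetilde{\varphi_n}^i\rangle = n^{\beta_i}\langle B_i,\widetilde{\varphi_n}^i\rangle$. Condition \eqref{control-cond-general} then becomes exactly \eqref{control_cond}, that is, $c_1\le |\langle\tilde B_i,\widetilde{\varphi_n}^i\rangle|\le c_2 n^{\gamma_i}$. Formally, $\partial_t u=\mathcal{A}u+BK(u)$ is equivalent to
\[
\partial_t\tilde u=\mathcal{A}\tilde u+\tilde B\,\tilde K(\tilde u),\qquad \tilde K:=K\circ S^{-1}.
\]
Theorem \ref{Theo_main} applied to $(\mathcal{A},\tilde B)$ gives $\lambda>\lambda_0$, a feedback $\tilde K\in\mathcal{L}(\mathcal{H}^{\vec\alpha/2},\mathbb{C}^m)$, and an isomorphism $\tilde T$ of $\mathcal{H}^{\vec r}$ for all $\vec r$ satisfying \eqref{eq:condr}. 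This $\tilde T$ maps the $\tilde u$-system to $\partial_t\tilde v=\mathcal{A}\tilde v-\lambda\tilde v$.

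We then define
\[
K:=\tilde K\circ S\in\mathcal{L}(\mathcal{H}^{\vec\beta+\vec\alpha/2},\mathbb{C}^m),\qquad T:=S^{-1}\tilde T S .
\]
Then $T$ is an isomorphism of $\mathcal{H}^{\vec\beta+\vec r}$, since it is a composition of the isomorphisms $\mathcal{H}^{\vec\beta+\vec r}\to\mathcal{H}^{\vec r}\to\mathcal{H}^{\vec r}\to\mathcal{H}^{\vec\beta+\vec r}$. Now set $v=Tu=S^{-1}\tilde v$. Because $S^{-1}$ commutes with $\mathcal{A}$, the function $v$ solves $\partial_t v=\mathcal{A}v-\lambda v$. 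The exponential stability with rate $\mu$ in $\mathcal{H}^{\vec\beta+\vec r}$ transfers along the same chain, because $S$ is an isometry between the relevant spaces. Well-posedness transfers as well: $\mathcal{A}+BK = S^{-1}(\mathcal{A}+\tilde B\tilde K)S$ is similar to a generator, so it generates the conjugated semigroup on $\mathcal{H}^{\vec\beta+\vec r}$.

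The main point needing care is justifying the commutation $S\mathcal{A}=\mathcal{A}S$ and the operator identity $S(BK(u))=\tilde B\,\tilde K(\tilde u)$ in the extended (distributional) spaces, where $B$ is not in $X$. I would check this coefficient-wise: the $n$-th coefficient of $BK(u)$ on $X_i$ is $\langle B_i,\widetilde{\varphi_n}^i\rangle K_i(u)$, and multiplying by $n^{\beta_i}$ gives $\langle\tilde B_i,\widetilde{\varphi_n}^i\rangle\tilde K_i(\tilde u)$. I would also check that the meaning of "$T$ maps the system to the target" used in Theorem \ref{Theo_main} is preserved under conjugation. Both are routine once expressed in the Riesz basis $(\varphi_n^i)$.

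If one prefers not to go through Theorem \ref{Theo_main}, one can instead rerun its construction directly: take $T\varphi_n=\sum_p \dots$, chosen so that $T(\mathcal{A}+BK)=(\mathcal{A}-\lambda)T$ and $TB=B$. In that route the $n^{-\beta_i}$ weights are absorbed into the same rescaling, which again reduces it to Theorem \ref{Theo_main}.
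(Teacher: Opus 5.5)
Your proposal is correct and is essentially the paper's argument. The paper proves the $\beta=0$ case directly in Steps 1--7; that case is exactly Theorem \ref{Theo_main}, so citing it is not circular. It then extends to $\vec\beta\neq 0$ in Appendix \ref{app:beta} using the same diagonal rescaling $M:n^{-\beta}\varphi_n\mapsto\varphi_n$ (your $S$), with $T=M^{-1}\tilde T M$ and $K=\tilde K M$. The only cosmetic difference is that the paper conjugates componentwise at the level of Proposition \ref{prop:main} before assembling the blocks, whereas you conjugate the already-assembled $\beta=0$ theorem, which transfers well-posedness and the decay estimate directly because $S$ is an isometry.
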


We give some illustrations from this Theorem in Sections \ref{sec:schro}--\ref{sec:diffusion}: 
an application to the Schrödinger equation is considered and the water wave equations, and an extension of the linear Laplacian operator studied in \cite{Gagnon2022-fredholm-laplacien}.\\
\textcolor{black}{\begin{rmk}
    Theorem \ref{Theo_main} and Theorem \ref{Theo_main_general} do not only provide the existence of feedback law $K$ such that the closed-loop $\partial_t u=\mathcal{A}u+BK(u)$ is exponentially stable, but they ensure the existence of transformation $T$ which transforms any solution $u$ of the closed-loop into exponentially decaying $v$ where $\partial_t v=\mathcal{A}v-\lambda v.$ This yields the following observations
    \begin{itemize}
        \item F-equivalence can be extended to ensure stability of some quasi-linear systems (see \cite{Boulard2025f} for parabolic systems, for generic systems the question is largely open) and also some nonlinear systems (see subsection \ref{sec-nonlinear}). 
        \item Instead of relying on a frequency-domain criterion involving the resolvent of the operator $\mathcal{A}+BK,$ the concept of $F$-equivalence offers the advantage of establishing the exponential stability of the semigroup generated by $\mathcal{A}+BK$ through the transformation $T$ and the stability properties of the shifted operator $\mathcal{A}-\lambda I,$ for sufficiently large $\lambda>0$ (see Section \ref{sec:wellposed}) and, in turn, obtain a more explicit construction of $K$ which brings hope of being able to provide explicit estimates (in particular quantitative estimates on $K$ with respect to the decay rate $\lambda$).
        \item Moreover since $u=Tv$ with
    \begin{align*}
    \|v(t)\|_{X}\leq C\|v_0\|e^{-\mu t},\quad \forall t\geq 0,
    \end{align*}
    we derive
    \begin{align*}
    \|u(t)\|_{X}\leq C\|T\|\|T^{-1}\|\|u_0\|_Xe^{-\mu t},\quad \forall t\geq 0,
    \end{align*}
    where $C$ only depends on $\mathcal{A}$ (and in particular not on $\lambda$ or $K$). A recent preprint \cite{Gagnon2025abstract} suggests that the $K$ we construct with this approach (see Lemma \ref{lem:regk} and Section \ref{sec:step4}) can be given by an explicit formula leading to sharp estimates on $\|T\|$ and $\|T^{-1}\|$ with respect to the decay rate $\lambda$. This would facilitate numerical simulations, error estimates and could lead to finite-time stabilization results. 
    \end{itemize}   
\end{rmk}}

In the particular case where $\mathcal{A}$ is skew-adjoint operator, this coincides with the result of \cite{Gagnon2022-fredholm}  but with strictly less restrictive conditions
\textcolor{black}{(they coincide with the condition of \cite{Gagnon2022-fredholm} when $\gamma=0$)}. One can also recover the result on the linearized Schrödinger equation of \cite{Coron2018-rapid}, here again with less restrictive conditions of stabilization since \cite[Hypothesis 1.1]{Coron2018-rapid} is not necessarily needed depending on the regularity of the control operator), \textcolor{black}{see Section \ref{sec:schro}}. \\
 
From Theorem \ref{Theo_main_general} we deduce the following conditions for the rapid stabilization of system \ref{eq:sys0}.
\begin{corollary}[\textcolor{black}{Rapid stabilization}]
\label{cor:expstab}
Consider the control system \eqref{eq:sys0} with operator $\mathcal{A}$ satisfying the assumptions $(A_{1})$  \eqref{ln+1_cond} and \eqref{ln-lp_cond}.  Let $(\gamma_{1},...,\gamma_{m})\in [0,(\alpha_{1}-1)/2)\times ...\times [0,(\alpha_{m}-1)/2)$, $r_{i}\in(1/2-\alpha_{i}+\gamma_{i},\alpha_{i}-1/2-\gamma_{i})$, and $B=(B_1,\cdots,B_m)\in\mathcal{H}_{\textcolor{black}{1}}^{-\alpha_1}\times \cdots\times \mathcal{H}_{\textcolor{black}{m}}^{-\alpha_m}$ such that there exist $c_1, c_2>0$ satisfying
\begin{equation}
\label{eq:condbsup}
   c_1n^{r_{i}}
   \leq |\langle B_i,\widetilde{\varphi_n}^i\rangle|\leq c_2 n^{r_{i}+\gamma_{i}},
\end{equation}
then for any $\mu>0$ there exists a (constructive) bounded linear feedback $K_{\gamma}\in \mathcal{L}(\textcolor{black}{\mathcal{H}^{\vec{\alpha}/2-\vec{r}}},\mathbb{C}^m)$ such that $K_{\gamma}\in \mathcal{L}(\mathcal{H}^{\textcolor{black}{1/2+\varepsilon-\vec{r}}},\mathbb{C}^m)$ for any $\varepsilon>0$, \textcolor{black}{where
$1/2+\varepsilon-\vec{r} := (1/2-\varepsilon -r_{1},...,1/2-\varepsilon-r_{m})$},
and the system \eqref{closed-loop-system} is exponentially stable in $\textcolor{black}{X}$ with decay rate $\mu$.
\end{corollary}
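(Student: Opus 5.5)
The plan is to deduce the corollary directly from Theorem \ref{Theo_main_general}, with the choice $\beta_i := -r_i$ for each $i\in\{1,\dots,m\}$.

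\textbf{Step 1: checking the hypotheses.} With $\beta_i=-r_i$, condition \eqref{eq:condbsup} is exactly \eqref{control-cond-general}: $c_1 n^{-\beta_i}\le |\langle B_i,\widetilde{\varphi_n}^i\rangle|\le c_2 n^{-\beta_i+\gamma_i}$. It remains to check that $B_i\in\mathcal{H}_i^{\beta_i-\alpha_i/2}=\mathcal{H}_i^{-r_i-\alpha_i/2}$. Using the upper bound,
\begin{equation*}
\|B_i\|^2_{\mathcal{H}_i^{-r_i-\alpha_i/2}}\lesssim \sum_{n} n^{-2r_i-\alpha_i}n^{2r_i+2\gamma_i}=\sum_n n^{2\gamma_i-\alpha_i}.
\end{equation*}
This sum is finite because $2\gamma_i<\alpha_i-1$. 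Hence the stated a priori class $\mathcal{H}_i^{-\alpha_i}$ for $B$ plays no role; the needed regularity follows from \eqref{eq:condbsup}.

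\textbf{Step 2: applying the theorem and identifying the stability space.} Theorem \ref{Theo_main_general} then yields, for any $\lambda_0$, a feedback $K\in\mathcal{L}(\mathcal{H}^{\vec\beta+\vec\alpha/2},\mathbb{C}^m)=\mathcal{L}(\mathcal{H}^{\vec\alpha/2-\vec r},\mathbb{C}^m)$. It also yields an isomorphism $T$ of $\mathcal{H}^{\vec\beta+\vec r'}$ for every $\vec r'$ satisfying \eqref{eq:condr}, together with exponential stability with rate $\mu$ in $\mathcal{H}^{\vec\beta+\vec r'}$. The key choice is $\vec r'=\vec r$, which is admissible precisely because $r_i\in(1/2-\alpha_i+\gamma_i,\alpha_i-1/2-\gamma_i)$. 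Then $\mathcal{H}^{\vec\beta+\vec r}=\mathcal{H}^{0}$. By Lemma \ref{lem:equivnorm}, this space equals $X$ with an equivalent norm, so exponential stability with rate $\mu$ transfers to $X$, with only the constant $C$ changing.

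\textbf{Step 3: the extra regularity of $K$.} The claim $K_\gamma\in\mathcal{L}(\mathcal{H}^{1/2+\varepsilon-\vec r},\mathbb{C}^m)$ follows from the remark after Theorem \ref{Theo_main}, in its general form with $\vec\beta$. The constructed feedback extends to $\mathcal{L}(\mathcal{H}^{\vec\beta+1/2+\varepsilon},\mathbb{C}^m)$, and with $\beta_i=-r_i$ this is the stated space.

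\textbf{Main obstacle.} The only delicate point is this last regularity claim. It relies on the explicit form of the coefficients $K_n$ (via \eqref{K_n-expression} and Lemma \ref{Lemma-K_n}), which appear later in the paper. Its rescaled version should be checked: roughly, $|K_n|\lesssim n^{\beta_i}$ times a bounded sequence, so that $\sum_n K_n u_n$ converges whenever $\sum_n n^{-1-2\varepsilon}<\infty$. Everything else is bookkeeping of exponents.
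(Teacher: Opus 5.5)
Your proposal is correct and is exactly the deduction the paper intends. Take $\beta_i=-r_i$ in Theorem \ref{Theo_main_general}, check $B_i\in\mathcal{H}_i^{-r_i-\alpha_i/2}$ from the upper bound together with $\gamma_i<(\alpha_i-1)/2$, and choose $\vec r$ itself as the admissible exponent, so that the stability space $\mathcal{H}^{\vec\beta+\vec r}=\mathcal{H}^{0}$ coincides with $X$ up to an equivalent norm (Lemma \ref{lem:equivnorm}).

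The regularity in your Step 3 is not really an obstacle. Proposition \ref{prop:main} already states $K_i\in\mathcal{L}(\mathcal{H}_i^{\beta_i+1/2+\varepsilon};\mathbb{C})$, which with $\beta_i=-r_i$ is the claimed space, and your rescaling $K_n=n^{\beta_i}\tilde K_n$ via Appendix \ref{app:beta} confirms it.
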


\textcolor{black}{
\begin{rmk}
    The conditions on the eigenvalues are asymptotic (see \eqref{ln+1_cond},\eqref{ln-lp_cond}), and  
    the knowledge of the asymptotic behaviors may suffice (Corollary \ref{corollary-diffusion}, Corollary \ref{cor-Gribov}). However, we need to know at least partially the value of $\langle B_i, \widetilde{\varphi_n}^i\rangle:$ the precise estimate required here on the growth of $\langle B_i, \widetilde{\varphi_n}^i\rangle$ is also asymptotic, but we need to ensure that it is non-zero for all $n$. 
    Nevertheless, the fact that it is non-zero for every $n$ means that \textcolor{black}{what} we really require is approximate controllability, which is essential for stabilizing a system without any further assumption (e.g. parabolicity). So while this does impose restrictions, it would be hard to do without it in any case.
\end{rmk}
}

\subsection{Link with controllability and existing conditions for stabilizability}
\label{sec:discussion}

Usually, a typical ``good'' condition for a generic rapid stabilization result (in $\textcolor{black}{X}$) would be admissibility of the control operator $B$ (in $\textcolor{black}{X}$), and exact null controllability (in $\textcolor{black}{X}$), sometimes reformulated as an observability condition.
These are the conditions 
typically required when using Riccati or Gramian methods for rapid stabilization (see \cite{slemrod1972linear,komornik1997rapid,urquiza2005rapid,LT1,LT2}).
The result of \cite[Proposition 1]{trelat2019characterization} using an observability approach to investigate rapid stabilizability in a general framework with also 
uses these as assumptions (see also \cite{slemrod1974note,nguyen2024stabilization} in the case where  $\mathcal{A}$ generates a $C^{0}$ group and not only a semigroup). 

In our case, because we require $BK$ to shift all the spectrum\footnote{since the system should be mapped to $\partial_{t}v = (\mathcal{A}-\lambda Id) v$} of $\mathcal{A}$ our statement is stronger than the sole rapid stabilization and we could expect to require a stronger condition\footnote{In fact our problem is closer to the pole placement problem in infinite dimension for which the conditions are usually stronger than the condition of rapid stabilization.}.
Surprisingly, our condition \eqref{eq:condbsup} can in fact be less restrictive: 
in our framework the system is not necessarily exactly null controllable in $\textcolor{black}{X}$ and $B$ is not necessarily admissible \textcolor{black}{in $X$}.
This is particularly striking when the system is skew-adjoint: in this case our sufficient conditions \eqref{eq:condbsup} are
less restrictive than those of 
the existing results so far. In particular, the system \eqref{eq:sys0} can be stabilized rapidly in $\textcolor{black}{X}$ even if $B$ is not admissible (in $\textcolor{black}{X}$) and the system is not exactly (null) controllable in \textcolor{black}{the state space $X$ (but only in a more regular space)}.\\ 

To go more in detail, let us recall that being admissible (in $\textcolor{black}{X}$) means that there exists $T>0$ and $C_{T}>0$ such that for every $z\in D(\mathcal{A}^{*})$ (see \cite[Section 2.3]{coron2007control} or \cite{tucsnak2009observation})
\begin{equation}
\label{eq:admissible0}
\int_{0}^{T}|B^{*}S(t)^{*}z|^{2}\leq C_{T}\|z\|_{\textcolor{black}{X}}^{2},\text{  \emph{(admissibility condition)} }
\end{equation}
where $\mathcal{A}^{*}$ is the adjoint of $\mathcal{A}$ and $S(t)^{*}$ is the adjoint of $S(t)$, where $\{S(t)\}_{t\geq0}$ is the $C^{0}$ semigroup generated by $\mathcal{A}$. On the \textcolor{black}{other hand} being exactly controllable in $\textcolor{black}{X}$ at time $T$ is equivalent to the existence of $c_{T}$ such that
\begin{equation}
\label{eq:exactcont}
    \int_{0}^{T}|B^{*}S(t)^{*}z|^{2}\geq c_{T}\|z\|_{\textcolor{black}{X}}^{2},\text{  \emph{(exact controllability)}}.
\end{equation}
In the particular case of skew-adjoint systems, \eqref{eq:exactcont} is also equivalent to exact null controllability.
In our framework, since $\mathcal{A}$ has a Riesz basis of eigenvectors we have
the following results from \cite{weiss2011eigenvalues} (see also \cite{russell1994general})
\begin{lem}[\cite{weiss2011eigenvalues}]
\label{lem:weiss}
If $B$ is admissible then there exists $C>0$ such that
\begin{equation}
\label{eq:admissible}
|\langle  B_i,\textcolor{black}{\widetilde{\varphi_{n}}}^{i} \rangle|\leq C(1+|\text{Re}(\lambda_{n}^{i})|)^{1/2},\;\forall\;(i,n)\in\{1,...,m\}\times\mathbb{N}^{*},
\end{equation}
and if the system is in addition exactly controllable in $\textcolor{black}{X}$ then there exists $c>0$ such that
\begin{equation}
\label{eq:exactcontspect}
|\langle  B_i,\textcolor{black}{\widetilde{\varphi_{n}}}^{i} \rangle|\geq c(1+|\text{Re}(\lambda_{n}^{i})|)^{1/2},\;\forall\;(i,n)\in\{1,...,m\}\times\mathbb{N}^{*}.
\end{equation}
\end{lem}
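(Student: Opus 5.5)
The statement to prove is Lemma \ref{lem:weiss}, taken from \cite{weiss2011eigenvalues}. The plan is to test the admissibility inequality \eqref{eq:admissible0} and the exact controllability inequality \eqref{eq:exactcont} on the adjoint eigenvectors $z=\widetilde{\varphi_n}^i$. These are eigenvectors of $\mathcal{A}^*$ with eigenvalue $\overline{\lambda_n^i}$, so $S(t)^*\widetilde{\varphi_n}^i = e^{\overline{\lambda_n^i}t}\widetilde{\varphi_n}^i$. The components $\langle B_k,\widetilde{\varphi_n}^i\rangle$ with $k\neq i$ do not affect the estimate for the $i$-th component; for the upper bound we simply drop them. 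Hence
\begin{equation*}
|B^*S(t)^*\widetilde{\varphi_n}^i|^2 \geq |\langle B_i,\widetilde{\varphi_n}^i\rangle|^2 e^{2\text{Re}(\lambda_n^i)t}.
\end{equation*}
Moreover $\|\widetilde{\varphi_n}^i\|_X$ is bounded above and below uniformly in $(i,n)$, since $(\widetilde{\varphi_n}^i)$ is a Riesz basis (Lemma \ref{lem:equivnorm}).

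For the upper bound, integrate over $[0,T]$ and set $\rho=\text{Re}(\lambda_n^i)$. The elementary identity $\int_0^T e^{2\rho t}\,dt = (e^{2\rho T}-1)/(2\rho)$ gives a quantity bounded below by $c_T/(1+|\rho|)$. This holds for either sign of $\rho$: when $\rho\geq 0$ the integral is at least $T$, and when $\rho<0$ it equals $(1-e^{-2|\rho|T})/(2|\rho|)\gtrsim \min(T,1/|\rho|)$. Note that the semigroup has finite growth bound, so $\rho$ is bounded above and only $\rho\to-\infty$ matters. Admissibility then yields
\begin{equation*}
|\langle B_i,\widetilde{\varphi_n}^i\rangle|^2 \lesssim C_T(1+|\rho|),
\end{equation*}
which is \eqref{eq:admissible}.

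For the lower bound the tested inequality has to be run the other way, and this is the main subtlety. The function $|B^*S(t)^*\widetilde{\varphi_n}^i|^2$ involves all components $\langle B_k,\widetilde{\varphi_n}^i\rangle$, which creates a difficulty when multiplicities are present. I would handle it by working within each $X_i$ on the span of a single eigenvalue, where the restriction of $B^*$ is determined by the $i$-th component, and then apply the multiplicity decomposition to separate them. In the simple-eigenvalue case the inequality reduces to
\begin{equation*}
|\langle B_i,\widetilde{\varphi_n}^i\rangle|^2 \int_0^T e^{2\rho t}\,dt \geq c_T\|\widetilde{\varphi_n}^i\|^2,
\end{equation*}
and the integral is at most $C/(1+|\rho|)$ for $\rho$ bounded above. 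This gives \eqref{eq:exactcontspect}. The step I expect to be the obstacle is exactly this isolation of the $i$-th component when the eigenvalues are not simple. If it does not go through directly, I would simply cite \cite[Theorem 1.1]{weiss2011eigenvalues}, and also \cite{russell1994general}, for that part.
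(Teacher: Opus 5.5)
Your route differs from the paper's. The paper gives no argument for this lemma: it quotes \cite{weiss2011eigenvalues} (see also \cite{russell1994general}), where such bounds follow from more general necessary conditions, namely Carleson-measure type criteria for admissibility of diagonal semigroups and the Russell--Weiss resolvent condition for exact observability. Your direct test of \eqref{eq:admissible0} and \eqref{eq:exactcont} on the adjoint eigenvectors $z=\widetilde{\varphi_n}^i\in D(\mathcal{A}^*)$ is more elementary and enough for the pointwise bounds stated here. The upper bound is complete as written. The two-sided estimate $\int_0^T e^{2\rho t}\,dt\asymp(1+|\rho|)^{-1}$ (for $\rho$ bounded above, constants depending on $T$ and the growth bound) is correct. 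The cited results give more than the lemma needs: summed Carleson-type bounds, and sufficiency under spacing assumptions. Your argument instead shows exactly which structural hypothesis the lower bound relies on.

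That hypothesis is what removes the obstacle you flag, so state it instead of the vaguer remark about restricting $B^*$. In the paper's setting each $B_k$ lies in some $\mathcal{H}_k^{s}$, i.e.\ it is expanded only on $(\varphi_p^k)_p$. Read $\widetilde{\varphi_n}^i$ as the eigenvectors of $\mathcal{A}^*$, which is how you use them; they are then biorthogonal to the whole Riesz basis $(\varphi_p^k)_{(k,p)}$ of $X$. Hence $\langle B_k,\widetilde{\varphi_n}^i\rangle=0$ for $k\neq i$, and $|B^*S(t)^*\widetilde{\varphi_n}^i|^2=e^{2\rho t}|\langle B_i,\widetilde{\varphi_n}^i\rangle|^2$ holds with equality. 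Your simple-eigenvalue computation then gives \eqref{eq:exactcontspect} verbatim, with or without multiplicities.

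Without this structure the componentwise lower bound is false. Take two identical copies of a diagonal exactly controllable system, with $X_1$ and $X_2$ the two copies, and let $B_1$ act only on the second copy and $B_2$ only on the first. The system is admissible and exactly controllable, yet $\langle B_1,\widetilde{\varphi_n}^1\rangle=0$ for every $n$. Exact controllability alone only bounds from below the smallest singular value of the matrix $(\langle B_k,\widetilde{\varphi}^{j}\rangle)_{k,j}$ on each eigenspace, not its diagonal entries. So no generic separation argument can close that step; the fix has to use $B_k\in\mathcal{H}_k^{s}$, and once it does, your proof is complete and needs no fallback citation.
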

These conditions are necessary (and not \textcolor{black}{always} sufficient \cite{hansen1997new} in general\footnote{Note that following \cite{weiss2011eigenvalues} \eqref{eq:exactcontspect} is necessary and sufficient for exact controllability if $(\lambda_{n}^{i})_{n,i}$ are \emph{properly spaced} (see \cite[Proposition 3.4]{weiss2011eigenvalues}), but they are not always here.}) 
to admissibility and exact controllability \textcolor{black}{in $X$}. \textcolor{black}{Note that in the case of a skew-adjoint operator, there is an orthonormal basis of eigenvectors and one can assume that $\widetilde{\varphi_{n}} = \varphi_{n}$.}
Comparing with our condition \eqref{eq:condbsup} we see the following:
\begin{itemize}
\item If the $\lambda_{n}^{i}$ have bounded real part (for instance if $\mathcal{A}$ is a skew-adjoint operator, but not only). 
\begin{itemize}
\item For $r_{i}=0$, $\gamma_{i}=0$ we recover the conditions of admissibility \textcolor{black}{and exact controllability in $X$} which coincide with the regularity and observability conditions of \cite{komornik1997rapid,urquiza2005rapid,vest2013rapid,nguyen2024stabilization} (and \cite{Coron2018-rapid,nguyen2024rapid} for the bilinear Schrödinger equation).
\item For \textcolor{black}{$r_{i}\in(0,\alpha_{i}-1/2-\gamma_i)$}, $B$ is not necessarily admissible \textcolor{black}{in $X$}, in particular the regularity condition required in \cite{vest2013rapid,urquiza2005rapid,komornik1997rapid,nguyen2024stabilization} is not satisfied. 
\item For \textcolor{black}{$r_{i}\in(1/2-\alpha_{i}+\gamma_{i},0)$}, then 
\textcolor{black}{the system is not necessarily exactly (null) controllable in the state space $X$ and for $r_{i}\in(1/2-\alpha_{i}+\gamma_{i},-\gamma_{i})$ it cannot be exactly (null) controllable}. This could seem surprising given the usual result  given in \cite[Theorem 16.5]{zabczyk2020mathematical} (see also \cite{trelat2019characterization}) according which rapid stabilizability \textcolor{black}{in $X$} implies exact (null) controllability \textcolor{black}{in $X$} for skew-adjoint systems. In fact, since in these references, the definition of rapid stabilizability additionally requires that $K\in\mathcal{L}(X,\mathbb{C}^{m})$, 
which is helpful in the analysis, in particular to ensure that the system is well-posed, but is not assumed here (compared to the case $r_{i}\in(1/2,\alpha_{i}-1/2)$): we only require the weaker condition that $\mathcal{A}+BK$ generates a $C^{0}$ semigroup on $\textcolor{black}{X}$. \textcolor{black}{This allows to have a weaker assumption on the controllability of the system: note that in this case the system is still exactly controllable in a stronger space $\mathcal{H}^{-\vec{r}}$ (recall that $r_{i}<0$ in this case).} This less restrictive and 
relaxed
definition \textcolor{black}{of stabilizability} can also be found in  \cite{liu2022characterizations,ma2023feedback} where the authors provide very nice necessary conditions in terms of observability when $\mathcal{A}$ is a skew-adjoint operators (but under the assumption that $B$ is admissible, which is not the case here).\\
\end{itemize}

\noindent \textcolor{black}{Note that both $r_{i}\in(1/2-\alpha_i+\gamma_{i},0)$ and $r_{i}+\gamma_{i}\in(0,\alpha_{i}-1/2)$ can be satisfied at the same time. As a consequence, our sufficient condition for rapid stabilization \eqref{eq:condbsup} can hold even in cases where the system is \textbf{not} exactly controllable in \textcolor{black}{the state space $X$} \textbf{and} $B$ is not admissible.}\\

\item If $\text{Re}(\lambda_{n}^{i})\geq c |\lambda_{n}|$ for some constant $c>0$ (for instance if $\mathcal{A}$ is a self-adjoint operator, but not only)
\begin{itemize}
\item If $r_{i}+\gamma_{i}\in(\alpha_{\textcolor{black}{i}}/2,\alpha_{\textcolor{black}{i}}-1/2)$ then 
$B$ does not necessarily need to be admissible.
\end{itemize}
\end{itemize}

Of course, if $B$ is admissible and the system exactly controllable, then our condition on $B$ is always satisfied, as one could expect.

\subsection{Application to Schrödinger equation around the ground state}
\label{sec:schro}

We consider the bilinear Schrödinger equation linearized around the ground state studied in \cite{Coron2018-rapid,nguyen2024rapid} (see also \cite{beauchard2010local,beauchard2014local} and \cite{machtyngier1994stabilization,dehman2006stabilization} for earlier works):
\begin{equation}
\label{eq:linearschro}
    \begin{split}
        &i\partial_{t}\Psi = -\Delta \Psi-\textcolor{black}{\sigma_{1}}\Psi+u(t)\mu(x)\textcolor{black}{\Phi_{1}}(x),\\
        &\Psi(t,0)=\Psi(t,1)=0,
    \end{split}
\end{equation}
where $\textcolor{black}{\sigma_{1}}=\pi^{2}$ and $\textcolor{black}{\Phi_{1}} = \sqrt{2}\sin(\pi x)$ are respectively the first eigenvalue and eigenvector of the Laplace operator with Dirichlet boundary conditions. The total family of eigenvalues and eigenvectors are the solution of 
\begin{equation}
\begin{split}
    &-\Delta \textcolor{black}{\Phi_{n}} = \textcolor{black}{\sigma_{n}}\textcolor{black}{\Phi_{n}}\\
    &\textcolor{black}{\Phi_{n}}(0) = \textcolor{black}{\Phi_{n}}(1) = 0,
    \end{split}
\end{equation}
and is given by
\begin{equation}
\textcolor{black}{\sigma_{n}}=\pi^{2}n^{2},\;\;\textcolor{black}{\Phi_{n}}(x) = \sqrt{2}\sin(n\pi x),\;\forall n\in\mathbb{N}^{*}.
\end{equation}
Here, the space under consideration is, as in \cite{Coron2018-rapid,nguyen2024rapid}
\begin{equation}
\textcolor{black}{X}=\{f\in H^{3}((0,1);\mathbb{C}),\ f(0)=f(1)=f''(0)=f''(1)=0,\; \text{Re}\langle f,\textcolor{black}{\Phi_{1}}\rangle_{L^{2}(0,1)} = 0\},
\end{equation}
One readily checks that the associated operator $\mathcal{A} = i(\Delta +\textcolor{black}{\sigma_{1}}Id)$ is diagonal with eigenvectors 
$\textcolor{black}{\varphi_{n}} = (\pi n)^{-3}\textcolor{black}{\Phi_{n}}$ associated to eigenvalues $\lambda_{n} = -i(\textcolor{black}{\sigma_{n}-\sigma_{1}})$, for $n\in\mathbb{N}^{*}$ on $\textcolor{black}{X}$ 
and satisfy \eqref{ln+1_cond}--\eqref{ln-lp_cond} with $\alpha=2$. \textcolor{black}{The space $X$} is naturally equipped with the following inner product and associated norm (see \cite{Coron2018-rapid}): 
\begin{equation}
\begin{split}
\langle f, g\rangle_{\textcolor{black}{X}}:= \sum\limits_{n\in\mathbb{N}^{*}}
\textcolor{black}{\sigma_{n}^{3}}\langle f,\textcolor{black}{\Phi_{n}}\rangle_{L^{2}}\overline{\langle g,\textcolor{black}{\Phi_{n}}\rangle_{L^{2}}},
\end{split}
\end{equation}
which, in fact, corresponds to the $H^{3}$ norm (see \cite[Section 1]{nguyen2024rapid}).

One can check that $(\textcolor{black}{\varphi_{n}})_{n\in\mathbb{N}^{*}}$ is an orthonormal basis of $\textcolor{black}{X}$, indeed
\begin{equation}
\textcolor{black}{\langle \varphi_{n},\varphi_{k}\rangle_{\textcolor{black}{X}}} = \sum\limits_{m\in\mathbb{N}^{*}}\textcolor{black}{\sigma_{m}^{3}}\pi^{-6}n^{-3}k^{-3}
\langle \textcolor{black}{\Phi_{n},\Phi_{m}\rangle_{L^{2}}\overline{\langle \Phi_{k},\Phi_{m}\rangle_{L^{2}}}}
=\delta_{n,k}\textcolor{black}{\sigma_{n}^{3}}\pi^{-6}n^{-6},
\end{equation}
thus for $k\neq n$, $\textcolor{black}{\langle \varphi_{n},\varphi_{k}\rangle_{\textcolor{black}{X}}}=0$ and $\|\textcolor{black}{\varphi_{n}}\|_{\textcolor{black}{X}}=1$. Finally, let $f\in \textcolor{black}{X}$, since $\textcolor{black}{X}\subset L^{2}(0,1)$ and $(\textcolor{black}{\Phi_{n}})_{n\in\mathbb{N}^{*}}$ is a basis of $L^{2}(0,1)$ there exists $\textcolor{black}{(f_{n})_{n\in \mathbb{N}^{*}}}$ such that
\begin{equation}
f = \sum\limits_{n\in\mathbb{N}^{*}} f_{n} \textcolor{black}{\Phi_{n}} = \sum\limits_{n\in\mathbb{N}^{*}} \textcolor{black}{\sigma_{n}^{3/2}}f_{n} \varphi_{n},
\end{equation}
and since $f\in \textcolor{black}{X}$, $(\textcolor{black}{\sigma_{n}^{3/2}}f_{n})_{n\in \mathbb{N}^{*}}\in l^{2}$.\\

In \cite{Coron2018-rapid,nguyen2024rapid} it was \textcolor{black}{shown} 
that the system is rapidly stabilizable, i.e. for any $\lambda>0$ there exists a feedback operator $K$ such that the system \eqref{eq:linearschro} with $u(t) = K(\Psi(t))$ is exponentially stable with decay rate $\lambda$ \textcolor{black}{in $X$}, provided that the following sufficient condition is satisfied

\begin{equation}
\label{eq:condexisting}
\begin{split}
    &\mu\text{ belongs to }H^{3}(0,1)\\
    &|\langle\mu\textcolor{black}{\Phi_{1}},\textcolor{black}{\Phi_{n}}\rangle_{L^{2}(0,1)}|\geq c n^{-3},\;\forall n \in\mathbb{N}^{*}
    \end{split}
\end{equation}
for some $c>0$.
This implies in particular (see \cite[Remark 1.2]{Coron2018-rapid}) that 
\begin{equation}
\label{eq:rmk12}
    c n^{-3}\leq |\langle\mu\textcolor{black}{\Phi_{1}},\textcolor{black}{\Phi_{n}}\rangle_{L^{2}(0,1)}|\leq C n^{-3},
\end{equation}
for some $c,C >0$ (that might change between lines but are independent on $n$). The left inequality is equivalent to the exact (null) controllability of the system \eqref{eq:linearschro} \textcolor{black}{(in $X$)} \cite{Coron2018-rapid}. Since $(\textcolor{black}{{\varphi}_{n}})_{n\in\mathbb{N}^{*}}$ is a Riesz (and in fact orthonormal) basis of $\textcolor{black}{X},$ \textcolor{black}{the condition \eqref{eq:rmk12} is equivalent to:}
\begin{equation}
    c \leq |\langle\mu\textcolor{black}{\Phi_{1}},\textcolor{black}{{\varphi}_{n}}\rangle_{\textcolor{black}{X}}|\leq C,
\end{equation}
which corresponds to our condition \eqref{eq:condbsup} with $\gamma=\beta=0$. From Corollary \ref{cor:expstab} we can relax the condition \eqref{eq:condexisting} of \cite{Coron2018-rapid,Gagnon2022-fredholm} and show that the system \eqref{eq:linearschro} with $u(t) = K(\Psi(t))$ is rapidly stabilizable \textcolor{black}{in $X$} as soon as there exists $\gamma\in [0,1/2)$ and $r\in(-3/2+\gamma,3/2-\gamma)$ such that
\begin{equation}
        c n^{r} \leq |\langle\mu\textcolor{black}{\Phi_{1}},\textcolor{black}{\varphi_{n}}\rangle_{\textcolor{black}{X}}|\leq C n^{r+\gamma}.
\end{equation}
In particular, a sufficient condition for rapid stabilization is \textcolor{black}{now}
\begin{equation}
\begin{split}
    &\mu\text{ belongs to }H^{3}(0,1)\\
    &|\langle\mu\textcolor{black}{\Phi_{1}},\textcolor{black}{\Phi_{n}}\rangle_{L^{2}(0,1)}|\geq c n^{-7/2+\varepsilon},\;\forall n \in\mathbb{N}^{*}, \text{ for some }\varepsilon>0,
\end{split}
\end{equation}
\textcolor{black}{
\noindent\textbf{Stabilization in weaker spaces} In \cite{Coron2018-rapid,nguyen2024rapid} the space considered is $\textcolor{black}{X}\subset H^{3}(0,1)$ because the system was shown to be exactly (null) controllable  under the assumption \eqref{eq:condexisting} (see \cite{beauchard2010local}) and if $\mu$ is regular the system cannot be exactly controllable in weaker spaces such as $H^{1}_{0}(0,1;\mathbb{C})$ or $H^{2}(0,1;\mathbb{C})\cap H^{1}_{0}(0,1;\mathbb{C})$ (this is a consequence of \eqref{eq:rmk12} and \ref{lem:weiss})}.
However, since with our result we do not need anymore the exact controllability to be able to stabilize the system, we can consider the stabilisation in weaker spaces. In particular, under the condition \eqref{eq:condexisting} of \cite{beauchard2010local,Coron2018-rapid,nguyen2024rapid} the system is rapidly stabilizable in $H^{2}(0,1;\mathbb{C})\cap H^{1}_{0}(0,1;\mathbb{C})$: \textcolor{black}{let us now set $X:= L^{2}(0,1)$, the following corollary holds}
\begin{corollary}
Assume that \eqref{eq:condexisting} holds, then
    for any $\lambda>0$ there exists $\mathcal{K}\in \mathcal{L}(\mathcal{H}^{7/2+\varepsilon}(0,1);\mathbb{C})$ for any $\varepsilon>0$ such that the system \eqref{eq:linearschro} 
    \textcolor{black}{with $u(t)=\mathcal{K}(\Psi(t))$} is exponentially stable in $\mathcal{H}^{r}$ for any $r\in(3/2,9/2)$, \textcolor{black}{where}
        \begin{equation}
        \label{eq:mathcalHsschro}
        \mathcal{H}^{s} :=\{f \in H^{s}(0,1;\mathbb{C})\;|\;\sum\limits_{n\in\mathbb{N}^{*}}|\langle f,\textcolor{black}{\Phi_{n}} \rangle_{L^{2}(0,1)}|^{2}n^{2s}<+\infty\}.
    \end{equation}
     \textcolor{black}{In} particular it is exponentially stable in $H^{2}(0,1;\mathbb{C})\cap H^{1}_{0}(0,1;\mathbb{C})$.
\end{corollary}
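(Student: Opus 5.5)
The plan is to apply Theorem \ref{Theo_main_general} directly in the setting $X=L^{2}(0,1)$, with the orthonormal basis $(\Phi_{n})_{n\in\mathbb{N}^{*}}$, and then translate the resulting spaces.

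\textbf{Setting.} Rewriting \eqref{eq:linearschro} as $\partial_{t}\Psi=\mathcal{A}\Psi+Bu$ gives
\[
\mathcal{A}=i(\Delta+\sigma_{1}Id), \qquad B=-i\mu\Phi_{1}.
\]
On $L^{2}(0,1)$ the operator $\mathcal{A}$ is skew-adjoint with orthonormal eigenvectors $\Phi_{n}$ and simple eigenvalues $\lambda_{n}=-i\pi^{2}(n^{2}-1)$. So $(A_{1})$ holds with $m=1$, and we may take $\widetilde{\varphi_{n}}=\Phi_{n}$. Assumption \eqref{ln+1_cond} holds with $\alpha=2$, since $|\lambda_{n}|+1\simeq n^{2}$ (including $n=1$, where $\lambda_{1}=0$). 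Assumption \eqref{ln-lp_cond} follows from
\[
|\lambda_{n}-\lambda_{p}|=\pi^{2}|n-p|(n+p)\geq \pi^{2}n|n-p|.
\]
With this basis, the spaces $\mathcal{H}^{s}$ of Section \ref{sec:notations} are exactly the spaces defined in \eqref{eq:mathcalHsschro}.

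\textbf{Condition on $B$.} By \eqref{eq:condexisting} and its consequence \eqref{eq:rmk12} (which uses $\mu\in H^{3}$ for the upper bound, see \cite[Remark 1.2]{Coron2018-rapid}), we have
\[
cn^{-3}\leq|\langle B,\Phi_{n}\rangle_{L^{2}}|\leq Cn^{-3}.
\]
This is \eqref{control-cond-general} with $\beta=3$ and $\gamma=0$. The theorem also requires $B\in\mathcal{H}^{\beta-\alpha/2}=\mathcal{H}^{2}$. This holds because $\sum_{n} n^{4}n^{-6}<\infty$.

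\textbf{Applying the theorem.} For any prescribed decay rate, choose $\lambda_{0}$ large enough. Theorem \ref{Theo_main_general} then provides $K\in\mathcal{L}(\mathcal{H}^{\beta+\alpha/2},\mathbb{C})=\mathcal{L}(\mathcal{H}^{4},\mathbb{C})$ and an isomorphism $T$ of $\mathcal{H}^{3+r}$ for every $r\in(1/2-\alpha,\alpha-1/2)=(-3/2,3/2)$. The closed-loop system is exponentially stable with that decay rate in $\mathcal{H}^{s}$ for every $s=3+r\in(3/2,9/2)$. By the remark following Theorem \ref{Theo_main} on the regularity of the feedback (transported through the shift by $\beta$, as in Corollary \ref{cor:expstab}), $K$ extends to $\mathcal{L}(\mathcal{H}^{\beta+1/2+\varepsilon},\mathbb{C})=\mathcal{L}(\mathcal{H}^{7/2+\varepsilon},\mathbb{C})$. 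We set $\mathcal{K}:=K$.

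\textbf{Identifying $H^{2}\cap H^{1}_{0}$.} It remains to check that $\mathcal{H}^{2}=H^{2}(0,1;\mathbb{C})\cap H^{1}_{0}(0,1;\mathbb{C})$ with equivalent norms. This is the standard spectral characterization of the domain of the Dirichlet Laplacian: $f\in D(-\Delta)$ if and only if $\sum_{n}\sigma_{n}^{2}|\langle f,\Phi_{n}\rangle|^{2}<\infty$, and $\sigma_{n}^{2}\simeq n^{4}$. Since $2\in(3/2,9/2)$, the final claim follows.

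The only delicate point is bookkeeping. One must correctly convert the $\widetilde{\varphi_{n}}$-coefficients and exponents between the $H^{3}$-normalized framework of \cite{Coron2018-rapid} and the $L^{2}$-normalized one used here; this is what turns the exponent $0$ there into $\beta=3$ here.
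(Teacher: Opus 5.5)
Your proposal is correct and follows the paper's proof. You apply Theorem \ref{Theo_main_general} in $X=L^{2}(0,1)$ with $\alpha=2$, $\beta=3$, $\gamma=0$, with the two-sided bound coming from \eqref{eq:rmk12}. This gives stability in $\mathcal{H}^{3+r}$ for $r\in(-3/2,3/2)$ and a feedback in $\mathcal{L}(\mathcal{H}^{7/2+\varepsilon},\mathbb{C})$. You then identify $\mathcal{H}^{2}$ with $H^{2}\cap H^{1}_{0}$; the paper cites \cite[Section 1.1]{Coron2018-rapid} for this, which is the same spectral characterization of the Dirichlet Laplacian's domain that you use. Your exponent bookkeeping, which the paper leaves implicit, is accurate.
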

\begin{proof}
It suffices to use Theorem \ref{Theo_main_general} and to note that $\mathcal{H}^{2} = H^{2}(0,1;\mathbb{C})\cap H^{1}_{0}(0,1;\mathbb{C})$ (see \cite[Section 1.1]{Coron2018-rapid} for this last point, noting that $\mathcal{H}^{2}$ in our case corresponds exactly to $H_{(0)}^{2}$ in \cite{Coron2018-rapid}).
\end{proof}
In general we have 
\begin{corollary}
    Assume that there exists $\beta\in \mathbb{R}$ and $\gamma\in[0,1/2)$ such that
    \begin{equation}
        c n^{-\beta}\leq |\langle \mu\textcolor{black}{\Phi_{1}}, \textcolor{black}{\Phi_{n}}\rangle_{L^{2}(0,1)}|\leq C n^{-\beta+\gamma}, \text{ for any }n\in\mathbb{N}^{*},\label{co}
    \end{equation}
    for some positive constants $c,C>0$. 
    Then for any $\lambda>0$ there exists an (explicitly computable) linear feedback $\mathcal{K}\in \mathcal{L}(\mathcal{H}^{\beta+1/2+\varepsilon},\mathbb{C})$ for any $\varepsilon>0$ such that the system \eqref{eq:linearschro} \textcolor{black}{with $u(t)=\mathcal{K}(\Psi(t))$} is exponentially stable in $\mathcal{H}^{\beta+r}$ for any $r\in(-3/2+\gamma,3/2-\gamma)$ with decay rate at least $\lambda$ where $\mathcal{H}^{s}$ is given by \eqref{eq:mathcalHsschro}
\end{corollary}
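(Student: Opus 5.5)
The plan is to reduce this corollary to Theorem \ref{Theo_main_general}, applied in the single-multiplicity case $m=1$ with $X=L^{2}(0,1)$. The first step is to write the system \eqref{eq:linearschro} in the abstract form \eqref{eq:sys0}. We set $\mathcal{A}=i(\Delta+\sigma_{1}Id)$ with Dirichlet boundary conditions, acting on $X=L^{2}(0,1)$, and take the control operator $B=-i\mu\Phi_{1}$, so that $\partial_{t}\Psi=\mathcal{A}\Psi+Bu(t)$. In $L^{2}(0,1)$, $\mathcal{A}$ is skew-adjoint. It has the orthonormal eigenbasis $\varphi_{n}=\Phi_{n}$, so $\widetilde{\varphi_{n}}=\Phi_{n}$, with simple eigenvalues $\lambda_{n}=-i\pi^{2}(n^{2}-1)$. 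Hence $(A_{1})$ holds with $m=1$. Moreover, the spaces $\mathcal{H}^{s}$ built in Section \ref{sec:notations} from this basis coincide with the spaces \eqref{eq:mathcalHsschro}.

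The second step is to check \eqref{ln+1_cond}--\eqref{ln-lp_cond} with $\alpha=2$. The bound $|\lambda_{n}|+1\sim n^{2}$ is immediate. For the gap condition, $|\lambda_{n}-\lambda_{p}|=\pi^{2}|n-p|(n+p)\geq\pi^{2}n|n-p|$, which is \eqref{ln-lp_cond} with $\alpha-1=1$. Then $\gamma\in[0,1/2)=[0,(\alpha-1)/2)$, as Theorem \ref{Theo_main_general} requires.

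The third step is to verify the hypothesis on $B$. Since $|\langle B,\widetilde{\varphi_{n}}\rangle|=|\langle\mu\Phi_{1},\Phi_{n}\rangle_{L^{2}}|$, assumption \eqref{co} is exactly \eqref{control-cond-general} with $\beta_{1}=\beta$. We also need $B\in\mathcal{H}^{\beta-\alpha/2}=\mathcal{H}^{\beta-1}$. This follows from the upper bound: $\sum n^{2(\beta-1)}|B_{n}|^{2}\lesssim\sum n^{-2+2\gamma}<\infty$, because $\gamma<1/2$. Theorem \ref{Theo_main_general} then gives, for any $\lambda_{0}$, a feedback $K\in\mathcal{L}(\mathcal{H}^{\beta+1},\mathbb{C})$ and an isomorphism $T$ of $\mathcal{H}^{\beta+r}$ for every $r\in(-3/2+\gamma,3/2-\gamma)$. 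This $T$ maps the closed loop system to $\partial_{t}v=\mathcal{A}v-\lambda v$. Exponential stability in $\mathcal{H}^{\beta+r}$ with decay rate at least $\lambda$ then follows from the final assertion of the theorem: the target semigroup is $e^{-\lambda t}e^{t\mathcal{A}}$, and $e^{t\mathcal{A}}$ is unitary on each $\mathcal{H}^{s}$ because it is diagonal with purely imaginary eigenvalues. Choosing $\lambda_{0}$ large enough gives any prescribed rate.

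The remaining point, which I expect to be the only nontrivial one, is the improved regularity $\mathcal{K}\in\mathcal{L}(\mathcal{H}^{\beta+1/2+\varepsilon},\mathbb{C})$. This is the shifted analogue of the remark after Theorem \ref{Theo_main}. The feedback constructed in the proof has coefficients $K_{n}$ given by \eqref{K_n-expression}. Lemma \ref{Lemma-K_n}, after undoing the shift by $\beta$ (i.e.\ replacing $B$ by its rescaling with coefficients multiplied by $n^{\beta}$), should give $|K_{n}|\lesssim n^{\beta}$. Then $|K(f)|\le\sum|K_{n}||f_{n}|\lesssim\big(\sum n^{-1-2\varepsilon}\big)^{1/2}\|f\|_{\mathcal{H}^{\beta+1/2+\varepsilon}}$ by Cauchy--Schwarz. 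Finally, the feedback is explicitly computable because $K$ is given coefficientwise by the explicit formula \eqref{K_n-expression} in terms of $\lambda_{n}$ and $\langle\mu\Phi_{1},\Phi_{n}\rangle$.
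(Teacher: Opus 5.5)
Your proposal is correct and follows essentially the same route as the paper, which obtains this corollary by a direct application of Theorem \ref{Theo_main_general} with $X=L^{2}(0,1)$, $m=1$, $\alpha=2$, $B=-i\mu\Phi_{1}$ and eigenbasis $(\Phi_{n})_{n\in\mathbb{N}^{*}}$. Your derivation of the improved regularity $\mathcal{K}\in\mathcal{L}(\mathcal{H}^{\beta+1/2+\varepsilon},\mathbb{C})$ from the boundedness of $(K_{n}b_{n})$ (Lemma \ref{Lemma-K_n}), the lower bound $|b_{n}|\geq c\,n^{-\beta}$, the $\beta$-rescaling of Appendix \ref{app:beta} and Cauchy--Schwarz is exactly how the paper justifies the corresponding claim in Proposition \ref{prop:main}.
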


\textcolor{black}{
\begin{rmk}[Comparison with \cite{Gagnon2022-fredholm}]
    Since the operator involved in the example is skew-adjoint, the results in \cite{Gagnon2022-fredholm} can be also invoked and the obtained condition would be \eqref{co} with $\gamma=0.$ This makes our condition more relaxed than the one proposed in \cite{Gagnon2022-fredholm}. 
\end{rmk}
}

\begin{rmk}
    Note that in \cite{nguyen2024rapid} the author uses a very different method and shows an impressive quantitative estimate of $K$ with respect to $\lambda$, which allows a finite-time stabilization of this system. Since the $F$-equivalence constructs relatively explicitly the control $K$ and the isomorphism $T$, it would be interesting to see if it is possible to obtain in our case a quantitative estimate on $K$ and $T$ and, in turn, a finite time stabilization.
\end{rmk}

\subsection{Application to parabolic systems}
\label{sec:diffusion}
\subsubsection{Heat equation}\label{sec:diff1}
Let us first consider the heat equation on a torus $\mathbb{T} = \mathbb{R}/2\pi\mathbb{Z}$ with two scalar controls
\begin{equation}
\label{eq:heat}
    \partial_{t} u = \Delta u + \phi w(t),
\end{equation}
where $\phi = (\phi_{1},\phi_{2})^{T}$ such that $\phi_1$ is even and $\phi_{2}$ is odd and $w(t)\in\mathbb{R}^{2}$ (the system is not controllable in $H^{m}(\mathbb{T})$ with a single control, see \cite{Gagnon2022-fredholm-laplacien}) . The space of consideration is $H^{m}(\mathbb{T})$ and the eigenvectors and associated eigenvalues of the operator $\mathcal{A} = \Delta$ are
\begin{equation}\label{phi-n-i-heat}
    \lambda_{n} = -n^{2},\;\varphi_{n}^{1}(x) = n^{-m}c_{1,n}\sin(n x),\;\varphi_{n+1}^{2}(x) = n^{-m}c_{2,n}\cos(n x), \forall n\in\mathbb{N}^{*},
\end{equation}
and the constant function $\varphi_{0}^{2}(x) = (2\pi)^{-1}$ associated to the eigenvalue $0$. Note that all eigenvalues have multiplicity 2 except $0$. In \cite{Gagnon2022-fredholm-laplacien} it is shown that for arbitrarily large $\lambda>0$ there exists a feedback operator $K\in \mathcal{L}(H^{m+1/2+\varepsilon};\mathbb{R}^{2})$ for any $\varepsilon>0$ such that this system with $w(t)=K(u(t))$ can be invertibly mapped to the exponentially stable system 
\begin{equation}
\label{eq:heatstab}
    \partial_{t}u = \Delta u-\lambda u,
\end{equation}
in $\mathbb{H}^{m+r}(\mathbb{T})$ for any $r\in(-1/2,1/2)$ provided that the following sufficient condition holds:
\begin{equation}
    c \leq |\langle \phi_{i},\varphi_{n}^{i}\rangle_{H^{m}(\mathbb{T})}|\leq C,\;\forall (i,n)\in \{1,2\}\times\mathbb{N}^{*}
\end{equation}
We recover the same with our Theorem \ref{Theo_main_general}, but with a weaker condition:
\begin{corollary}
\label{cor:heat}
    Let $\gamma\in[0,1/2)$, if
    \begin{equation}
    c \leq |\langle \phi_{i},\varphi_{n}^{i}\rangle_{H^{m}(\mathbb{T})}|\leq C n^{\gamma},\;\forall (i,n)\in \{1,2\}\times\mathbb{N}^{*},
    \end{equation}
    then for arbitrarily large $\lambda>0$ there exists a feedback operator $K\in \mathcal{L}(H^{m+1/2+\varepsilon};\mathbb{R}^{2})$ such that the system \textcolor{black}{\eqref{eq:heat}} with $w(t)=K(u(t))$ can be invertibly mapped to the exponentially stable system \eqref{eq:heatstab} in $H^{m+r}(\mathbb{T})$, and in particular it is exponentially stable \textcolor{black}{in $H^{m+r}(\mathbb{T})$}, for any $r\in(-3/2+\gamma,3/2-\gamma)$.
\end{corollary}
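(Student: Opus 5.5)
The corollary is a direct application of Theorem \ref{Theo_main_general} with $m=2$, $\alpha_1=\alpha_2=2$, $\beta_1=\beta_2=0$ and $\gamma_1=\gamma_2=\gamma$, working in the state space $X=H^{m}(\mathbb{T})$. I will first verify the structural assumptions. Since $\Delta$ on $\mathbb{T}$ is self-adjoint with eigenfunctions $\sin(nx)$, $\cos(nx)$ and $1$, and the normalisations $c_{1,n},c_{2,n}$ in \eqref{phi-n-i-heat} are chosen so that $n^{-m}c_{i,n}\sin(nx)$ and $n^{-m}c_{i,n}\cos(nx)$ have unit $H^m(\mathbb{T})$ norm, the family \eqref{phi-n-i-heat} is an orthonormal (hence Riesz) basis of $H^m(\mathbb{T})$. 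This gives $(A_1)$ with multiplicity at most $2$. The natural splitting is $X_1=\overline{\mathrm{Span}}\{\sin(n\cdot)\}$, which contains the odd functions, and $X_2=\overline{\mathrm{Span}}\{1,\cos(n\cdot)\}$, which contains the even functions. On each $X_i$ the eigenvalues are simple, and $e^{t\Delta}$ preserves $X_i$ by parity.

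Next I check \eqref{ln+1_cond}--\eqref{ln-lp_cond}. On $X_1$ the eigenvalues are $\lambda^1_n=-n^2$. On $X_2$, with the index shift in \eqref{phi-n-i-heat}, they are $\lambda^2_{n}=-(n-1)^2$. In both cases $|\lambda_n^i|+1\simeq n^2$, so $\alpha_i=2$. For the gap condition I use $|n^2-p^2|=(n+p)|n-p|\geq n|n-p|$, and the shifted version satisfies the same bound up to a constant. This gives \eqref{ln-lp_cond} with $\alpha_i-1=1$. The only point needing care is the zero eigenvalue in $X_2$, but after the reindexing it sits at $n=1$ and causes no trouble in either inequality.

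For the control condition, orthonormality gives $\widetilde{\varphi_n}^i=\varphi_n^i$. Since the controls act through $\phi w$, take $B_i=\phi_i$. Then $\langle B_i,\widetilde{\varphi_n}^i\rangle=\langle\phi_i,\varphi^i_n\rangle_{H^m(\mathbb{T})}$, and the hypothesis is exactly \eqref{control-cond-general} with $\beta_i=0$ and $\gamma_i=\gamma\in[0,1/2)=[0,(\alpha_i-1)/2)$. The coefficients of $B_i$ grow at most like $n^{\gamma}$, and $\gamma<1/2<1$, so $B_i\in\mathcal{H}_i^{-\alpha_i/2}=\mathcal{H}_i^{-1}$ as required. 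The parity of $\phi_1$ and $\phi_2$ is what places each $B_i$ in the correct $X_i$.

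Theorem \ref{Theo_main_general} then gives, for $\lambda$ arbitrarily large, a feedback $K$ and an isomorphism $T$ of $\mathcal{H}^{\vec r}$ mapping the closed loop to \eqref{eq:heatstab}, for every $r\in(1/2-2+\gamma,\,2-1/2-\gamma)=(-3/2+\gamma,3/2-\gamma)$. The regularity $K\in\mathcal{L}(\mathcal{H}^{1/2+\varepsilon},\mathbb{R}^2)$ comes from the remark following Theorem \ref{Theo_main}.

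Two identification steps remain, and I expect them to be the only real obstacle. First, $\mathcal{H}^{s}$ must be identified with $H^{m+s}(\mathbb{T})$: the coefficients in the $\varphi_n^i$ basis carry the extra weight $n^{m}$, so $\sum n^{2s}|f_n|^2<\infty$ is the Fourier characterisation of $H^{m+s}(\mathbb{T})$. Second, I must check that the construction yields real-valued $K$. Because everything here is real, I would verify this by noting that the Fourier coefficients and the $\lambda_n$ are real, so the construction produces real $K_n$. Exponential stability in $H^{m+r}(\mathbb{T})$ then follows from the stability of \eqref{eq:heatstab} and the isomorphism property of $T$.
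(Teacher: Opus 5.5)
Your proposal is correct and follows the paper's route: the paper gives no separate proof and simply invokes Theorem~\ref{Theo_main_general} with $\alpha_i=2$, $\beta_i=0$, $\gamma_i=\gamma$. Your checks of $(A_1)$, \eqref{ln+1_cond}--\eqref{ln-lp_cond}, $B_i\in\mathcal{H}_i^{-1}$ (which needs precisely $\gamma<1/2$), $\mathcal{H}^s=H^{m+s}(\mathbb{T})$, and real-valuedness of $K$ fill in what the paper leaves implicit; real-valuedness is best justified by the uniqueness of $(K_n)$ in Lemma~\ref{lem:regk}, which is preserved under complex conjugation since $b_n$, $\lambda_n$, $\lambda$ and the $\varphi_n$ are real.

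One caveat is inherited from the paper. As written, $\phi_1$ is even while the $\varphi_n^1$ are sines, so the two parities (or the two eigenfunction families) must be swapped both for your parity argument to place $B_i$ in $X_i$ and for the hypothesis $|\langle\phi_1,\varphi_n^1\rangle|\geq c$ to be satisfiable at all. Your reindexing of the cosine family is also needed: it makes the hypothesis cover the constant mode, whose nondegeneracy is essential, since otherwise the mean of $u$ is conserved.
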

\subsubsection{A nonlinear system: \textcolor{black}{Burgers equation}}\label{sec-nonlinear}
Our result can in fact generalize to a nonlinear (semilinear) system such as the \textcolor{black}{Burgers} equation. Let us consider
\begin{equation}
\label{eq:burgers}
    \partial_{t} u = \Delta u - u\partial_{x}u+\phi w(t),
\end{equation}
where $\phi=(\phi_{1},\phi_{2})^{T}$ is taken as in the previous example. The space of consideration is $L^{2}(\mathbb{T})$ and we consider the functions $(\varphi_n^i)$ defined in \eqref{phi-n-i-heat} and get the following
\begin{corollary}
\label{cor:burgers}
Let $\gamma\in [0,1/2)$, and $r\in (-1/4,1/2-\gamma).$ If  
    \begin{equation}
    \label{eq:condburgers}
    c n^r\leq |\langle \phi_{i},\varphi_{n}^{i}\rangle_{L^{2}(\mathbb{T})}|\leq C n^{r+\gamma},\;\forall (i,n)\in \{1,2\}\times\mathbb{N}^{*},
    \end{equation}
    then for any arbitrary large $\lambda>0$ there exists a feedback operator $K\in \mathcal{L}(H^{3/4};\mathbb{R}^{2})$ (and in fact $K\in\mathcal{L}(H^{1/2-r+\varepsilon};\mathbb{R}^{2})$ for any $\varepsilon>0$) such that the nonlinear system \eqref{eq:burgers} with $w(t)=K(u(t))$ is locally exponentially stable in $L^{2}(\mathbb{T})$.
\end{corollary}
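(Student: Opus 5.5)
The plan is to deduce Corollary \ref{cor:burgers} from the linear result, Theorem \ref{Theo_main_general}, applied to the heat operator $\mathcal{A}=\Delta$ on $\mathbb{T}$. The Burgers nonlinearity $-u\partial_x u$ is then handled as a small perturbation by a standard fixed-point / Lyapunov argument in the transformed variables.

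\textbf{Linear framework.} In $X=L^2(\mathbb{T})$ the eigenvalues are $\lambda_n=-n^2$, so $\alpha=2$ and \eqref{ln+1_cond}--\eqref{ln-lp_cond} hold. The sine and cosine families give the decomposition $X=X_1+X_2$ with simple spectrum. The extra constant mode $\varphi_0^2$ is harmless: it is one finite-dimensional direction, treated as in the paper's remark. Since the basis is orthogonal, $\widetilde{\varphi_n}^i$ is proportional to $\varphi_n^i$. Hypothesis \eqref{eq:condburgers} is therefore exactly \eqref{control-cond-general} with $\beta_i=-r$ and $\gamma_i=\gamma\in[0,1/2)=[0,(\alpha-1)/2)$.

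Theorem \ref{Theo_main_general} then provides, for $\lambda$ large, a feedback $K$ and an isomorphism $T$ of $\mathcal{H}^{-r+r'}$ for every $r'\in(-3/2+\gamma,3/2-\gamma)$, mapping the closed loop to $\partial_t v=\Delta v-\lambda v$. Choosing $r'=r$ gives that $T$ is an isomorphism of $\mathcal{H}^0=L^2$. Choosing $r'=r+1$ (admissible because $r+1<3/2-\gamma$ is equivalent to $r<1/2-\gamma$) gives that $T$ is an isomorphism of $H^1$. By interpolation, $T$ is also an isomorphism of the intermediate space $H^{1/2}$. The remark on the regularity of $K$ gives $K\in\mathcal{L}(H^{1/2-r+\varepsilon},\mathbb{R}^2)$. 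Because $r>-1/4$, we can pick $\varepsilon$ with $1/2-r+\varepsilon\le 3/4$, so $K\in\mathcal{L}(H^{3/4},\mathbb{R}^2)$.

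\textbf{Nonlinear closed loop.} Write $u=T^{-1}v$ with $T^{-1}$ the inverse transformation, so that
\[
\partial_t v=\Delta v-\lambda v - T\big((T^{-1}v)\,\partial_x(T^{-1}v)\big).
\]
We use the parabolic energy estimate in $L^2$ with dissipation in $H^1$: take the inner product with $v$. The linear part gives $\tfrac12\tfrac{d}{dt}\|v\|^2\le -\|\partial_x v\|^2-\lambda\|v\|^2$, up to equivalence of norms, which comes from Lemma \ref{lem:equivnorm}. For the nonlinear term, $T$ is bounded on $H^{-1/2}$ by the range of $r'$, taking $r'=r-1/2$, which needs $r-1/2>-3/2+\gamma$; this holds since $r>-1/4$ and $\gamma<1/2$. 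Hence
\[
\big|\langle T(w\partial_x w),v\rangle\big|\lesssim \|w\partial_x w\|_{H^{-1/2}}\|v\|_{H^{1/2}}\lesssim \|w^2\|_{H^{1/2}}\|v\|_{H^{1/2}},
\qquad w=T^{-1}v.
\]
Using the one-dimensional product estimate $\|w^2\|_{H^{1/2}}\lesssim\|w\|_{L^\infty}\|w\|_{H^{1/2}}+\dots$, or more simply $\|w^2\|_{H^{1/2}}\lesssim \|w\|_{H^{1/2}}\|w\|_{H^{3/4}}$, and interpolating $H^{3/4}$ and $H^{1/2}$ between $L^2$ and $H^1$, this term is bounded by $C\|v\|_{L^2}\|v\|_{H^1}^2$. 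It is absorbed by the dissipation as long as $\|v\|_{L^2}$ stays small. A continuity (bootstrap) argument then gives $\|v(t)\|_{L^2}\le e^{-\lambda' t}\|v_0\|$ for small data, and local exponential stability of $u$ in $L^2$ follows because $T$ is an isomorphism of $L^2$. Local well-posedness of the closed loop is obtained by a fixed point in $C([0,\tau];L^2)\cap L^2(0,\tau;H^1)$ for the $v$-equation, since the nonlinear term maps this space into $L^2(0,\tau;H^{-1})$ by the same estimates.

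\textbf{Main obstacle.} The delicate step is the nonlinear estimate. It requires $T$ to be an isomorphism simultaneously on $H^{-1/2}$ (or $H^{-1}$), $L^2$ and $H^1$, which is exactly what forces the window $r\in(-1/4,1/2-\gamma)$. If the $H^{-1/2}$ bound fails near the endpoint, the first fallback is to estimate the nonlinear term in $H^{-1+\delta}$ and redo the interpolation bookkeeping.
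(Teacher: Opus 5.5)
Your argument works and follows the paper's strategy. You apply the linear result with $\beta_i=-r$, $\gamma_i=\gamma$: then $T$ is an isomorphism of $L^2$, and of $H^1$ because $r<1/2-\gamma$, while $r>-1/4$ gives $K\in\mathcal{L}(H^{3/4};\mathbb{R}^2)$. You then run the $L^2$ energy estimate on the transformed variable with $H^1$ dissipation, absorb a cubic term $C\|v\|_{L^2}\|v\|_{H^1}^2$ for small data, and return with $T^{-1}\in\mathcal{L}(L^2)$.

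Two technical choices differ from the paper.

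For the nonlinear term, the paper uses the plain $L^2$ pairing, $|\langle z,T(w\partial_x w)\rangle|\le C\|z\|_{L^2}\|w\partial_x w\|_{L^2}\lesssim\|z\|_{L^2}\|w\|_{H^1}^2$ with $w=T^{-1}z$. Only $T\in\mathcal{L}(L^2)$ and $T^{-1}\in\mathcal{L}(H^1)$ are needed. Your $H^{-1/2}$--$H^{1/2}$ duality, fractional product estimate and interpolation reach the same bound with more work.

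For well-posedness, the paper applies \cite[Lemma 5.5]{Gagnon2022-fredholm-laplacien} directly to the original closed loop. This is exactly where $\phi\in H^{-1}$ and $K\in\mathcal{L}(H^{3/4})$, hence $r>-1/4$, are used. The paper then only transforms forward, for which the operator identity in $\mathcal{L}(H^1,H^{-1})$ and boundedness of $T$ on $H^{-1}$ suffice.

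You instead solve the transformed equation, whose linear part is just $\Delta-\lambda$. This is self-contained and never has to handle $\phi K(u)$. However, you must still say in which sense $u=T^{-1}v$ solves the closed loop.
\begin{itemize}
\item For mild solutions built on the closed-loop semigroup $T^{-1}e^{t(\Delta-\lambda)}T$ of Proposition \ref{th:wellposed}, this is immediate: $u\partial_x u\in L^{4/3}(0,\tau;L^2)$ and $T$ is an isomorphism of $L^2$.
\item For $H^{-1}$-valued weak solutions, you would need injectivity of $T$ on $H^{-1}$. Theorem \ref{Theo_main_general} only provides this for $r>\gamma-1/2$, which does not cover the whole window when $\gamma>1/4$.
\end{itemize}

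Finally, your closing paragraph misattributes the window. Boundedness of $T$ on $H^{-1/2}$ only requires $r>\gamma-1$. The lower endpoint $-1/4$ comes solely from the requirement $K\in\mathcal{L}(H^{3/4})$, as you correctly computed earlier.
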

This can be shown leveraging the $F$-equivalence on the linear system and using the same on the nonlinear system exactly as in \cite{Gagnon2022-fredholm-laplacien}. We give a proof in Appendix \ref{sec:appendix-burgers}. To our knowledge this stabilization result is new and is an improvement of the results of \cite{Gagnon2022-fredholm-laplacien} (which correspond to the particular case of Corollary \ref{cor:burgers} when $r=\gamma=0$). 

\subsubsection{General diffusion equation}
Our Theorem \ref{Theo_main_general} also allows to generalize this to the classical diffusion equation:
\begin{equation}
\label{eq:sysdiff1}
    \partial_{t}u=\partial_{x}(a\partial_{x}u) +b u+\phi K(u),\;\;\text{ on }\ [0,+\infty)\times [0,L],
\end{equation}
\begin{equation}
\begin{split}
\label{eq:bcsturm}
    c_{1}u(t,0) + c_{2}\partial_{x}u(t,0)=0\\
    c_{3}u(t,L) + c_{4}\partial_{x}u(t,L)=0,
    \end{split}
\end{equation}
where 
$a\in C^{2}([0,L];\mathbb{R}_{+}^*)$, $b\in L^{2}(0,L)$,
and
 $c_{1}^{2}+c_{2}^{2}>0$, $c_{3}^{3}+c_{4}^{2}>0$.   
This framework could not be considered with the $F$-equivalence until now, while stabilization results \textcolor{black}{were} known with other approaches for some time: this system was for instance considered with Dirichlet boundary condition and $\phi$ a bounded operator in the reference space $\textcolor{black}{X}$ (here $\textcolor{black}{X}=L^{2}(0,L)$) in \cite{banks1984linear} and the authors obtained a stabilization with an optimal feedback using a Riccati approach. 
In particular, the following holds 
\begin{corollary} \label{corollary-diffusion}
    If there exist $c_1, c_2>0$ and $\gamma\in[0,1/2)$ such that $\phi$ satisfies
 \begin{equation}
 \label{eq:cond-cor}
c_1\leq \left|\int_0^L\phi(x)u_n(x)dx\right|\leq c_2 n^{\gamma}, \quad \forall n\in \mathbb{N}^*,
 \end{equation} where $(u_n)_{n\in \mathbb{N}^*}$ is the sequence of eigenvectors of the operator $\mathcal{A}:=\partial_x(a\partial_x)+b,$
then for any $\lambda\in (0,+\infty),$ there exists a bounded linear feedback $K\in \mathcal{L}(H^{1/2+\varepsilon}(0,L),\mathbb{C})$ for any $\varepsilon>0$, such that the diffusion \textcolor{black}{equation} \eqref{eq:sysdiff1}--\eqref{eq:bcsturm} is exponentially stable in $H^r(0,L)$ with decay rate $\lambda$, \textcolor{black}{for any $r\in (-3/2+\gamma,3/2-\gamma)$}.
\end{corollary}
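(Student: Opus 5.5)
Corollary \ref{corollary-diffusion} will follow from Theorem \ref{Theo_main_general} applied with $m=1$, $X=L^{2}(0,L)$, $\alpha=2$, $\beta=0$ and the given $\gamma\in[0,1/2)$. The admissible range $r\in(1/2-\alpha+\gamma,\alpha-1/2-\gamma)=(-3/2+\gamma,3/2-\gamma)$ is then exactly the range in the statement. So the work reduces to checking the hypotheses $(A_{1})$, \eqref{ln+1_cond}, \eqref{ln-lp_cond}, \eqref{control-cond-general}, and to identifying the spaces $\mathcal{H}^{r}$ with $H^{r}(0,L)$.

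\textbf{Step 1: spectral structure.} I will first show that $\mathcal{A}=\partial_{x}(a\partial_{x}\cdot)+b$ with the Robin/Dirichlet/Neumann conditions \eqref{eq:bcsturm} is a self-adjoint Sturm--Liouville operator on $L^{2}(0,L)$ with compact resolvent and bounded-below quadratic form. I will take $b$ real, as implicit in the setting; otherwise $b$ is a bounded perturbation and one argues by Riesz-basis perturbation. It follows that $\mathcal{A}$ generates a $C^{0}$ semigroup and has an orthonormal basis of real eigenfunctions $(u_{n})$. The eigenvalues are simple, since a second-order ODE with a separated boundary condition has at most one independent solution, so $m=1$ and $\widetilde{\varphi_{n}}=\varphi_{n}=u_{n}$.

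\textbf{Step 2: eigenvalue asymptotics.} This is the main obstacle, since $b$ is only $L^{2}$ and the boundary conditions are general. I will use the Liouville transformation $y=\int_{0}^{x}a^{-1/2}$, with $\ell=\int_{0}^{L}a^{-1/2}$, which uses $a\in C^{2}$. It reduces the operator to $-\partial_{y}^{2}+q$ with $q\in L^{1}$ (indeed $L^{2}$) and transformed separated boundary conditions. The classical asymptotics (Marchenko, Pöschel--Trubowitz, or Prüfer angle arguments valid for integrable potentials) give
\begin{equation*}
-\lambda_{n}=\frac{\pi^{2}(n+\kappa)^{2}}{\ell^{2}}+O(1),
\end{equation*}
with $\kappa\in\{-1,-1/2,0\}$ depending on the boundary type. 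This yields \eqref{ln+1_cond} with $\alpha=2$. It also gives $|\lambda_{n}-\lambda_{p}|\gtrsim|n^{2}-p^{2}|-C\gtrsim n|n-p|$ for $n$ large.

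The finitely many small indices in \eqref{ln-lp_cond} are handled by simplicity of the eigenvalues: they are distinct, so the minimum gap among any finite set is positive. Concretely, for $n,p\le N$ the left side is bounded below by a positive constant, and the right side is at most $CN^{2}$, so $C$ can be shrunk. For $n>N\ge p$, the gap is at least $cn^{2}\ge c\,n|n-p|$.

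\textbf{Step 3: control condition and spaces.} Since $\beta=0$ and $\widetilde{\varphi_{n}}=u_{n}$, condition \eqref{eq:cond-cor} is literally \eqref{control-cond-general}. It also gives $\phi\in\mathcal{H}^{-1}$, because $\sum n^{-2}n^{2\gamma}<\infty$ for $\gamma<1/2$, matching $B\in\mathcal{H}^{\beta-\alpha/2}$. Theorem \ref{Theo_main_general} then provides $\lambda>\lambda_{0}$, a feedback $K$ (extendable to $\mathcal{H}^{1/2+\varepsilon}$ by the regularity remark), and exponential stability with the prescribed decay rate in $\mathcal{H}^{r}$ for all $r\in(-3/2+\gamma,3/2-\gamma)$.

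Finally, $\mathcal{H}^{s}$ is defined through $|\langle f,u_{n}\rangle|n^{s}$, so $\mathcal{H}^{2s}=D((-\mathcal{A}+c)^{s})$ by \eqref{ln+1_cond}. Interpolation then identifies $\mathcal{H}^{r}$ with $H^{r}(0,L)$ for $|r|<1/2$ (duality for negative $r$). For larger $r$ the identification holds up to the boundary compatibility conditions, which is how $H^{r}(0,L)$ should be read in the corollary. I will state this identification with a reference rather than reprove it.
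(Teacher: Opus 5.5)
Your proposal is correct and follows essentially the same route as the paper: you verify $(A_{1})$, \eqref{ln+1_cond} and \eqref{ln-lp_cond} with $\alpha=2$ through the Liouville transformation and classical Sturm--Liouville eigenvalue asymptotics, and then invoke Theorem \ref{Theo_main_general} with $m=1$ and $\beta=0$. Your version is somewhat more careful than the paper's, namely the shift $\kappa$ in the asymptotics, using simplicity of the eigenvalues to handle small indices in \eqref{ln-lp_cond}, the check that $\phi\in\mathcal{H}^{-1}$, and the caveat on identifying $\mathcal{H}^{r}$ with $H^{r}(0,L)$; one small correction is that in your aside on complex $b$, multiplication by $b\in L^{2}$ is only relatively bounded, not bounded, on $L^{2}$.
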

\begin{rmk}[Other boundary conditions]
Depending on the boundary conditions that we consider for the system \eqref{eq:sysdiff1}, the eigenvalues of the operator $\mathcal{A}$ might have simple or double multiplicity (see \cite{Kong1996-eigenvalues,Weidmann2006-spectral,Moller1999-unboundedness, Bailey1996-regular}). Then, by adapting the space $\textcolor{black}{X}$ (see \eqref{eq:sumHi}) depending on the multiplicity of eigenvalues, we can apply our result to get stabilization condition of the system \eqref{eq:sysdiff1} even if we consider different boundary conditions \eqref{eq:bcsturm} (such as Dirichlet boundary conditions for instance).
\end{rmk}

\subsection{Application to a Gribov operator in the \textcolor{black}{Bargmann} space}
The Hilbert space $\textcolor{black}{X}$ considered in this section is the following Bargmann space
\begin{equation*}
    \textcolor{black}{X}:=\left\{f:\mathbb{C}\rightarrow \mathbb{C}\ \text{holomorphic}\ \;|\;\int_{\mathbb{C}}e^{-|z|^2}|f(z)|^2dz<\infty \ \text{and}\ f(0)=0\right\},
\end{equation*} endowed with the scalar product:
\begin{align*}
    \langle f,g\rangle:=\int_{\mathbb{C}}e^{-|z|^2}f(z)\bar g(z)dz,\quad \forall f,g\in \textcolor{black}{X},
\end{align*} 
where $dz$ denotes the $2-$dimensional Lebesgue measure on $\mathbb{C}.$
Let us consider the operators $U$ and $V$ defined as:
\begin{align*}
    \begin{cases}
        U: D(U)\subset \textcolor{black}{X}\rightarrow \textcolor{black}{X}\\
        f\mapsto Uf=\frac{df}{dz}\\
        D(U)=\{f\in \textcolor{black}{X} \;|\; Uf\in \textcolor{black}{X}\},
    \end{cases}\ \text{and}\ \begin{cases}
        V: D(V)\subset \textcolor{black}{X}\rightarrow \textcolor{black}{X}\\
        f\mapsto Vf=zf\\
        D(V)=\{f\in \textcolor{black}{X} \;|\; Vf\in \textcolor{black}{X}\}.
    \end{cases}
\end{align*}
The operator $U$ is called the annihilation operator and $V$ the creation operator. The nonself-adjoint Gribov operator (\cite{Aimar1999-On-an-unconditional}, \cite{Intissar1997-analyse}) is constructed as a polynomial of the operators $U$ and $V$ defined in the Bargmann space $\textcolor{black}{X}$ as follows:
\begin{align}
\mathcal{A}:=(VU)^3+\varepsilon V(U+V)U+\varepsilon^2(VU)^{3d_2}+\cdots+\varepsilon^k(VU)^{3d_k}+\cdots, \label{Gribov-operator}
\end{align}  where $\varepsilon\in \mathbb{C}$ and $(d_k)_{k\in\mathbb{N}}$ is a strictly decreasing sequence with strictly positive terms such that $d_0=2d_1=1.$ This operator appears in the Reggeon field theory which was introduced by Gribov in \cite{Gribov1968-A-reggeon} to study strong interactions between protons and neutrons \textcolor{black}{among} other less stable \textcolor{black}{particles}. Notice that this operator is neither self-adjoint nor skew-adjoint. So the stability analysis results stated in \cite{Gagnon2022-fredholm}, \cite{Gagnon2022-fredholm-laplacien} \textcolor{black}{cannot} cover this class of operator. In view of \cite{Feki2014-riesz}, this operator can be studied on the domain $\mathcal{D}$ defined as
\begin{align*}
    \mathcal{D}:=D((VU)^{3d_2})\cap D(V(U+V)U).
\end{align*} Consider now the following evolution equation associated to it:
\begin{align}
    \partial_t u=-\mathcal{A}u+\phi w(t) \label{eq:Grb-syst}
\end{align} where $\phi$ is a given function and $w$ is the control of the system. Based on \cite[section 4.2]{Feki2014-riesz}, for small value of $\varepsilon,$ there exists a sequence $(\varphi_n^i)_{n,i}$ such that the system of eigenvectors of $\mathcal{A}$ forms a Riesz basis in $\textcolor{black}{X}$ which can be decomposed in entire \textcolor{black}{series} as follows:
\begin{align*}
    \varphi_n(z)&=\frac{z^n}{n!}+p\frac{z^n}{\sqrt{n!}}+\sum_{i=1}^{\infty}\varepsilon^i \varphi_n^i(z),
\end{align*} for any $p\in (0,\frac{1}{\sqrt{n!}}).$ So there exists a Riesz basis $(\widetilde{\varphi_n})_{n\in \mathbb{N}^*}$ of $\textcolor{black}{X}$ which is a bi-orthogonal family of $(\varphi_n)_{n\in \mathbb{N}^*}.$ Applying our Corollary \ref{cor:expstab}, the following holds:
\begin{corollary}\label{cor-Gribov}
    Let $\gamma\in [0,1),$ and $r\in (-5/2+\gamma,5/2-\gamma).$ If $\phi$ is such that there exists $c_1,c_2>0$ satisfying
    \begin{equation*}
        c_1 n^{r}\leq \left|\int_{\mathbb{C}}e^{-|z|^2}\phi(z)\widetilde{\varphi_n}(z)dz\right|\leq c_2 n^{r+\gamma},
    \end{equation*} then if $\varepsilon$ is sufficiently small, for any $\lambda\in (0,+\infty),$ there exists a feedback law $w(t)=K(u(t, \cdot))$ with $K\in \mathcal{L}(\textcolor{black}{\mathcal{H}}^{\frac{3}{2}-r},\mathbb{C})$
    such that 
    the system \eqref{eq:Grb-syst} is exponentially stable in the \textcolor{black}{Bargmann} space $\textcolor{black}{X}$ with decay rate $\lambda.$
\end{corollary}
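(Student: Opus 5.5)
The plan is to deduce Corollary \ref{cor-Gribov} from Corollary \ref{cor:expstab}, applied to the operator $-\mathcal{A}$ on the Bargmann space $X$ with $m=1$. The work splits into three parts: checking $(A_1)$, checking the spectral conditions \eqref{ln+1_cond}--\eqref{ln-lp_cond} with $\alpha=3$, and translating the hypothesis on $\phi$ into \eqref{eq:condbsup}.

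\textbf{Step 1: assumption $(A_1)$.} By \cite[Section 4.2]{Feki2014-riesz}, for $\varepsilon$ small enough the eigenvectors $(\varphi_n)$ of $\mathcal{A}$ form a Riesz basis of $X$. The biorthogonal family $(\widetilde{\varphi_n})$ is then exactly the family appearing in the statement. Because the eigenvalues are simple (see Step 2), we may take $m=1$. We must also know that $-\mathcal{A}$ generates a $C^0$ semigroup. Since $-\mathcal{A}$ is diagonal in a Riesz basis, this follows once the real parts of its eigenvalues are bounded above.

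\textbf{Step 2: spectral conditions with $\alpha=3$.} For $\varepsilon=0$, the operator $VU$ acts on $z^n$ as multiplication by $n$. Hence $(VU)^3$ has eigenvalues $n^3$ with eigenvectors $z^n$, $n\geq 1$ (recall $f(0)=0$ in $X$).

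The perturbation $\varepsilon V(U+V)U + \sum_{k\geq2}\varepsilon^k (VU)^{3d_k}$ is of lower order relative to $(VU)^3$. Indeed $d_k\leq d_2<1$, and $V(U+V)U$ is relatively bounded with respect to $(VU)^{3/2}$ or so. Using the perturbation results of \cite{Feki2014-riesz} (or Kato-type estimates), the eigenvalues should satisfy
\begin{equation*}
\lambda_n = n^3 + O\big(n^{\max(2, 3d_2)}\big).
\end{equation*}
From this, \eqref{ln+1_cond} follows immediately. For \eqref{ln-lp_cond} I would write
\begin{equation*}
|\lambda_n-\lambda_p| \geq |n^3-p^3| - |\text{error}|,
\end{equation*}
and compare $|n^3-p^3|\gtrsim n^2|n-p|$ with the error terms. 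I expect this to be the main obstacle: the error must be controlled by a quantity like $o(n^2)|n-p|$, or at least the differences of errors must be small compared with $n^2|n-p|$. When $|n-p|$ is large this is clear. For nearby indices one needs a finer expansion of $\lambda_n$, with derivative-like control of the correction in $n$. I would try to obtain this from the explicit perturbation series in $\varepsilon$ of \cite{Feki2014-riesz}, taking $\varepsilon$ small so that the spacing $\sim 3n^2$ dominates. A finite number of low modes can be handled separately: there, distinctness of the eigenvalues gives the bound with a smaller constant.

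\textbf{Step 3: apply Corollary \ref{cor:expstab}.} With $\alpha=3$, the admissible ranges become $\gamma\in[0,1)$ and $r\in(1/2-3+\gamma,\,3-1/2-\gamma)=(-5/2+\gamma,5/2-\gamma)$, matching the statement. The hypothesis on $\phi$ is precisely \eqref{eq:condbsup}, since $\langle\phi,\widetilde{\varphi_n}\rangle$ is the given Bargmann integral; this also places $\phi\in\mathcal{H}^{-\alpha}$. Corollary \ref{cor:expstab} then yields $K\in\mathcal{L}(\mathcal{H}^{\alpha/2-r},\mathbb{C})=\mathcal{L}(\mathcal{H}^{3/2-r},\mathbb{C})$ such that the closed loop is exponentially stable in $X$ with any prescribed decay rate $\lambda$.
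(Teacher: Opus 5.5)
Your reduction to Corollary \ref{cor:expstab} with $m=1$ and $\alpha=3$ is the paper's route. Steps 1 and 3 are fine: the ranges $\gamma\in[0,1)$, $r\in(-5/2+\gamma,5/2-\gamma)$ and the space $\mathcal{L}(\mathcal{H}^{3/2-r},\mathbb{C})$ for $K$ all come out correctly.

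\textbf{The gap is Step 2.} You never establish \eqref{ln-lp_cond}. The simplicity of the eigenvalues, which you invoke in Step 1 to take $m=1$, is also deferred to Step 2 and likewise not proved. The asymptotic you propose, $\lambda_n=n^3+O(n^{\max(2,3d_2)})$, is an $O(n^2)$ correction (since $3d_2<3/2$). That is the same order as the spacing $|n^3-(n\pm1)^3|\sim 3n^2$, so for neighbouring indices subtracting the errors gives nothing. You leave this as something you ``would try'' to repair through a finer expansion. Since the separation condition is the only nontrivial hypothesis of Corollary \ref{cor:expstab} to check here, the proof is incomplete as it stands.

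\textbf{What is missing} is an eigenvalue asymptotic strictly better than the spacing. The paper takes it from \cite[Section 4.2, Theorem 3.2]{Feki2014-riesz}: for $\varepsilon$ small, the eigenvalues of $-\mathcal{A}$ are simple and $\lambda_n=-n^3+O_\varepsilon(1)$. Suppose you have any bound $|\lambda_n+n^3|\le\delta_n$ with $\delta_n=o(n^2)$, or with $\delta_n\le C|\varepsilon|n^2$ and $C$ independent of $n$ and $\varepsilon$. Then no derivative-type control of the correction is needed. For $n\neq p$ one has $|n^3-p^3|=|n-p|(n^2+np+p^2)\ge n^2+p^2$. Hence $|\lambda_n-\lambda_p|\ge |n-p|(n^2+np+p^2)-\delta_n-\delta_p\ge \tfrac12 n^2|n-p|$ once $\max(n,p)$ is large (respectively, for all $n\neq p$ once $C|\varepsilon|\le 1/2$). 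The finitely many remaining pairs are handled by simplicity. The same asymptotic also gives \eqref{ln+1_cond} and the upper bound on real parts you need for the semigroup in Step 1.

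Your own heuristic actually points to a better exponent than $2$. In the orthonormal basis $\psi_n=z^n/\sqrt{\pi n!}$ of $X$ one has $V(U+V)U\psi_n=(n-1)\sqrt{n}\,\psi_{n-1}+n\sqrt{n+1}\,\psi_{n+1}$, a purely off-diagonal term of size $n^{3/2}$. The terms $\varepsilon^k(VU)^{3d_k}$ are diagonal with entries $\varepsilon^k n^{3d_k}$, where $3d_k\le 3d_2<3/2$ for $k\geq 2$. So one expects a correction $O(n^{3/2})=o(n^2)$. Converting this into an eigenvalue bound, however, requires an actual perturbation theorem for non-self-adjoint relatively bounded perturbations, or the cited result. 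That is precisely the step your proposal skips.
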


\begin{proof}
    All we need to show is that: the Gribov operator $-\mathcal{A}$ generates a $C^0$ semigroup and the eigenvalues of $-\mathcal{A}$ satisfy assumptions \eqref{ln+1_cond} and \eqref{ln-lp_cond} and then concludes by Corollary \ref{cor:expstab}. Based on \cite[Section 4.2, Theorem 3.2]{Feki2014-riesz}, there exists small $\varepsilon$ and a sequence $(\lambda_n^i)_{n,i}$ such that the eigenvalues of $-\mathcal{A}$ are simple and can be decomposed as follows:
\begin{align*}
    \lambda_n=-n^3+O_{\varepsilon}(1),
\end{align*} where $O_{\varepsilon}(1):=\displaystyle\sum_{i\in \mathbb{N}^*}\varepsilon^i \lambda_n^i\simeq O(|\varepsilon|,1/n).$
So the conditions \eqref{ln+1_cond} and \eqref{ln-lp_cond} hold with $\alpha=3$. The fact that $-\mathcal{A}$ generates a $C^{0}$ semigroup is a consequence of this and the existence of a Riesz basis of eigenvectors: one can easily check that there exists $\lambda_{0}>0$ such that $\lambda_{n}-\omega<0$ and therefore $-\mathcal{A}-\omega$ is dissipative and similarly thanks to the Riesz basis of eigenvectors $-\mathcal{A}-\lambda_{0}$ is surjective for any $\lambda_{0}>\omega$ and we can apply for instance Lumer-Phillips theorem to conclude to the existence of a $C^{0}$ semigroup.

\end{proof}

\section{Strategy and outline}
\label{sec:strat}
To prove our main result, the main challenge is to show the $F$-equivalence in each of the spaces $\textcolor{black}{X}_{i}$ separately, more precisely:

\begin{prop}
\label{prop:main}
Assume that $\mathcal{A}$ satisfies $(A_{1})$, \eqref{ln+1_cond} and \eqref{ln-lp_cond}. Let $\beta=(\beta_{1},...,\beta_{m})\in \mathbb{R}^{m}$ and $\gamma = (\gamma_{1},...,\gamma_{m})\in [0,(\alpha_{1}-1)/2)\times ...\times [0,(\alpha_{m}-1)/2)$. For any $i\in \{1,...,m\}$, assume that $B_{i}\in \mathcal{H}_{i}^{\beta_i- \frac{\alpha_i}{2}}$ satisfies \eqref{control-cond-general}.
Then for any $\lambda\in\mathbb{R}_{+}\setminus\mathcal{N}$, where $\mathcal{N} =\{\lambda_{n}^{i}-\lambda_{p}^{i} \;|\; (n,p)\in\mathbb{N}^{*},\;i\in\{1,...,m\}\}$,
there exists a bounded linear operator $K_{i}\in \mathcal{L}(\mathcal{H}^{\beta_i+\frac{\alpha_i}{2}}_{i};\mathbb{C})$ such that $K_{i}\in \mathcal{L}(\mathcal{H}^{\beta_i+\frac{1}{2}+\varepsilon}_{i};\mathbb{C})$ for any $\varepsilon>0$ and a linear mapping $T_{i}\in\mathcal{L}(\mathcal{H}_i^{r})$ which is an isomorphism  from $\mathcal{H}_i^{r}$ to itself for any $r\in(\beta_i+ 1/2-\alpha_i+\gamma_{i},\beta_i+ \alpha_i-1/2-\gamma_{i})$ and maps the system, 
\begin{equation}\label{sysdepart1}
\begin{array}{ll}
\partial_t u =\mathcal{A}u +B_{i}K_{i}(u),\;\;u\in \mathcal{H}^{r}_{i},
\end{array}
\end{equation}
to the system, 
\begin{equation}\label{sysarrive1}
    \begin{array}{ll}
\partial_t v =\mathcal{A}v -\lambda v,\;\;v\in \mathcal{H}^{r}_{i}.
\end{array} 
\end{equation}
\end{prop}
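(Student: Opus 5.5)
We follow the Fredholm/compactness-duality strategy of \cite{Gagnon2022-fredholm}, adapted to a non-orthogonal Riesz basis. Fix $i$ and drop the index $i$. Write $b_n:=\langle B,\widetilde{\varphi_n}\rangle$, so that $c_1 n^{-\beta}\le |b_n|\le c_2 n^{-\beta+\gamma}$, and look for $T$ of the form $T\varphi_n=:q_n$. The formal requirement that $T$ maps \eqref{sysdepart1} onto \eqref{sysarrive1} reads
\begin{equation*}
T\mathcal{A}+TBK=(\mathcal{A}-\lambda)T,\qquad TB=B .
\end{equation*}
The second identity is the usual normalisation that makes the first one solvable. Testing the first identity on $\varphi_n$ gives
\begin{equation*}
(\mathcal{A}-\lambda-\lambda_n)q_n=K_n B,\qquad K_n:=K(\varphi_n).
\end{equation*}
Since $\lambda\notin\mathcal{N}$, the operator $\mathcal{A}-\lambda-\lambda_n$ is invertible on the eigen-coordinates, so
\begin{equation*}
q_n=-K_n\sum_p \frac{b_p}{\lambda_p+\lambda-\lambda_n}\,\varphi_p .
\end{equation*}
We therefore set $\tau_n:=\sum_p \frac{b_p}{\lambda_p+\lambda-\lambda_n}\varphi_p$ and $q_n=-K_n\tau_n$. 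The condition $TB=B$, i.e.\ $\sum_n b_n q_n=B$, then determines $K_n$. Projecting on $\widetilde{\varphi_p}$ and using $b_p\neq0$ gives $-\sum_n \frac{b_nK_n}{\lambda_p+\lambda-\lambda_n}=1$ for all $p$, which is solved (as in \cite{Coron2014-local,Gagnon2022-fredholm}) by a choice of the form $K_n=-\lambda/b_n$ (up to normalisation), possibly after a compact correction handled in the Fredholm step below. This gives the explicit candidate for $K$ in \eqref{K_n-expression}. The bound $|K_n|\lesssim n^{\beta}$ then shows $K\in\mathcal{L}(\mathcal{H}^{\beta+1/2+\varepsilon};\mathbb{C})$ by Cauchy--Schwarz, and a fortiori $K\in\mathcal{L}(\mathcal{H}^{\beta+\alpha/2};\mathbb{C})$ since $\alpha>1$.

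Next we show that $T$ is bounded on $\mathcal{H}^r$. Write $T=\lambda\,\mathrm{Id}$-type diagonal part plus off-diagonal part. The coefficients are
\begin{equation*}
\langle T\varphi_n,\widetilde{\varphi_p}\rangle=\frac{\lambda\, b_p/b_n}{\lambda_p+\lambda-\lambda_n}.
\end{equation*}
Using \eqref{ln-lp_cond}, namely $|\lambda_p-\lambda_n|\ge C\max(n,p)^{\alpha-1}|n-p|$ (by symmetry), together with $|b_p/b_n|\lesssim p^{-\beta+\gamma}n^{\beta}$, the matrix in the weighted basis $n^{r}$ is dominated by a kernel of Hilbert/Schur type, $\frac{p^{r-\beta+\gamma}n^{\beta-r}}{\max(n,p)^{\alpha-1}|n-p|}$ off the diagonal. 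A Schur test with power weights yields boundedness on $\mathcal{H}^r$ exactly when $r-\beta$ lies in $(1/2-\alpha+\gamma,\alpha-1/2-\gamma)$. Near the diagonal, the loss $n^\gamma$ is absorbed because $\gamma<(\alpha-1)/2$. The Schur test also shows that the off-diagonal part is compact on a slightly larger range, via Lemma \ref{lem:embedding}.

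It remains to prove that $T$ is an isomorphism, and this is the main obstacle. We write $T=D+\mathcal{K}$, with $D$ diagonal invertible (after a shift of $\lambda$) and $\mathcal{K}$ compact, so $T$ is Fredholm of index $0$, and it suffices to prove injectivity. For this we use the duality argument of \cite{Gagnon2022-fredholm}. If $T^*h=0$, the operator identity $T\mathcal{A}+TBK=(\mathcal{A}-\lambda)T$ gives that $\ker T^*$ is invariant under $\mathcal{A}^*$, and $TB=B$ gives $\langle B,h\rangle=0$. A finite-dimensional $\mathcal{A}^*$-invariant subspace contains an eigenvector $\widetilde{\varphi_p}$, and then $b_p=0$, contradicting \eqref{control-cond-general}. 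The delicate points are the following:
\begin{itemize}
\item justifying the operator identities in the right spaces, since $B\notin\mathcal{H}^r$ in general, which requires working in $\mathcal{H}^{r-\alpha}$ and using density;
\item the possible finite exceptional set of $\lambda$, which we avoid through $\lambda\notin\mathcal{N}$ and by choosing $\lambda$ large;
\item the fact that, without orthogonality, biorthogonality must be used throughout in place of the inner product.
\end{itemize}
Once $T$ is invertible, $v=Tu$ gives the equivalence of \eqref{sysdepart1} and \eqref{sysarrive1}.
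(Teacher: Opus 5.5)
There is a genuine gap at the construction of $K$, which is where most of the paper's work lies. Your denominators should read $\lambda_n-\lambda_p+\lambda$; you have swapped $n$ and $p$. With that corrected, the normalisation $TB=B$ says $\sum_n \frac{x_n}{\lambda_n-\lambda_p+\lambda}=1$ for every $p$, where $x_n:=-K_nb_n$.

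Your choice $K_n=-\lambda/b_n$, i.e.\ $x_n\equiv\lambda$, gives instead $1+\lambda\sum_{n\neq p}\frac{1}{\lambda_n-\lambda_p+\lambda}$, which is not $1$ in general. With this $K$ you only get $T(\mathcal{A}+BK)=(\mathcal{A}-\lambda)T+(TB-B)K$. So $T$ does not map \eqref{sysdepart1} to \eqref{sysarrive1}, and your injectivity argument, which uses $TB=B$, also fails.

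The normalisation is really the linear equation $S\bigl(\sum_n x_n\varphi_n\bigr)=\sum_p\varphi_p$ in $\mathcal{H}^{-1/2-\varepsilon}$, with $S\varphi_n=\sum_p\varphi_p/(\lambda_n-\lambda_p+\lambda)$. Solving it requires $S$ to be an isomorphism of $\mathcal{H}^{-1/2-\varepsilon}$. Fredholmness ($S=\lambda^{-1}\mathrm{Id}+S_c$) is the easy half. Injectivity is exactly where \cite{Gagnon2022-fredholm} used skew-adjointness. The paper replaces that by:
\begin{itemize}
\item the dichotomy ``$(q_n)$ is either dense or $\omega$-independent'' (Appendix \ref{sec-appendix-q});
\item the equivalence ``$(q_n)$ dense $\Leftrightarrow$ $(\widetilde{q_n})$ $\omega$-independent'' in $\mathcal{H}^0$;
\item an $\mathcal{H}^r$/$\mathcal{H}^{-r}$ duality to reach negative $r$ (Sections \ref{sec:step2}--\ref{sec:step3}).
\end{itemize}
None of this appears in your proposal. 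A ``compact correction handled in the Fredholm step'' cannot supply it, since that step concerns $T$, whose definition already requires $K$.

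Even once $x=S^{-1}(\sum_p\varphi_p)$ is available, one only knows $(x_nn^{-1/2-\varepsilon})_n\in\ell^2$. Your bound $|K_n|\lesssim n^{\beta}$ needs $(x_n)$ bounded. The paper gets this from the iterative bootstrap of Lemma \ref{Lemma-K_n} on $x_m-\lambda=-\lambda\sum_{n\neq m}\frac{x_n}{\lambda_n-\lambda_m+\lambda}$, which yields $x_n-\lambda=O(n^{-\varepsilon})$.

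Your ``$D$ diagonal invertible'' also presupposes $\inf_n|x_n|>0$. This holds for large $n$ since $x_n\to\lambda$. For the finitely many remaining $n$, the paper concludes $x_n\neq0$ only from $\ker T=\{0\}$. It does so after writing $T=(\lambda\,\mathrm{Id}+k)\tilde\tau$ with $k$ compact and $\tilde\tau$ an isomorphism (Lemma \ref{lem-tilde-tau}), and that lemma again rests on $(\tau q_n)$ being a Riesz basis, i.e.\ on the invertibility of $S$.

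Finally, the $\mathcal{A}^*$-invariance of $\ker T^*$ does not follow from the formal identity, because elements of $\ker T^*\subset\mathcal{H}^{-\alpha/2}$ need not lie in $D(\mathcal{A}^*)$. The paper works instead with $(\mathcal{A}+\rho)^{-1}$ and must first produce a $\rho$ with $\mathcal{A}+BK+\lambda+\rho$ invertible, via the perturbation argument of Appendix \ref{sec-Appendix-kerT}. Note also that the statement holds for every $\lambda\in\mathbb{R}_+\setminus\mathcal{N}$, so taking $\lambda$ large is not an available move.

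Once the correct $K$ is in hand, your Schur-type bound for $T$ and the eigenvector argument for $\ker T^*$ are in line with Steps 5--6 of the paper.
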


The idea is that if Proposition \ref{prop:main} holds, then a candidate feedback to obtain the exponential stability with decay rate $\lambda$ for the total system is $K = (K_{1},....,K_{m})^{T}$. To do so, the main step is to show that the system with feedback $K=(K_{1},...,K_{m})^{T}$ is well-posed, that is
\begin{prop}
\label{th:wellposed}
Under the assumption of Theorem \ref{Theo_main_general},
and with $K=(K_{1},...,K_{m})^{T}$ where the $K_{i}$ are given by Proposition \ref{prop:main}, for any $\vec{r}$ satisfying \eqref{eq:condr}, $\mathcal{A}+BK$ generates a semigroup on $\mathcal{H}^{\vec{r}}$ and in particular the system \eqref{closed-loop-system} has a unique solution $u\in C^{0}([0,+\infty);\mathcal{H}^{\vec{r}})$.
\end{prop}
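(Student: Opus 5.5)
The plan is to transport the known well-posedness of the target system back through the transformation $T$, rather than analysing $\mathcal{A}+BK$ directly. Let $T=(T_{1},\dots,T_{m})$ act componentwise on $\mathcal{H}^{\vec r}=\mathcal{H}_1^{r_1}+\dots+\mathcal{H}_m^{r_m}$. Each $T_i$ is the isomorphism of $\mathcal{H}_i^{r_i}$ given by Proposition \ref{prop:main}, where $r_i$ is shifted by $\beta_i$ as in Theorem \ref{Theo_main_general}.

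Two points have to be handled at once. The sum \eqref{eq:sumHi} is not direct, and the feedback $K=(K_1,\dots,K_m)^T$ couples the components through $B$. I would therefore first work on the product space $\mathcal{H}_1^{r_1}\times\dots\times\mathcal{H}_m^{r_m}$ and then pass to $\mathcal{H}^{\vec r}$. This works because $(\varphi_n^i)_{(i,n)}$ is a Riesz basis of $X$. The same argument as in Lemma \ref{lem:equivnorm}, applied with weights $n^{2r_i}$, shows that $\mathcal{H}^{\vec r}$ is isomorphic to this product via the coordinates $\langle\cdot,\widetilde{\varphi_n}^i\rangle$.

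In these coordinates $\mathcal{A}$ is block-diagonal. The operator $BK$ acts on the $i$-th component as $B_iK_i(u_i)$, where $u_i$ is the $i$-th component of $u$, provided each $K_i$ only sees $u_i$. This is the structure Proposition \ref{prop:main} suggests: the pair $(T_i,K_i)$ is built on $X_i$ alone.

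The core identity is the operator equality $T_i(\mathcal{A}+B_iK_i)=(\mathcal{A}-\lambda)T_i$ on a suitable dense domain. I expect to extract it from the proof of Proposition \ref{prop:main}, where it appears as the kernel equation $T\mathcal{A}+TBK=\mathcal{A}T-\lambda T$ together with $TB=B$. Given this identity, define
\begin{equation*}
D(\mathcal{A}+BK):=T^{-1}\bigl(D_{\vec r}(\mathcal{A})\bigr),
\end{equation*}
where $D_{\vec r}(\mathcal{A})=\mathcal{H}^{\vec r+\vec\alpha}$ is the domain of $\mathcal{A}-\lambda$ in $\mathcal{H}^{\vec r}$. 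Then $\mathcal{A}+BK=T^{-1}(\mathcal{A}-\lambda)T$ on this domain.

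The operator $\mathcal{A}-\lambda$ generates a $C^0$ semigroup on each $\mathcal{H}_i^{r_i}$. It is diagonal with eigenvalues $\lambda_n^i-\lambda$, and the growth bound of $e^{t\mathcal{A}}$ is finite, which forces $\operatorname{Re}\lambda_n^i$ to be bounded above. Hence $S(t):=T^{-1}e^{t(\mathcal{A}-\lambda)}T$ is a $C^0$ semigroup on $\mathcal{H}^{\vec r}$. Its generator is exactly $T^{-1}(\mathcal{A}-\lambda)T$ with the domain above; this follows by a standard similarity argument. Uniqueness of solutions in $C^0([0,\infty);\mathcal{H}^{\vec r})$ then follows from the semigroup property.

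The main obstacle is to show that this generator really coincides with $\mathcal{A}+BK$. One has to check that $T^{-1}(\mathcal{H}^{\vec r+\vec\alpha})$ is the natural domain $\{u:\ \mathcal{A}u+BK(u)\in\mathcal{H}^{\vec r}\}$, and that the identity holds there and not just on finite linear combinations of eigenvectors. Here $B_i\notin\mathcal{H}_i^{r_i}$ in general, so $\mathcal{A}u$ and $B_iK_i(u)$ may each fall outside $\mathcal{H}_i^{r_i}$ while their sum lies in it.

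To handle this, I would first verify the identity on $\mathrm{span}(\varphi_n^i)$ coefficient by coefficient. I would then extend it by density and continuity, using $K_i\in\mathcal{L}(\mathcal{H}_i^{\beta_i+1/2+\varepsilon})$ and the fact that $T_i$ is an isomorphism on every $\mathcal{H}_i^{s}$ in the admissible range. For the extension I take the range with $s$ close to $r_i+\alpha_i$, or else work in the extrapolation space $\mathcal{H}_i^{r_i-\alpha_i}$, where both sides are continuous. Restricting back to the set where the image lies in $\mathcal{H}^{\vec r}$ then identifies the two domains.
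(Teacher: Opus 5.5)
Your route is the paper's: conjugate $e^{t(\mathcal{A}-\lambda)}$ by $T$, identify the generator's domain with $T^{-1}(\mathcal{H}^{\vec r+\vec\alpha})$, and assemble the components. Your block-diagonal product picture is the paper's $\mathcal{S}(t)=\sum_i\mathcal{S}_i(t)$. Since each $K_i$ is extended by zero off $X_i$ and the total family is a Riesz basis, the sum is in fact direct and the closed loop decouples, so there is no coupling to handle.

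The step that does not go through as written is the inclusion of the natural domain $\{u\in\mathcal{H}^{r}:\ \mathcal{A}u+BK(u)\in\mathcal{H}^{r}\}$ into $T^{-1}(\mathcal{H}^{r+\alpha})$ (one component, $\beta=0$).
\begin{itemize}
\item Density and continuity only give the identity on a space $\mathcal{H}^{s}$ where $K$ is bounded and Lemma~\ref{Lemma-T-bounded} applies, i.e.\ $s\in(1/2+\gamma,\alpha-1/2-\gamma)$. ``Restricting back'' then only describes $\{u\in\mathcal{H}^{s}:\ (\mathcal{A}+BK)u\in\mathcal{H}^{r}\}$. Elements of the natural domain not already known to lie in $\mathcal{H}^{s}$ are not covered.
\item Your fallback, working in $\mathcal{H}^{r-\alpha}$ with both sides continuous on $\mathcal{H}^{r}$, fails exactly when $r\le 1/2$, which includes the main case $r=0$. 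There $K$ is in general not even defined on $\mathcal{H}^{r}$, since its coefficients $K_n=-(\lambda+k_n)/b_n$ need not decay. Also $T$ is not known to be bounded on $\mathcal{H}^{r-\alpha}$, since $r-\alpha\le 1/2-\alpha$.
\item You cannot settle for the restriction either. A proper extension of a generator is never a generator, so equality of the two domains is required.
\end{itemize}

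The missing idea is an a priori regularity bootstrap using that $BK$ has rank one. For $u$ in the natural domain, $K(u)$ is a scalar, and $B\in\mathcal{H}^{-1/2-\gamma-\varepsilon}$ for every $\varepsilon>0$ (from $|b_n|\lesssim n^{\gamma}$). Hence
\begin{equation*}
\mathcal{A}u=(\mathcal{A}+BK)u-K(u)\,B\in\mathcal{H}^{\min(r,\,-1/2-\gamma-\varepsilon)}.
\end{equation*}
Up to the at most one-dimensional kernel of $\mathcal{A}$ (spanned by an eigenvector), this gives $u\in\mathcal{H}^{s}$ with $s=\min(r+\alpha,\,\alpha-1/2-\gamma-\varepsilon)$. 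For small $\varepsilon$, $s\in(1/2+\gamma,\alpha-1/2-\gamma)$, because $r>1/2-\alpha+\gamma$ and $\gamma<(\alpha-1)/2$.

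Then \eqref{opeq-space} applies in $\mathcal{L}(\mathcal{H}^{s},\mathcal{H}^{s-\alpha})$, giving $(\mathcal{A}-\lambda)Tu=T(\mathcal{A}+BK)u\in\mathcal{H}^{r}$, so $Tu\in\mathcal{H}^{r+\alpha}$. Conversely, $T^{-1}(\mathcal{H}^{r+\alpha})\subset\mathcal{H}^{s}$, and injectivity of $T$ on $\mathcal{H}^{\min(r,s-\alpha)}$ (Lemma~\ref{lemma-T-invertible}) yields $(\mathcal{A}+BK)u\in\mathcal{H}^{r}$.

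This is the first half of the paper's proof of Lemma~\ref{Lemma-Dr}. With it, your similarity argument goes through exactly as in Lemma~\ref{Lemma-Dr-semigroup}.
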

If Propositions \ref{prop:main} and \ref{th:wellposed} hold, then Theorem \ref{Theo_main_general} (and hence Theorem \ref{Theo_main}) follows by choosing $T= T_{1}+...+T_{m}$. This is detailed in Section \ref{sec:thm}, while the proof of Propositions \ref{prop:main} and \ref{th:wellposed} are done respectively in Sections \ref{sec:step1}--\ref{sec:step6} and Section \ref{sec:wellposed}. In the remaining of this section, we present the strategy to show the main Proposition \ref{prop:main} and the spirit of the \textcolor{black}{$F$-equivalence} approach.
\\

\noindent \textbf{Principle of the approach.} Formally, having $T_{i}$ mapping the system \eqref{sysdepart1} to the system \eqref{sysarrive1} with $v=T_{i}u$ means that $T_{i}$ is an isomorphism which satisfies the following operator equality
\begin{equation}
\label{eq:eqop}
    T_{i}(\mathcal{A}+B_{i}K_{i}) = (\mathcal{A}-\lambda)T_{i},
\end{equation}
in some sense. Indeed, still formally, if \eqref{eq:eqop} holds and $v=T_{i}u$, then
\begin{equation}
    \partial_{t}v = T_{i}\partial_{t}u = T_{i}(\mathcal{A}+B_{i}K_{i})u = (\mathcal{A}-\lambda)T_{i}u = (\mathcal{A}-\lambda)v,
\end{equation}
and conversely, since $T_{i}$ is an isomorphism. Because the operator equality \eqref{eq:eqop} has no uniqueness in the solutions $(T,K)$ --note that if $(T,K)$ is a solution, then $(aT,K)$ is still a solution for any $a\neq0$-- it is tempting to add a kind of normalization on $T$ which would simplify \eqref{eq:eqop}. In this regards, a good approach is often to formally add a condition of the form 
\begin{equation}
\label{eq:tbb}
    T_iB_{i}= B_{i},
\end{equation}
again in a sense to be defined (note that since $B_{i}$ can be unbounded and not belong to the spaces $\mathcal{H}^{r}_{i}$ on which we consider $T_{i}$, this inequality may have to be considered in a weak sense). With \eqref{eq:tbb}, the operator equality \eqref{eq:eqop} becomes (formally) 
\begin{equation}
\label{eq:eqoplin}
    T_{i}\mathcal{A}+B_{i}K_{i} = (\mathcal{A}-\lambda)T_{i}.
\end{equation}
This operator equation is easier to solve than \eqref{eq:eqop} since it is linear in $(T,K)$. When $\textcolor{black}{X}$ is finite-dimensional there exists a unique solution to \eqref{eq:tbb}--\eqref{eq:eqoplin} if $(\mathcal{A},B_{i})$ is controllable in $\textcolor{black}{X}_{i}$ (see \cite{coron2015stabilization,Coron2022-stabilization}) which is an additional motivation to adding the condition \eqref{eq:tbb}. When $\textcolor{black}{X}$ is infinite-dimensional, the situation is much less clear, but 
\eqref{eq:eqoplin} can be translated in terms of actions of $T$ on $(\varphi^{i}_{n})_{i\in\mathbb{N}}$. Indeed,
projecting the operator equality \eqref{eq:eqoplin} on the eigenvector $\varphi_n^i$ of $\mathcal{A},$ we get
\begin{align}
    \lambda_n^i T_i\varphi_n^i+B_iK_i(\varphi_n^i)=(\mathcal{A}-\lambda I)T_i\varphi_n^i.\label{proj_1}
\end{align}
We set $h_n^i:=T_i\varphi_n^i;\ K_n^i:=K_i(\varphi_n^i)$ and we project \eqref{proj_1} on a bi-orthogonal family $(\widetilde{\varphi_p}^i)_{p\in\mathbb{N}^{*}}$ associated to $(\varphi_{n}^{i})_{n\in\mathbb{N}^{*}}$ to obtain
\begin{align*}
    \lambda_n^i \langle h_n^i,\widetilde{\varphi_p}^i\rangle+K_n^i\langle B_i,\widetilde{\varphi_p}^i\rangle&=\langle \mathcal{A}h_n^i,\widetilde{\varphi_p}^i\rangle-\lambda \langle h_n^i,\widetilde{\varphi_p}^i\rangle\\
    &=\lambda_p^i\langle h_n^i,\widetilde{\varphi_p}^i\rangle-\lambda \langle h_n^i,\widetilde{\varphi_p}^i\rangle
\end{align*}
 So we obtain, assuming that $\lambda$ is chosen such that $(\lambda_n^i-\lambda_p^i+\lambda)\neq 0,$ for any $(n,p)\in\mathbb{N}^*$,
 \begin{align*}
     \langle h_n^i,\widetilde{\varphi_p}^i\rangle=-K_n^i\frac{\langle B_i,\widetilde{\varphi_p}^i\rangle}{\lambda_n^i-\lambda_p^i+\lambda}.
 \end{align*}
 This leads to the following
 \begin{equation}
 \label{T_expression}
     T_i\varphi_n^i=h_n^i=\sum_{p\in \mathbb{N}^*}\langle h_n^i,\widetilde{\varphi_p}^i\rangle\varphi_p^i=-K_n^i\sum_{p\in \mathbb{N}^*}\frac{\langle B_i,\widetilde{\varphi_p}^i\rangle}{\lambda_n^i-\lambda_p^i+\lambda}\varphi_p^i.
 \end{equation}
Hence, if $T_{i}$ is a solution to \eqref{eq:eqoplin}, it has to have the decomposition \eqref{T_expression}. In particular, as soon as $K_{i}$ is chosen, the candidate transform $T_{i}$ is fixed and we aim to show that it is indeed an isomorphism from $\mathcal{H}^{r}_{i}$ to itself and maps the system \eqref{sysdepart1} to the system \eqref{sysarrive1}.\\

\noindent \textbf{Outline of the proof of Proposition \ref{prop:main}}
\vphantom{}\\

The general strategy 
is the following
\begin{enumerate}
    \item Show that the operator 
    \begin{equation}
        \label{eq:defS}
        S_{i}:\varphi_{n}^{i} \rightarrow \sum\limits_{p\in\mathbb{N}^{*}}\frac{\varphi_{p}^{i}}{\lambda_{n}^{i}-\lambda_{p}^{i}+\lambda}
    \end{equation}
    is a Fredholm operator of index $0$ from $\mathcal{H}^{r}_{i}$ into itself for any $r\in (1/2-\alpha_{i}, \alpha_{i}-1/2)$.
    \item Show that $\ker(S_{i}) = \text{ker}(S^{*}_{i})=\{0\}$ and consequently $S_{i}$ is an isomorphism from $\textcolor{black}{X}_{i}$ into itself.
    \item Show that $S_{i}$ is an isomorphism from $\mathcal{H}_{i}^{r}$ into itself for any $r\in (1/2-\alpha_{i}, \alpha_{i}-1/2)$\\
    \item Find an explicit candidate $K_{i}\in \mathcal{L}(\mathcal{H}^{1/2+\varepsilon};\mathbb{C})$ for any $\varepsilon>0$, such that $T_{i}B_{i}= B_{i}$ holds in $\mathcal{H}_{i}^{-\alpha_{i}/2}$ with
        \begin{equation}
        \label{eq:defT}
        T_{i}:\varphi_{n}^{i} \rightarrow -K_{i}(\varphi_{n}^{i})\sum\limits_{p\in\mathbb{N}^{*}}\frac{b_p^i\varphi_{p}^{i}}{\lambda_{n}^{i}-\lambda_{p}^{i}+\lambda}
    \end{equation} where $b_p^i:=\langle B_{i},\widetilde{\varphi_p}^{i}\rangle.$
    
    \item Show that the linear operator $T_{i}$ is bounded from $\mathcal{H}_{i}^{r_{i}}$ into itself for $r_{i}\in(1/2-\alpha_{i}+\gamma_{i}, \alpha_{i}-1/2)$ and satisfies the operator equality \eqref{eq:eqoplin} in $\mathcal{L}(\mathcal{H}_{i}^{\alpha_{i}/2+s};\mathcal{H}_{i}^{-\alpha_{i}/2+s})$ for any 
    \textcolor{black}{$s\in(-(\alpha_{i}-1)/2+\gamma_{i},(\alpha_{i}-1)/2-\gamma_{i})$)}.
    \item Show that $T_{i}$ is an isomorphism from
    $\mathcal{H}^{r}_{i}$ into itself for any $r\in(1/2-\alpha_{i}+\gamma_{i}, \alpha_{i}-1/2-\gamma_{i})$.
    
\end{enumerate}
Step 1, 4 and 5 are similar to \cite{Gagnon2022-fredholm}, the main difference with the duality compactness method introduced in \cite{Gagnon2022-fredholm} lies in the steps 2 and 3 where the duality argument used in \cite{Gagnon2022-fredholm} leverages the skew-adjoint properties and does not hold anymore and in the Step 5 and 6, where the boundedness of $T_{i}$ is not straightforward when $\gamma_{i}\neq 0$ while showing directly that $T_{i}$ is an isomorphism from $\mathcal{H}_{i}^{-\textcolor{black}{\alpha_{i}}/2}$ into itself as in \cite{Gagnon2022-fredholm} would be challenge. 
These steps are shown in Section \ref{sec:step1}--\ref{sec:step6}. 

\section{Proof of Theorem \ref{Theo_main_general}}

We first prove Proposition \ref{prop:main}, following the outline described in Section \ref{sec:strat} and we then deduce Theorem \ref{Theo_main_general} (see Section \ref{sec:thm}) and hence Theorem \ref{Theo_main}. Since several of the arguments of the proof (namely steps 1, 4, 5) are similar to \cite{Gagnon2022-fredholm},
we just explicit the novelty and refer the reader to \cite{Gagnon2022-fredholm} for the remaining.\\

Since Proposition \ref{prop:main} consist in showing the $F$-equivalence on each of the spaces $\textcolor{black}{X}_{i}$, we start by fixing $i\in\{1,...,m\}$ and we will work in $\textcolor{black}{X}_{i}$ (resp. $\mathcal{H}^{s}_{i}$). We also adopt the following convention: when an operator on $\textcolor{black}{X}_{i}$ (resp. $\mathcal{H}^{s}_{i}$) is defined by its action on $(\varphi_{n}^{i})_{n\in\mathbb{N}}$, we extend it to $\textcolor{black}{X}$ (resp. $\mathcal{H}^{s}$) by setting its image to $\{0\}$ on  $(\varphi_{n}^{j})_{n\in\mathbb{N}}$ for any $j\neq i$. With this in mind, in the following, we will drop the dependency in $i$ for the different quantities $B_{i}$, $K_{i}$, $T_{i}$, $\beta_{i}$, $\alpha_{i}$, $\gamma_{i}$, $(\varphi_{n}^{i})_{n\in\mathbb{N}}$, $(\lambda_{n}^{i})_{n\in\mathbb{N}}$, for the reader's convenience. We will also assume in the following that $\beta=0$, the extension to the case $\beta\neq0$ is given in Appendix \ref{app:beta}.\\ 

 Let us set, for $n\in\mathbb{N}^*,$
 \begin{align}
     q_n:=\sum_{p\in \mathbb{N}^*}\frac{\varphi_p}{\lambda_n-\lambda_p+\lambda} = S \varphi_{n},\label{q_n}
 \end{align}
\subsection{Step 1: S is a Fredholm operator}
\label{sec:step1}

We are going to show the following:
\begin{prop}
\label{prop:fredholm}
    For any $r\in(1/2-\alpha,\alpha-1/2)$, the operator $S\in\mathcal{L}(\mathcal{H}^{r})$ is a Fredholm operator of index 0. More precisely there exists a compact operator $S_{c}\in\mathcal{L}(\mathcal{H}^{r})$ such that
    \begin{equation}
        S = \lambda^{-1} Id +S_{c}.
    \end{equation}
\end{prop}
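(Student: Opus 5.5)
Our plan is to read $S$ as an infinite matrix in the basis $(\varphi_n)$. Write $S\varphi_n=\sum_p s_{pn}\varphi_p$ with $s_{pn}=(\lambda_n-\lambda_p+\lambda)^{-1}$. The diagonal entries are all $s_{nn}=\lambda^{-1}$, which gives the term $\lambda^{-1}Id$. We therefore set $S_c:=S-\lambda^{-1}Id$, whose matrix is the off-diagonal part, $(S_c)_{pn}=s_{pn}\mathbf{1}_{p\neq n}$. Since $\lambda\notin\mathcal{N}$, every entry is well defined, and by \eqref{ln-lp_cond} $|\lambda_n-\lambda_p+\lambda|\gtrsim n^{\alpha-1}|n-p|$ for $p\neq n$ and $n$ large (the shift $\lambda$ is absorbed once $n^{\alpha-1}|n-p|\gg\lambda$; the finitely many remaining pairs only contribute a finite-rank, hence compact, piece). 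By Lemma \ref{lem:structure}, $\mathcal{H}^r$ is isometric to the weighted space $\ell^2(n^{2r})$. The claim thus reduces to showing that the matrix $M_{pn}=p^{r}n^{-r}(S_c)_{pn}$ defines a compact operator on $\ell^2$. Once this is done, $S=\lambda^{-1}Id+S_c$ is Fredholm of index $0$, since it is a compact perturbation of an invertible operator.

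For compactness we aim to show that $M$ is Hilbert--Schmidt, i.e. $\sum_{n,p}|M_{pn}|^2<\infty$. That condition is sufficient for compactness, and if it fails we fall back on truncation together with a Schur-test bound showing the tail has small norm. We must use both forms of the lower bound, $|\lambda_n-\lambda_p|\gtrsim n^{\alpha-1}|n-p|$ and, by symmetry, $|\lambda_n-\lambda_p|\gtrsim p^{\alpha-1}|n-p|$, hence $\gtrsim\max(n,p)^{\alpha-1}|n-p|$. We also have the cruder bound $|\lambda_n-\lambda_p|\gtrsim |\,|\lambda_n|-|\lambda_p|\,|$, which is useful when $p$ and $n$ are far apart. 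Using \eqref{ln+1_cond}, for $p\ge 2n$ we expect $|\lambda_n-\lambda_p|\gtrsim p^{\alpha}$, and symmetrically for $n\ge 2p$ we expect $\gtrsim n^{\alpha}$. We then split the double sum into three regions: $p\ge 2n$, $n\ge 2p$, and $n/2<p<2n$.

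In the region $p\ge 2n$ the summand is bounded by $p^{2r-2\alpha}n^{-2r}$. Summing over $p\ge 2n$ gives $n^{2r-2\alpha+1}n^{-2r}=n^{1-2\alpha}$ provided $2r-2\alpha<-1$, i.e. $r<\alpha-1/2$, and this is summable in $n$ since $\alpha>1$. The region $n\ge 2p$ is symmetric and bounded by $n^{2r-2\alpha}\cdot n^{-2r}\cdot\sum_{p\le n/2}p^{2r}$ after redistributing the weights. This requires $-2r-2\alpha<-1$, i.e. $r>1/2-\alpha$, which is exactly where the range of $r$ comes from; it is worth double-checking this by doing the sum in the other order. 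In the near-diagonal region $p^rn^{-r}\simeq1$, and the summand is $\lesssim n^{-2(\alpha-1)}|n-p|^{-2}$. Summing over $p\neq n$ gives $n^{-2(\alpha-1)}$, which is summable exactly when $\alpha>3/2$.

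The near-diagonal region is the main obstacle when $1<\alpha\le 3/2$, because the Hilbert--Schmidt estimate then fails. In that case we would instead cut $M=M_N+R_N$, where $M_N$ keeps only the entries with $\min(n,p)\le N$. Each such $M_N$ is compact, since it is a finite set of rows and columns that are each $\ell^2$ by the far-region bounds. We would then show $\|R_N\|\to0$ using the Schur test with weights $1$. The near-diagonal row sums are $\sum_{p\neq n}n^{1-\alpha}|n-p|^{-1}\lesssim n^{1-\alpha}\log n$, which tends to $0$ because $\alpha>1$, and the column sums behave the same way. The far regions are handled similarly, using the same exponent conditions on $r$ as above. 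Together these show that $S_c$ is a norm limit of compact operators, hence compact, which concludes the proof.
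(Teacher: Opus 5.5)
Your plan to split off the diagonal $\lambda^{-1}Id$ and prove directly that the off-diagonal matrix $M_{pn}=p^{r}n^{-r}(\lambda_n-\lambda_p+\lambda)^{-1}$, $p\neq n$, is compact on $\ell^2$ is sound. Your Hilbert--Schmidt computation is also correct. The two far regions $p\ge 2n$ and $n\ge 2p$ are Hilbert--Schmidt for every $r\in(1/2-\alpha,\alpha-1/2)$ and every $\alpha>1$, while the band $n/2<p<2n$ is Hilbert--Schmidt only when $\alpha>3/2$.

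The step that fails as written is the case $1<\alpha\le 3/2$. There you apply the Schur test with unit weights to the whole tail $R_N$ and claim the far regions go through ``using the same exponent conditions on $r$ as above''. They do not, because the unit-weight Schur test needs $\ell^1$ row and column sums, not $\ell^2$ ones. For $p\ge 2n$ the entries are comparable to $n^{-r}p^{r-\alpha}$, since $|\lambda_n-\lambda_p+\lambda|\lesssim p^{\alpha}$ there. So the column sums $n^{-r}\sum_{p\ge 2n}p^{r-\alpha}$ are infinite once $r\ge\alpha-1$. For $n\ge 2p$ the row sums $p^{r}\sum_{n\ge 2p}n^{-r-\alpha}$ are infinite once $r\le 1-\alpha$. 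The argument as written therefore only covers $|r|<\alpha-1$, a strict subinterval of the required range; for $\alpha=5/4$ it gives $(-1/4,1/4)$ instead of $(-3/4,3/4)$.

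The repair uses only what you already proved:
\begin{itemize}
\item Write $S_c=S_{\mathrm{far}}+S_{\mathrm{near}}$, where $S_{\mathrm{far}}$ keeps the entries with $p\ge 2n$ or $n\ge 2p$. It is Hilbert--Schmidt, hence compact, on the whole range.
\item Apply truncation and the unit-weight Schur test only to the band $n/2<p<2n$. There $p^rn^{-r}\le 2^{|r|}$, and the entries with $\min(n,p)\le N$ are finitely many, since both indices are $<2N$.
\item The remaining band tail then has norm $\lesssim N^{1-\alpha}\log N\to 0$.
\end{itemize}

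A smaller point concerns the bound $|\lambda_n-\lambda_p|\gtrsim p^{\alpha}$ for $p\ge 2n$. It does not follow from \eqref{ln+1_cond} and the reverse triangle inequality, because that would need a relation between the unspecified constants in \eqref{ln+1_cond}. It follows instead from the symmetric form of \eqref{ln-lp_cond}, since $p^{\alpha-1}|n-p|\ge p^{\alpha}/2$ in that region.

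With this fix your route is a genuine alternative to the paper's. The paper proves the smoothing estimate $\|S_ca\|_{\mathcal{H}^{r+\varepsilon}}\lesssim\|a\|_{\mathcal{H}^r}$ for some $\varepsilon>0$ (Lemma \ref{lemma-compact-sum}). That estimate rests on the logarithmic sum bound of Lemma \ref{lem:tech}, which plays the role of your near-diagonal Schur estimate. The paper then concludes with the compact embedding of Lemma \ref{lem:embedding}. Your matrix argument is more self-contained and needs no embedding, but it gives compactness only. The paper's gain of regularity is reused later: Lemma \ref{Lemma-T-bounded} uses $S_c\in\mathcal{L}(\mathcal{H}^r,\mathcal{H}^{r+\gamma})$, and Lemma \ref{lem-tilde-tau} needs the analogous estimate for $\tilde\tau_c$. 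Your version would not supply these.
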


\begin{proof}[Proof of Proposition \ref{prop:fredholm}]
    
\end{proof}Let $r\in(1/2-\alpha, \alpha-1/2)$. Showing that $S$ is a Fredholm operator of order 0 in $\mathcal{H}^{r}$ can be done essentially as in \cite{Gagnon2022-fredholm}. The first thing to observe is the following
\begin{lem}
\label{lem:lambda}
    For any $\lambda\in\mathbb{R}_{+}\setminus\mathcal{N}$, there exists $C(\lambda)>0$ such that 
    $|\lambda_n-\lambda_p+\lambda|\geq C(\lambda)|\lambda_n-\lambda_p|$.
\end{lem}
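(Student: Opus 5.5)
The plan is to split the pairs $(n,p)$ according to whether $|\lambda_n-\lambda_p|$ is large or small compared to $\lambda$. In the first case the triangle inequality gives the bound directly. In the second case, assumption \eqref{ln-lp_cond} leaves only finitely many pairs, and $\lambda\notin\mathcal{N}$ makes each relevant quantity nonzero, so a minimum over a finite set suffices.

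\emph{Trivial case $n=p$.} The right-hand side vanishes while the left-hand side equals $\lambda\geq 0$, so the inequality holds for any $C(\lambda)$. From now on assume $n\neq p$, so that $\lambda_n\neq\lambda_p$ because the eigenvalues are simple on each $X_i$.

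\emph{Large differences.} Suppose $|\lambda_n-\lambda_p|\geq 2\lambda$. By the triangle inequality,
\begin{equation*}
|\lambda_n-\lambda_p+\lambda|\geq |\lambda_n-\lambda_p|-\lambda\geq \tfrac12|\lambda_n-\lambda_p|,
\end{equation*}
so any $C(\lambda)\leq 1/2$ works for these pairs.

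\emph{Small differences.} Suppose $|\lambda_n-\lambda_p|<2\lambda$ with $n\neq p$. By \eqref{ln-lp_cond}, $C n^{\alpha-1}|n-p|<2\lambda$. Since $|n-p|\geq 1$ and $\alpha>1$, this bounds $n$ by a constant depending only on $\lambda$. It then bounds $|n-p|$, hence also $p$. So only finitely many pairs $(n,p)$ fall into this case.

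For each such pair, $\lambda_n-\lambda_p+\lambda\neq 0$. Indeed, vanishing would mean $\lambda=\lambda_p-\lambda_n\in\mathcal{N}$, which is excluded. The ratio $|\lambda_n-\lambda_p+\lambda|/|\lambda_n-\lambda_p|$ is therefore a strictly positive number for each of these finitely many pairs, and its minimum $c_0(\lambda)$ over them is positive.

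\emph{Conclusion.} Setting $C(\lambda)=\min(1/2,c_0(\lambda))$ gives the claim for all $(n,p)$. The only point requiring care is the finiteness in the small-difference case. This is where $\alpha>1$ is used, together with the fact that \eqref{ln-lp_cond} is stated with $n^{\alpha-1}$ in front of $|n-p|$ rather than a symmetric expression. Apart from that, the argument is elementary.
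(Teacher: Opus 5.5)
Your proof is correct and takes essentially the same route as the paper. The paper bounds the ratio below by $1-\lambda/|\lambda_n-\lambda_p|$, uses \eqref{ln-lp_cond} to show this tends to $1$ as $n^2+p^2\to\infty$, and then uses $\lambda\notin\mathcal{N}$ for the remaining pairs; this is your large/small-difference split, with the finiteness step left implicit where you make it explicit.
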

\begin{proof}
For $n=p$ the inequality is true for any constant $C(\lambda)$. For $n\neq p$
      \begin{equation}
      \frac{|\lambda_{n}-\lambda_{p}\pm\lambda|}{|\lambda_{n}-\lambda_{p}|} \geq       1 - \frac{\lambda}{|\lambda_{n}-\lambda_{p}|}.
      \end{equation}
      From \eqref{ln-lp_cond} 
      $$
      \lim\limits_{n^{2}+p^{2}\rightarrow +\infty} \frac{\lambda}{|\lambda_{n}-\lambda_{p}|} \leq \lim\limits_{n^{2}+p^{2}\rightarrow +\infty}\frac{\lambda}{C\max(n,p)^{\alpha-1}} = 0.
      $$
      Since $\lambda_{n}-\lambda_{p}+\lambda \neq 0$ for any $(n,p)\in\mathbb{N}^{*}$ by definition of $\mathcal{N}$, there exists $C(\lambda)>0$ such that
$$|\lambda_n-\lambda_p+\lambda|\geq C(\lambda)|\lambda_n-\lambda_p|$$
\end{proof}
With this in mind, the key of this step is to notice that \cite[Lemma 4.6]{Gagnon2022-fredholm} can still apply despite $\mathcal{A}$ not being skew-adjoint. Indeed, we have 
\begin{lem}
\label{lem:tech}
For any $s<\alpha-1$ we have
\begin{equation} \sum\limits_{n\in\mathbb{N}^{*}\setminus\{p\}}\frac{n^{s}}{|\lambda_{n}-\lambda_{p}+\lambda|}\lesssim p^{1- \alpha+ s}\log(p)+p^{-\alpha}, \; \forall p\in \mathbb{N}^*,
\end{equation}
where $\lesssim$ means lower or equal up to a multiplicative constant that does not depend on $p$ or $n$.
\end{lem}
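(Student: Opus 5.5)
The plan is to combine Lemma \ref{lem:lambda} with the gap assumption \eqref{ln-lp_cond}, which reduces the sum to a purely arithmetic one. Fix $p$. For $n\neq p$, Lemma \ref{lem:lambda} followed by \eqref{ln-lp_cond}, applied with the roles of $n$ and $p$ chosen so that the larger index appears in the power, gives
\begin{equation*}
|\lambda_n-\lambda_p+\lambda|\geq C(\lambda)|\lambda_n-\lambda_p|\gtrsim \max(n,p)^{\alpha-1}|n-p|.
\end{equation*}
This is legitimate because $|\lambda_n-\lambda_p|$ is symmetric in $n,p$, so \eqref{ln-lp_cond} also holds with $p^{\alpha-1}$ in place of $n^{\alpha-1}$. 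The sum is therefore bounded by
\begin{equation*}
\sum_{n\neq p}\frac{n^{s}}{\max(n,p)^{\alpha-1}|n-p|},
\end{equation*}
and the bound is uniform in $p$, since $C(\lambda)$ does not depend on $n$ or $p$.

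I will split this sum into three ranges. The first is $n<p/2$. There $|n-p|\geq p/2$ and $\max(n,p)=p$, so the contribution is at most $p^{-\alpha}\sum_{n<p/2}n^{s}$. If $s>-1$ this is $\lesssim p^{-\alpha}p^{s+1}=p^{1-\alpha+s}$. If $s=-1$ it is $\lesssim p^{-\alpha}\log p=p^{1-\alpha+s}\log p$. If $s<-1$ it is $\lesssim p^{-\alpha}$, and this is exactly where the extra term $p^{-\alpha}$ in the statement comes from. The second range is $p/2\leq n\leq 2p$ with $n\neq p$. Here $n^{s}\sim p^{s}$ and $\max(n,p)\sim p$, so the contribution is $\lesssim p^{s-\alpha+1}\sum_{1\leq|n-p|\leq p}|n-p|^{-1}\lesssim p^{1-\alpha+s}\log p$. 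This harmonic-sum range is the source of the logarithm.

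The third range, $n>2p$, is the one that uses the hypothesis $s<\alpha-1$. There $|n-p|\geq n/2$ and $\max(n,p)=n$, so the contribution is $\lesssim\sum_{n>2p}n^{s-\alpha}$. Since $s-\alpha<-1$, the series converges and its tail is $\lesssim p^{s-\alpha+1}$. The hypothesis is needed precisely to make this tail summable with the decay $p^{1-\alpha+s}$. For $p=1$ the factor $\log p$ vanishes, but then the whole sum is simply a finite constant, again because $s-\alpha<-1$, and it is absorbed by the $p^{-\alpha}$ term.

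Adding the three ranges gives the claimed bound. The only delicate point I anticipate is orienting \eqref{ln-lp_cond} so that $\max(n,p)^{\alpha-1}$ appears rather than $n^{\alpha-1}$. If the symmetry argument above were not available, the bound in the first range would weaken to $\sum_{n<p/2}n^{s-\alpha+1}/p$. In that case I would fall back on \eqref{ln+1_cond}: for $n<p/2$ and $p$ large, the eigenvalue growth gives $|\lambda_n-\lambda_p|\gtrsim p^{\alpha}$ directly, once $|\lambda_p|$ dominates $|\lambda_n|$ (this holds for $n\leq cp$ with $c$ small). The indices in the leftover window $cp\leq n<p/2$ can then be moved into the middle range and handled there.
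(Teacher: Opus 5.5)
Your proof is correct and follows the paper's route: you reduce via Lemma \ref{lem:lambda} to bounding $\sum_{n\neq p} n^{s}/|\lambda_n-\lambda_p|$ and then use \eqref{ln-lp_cond}. The only difference is that the paper cites \cite[Lemma 4.2]{Gagnon2022-fredholm} for that last estimate, whereas your three-range splitting, with the separate treatment of $p=1$, proves it directly. The fallback paragraph is unnecessary: \eqref{ln-lp_cond} is assumed for all pairs $(n,p)$, so the symmetrized bound with $\max(n,p)^{\alpha-1}$ holds automatically.
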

\begin{proof}
From Lemma \ref{lem:lambda} it is enough to show that
\begin{equation} \sum\limits_{n\in\mathbb{N}^{*}\setminus\{p\}}\frac{n^{s}}{|\lambda_{n}-\lambda_{p}|}\lesssim p^{1- \alpha+ s}\log(p)+p^{-\alpha}, \; \forall p\in \mathbb{N}^*.
\end{equation}
Since condition \eqref{ln+1_cond} and \eqref{ln-lp_cond} hold, this is exactly given by \cite[Lemma 4.2]{Gagnon2022-fredholm}.
\end{proof}
We can now perform as in \cite[Lemma 4.7]{Gagnon2022-fredholm} to show that
\begin{equation}
\label{eq:sc}
    S_c:  \sum\limits_{n\in\mathbb{N}^{*}}a_{n}n^{-r}\varphi_{n} \mapsto \sum\limits_{n\in\mathbb{N}^{*}}a_{n}n^{-r}\left(\sum\limits_{p\in\mathbb{N}^{*}\setminus\{n\}}\frac{\varphi_{p}}{\lambda_{n}-\lambda_{p}+\lambda}\right),
\end{equation}
is a compact operator from $\mathcal{H}^{r}$ to itself: 

{\color{black}
let $\varepsilon>0$ to be chosen, by using Fubini's Theorem we have, 
for any $a = \sum\limits_{n\in\mathbb{N}^{*}} a_{n}n^{-r}\varphi_{n}\in \mathcal{H}^{r}$, 
\begin{align*}
    S_c a=\sum\limits_{n\in\mathbb{N}^{*}}a_{n}n^{-r}\left(\sum\limits_{p\in\mathbb{N}^{*}\setminus\{n\}}\frac{\varphi_{p}}{\lambda_{n}-\lambda_{p}+\lambda}\right)=\sum\limits_{p\in\mathbb{N}^{*}}\varphi_{p}\left(\sum\limits_{n\in\mathbb{N}^{*}\setminus\{p\}}\frac{a_{n}n^{-r}}{\lambda_{n}-\lambda_{p}+\lambda}\right).
\end{align*}
Thus,
\begin{equation}
\begin{split}
    \left\| S_{c} a\right\|_{\mathcal{H}^{r+\varepsilon}}^{2} &= 
    \left\| \sum\limits_{p\in\mathbb{N}^{*}}\varphi_{p}\left(\sum\limits_{n\in\mathbb{N}^{*}\setminus\{p\}}\frac{a_{n}n^{-r}}{\lambda_{n}-\lambda_{p}+\lambda}\right)\right\|_{\mathcal{H}^{r+\varepsilon}}^{2} \\
&=\sum\limits_{p\in\mathbb{N}^{*}}p^{2r+2\varepsilon}\left|\sum\limits_{n\in\mathbb{N}^{*}\setminus\{p\}}
    \frac{a_{n}n^{-r}}{\lambda_{n}-\lambda_{p}+\lambda}
    \right|^{2}.
    \end{split}
\end{equation}
Hence, we can use Lemma \ref{lem:tech} and perform exactly as in \cite[Lemma 4.7] {Gagnon2022-fredholm} to get the following
\begin{lem}\label{lemma-compact-sum}
    Consider $r\in(1/2-\alpha,\alpha-1/2).$ For any $\varepsilon\in \left(0,\min\{\frac{\alpha-1}{2}, \alpha+r-\frac{1}{2}\}\right),$ it holds that
    \begin{align*}
        \sum\limits_{p\in\mathbb{N}^{*}}p^{2r+2\varepsilon}\left|\sum\limits_{n\in\mathbb{N}^{*}\setminus\{p\}}
    \frac{a_{n}n^{-r}}{\lambda_{n}-\lambda_{p}+\lambda}
    \right|^{2}\lesssim \sum\limits_{n\in\mathbb{N}^{*}} |a_{n}|^{2}.
    \end{align*} 
\end{lem}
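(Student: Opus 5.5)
The idea is to apply Cauchy--Schwarz to the inner sum with a weight $n^{\theta}$ chosen so that both resulting sums are controlled by Lemma \ref{lem:tech}. For fixed $p$ and a parameter $\theta\in\mathbb{R}$ to be fixed later, we write
\begin{equation*}
\left|\sum_{n\neq p}\frac{a_n n^{-r}}{\lambda_n-\lambda_p+\lambda}\right|^2\leq \left(\sum_{n\neq p}\frac{n^{\theta}}{|\lambda_n-\lambda_p+\lambda|}\right)\left(\sum_{n\neq p}\frac{|a_n|^2 n^{-2r-\theta}}{|\lambda_n-\lambda_p+\lambda|}\right).
\end{equation*}
Provided $\theta<\alpha-1$, Lemma \ref{lem:tech} bounds the first factor by a multiple of $p^{1-\alpha+\theta}\log p+p^{-\alpha}$, which is at most a constant times $p^{1-\alpha+\theta+\delta}$ for any small $\delta>0$ (here we use $\theta>-1$, which will hold for our choice).

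We then multiply by $p^{2r+2\varepsilon}$, sum over $p$, and exchange the order of summation, which is legitimate since all terms are nonnegative. This gives
\begin{equation*}
\sum_p p^{2r+2\varepsilon}|\cdots|^2\lesssim \sum_n |a_n|^2 n^{-2r-\theta}\sum_{p\neq n}\frac{p^{2r+2\varepsilon+1-\alpha+\theta+\delta}}{|\lambda_n-\lambda_p+\lambda|}.
\end{equation*}
Lemma \ref{lem:tech} is stated with the summation index in the numerator distinct from the fixed index. However, its proof only uses \eqref{ln-lp_cond} and the symmetric quantity $|\lambda_n-\lambda_p|$, up to the factor $\max(n,p)^{\alpha-1}$ versus $n^{\alpha-1}$. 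So the roles of $n$ and $p$ can be swapped: I would first check that \cite[Lemma 4.2]{Gagnon2022-fredholm} is symmetric in this sense, or otherwise use $|\lambda_n-\lambda_p|\gtrsim \max(n,p)^{\alpha-1}|n-p|$, which follows from \eqref{ln-lp_cond} applied in both orders. With $s':=2r+2\varepsilon+1-\alpha+\theta+\delta$, and assuming $s'<\alpha-1$, the inner sum is then bounded by a multiple of $n^{1-\alpha+s'+\delta}$. The total power of $n$ is therefore $-2r-\theta+1-\alpha+s'+\delta=2\varepsilon+2(1-\alpha)+2\delta$, which is $\leq 0$ as soon as $\varepsilon+\delta\leq\alpha-1$. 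This is guaranteed by $\varepsilon<(\alpha-1)/2$ and $\delta$ small. We then obtain the required bound $\lesssim\sum_n|a_n|^2$.

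The remaining, and main, difficulty is choosing $\theta$ so that both applications of Lemma \ref{lem:tech} are valid. We need $-1<\theta<\alpha-1$ and $s'=2r+2\varepsilon+1-\alpha+\theta+\delta<\alpha-1$, that is $\theta<2\alpha-2-2r-2\varepsilon-\delta$. If the Cauchy--Schwarz weight is placed symmetrically, the second constraint becomes a lower bound on $\theta$ of the form $\theta>-2r-2\varepsilon+\dots$ instead, and the admissible window is nonempty exactly when $r$ lies in $(1/2-\alpha,\alpha-1/2)$ and $\varepsilon<\alpha+r-1/2$. This matches the hypotheses of the statement. I would determine $\theta$ explicitly by writing the two linear constraints (taking, for instance, $\theta=-r-\varepsilon$ adjusted by a small amount) and checking that the interval is nonempty precisely under the stated ranges for $r$ and $\varepsilon$, exactly as in \cite[Lemma 4.7]{Gagnon2022-fredholm}. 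For negative exponents, the logarithmic factors and the $p^{-\alpha}$ terms are harmless because they are absorbed by the extra $\delta$.
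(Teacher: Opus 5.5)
Your route is the right one: weighted Cauchy--Schwarz with $n^{\theta}$, Tonelli, and Lemma \ref{lem:tech} applied in both index orders. The symmetric bound $|\lambda_n-\lambda_p|\gtrsim\max(n,p)^{\alpha-1}|n-p|$ you need does follow from \eqref{ln-lp_cond} used both ways, and this is the argument behind \cite[Lemma 4.7]{Gagnon2022-fredholm}, to which the paper defers. The gap is the step you leave open, the choice of $\theta$, and your claims about it are wrong.

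\textbf{The $n^{-\alpha}$ remainder is not absorbed by $\delta$.} Since $\theta<\alpha-1$ forces $s'<2r+2\varepsilon+\delta$, one has $s'<-1$ whenever $r$ is close to $1/2-\alpha$. In that case the remainder $n^{-\alpha}$ from the second application of Lemma \ref{lem:tech} dominates. It contributes $|a_n|^2n^{-2r-\theta-\alpha}$, which imposes the extra constraint $\theta\ge-2r-\alpha$. This is the only place where $r>1/2-\alpha$ is used. Your tentative $\theta=-r-\varepsilon$ is not admissible near that endpoint when $\alpha<2$, since there it exceeds $\alpha-1$.

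\textbf{The correct window.} The full set of constraints is $\max\{-1,-2r-\alpha\}\le\theta<\min\{\alpha-1,\,2\alpha-2-2r-2\varepsilon-\delta\}$. For $\varepsilon<\alpha-1$ and $\delta$ small, this is nonempty if and only if $r>1/2-\alpha$ and $r+\varepsilon<\alpha-1/2$. In that case $\theta=\max\{-1,-2r-\alpha\}$ works, and your computation of the main power $2\varepsilon+2(1-\alpha)+2\delta$ finishes the proof. The hypothesis $\varepsilon<\alpha+r-1/2$ plays no role. Your remark that placing the weight ``symmetrically'' turns the upper bound on $\theta$ into a lower bound is not supported by any computation.

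\textbf{The statement as printed is false.} The condition $r+\varepsilon<\alpha-1/2$ is not implied by the stated hypotheses, and it cannot be dropped. Take $a_n=\delta_{n,1}$. The left-hand side is then $\sum_{p\ge2}p^{2r+2\varepsilon}|\lambda_1-\lambda_p+\lambda|^{-2}\gtrsim\sum_{p\ge2}p^{2r+2\varepsilon-2\alpha}$ by \eqref{ln+1_cond}, while the right-hand side equals $1$. This diverges whenever $r+\varepsilon\ge\alpha-1/2$. That case is allowed by the hypotheses, e.g. $r\in(\alpha/2,\alpha-1/2)$ and $\varepsilon\in[\alpha-1/2-r,(\alpha-1)/2)$. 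Equivalently, $S_c\varphi_1$ has coefficients of size $p^{-\alpha}$, so it lies in $\mathcal{H}^{s}$ only for $s<\alpha-1/2$.

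So the printed range presumably contains a sign slip and should read $\alpha-r-1/2$ instead of $\alpha+r-1/2$. Your own constraint set detects exactly this. You should have carried out the check and flagged the discrepancy, rather than asserting that the window matches $\varepsilon<\alpha+r-1/2$.
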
  
Using any $\varepsilon$ provided by this Lemma, the following holds:
\begin{equation}
        \left\| S_{c} a\right\|^2_{\mathcal{H}^{r+\varepsilon}} \lesssim \sum\limits_{n\in\mathbb{N}^{*}} |a_{n}|^{2}. 
\end{equation}
Since 
\begin{equation}
\sum\limits_{n\in\mathbb{N}^{*}} |a_{n}|^{2} = 
\left\|\sum\limits_{n\in\mathbb{N}^{*}} a_{n} (n^{-r}\varphi_{n})\right\|_{\mathcal{H}^{r}}^{2} = \|a\|_{\mathcal{H}^{r}}^{2},
\end{equation}
we conclude that
\begin{equation}
    \left\| S_{c} a\right\|_{\mathcal{H}^{r+\varepsilon}}\lesssim \|a\|_{\mathcal{H}^{r}}
\end{equation}
and from the compact embedding of $\mathcal{H}^{r+\varepsilon}$ in $\mathcal{H}^{r}$ (see Lemma \ref{lem:embedding}),
this implies in particular that $S_{c}$ is a compact operator from $\mathcal{H}^{r}$ to itself. Finally, noting that
\begin{equation}
\label{S_c-expression}
    S = \lambda^{-1}Id + S_{c},
\end{equation}
concludes the proof.
}

\subsection{Step 2: $S$ is an isomorphism from $\mathcal{H}^0$ to itself}
\label{sec:step2}

Using the results of Step 1  for $r=0$, we deduce that $S$ is a Fredholm operator of index $0$ from $\mathcal{H}^0$ to itself. Then, $S$ is an isomorphism if and only if $\ker S=\{0\}.$ \textcolor{black}{The idea of this step is to show that this can be reduced to an equivalence between the density of $(S\varphi_{n})_{n\in\mathbb{N}^{*}}$ and the $\omega$-independence of $(\tilde{S}\varphi_{n})_{n\in\mathbb{N}^{*}}$ where $\tilde{S}$ is the adjoint of $S$ in $\mathcal{H}^{0}$.}
\textcolor{black}{Since $S$ is of index $0,$ showing that $\ker S=\{0\}$} is also equivalent to (see \cite[Chapter 4]{Kato-book})
\begin{equation}
\label{eq:equiv}
\ker \tilde S=\{0\}\text{ where }\tilde S\text{ is the adjoint of }S.
\end{equation}

Let us introduce
\begin{equation}
\label{q_n_tilde}
\widetilde{q_n}:=\sum_{p\in \mathbb{N}^*}\frac{\varphi_p}{\overline{\lambda_p-\lambda_n+\lambda}}.
\end{equation}
We first claim the following
 \begin{lem}
 \label{lem:adjoint}
  The adjoint of the operator $S$ in $\mathcal{H}^0$ is 
  \begin{equation}
\tilde S: \varphi_n \mapsto \widetilde{q_n}.
\end{equation}

 \end{lem}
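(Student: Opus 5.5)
The plan is to compute the matrix coefficients of $S$ in the Hilbert space $\mathcal{H}^0$ and read the adjoint off from them, using that $(\varphi_n)_{n\in\mathbb{N}^*}$ is an orthonormal basis of $\mathcal{H}^0$ for the inner product \eqref{eq:inner} with $s=0$. By Proposition \ref{prop:fredholm} with $r=0$ (admissible since $\alpha>1$), $S\in\mathcal{L}(\mathcal{H}^0)$, so its adjoint $\tilde S$ exists and is bounded. Moreover, $\tilde S$ is uniquely determined by the relations $\langle \tilde S\varphi_n,\varphi_p\rangle_{\mathcal{H}^0}=\langle \varphi_n,S\varphi_p\rangle_{\mathcal{H}^0}$ for all $n,p$, by linearity and continuity together with the density of the span of the $\varphi_p$.

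First compute $\langle S\varphi_p,\varphi_n\rangle_{\mathcal{H}^0}$. By \eqref{q_n}, $S\varphi_p=q_p=\sum_k (\lambda_p-\lambda_k+\lambda)^{-1}\varphi_k$. This series converges in $\mathcal{H}^0$, since by Lemma \ref{lem:lambda} and \eqref{ln-lp_cond} its coefficients are square summable. By orthonormality,
\begin{equation*}
\langle S\varphi_p,\varphi_n\rangle_{\mathcal{H}^0}=\frac{1}{\lambda_p-\lambda_n+\lambda}.
\end{equation*}
Hence
\begin{equation*}
\langle \varphi_n, S\varphi_p\rangle_{\mathcal{H}^0}=\frac{1}{\overline{\lambda_p-\lambda_n+\lambda}}.
\end{equation*}

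Next, $\widetilde{q_n}$ from \eqref{q_n_tilde} lies in $\mathcal{H}^0$ by the same bound. Its coefficients are $\overline{(\lambda_p-\lambda_n+\lambda)}^{-1}$, and
\begin{equation*}
\big|\overline{\lambda_p-\lambda_n+\lambda}\big|=|\lambda_p-\lambda_n+\lambda|\geq C(\lambda)|\lambda_p-\lambda_n|\geq C(\lambda)\,C\,p^{\alpha-1}|n-p|
\end{equation*}
for $p\neq n$, where the constant $C(\lambda)$ comes from Lemma \ref{lem:lambda} with the roles of $n$ and $p$ swapped. The latter bound makes the coefficients $O(|n-p|^{-1})$, which are square summable. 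Therefore $\langle \widetilde{q_n},\varphi_p\rangle_{\mathcal{H}^0}=\overline{(\lambda_p-\lambda_n+\lambda)}^{-1}=\langle \varphi_n,S\varphi_p\rangle_{\mathcal{H}^0}$ for every $p$. So $\tilde S\varphi_n$ and $\widetilde{q_n}$ have the same coordinates, and $\tilde S\varphi_n=\widetilde{q_n}$.

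The only delicate point is making sure the pairing is the $\mathcal{H}^0$ one and not that of $X$. The basis is only Riesz in $X$, so the formula holds for the adjoint with respect to $\langle\cdot,\cdot\rangle_{\mathcal{H}^0}$, which is exactly what the lemma states. A related point is checking that the needed separation $\lambda_p-\lambda_n+\lambda\neq0$ holds; this is guaranteed because $\lambda\notin\mathcal{N}$, and $\mathcal{N}$ is symmetric in $(n,p)$.
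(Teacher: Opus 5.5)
Your proof is correct and is essentially the paper's argument: both identify the adjoint through the orthonormality of $(\varphi_n)$ in $\mathcal{H}^0$, comparing the coefficients $(\lambda_n-\lambda_p+\lambda)^{-1}$ with their conjugate transposes. The paper expands $\langle Sf,g\rangle_{\mathcal{H}^0}$ and $\langle f,\tilde S g\rangle_{\mathcal{H}^0}$ for general $f,g$ and swaps the double series without comment, whereas you check only the matrix entries and extend by the boundedness of $S$ from Proposition~\ref{prop:fredholm}, which avoids having to justify that interchange.
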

 \begin{proof}
    Indeed, consider any $f=\displaystyle\sum_{n\in \mathbb{N^*}}f_n\varphi_n,\in \mathcal{H}^0$ and $g=\displaystyle\sum_{n\in \mathbb{N^*}}g_n\varphi_n\in \mathcal{H}^0.$ The following holds
 \begin{align*}
     \langle Sf,g\rangle_{\mathcal{H}^0}
     &=\langle\sum_{n\in \mathbb{N^*}}f_n q_n, \sum_{n\in \mathbb{N^*}}g_n\varphi_n\rangle_{\mathcal{H}^0}\\
     &=\langle \sum_{n\in \mathbb{N^*}}f_n \sum_{p\in \mathbb{N}^*}\frac{\varphi_p}{\lambda_n-\lambda_p+\lambda}, \sum_{n\in \mathbb{N^*}}g_n\varphi_n\rangle_{\mathcal{H}^0}\\
     &=\langle\sum_{p\in \mathbb{N^*}}\varphi_p  \sum_{n\in \mathbb{N}^*}\frac{f_n}{\lambda_n-\lambda_p+\lambda}, \sum_{n\in \mathbb{N^*}} g_n\varphi_n\rangle_{\mathcal{H}^0}\\
     &=\sum_{p\in \mathbb{N^*}}\sum_{n\in \mathbb{N}^*}\frac{\overline{g_{p}} f_n}{\lambda_n-\lambda_p+\lambda}.
 \end{align*}
 In the same way, we have the following
 \begin{align*}
 \langle f,\tilde Sg\rangle_{\mathcal{H}^0}
             &=\langle\sum_{n\in \mathbb{N}^*}f_n \varphi_n, \sum_{n\in \mathbb{N}^*}g_n\widetilde{q_n}\rangle_{\mathcal{H}^0}\\
             &=\langle\sum_{n\in \mathbb{N^*}}f_n\varphi_n , \sum_{p\in \mathbb{N^*}} \varphi_p\sum_{n\in \mathbb{N}^*}\frac{g_n}{\overline{\lambda_p-\lambda_n+\lambda}}\rangle_{\mathcal{H}^0}\\
            &=\sum_{p\in \mathbb{N^*}} \sum_{n\in \mathbb{N}^*}\frac{f_p\overline{g_{n}}}{\lambda_p-\lambda_n+\lambda}
 \end{align*}
 Thus, we have $\langle Sf,g\rangle=\langle f,\tilde Sg\rangle$ for all $f,g\in {\mathcal{H}^0}.$ 
 \end{proof}

In view of \eqref{S_c-expression}, \eqref{eq:equiv}, $S$ is an isomorphism is equivalent to $\ker S=\{0\}$ or $\ker \tilde S=\{0\},$ which in turn is equivalent to: 
\begin{align*}
   \forall (f_{n})_{n}\in l^{2}, \sum_{n\in \mathbb{N}^*}f_n q_n=0 \Leftrightarrow f_n=0\ \forall n\in \mathbb{N}^*,\ \text{or}\ \forall (f_{n})_{n}\in l^{2} \sum_{n\in \mathbb{N}^*} f_n \widetilde{q_n}=0\Leftrightarrow  f_n=0\ \forall n\in \mathbb{N}^*.  
\end{align*}
This means that $S$ is an isomorphism from ${\mathcal{H}^0}$ to itself is equivalent to $(q_n)_{n\in\mathbb{N}^*}$ is $\omega$-independent in ${\mathcal{H}^0}$ or $(\widetilde{q_n})_{n\in\mathbb{N}^*}$ is $\omega$-independent in ${\mathcal{H}^0}$. With a classical argument (see for instance \cite{Coron2018-rapid,Gagnon2022-fredholm-laplacien,Gagnon2022-fredholm}), one can show that $(q_n)_{n\in\mathbb{N}^*}$ is either $\omega$-independent in ${\mathcal{H}^0}$ or ${\mathcal{H}^0}$-dense for any $\lambda\in \mathcal{N}$. This is done in Appendix \ref{sec-appendix-q}. So, \textcolor{black}{to show that $S$ is an isomorphism from $\mathcal{H}^{0}$ into itself, it suffices to show} the following Lemma:
\begin{lem}
    $(q_n)_{n\in\mathbb{N}^*}$ is ${\mathcal{H}^0}$-dense $\Leftrightarrow$ $(\widetilde{q_n})_{n\in\mathbb{N}^*}$ is $\omega$-independent in ${\mathcal{H}^0}$.
\end{lem}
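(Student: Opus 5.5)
The plan is to show that both properties are the same statement: that the only $g\in\mathcal{H}^0$ orthogonal to every $q_p$ is $g=0$. The tool is the adjoint relation of Lemma \ref{lem:adjoint}. Here "$(\widetilde{q_n})_{n}$ is $\omega$-independent" means that for $(f_n)_n\in l^2$, $\sum_n f_n\widetilde{q_n}=0$ forces $f_n=0$ for every $n$. Since $(\varphi_n)_n$ is an orthonormal basis of $\mathcal{H}^0$, for $f=\sum_n f_n\varphi_n$ this series is exactly $\tilde S f$. So $\omega$-independence of $(\widetilde{q_n})_n$ is equivalent to $\ker\tilde S=\{0\}$.

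\textbf{Step 1: coefficients of $\tilde S f$.} I would not expand the double series. Because $S\in\mathcal{L}(\mathcal{H}^0)$ by Step 1, its adjoint $\tilde S$ is bounded, and the series $\sum_n f_n\widetilde{q_n}$ converges in $\mathcal{H}^0$ to $\tilde S f$. Its coefficient on $\varphi_p$ is
\begin{equation*}
\langle \tilde S f,\varphi_p\rangle_{\mathcal{H}^0}=\langle f,S\varphi_p\rangle_{\mathcal{H}^0}=\langle f,q_p\rangle_{\mathcal{H}^0}.
\end{equation*}
Hence $\sum_n f_n\widetilde{q_n}=0$ if and only if $\langle f,q_p\rangle_{\mathcal{H}^0}=0$ for every $p\in\mathbb{N}^*$. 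A direct check agrees: both sides equal $\overline{\sum_n \overline{f_n}/(\lambda_p-\lambda_n+\lambda)}$, up to conjugation.

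\textbf{Step 2: conclude.} The family $(q_n)_n$ is $\mathcal{H}^0$-dense exactly when its orthogonal complement $\{f\in\mathcal{H}^0:\ \langle f,q_p\rangle_{\mathcal{H}^0}=0\ \forall p\}$ is $\{0\}$. By Step 1 this complement is precisely $\ker\tilde S$, the set of $\ell^2$ coefficient sequences $(f_n)_n$ with $\sum_n f_n\widetilde{q_n}=0$. Therefore density of $(q_n)_n$ is equivalent to $\ker\tilde S=\{0\}$, which is $\omega$-independence of $(\widetilde{q_n})_n$. Equivalently, $\ker \tilde S=(\mathrm{Ran}\, S)^{\perp}$. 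Moreover $\overline{\mathrm{span}}(q_n)_n=\overline{\mathrm{Ran}\, S}$, since $S$ is bounded and maps finite combinations of the $\varphi_n$ onto $\mathrm{span}(q_n)_n$.

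\textbf{Main obstacle.} The only delicate point is rigor in the series manipulations: convergence of $\sum_n f_n\widetilde{q_n}$ and identification of its coefficients. Working through the bounded operators $S$ and $\tilde S$ (Proposition \ref{prop:fredholm} and Lemma \ref{lem:adjoint}) handles this without any Fubini argument on non-absolutely convergent double sums.
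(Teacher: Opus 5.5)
Your proposal is correct and follows essentially the paper's argument: both identify the $\varphi_p$-coefficient of $\sum_n f_n\widetilde{q_n}$ with $\langle f,q_p\rangle_{\mathcal{H}^0}$, so that the orthogonal complement of $(q_n)_{n\in\mathbb{N}^*}$ is exactly the set of $\ell^2$ relations among the $\widetilde{q_n}$. The only difference is in the justification: the paper obtains this identification by an unjustified Fubini interchange of the double series, whereas you get it from $\langle \tilde S f,\varphi_p\rangle_{\mathcal{H}^0}=\langle f,S\varphi_p\rangle_{\mathcal{H}^0}$ with $S$ bounded on $\mathcal{H}^0$ (Proposition \ref{prop:fredholm}), which makes the step rigorous.
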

\begin{proof}
    Assume that $(q_n)_{n\in\mathbb{N}^*}$ is not ${\mathcal{H}^0}$-dense. Then there exists a non trivial $(f_n)_{n\in \mathbb{N}^*}\in l^{2}$ such that for any $n\in\mathbb{N}^*$ $\langle q_n,f\rangle_{\mathcal{H}^0}=0$ where $f=\displaystyle\sum_{n\in \mathbb{N}^*}f_n \varphi_n.$
    \begin{align*}
        \langle q_n,f\rangle_{\mathcal{H}^0}=0&\Leftrightarrow \sum_{p\in \mathbb{N}^*}\frac{\overline{f_p}}{\lambda_n-\lambda_p+\lambda}=0\\
        &\Leftrightarrow \sum_{p\in \mathbb{N}^*}\frac{f_p}{\overline{\lambda_n-\lambda_p+\lambda}}=0
    \end{align*}
Thus, $\displaystyle\sum_{p\in \mathbb{N}^*}f_p \widetilde{q_p}=\displaystyle\sum_{p\in \mathbb{N}^*}f_p\displaystyle\sum_{n\in \mathbb{N}^*}\frac{\varphi_n}{\overline{\lambda_n-\lambda_p+\lambda}}=\displaystyle\sum_{n\in \mathbb{N}^*}\varphi_n\displaystyle\sum_{p\in \mathbb{N}^*}\frac{f_p}{\overline{\lambda_n-\lambda_p+\lambda}}=0,$ showing that $(\widetilde{q_n})_{n\in\mathbb{N}^*}$ is not $\omega$-independent.
The converse is straightforward: let us assume that $(\widetilde{q_n})_{n\in\mathbb{N}^*}$ is not $\omega$-independent. Then, there exists non trivial $(f_n)_{n\in \mathbb{N}}\in l^{2}$ such that $\displaystyle\sum_{p\in \mathbb{N}^*} f_p \widetilde{q_p}=0$.
\begin{align*}
  \sum_{p\in \mathbb{N}^*} f_p \widetilde{q_p}=0
  &\Leftrightarrow\sum_{n\in \mathbb{N}^*}\varphi_n\sum_{p\in \mathbb{N}^*}\frac{ f_p}{\overline{\lambda_n-\lambda_p+\lambda}}=0\\
  &\Leftrightarrow \sum_{p\in \mathbb{N}^*}\frac{ f_p}{\overline{\lambda_n-\lambda_p+\lambda}}=0
\end{align*} Thus by setting $f=\sum_{n\in \mathbb{N}^*} f_n \varphi_n,\in {\mathcal{H}^0}$ it holds that
$\langle f,q_n\rangle_{\mathcal{H}^0}=\sum_{p\in \mathbb{N}^*}\frac{ f_p}{\overline{\lambda_n-\lambda_p+\lambda}}=0,$ 
meaning that $(q_n)_{n\in\mathbb{N}^*}$ is not ${\mathcal{H}^0}$-dense. 
\end{proof}

\subsection{Step 3: $S$ is an isomorphism from $\mathcal{H}^{r}$ to itself for any $r\in(1/2-\alpha,\alpha-1/2)$}
\label{sec:step3}

\textcolor{black}{The idea of this step is to first leverage Step 2 to show that $S$ is an isomorphism from $\mathcal{H}^{r}$ to itself for any $r\in[0,\alpha-1/2)$, and then repeat the idea of Step 2 for negative $r$ using that, this time, the duality between the density and $\omega$-independence of $(S\varphi_{n})_{n\in\mathbb{N}^{*}}$ and the adjoint family does not hold in $\mathcal{H}^{r}$ but between $\mathcal{H}^{r}$ and $\mathcal{H}^{-r}$.}\\

Let us now look the operators $S, \tilde S$ as their extension on $\mathcal{H}^r$ defined as
\begin{align*}
    S: n^{-r}\varphi_n\mapsto n^{-r}q_n,\\
    \tilde S: n^{-r}\varphi_n\mapsto n^{-r}\widetilde{q_n}.
\end{align*} Then, $(n^{-r}q_n)_{n\in\mathbb{N}^*}$ is a Riesz basis in $\mathcal{H}^r$ if and only if $S$ is an isomorphism from $\mathcal{H}^r$ to itself. Since $S$ is a Fredholm operator of index $0,$ this is equivalent to $\ker S=\{0\}$. Like in the case $r=0$ this is equivalent to show that for any $(f_n)_{n\in \mathbb{N}^*}\in l^2$
\begin{align}
   \sum_{n\in \mathbb{N}^*}f_n n^{-r}q_n=0 \Leftrightarrow f_n=0\ \forall n\in \mathbb{N}^*.\label{w_indepen_negatif}
\end{align} 
\textcolor{black}{So we have} the following different cases:
\begin{itemize}
\item $r\in[0,\alpha-1/2),$ \\
Then $(n^{-r}f_n)_{n\in \mathbb{N}^*}\in l^2.$ And, as $(q_n)_{n\in \mathbb{N}^*}$ is a Riesz basis in ${\mathcal{H}^0},$ it holds that
\begin{align*}
    \sum_{n\in \mathbb{N}^*}f_n n^{-r}q_n=0 &\Leftrightarrow n^{-r}f_n=0\quad  \forall n\in \mathbb{N}^*\\
    &\Leftrightarrow f_n=0\quad  \forall n\in \mathbb{N}^*,
\end{align*}
thus $S$ is an isomorphism from $\mathcal{H}^{r}$ to itself for any $r\in[0,\alpha-1/2).$\\

\item $r\in(1/2-\alpha,0)$
Showing the property \eqref{w_indepen_negatif} is equivalent to show that $(n^{-r}q_n)_{n\in\mathbb{N}^*}$ is $\omega$-independent in $\mathcal{H}^r.$ We assume here by contradiction that $(n^{-r}q_n)_{n\in\mathbb{N}^*}$ is not $\omega$-independent in $\mathcal{H}^r.$ Then, there exists a non trivial $(f_n)_{n\in \mathbb{N}^*}\in l^2$ such that $\sum_{n\in \mathbb{N}^*}f_n n^{-r}q_n=0.$ Projecting this on the vector $m^{-r}\varphi_m,$ for any $m\in \mathbb{N}^*$ we get
\begin{align}
    0&=\langle\sum_{n\in \mathbb{N}^*}f_n n^{-r}q_n,m^{-r}\varphi_m\rangle_{\mathcal{H}^r}\nonumber\\
    &=\langle\sum_{p\in \mathbb{N}^*}\varphi_p\sum_{n\in \mathbb{N}^*}\frac{f_n n^{-r}}{\lambda_n-\lambda_p+\lambda},m^{-r}\varphi_m\rangle_{\mathcal{H}^r}\nonumber\\
    0&=\sum_{n\in \mathbb{N}^*}\frac{f_n n^{-r}m^r}{\lambda_n-\lambda_m+\lambda}, \quad \forall m\in \mathbb{N}^*.\label{sum=0}
\end{align}

Let us set $f:=\sum_{n\in \mathbb{N}^*}f_n n^{r}\varphi_n.$ As $(f_n)_{n\in \mathbb{N}^*}\in l^2,$ it holds that $f\in \mathcal{H}^{-r}.$ Since $(n^r\varphi_n)_{n\in \mathbb{N}^*}$ is a Riesz basis of $\mathcal{H}^{-r}$ and $(f_n)_{n\in \mathbb{N}^*}$ is non trivial, we have that $f\neq 0$. Since we showed in the previous case that $S$ is an isomorphism on $\mathcal{H}^r$ for $r\in [0,\alpha-1/2),$ then $\tilde S$ is also an isomorphism from $\mathcal{H}^r$ to itself for $r\in [0,\alpha-1/2),$ as the adjoint of $S.$ 
Thus, $(n^{-r}\widetilde{q_n})_{n\in \mathbb{N}^*}$ is a Riesz basis of $\mathcal{H}^r$ for $r\in [0,\alpha-1/2).$ Then for $r\in (1/2-\alpha,0),$  $(n^{r}\widetilde{q_n})_{n\in \mathbb{N}^*}$ is a Riesz basis of $\mathcal{H}^{-r}.$ In particular $( n^{r}\widetilde{q_n})_{n\in \mathbb{N}^*}$ is $\mathcal{H}^{-r}$-dense. This ensures that there exists $m_0\in \mathbb{N}^*$ such that $\langle f,m_0^r\tilde q_{m_0}\rangle_{\mathcal{H}^{-r}}\neq 0.$ It follows that
\begin{align*}
    \langle f,m_0^r\tilde q_{m_0}\rangle_{\mathcal{H}^{-r}}=\sum_{n\in \mathbb{N}^*}\frac{f_n n^{-r}m_0^r}{\lambda_n-\lambda_{m_0}+\lambda}\neq 0
\end{align*} which contradicts \eqref{sum=0}. So $(n^{-r}q_n)_{n\in\mathbb{N}^*}$ is $\omega$-independent in $\mathcal{H}^r$ and we conclude that $S$ is an isomorphism from $\mathcal{H}^r$ to itself for any $r\in(1/2-\alpha,0).$
\end{itemize}
Therefore, $S$ is an isomorphism from $\mathcal{H}^r$ to itself for any $r\in(1/2-\alpha,\alpha-1/2)$.

\subsection{Step 4: an explicit candidate for $K$}
\label{sec:step4}
{\color{black}
\textcolor{black}{Recall that the overall goal is to build $T$ and $K$ such that $T$ is an isomorphism and satisfies the two operator equalities \eqref{eq:tbb} and \eqref{eq:eqoplin}. Since $T$ given by \eqref{eq:defT} depends on $K$ and satisfies (formally) the operator equality \eqref{eq:eqoplin}, the goal of this step is to find $K$ such that $T$ also satisfies the operator equality \eqref{eq:tbb}.} Consider $r\in (1/2-\alpha,\alpha-1/2).$ Let us set 
\begin{equation} \label{eq:deftau}
 \tau: n^{-r}\varphi_n\mapsto b_n n^{-r}\varphi_n,
\end{equation} with $b_n:=\langle B, \widetilde{\varphi_n}\rangle,$ and \begin{align}
   K:\varphi_n\mapsto K(\varphi_n)=K_n.\label{K-phi-n}
\end{align} 

For any $r\in (1/2-\alpha,\alpha-1/2),$ thanks to \eqref{control_cond}, we can see that the operator $\tau$ is an isomorphism from $\mathcal{H}^r$ to $\tau(\mathcal{H}^r).$ Notice that this last space is \emph{a priori} not $\mathcal{H}^r$ when $\gamma\neq0$. Thus, by seeing \textcolor{black}{that $T$ can be written} as follows
\begin{equation}
\label{eq:decompT2}
T : n^{-r}\varphi_{n}\mapsto -K_n n^{-r}\tau q_n,
\end{equation}  the boundedness by above and below of $K_n$ does not suffice to conclude that $T$ is an isomorphism from $\mathcal{H}^r$ to itself as done in \cite{Gagnon2022-fredholm}. Here, we rather see the \textcolor{black}{definition of $T$ as follows}
\begin{equation*}
T : n^{-r}\varphi_{n}\mapsto -K_n b_n (\frac{1}{b_n} n^{-r}\tau q_n).
\end{equation*} 
In Section \ref{sec:step6}, we will show that, under our assumption on $\gamma$, the operator $n^{-r}\varphi_{n}\mapsto  b_n^{-1} n^{-r}\tau q_n$ is an isomorphism from $\mathcal{H}^r$ to itself and then $(b_n^{-1} n^{-r}\tau q_n)_{n\in \mathbb{N}^*}$ forms a Riesz basis of $\mathcal{H}^r$. Note that the latter is not obvious \emph{a priori, since $\tau$ and $S$ do not commute thus $b_n^{-1} n^{-r}\tau q_n\neq n^{-r}q_{n}$.}
From this we will be able to deduce that the operator $T$ is an isomorphism from $\mathcal{H}^r$ to itself if $(K_{n} b_n)_{n\in\mathbb{N}^*}$ is uniformly bounded by above and below.\\
 
 Recall that our goal is to have $T$ that is a solution to \eqref{eq:eqop} and \eqref{eq:tbb}. In this section 
we would like to construct $K_n$ such that the normalization \eqref{eq:tbb} holds and such that $(K_{n}b_n)_{n\in\mathbb{N}}$ is bounded by above. The question of whether $(K_{n} b_n)_{n\in\mathbb{N}}$ is bounded by below is the object of Steps (5)-(6). We first state the following Lemma
\begin{lem}\label{lem:regk}
 There exists a unique sequence $(K_n)_{n\in{\mathbb{N}^*}}$ such that for any $\varepsilon \in (0,\alpha-1)$ the condition \eqref{eq:tbb} holds in $\tau(\mathcal{H}^{-\frac{1}{2}-\varepsilon})$ and $(-K_n b_n n^{-(\frac{1}{2}+\varepsilon)})_{n\in\mathbb{N}^*}\in l^2.$   
\end{lem}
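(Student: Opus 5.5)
Writing $TB=B$ in coordinates reduces the lemma to an expansion problem for $B$ in the family $(\tau q_n)$, which Step 3 solves. Formally $B=\sum_n b_n\varphi_n$, so by \eqref{eq:defT} (i.e. \eqref{eq:decompT2})
\[
TB=\sum_{n\in\mathbb{N}^*} b_n T\varphi_n=-\sum_{n\in\mathbb{N}^*} b_nK_n\,\tau q_n = \tau\Big(-\sum_{n\in\mathbb{N}^*} b_nK_n\, q_n\Big)=\tau S\Big(-\sum_{n\in\mathbb{N}^*} b_nK_n\varphi_n\Big).
\]
On the other hand $B=\tau\big(\sum_n \varphi_n\big)$. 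Since $\tau$ is injective (the $b_n$ are nonzero by \eqref{control-cond-general}), the condition \eqref{eq:tbb} in $\tau(\mathcal{H}^{-1/2-\varepsilon})$ is equivalent to the identity
\begin{equation*}
S\Big(\sum_{n\in\mathbb{N}^*} (-K_nb_n)\varphi_n\Big)=\sum_{n\in\mathbb{N}^*}\varphi_n \quad\text{in }\mathcal{H}^{-1/2-\varepsilon}.
\end{equation*}
So the plan is to define the unknown sequence $c_n:=-K_nb_n$, solve this equation for $(c_n)$ using Step 3, and set $K_n:=-c_n/b_n$.

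First I check that $\mathbf{1}:=\sum_n\varphi_n$ belongs to $\mathcal{H}^{-1/2-\varepsilon}$. This holds since $\sum_n n^{-1-2\varepsilon}<\infty$. For $\varepsilon\in(0,\alpha-1)$ we have $-1/2-\varepsilon\in(1/2-\alpha,-1/2)\subset(1/2-\alpha,\alpha-1/2)$, so by Step 3 (Section \ref{sec:step3}) $S$ is an isomorphism of $\mathcal{H}^{-1/2-\varepsilon}$. We may therefore define $c:=S^{-1}\mathbf{1}\in\mathcal{H}^{-1/2-\varepsilon}$, write $c=\sum_n c_n\varphi_n$, and set $K_n:=-c_n/b_n$. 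Membership $c\in\mathcal{H}^{-1/2-\varepsilon}$ means exactly $(c_n n^{-1/2-\varepsilon})_n\in l^2$, i.e. $(-K_nb_nn^{-(1/2+\varepsilon)})\in l^2$, which is the required regularity.

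Two points must be checked to make the formal computation above rigorous.
\begin{itemize}
\item The interchange $\sum_n c_n \tau q_n=\tau S c$ has to be justified. Convergence of $\sum c_n q_n=Sc$ in $\mathcal{H}^{-1/2-\varepsilon}$ follows from boundedness of $S$ there (Step 1). Then $\tau$ is applied termwise, $\tau$ being an isomorphism onto its image with the image topology.
\item The sequence must not depend on $\varepsilon$. For $\varepsilon'<\varepsilon$ we have $\mathcal{H}^{-1/2-\varepsilon'}\subset\mathcal{H}^{-1/2-\varepsilon}$, and $S$ acts compatibly on both spaces (same action on each $\varphi_n$). Hence injectivity of $S$ on the larger space forces $S^{-1}\mathbf{1}$ to coincide, so $(c_n)$ is independent of $\varepsilon$.
\end{itemize}

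Uniqueness follows the same way. If $(K_n)$ and $(K_n')$ both satisfy the requirements for some $\varepsilon$, then $c-c'\in\mathcal{H}^{-1/2-\varepsilon}$ and $S(c-c')=0$ after cancelling $\tau$. Injectivity of $S$ on $\mathcal{H}^{-1/2-\varepsilon}$ from Step 3 then gives $c=c'$, and since $b_n\neq0$ this yields $K_n=K_n'$.

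I expect the main obstacle to be the precise meaning of \eqref{eq:tbb} in $\tau(\mathcal{H}^{-1/2-\varepsilon})$. One must justify that $TB$ makes sense as $\sum_n b_nT\varphi_n$ even though $B$ need not lie in the space where $T$ is defined. The natural route is to take this series as the definition, its convergence being guaranteed by the $l^2$ weighted condition on $(c_n)$ together with the boundedness of $S$.
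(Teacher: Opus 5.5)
Your proposal is correct and follows essentially the same route as the paper, which writes $B=\tau\bigl(\sum_n\varphi_n\bigr)\in\tau(\mathcal{H}^{-1/2-\varepsilon})$ and expands it in the Riesz basis $\bigl(\tau(n^{1/2+\varepsilon}q_n)\bigr)_{n}$ of $\tau(\mathcal{H}^{-1/2-\varepsilon})$ given by Step 3. This is exactly your $c=S^{-1}\mathbf{1}$ followed by $\tau$. Your explicit checks that $(c_n)$ does not depend on $\varepsilon$ and that the sequence is unique spell out what the paper leaves implicit in the uniqueness of Riesz-basis coefficients.
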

\begin{proof}
    Noticing that $B=\sum_{n\in \mathbb{N}^*}b_n\varphi_n\in \mathcal{H}^{-\alpha/2}$ with $c_1\leq b_n\leq c_2 n^{\gamma},$ (recall that we assume $\beta=0$ to ease the notation, see Appendix \ref{app:beta} for $\beta\neq 0$)
    it holds from definition \eqref{eq:deftau} that $\tau$ is an isomorphism from $\mathcal{H}^r$ to $\tau(\mathcal{H}^r)$ for any $r\in (1/2-\alpha,\alpha-1/2).$ Thus for any $r\in (-1/2-\alpha,\alpha-1/2),$ $\tau (n^{-r}\varphi_n)_{n\in \mathbb{N}^*}$ forms a Riesz basis of $\tau(\mathcal{H}^r).$  Then the expression 
    \begin{align}
        B=\displaystyle\sum_{n\in \mathbb{N}^*} n^{-\frac{1}{2}-\varepsilon}\tau(n^{\frac{1}{2}+\varepsilon}\varphi_n) \label{B-express}
    \end{align} makes sense in $\tau(\mathcal{H}^{-\frac{1}{2}-\varepsilon})$  for any $\varepsilon>0$, since $\left(n^{-\frac{1}{2}-\varepsilon}\right)_{n\in \mathbb{N}^*}\in l^2$. In particular, the equation \eqref{B-express} shows that $B\in \tau(\mathcal{H}^{-\frac{1}{2}-\varepsilon})$ for all $\varepsilon>0$. 
    The condition \eqref{eq:tbb} can be expressed in a weak sense as
    \begin{align}
   \sum_{n\in \mathbb{N}^*}-K_n b_n \tau q_n=\sum_{n\in \mathbb{N}^*}b_n \varphi_n,\label{TB=B-expression}
\end{align}  and we yet know that its right side belongs to $\tau(\mathcal{H}^{-\frac{1}{2}-\varepsilon})$ for all $\varepsilon>0$. This means, in particular, that the left-hand side does too. As a consequence $(-K_n b_n n^{-(\frac{1}{2}+\varepsilon)})_{n\in \mathbb{N}^*}\in l^2$ for all $\varepsilon \in (0,\alpha-1)$.
Indeed, in view of step \ref{sec:step2}, $(n^{\frac{1}{2}+\varepsilon}q_n)$ is a Riesz basis of $\mathcal{H}^{-\frac{1}{2}-\varepsilon}$ for any $\varepsilon \in (0,\alpha-1)$, and since $\tau$ is an isomorphism from $\mathcal{H}^{-\frac{1}{2}-\varepsilon}$ to $\tau(\mathcal{H}^{-\frac{1}{2}-\varepsilon}),$ then $\tau(n^{\frac{1}{2}+\varepsilon} q_n)_n$ is a Riesz basis of $\tau(\mathcal{H}^{-\frac{1}{2}-\varepsilon})$.
\end{proof}
}

The regularity given by Lemma \ref{lem:regk} is not enough \emph{a priori}. To tackle this, we proceed as in \cite{Gagnon2022-fredholm} and exploit the special structure of $(q_{n})_{n\in\mathbb{N}}$ and the fact that $(q_{n})_{n\in\mathbb{N}}-\lambda^{-1}$ is more regular than $(q_{n})_{n\in\mathbb{N}}$ (see Lemma \ref{lem:tech}). We set 
 \begin{equation}
 \label{K_n-expression}
    k_n:=-(K_nb_n+\lambda)\quad \forall n\in \mathbb{N}^*.
\end{equation}

  \begin{lem}\label{Lemma-K_n}
Consider $r\in (1/2-\alpha,\alpha-1/2).$ There exists $\bar\varepsilon>0$ such that the sequence $(k_n n^{\varepsilon})_{n\in \mathbb{N}^*}$ is uniformly bounded for any $\varepsilon\in (0,\bar \varepsilon).$ In particular, the sequence $(K_n b_n)_{n\in \mathbb{N}^*}$ defined by \eqref{K_n-expression} is uniformly bounded.
  \end{lem}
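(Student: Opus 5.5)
We rewrite the normalization \eqref{TB=B-expression} so that the identity part $\lambda^{-1}Id$ of $S$ cancels, and what remains is governed by the compact, regularizing part $S_c$ of Proposition \ref{prop:fredholm}. We then close the argument by a fixed-point/bootstrap in the regularity of $(k_n)$.

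\textbf{Step (i): reformulation.} Write $q_n=\lambda^{-1}\varphi_n+S_c\varphi_n$ and substitute $-K_nb_n=k_n+\lambda$ into \eqref{TB=B-expression}. Projecting coefficientwise on $\varphi_p$, using the biorthogonal family, and dividing by $b_p\neq0$ (possible by \eqref{control_cond}), the condition $TB=B$ becomes, for every $p$,
\begin{equation*}
\sum_{n}\frac{k_n+\lambda}{\lambda_n-\lambda_p+\lambda}=1 .
\end{equation*}
Isolating the $n=p$ term, which equals $(k_p+\lambda)/\lambda$, gives
\begin{equation*}
k_p=-\lambda\sum_{n\neq p}\frac{k_n+\lambda}{\lambda_n-\lambda_p+\lambda}.
\end{equation*}
In operator form this reads $\lambda S k=-\lambda S_c\mathbf{1}$ with $k=\sum_n k_n\varphi_n$ and $\mathbf{1}=\sum_n\varphi_n$, that is $k=-\lambda S^{-1}S_c\mathbf 1$, interpreted via the sequence spaces $\mathcal{H}^s$.

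\textbf{Step (ii): regularity of $S_c\mathbf 1$.} Since $\mathbf 1\in\mathcal{H}^{-1/2-\delta}$ for every $\delta>0$, Lemma \ref{lemma-compact-sum} (with $r=-1/2-\delta$, admissible because $\alpha>1$) places $S_c\mathbf 1$ in $\mathcal{H}^{-1/2-\delta+\varepsilon}$. A sharper route is the direct pointwise estimate from Lemma \ref{lem:tech} with $s=0$:
\begin{equation*}
\Big|\sum_{n\neq p}\frac{1}{\lambda_n-\lambda_p+\lambda}\Big|\lesssim p^{1-\alpha}\log p+p^{-\alpha}.
\end{equation*}
Hence the inhomogeneous term decays like $p^{1-\alpha}\log p$, which gives $p^{-\bar\varepsilon}$ decay for any $\bar\varepsilon<\alpha-1$.

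\textbf{Step (iii): bootstrap on $k$ itself.} By Lemma \ref{lem:regk} and Step 3, $k$ is a priori in $\mathcal{H}^{-1/2-\delta}$, and $S^{-1}$ preserves each $\mathcal{H}^s$ for $s\in(1/2-\alpha,\alpha-1/2)$. An $\ell^2$-type bound does not give a pointwise bound, so we work in weighted $\ell^\infty$. Suppose $|k_n|\lesssim n^{\theta}$. Then Lemma \ref{lem:tech} with $s=\theta<\alpha-1$ yields
\begin{equation*}
|k_p|\lesssim \lambda\bigl(p^{1-\alpha+\theta}\log p+p^{1-\alpha}\log p+p^{-\alpha}\bigr),
\end{equation*}
so each pass improves the exponent by $\alpha-1>0$. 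Finitely many iterations therefore reach $|k_p|\lesssim p^{1-\alpha}\log p\lesssim p^{-\varepsilon}$ for all $\varepsilon<\bar\varepsilon:=\alpha-1$. This gives the uniform boundedness of $(k_nn^{\varepsilon})$, and hence of $K_nb_n=-(k_n+\lambda)$.

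\textbf{Main obstacle.} The difficulty is the starting point: an initial polynomial pointwise bound $|k_n|\lesssim n^{\theta_0}$ with $\theta_0<\alpha-1$. Lemma \ref{lem:regk} only gives $(k_n n^{-1/2-\delta})\in\ell^2$, which implies $|k_n|\lesssim n^{1/2+\delta}$. This lies below $\alpha-1$ only when $\alpha>3/2$. For $\alpha\in(1,3/2]$ I would first run the bootstrap in weighted $\ell^2$, using Lemma \ref{lemma-compact-sum} on the identity $k=-\lambda S_c(k+\lambda\mathbf 1)$. Each application gains $\varepsilon$ in Sobolev index until $k\in\mathcal{H}^{s}$ with $s>1/2$, which forces $k_n\to0$ and in particular boundedness. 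After that, the pointwise iteration of Step (iii) with $\theta=0$ applies and finishes the proof.
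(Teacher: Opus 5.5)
Your Step (i) reproduces the paper's identity \eqref{kn-def}; the operator form should read $Sk=-\lambda S_c\mathbf 1$, but your $k=-\lambda S^{-1}S_c\mathbf 1$ is right. For $\alpha>3/2$, your $\ell^\infty$ iteration started from $|k_n|\lesssim n^{1/2+\delta}$ with $1/2+\delta<\alpha-1$ is a valid alternative to the paper's argument. The paper instead reads off $(k_nn^{\varepsilon})_{n}\in l^2$ from the Riesz basis $\tau(n^{-\varepsilon}q_n)$ of $\tau(\mathcal{H}^{\varepsilon})$.

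The gap is the case $\alpha\in(1,3/2]$. A weighted-$\ell^2$ bootstrap on the full identity $k=-\lambda S_ck-\lambda^2S_c\mathbf 1$ can never exceed the regularity of the forcing term $\lambda^2S_c\mathbf 1$. Its coefficients are of size $p^{1-\alpha}$ (your own Step (ii)), so it lies in $\mathcal{H}^s$ only for $s<\alpha-3/2\le 0$. This is not a weakness of the estimates. Since $S$ is an isomorphism of $\mathcal{H}^s$ for every $s\in(1/2-\alpha,\alpha-1/2)$, the formula $k=-\lambda S^{-1}S_c\mathbf 1$ shows that $k$ has exactly the $\mathcal{H}^s$-regularity of $S_c\mathbf 1$. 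For instance, for $\lambda_n=-n^{\alpha}$ one finds $(S_c\mathbf 1)_p\sim c_\alpha p^{1-\alpha}$ with $c_\alpha=\frac{\pi}{\alpha}\cot\frac{\pi}{\alpha}\neq 0$, hence $k\notin\mathcal{H}^0$ (note that $k$ does not depend on $B$ at all). So your claim that the bootstrap reaches $k\in\mathcal{H}^s$ with $s>1/2$ is false, and it yields no bound. You also cannot simply hand the available $\ell^2$ information to Step (iii). Cauchy--Schwarz from $k\in\mathcal{H}^s$, $s<\alpha-3/2$, only gives $|k_p|\lesssim p^{5/2-2\alpha+\eta}$ for every $\eta>0$. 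That exponent is below the threshold $\alpha-1$ of Lemma \ref{lem:tech} only when $\alpha>7/6$.

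The missing idea is to keep the forcing and its iterates out of the $\ell^2$ bootstrap; this is what the paper's splitting into $e_n^i$ and $k_n^i$ does. Iterating the identity gives
\[
k=\sum_{j=1}^{N}e^{j}+(-\lambda S_c)^{N}k,\qquad e^{0}:=\lambda\mathbf 1,\quad e^{j+1}=-\lambda S_c e^{j}.
\]
Each $e^{j}$ is handled pointwise. By induction, since $|e^{j-1}_n|\lesssim 1$, Lemma \ref{lem:tech} with $s=0$ gives $|e^{j}_p|\lesssim p^{1-\alpha}\log p+p^{-\alpha}$ for every $j\geq 1$. The homogeneous remainder $(-\lambda S_c)^{N}k$ starts in $\mathcal{H}^{-1/2-\delta}$ and gains a fixed amount of regularity at each application of $S_c$. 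Lemma \ref{lemma-compact-sum} gives a fixed gain, and the paper's Cauchy--Schwarz estimate gives almost $\alpha-1$ per step. So for $N$ large the remainder lies in some $\mathcal{H}^{s}$ with $s>0$, whence $|((-\lambda S_c)^{N}k)_p|\lesssim p^{-s}$. Summing gives $|k_p|\lesssim p^{-\bar\varepsilon}$ for some $\bar\varepsilon>0$, for every $\alpha>1$.
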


{\color{black}
\begin{proof}[Proof of Lemma \ref{Lemma-K_n}]
For any $\varepsilon >0,$ it holds that $(n^{-\frac{1}{2}-\varepsilon}\lambda)_{n\in\mathbb{N}^{*}}\in l^2,$ thus using Lemma \ref{lem:regk} and \eqref{K_n-expression}, we have for all $\varepsilon\in (0,\alpha-1)$ that
$(k_n n^{-\frac{1}{2}-\varepsilon})_{n\in \mathbb{N}^*}\in l^2,$ and in particular
\begin{align} 
    (k_n n^{-r})_{n\in \mathbb{N}^*}\in l^2,\quad \forall r\in \left(\frac{1}{2},\alpha-\frac{1}{2}\right).\label{kn-regular0}
\end{align}
Still using Lemma \ref{lem:regk}, the equation \eqref{TB=B-expression} makes sense in $\tau(\mathcal{H}^{-\frac{1}{2}-\varepsilon}),$ and by expressing $K_n b_n$ and $\tau q_n$ it becomes
\begin{align*}
   \sum_{n\in \mathbb{N}^*}\lambda \sum_{p\in \mathbb{N}^*}\frac{b_{p}\varphi_p}{\lambda_n-\lambda_p+\lambda}+ \sum_{n\in \mathbb{N}^*}k_{n}\tau q_{n}= \sum_{n\in \mathbb{N}^*}b_n \varphi_n.
\end{align*}
This yields to the following
\begin{align}
   \sum_{n\in \mathbb{N}^*}\lambda \sum_{p\in \mathbb{N}^*\setminus\{n\}}\frac{b_{p}\varphi_p}{\lambda_n-\lambda_p+\lambda} = -\sum_{n\in \mathbb{N}^*}k_{n}\tau q_{n}\label{sum-regular}
\end{align}

which holds a priori in $\tau(\mathcal{H}^{-\frac{1}{2}-\varepsilon})$ for any $\varepsilon \in (0,\alpha-1).$ However, the equality \eqref{sum-regular} \textcolor{black}{now holds in a more regular space} than $\tau(\mathcal{H}^{-\frac{1}{2}-\varepsilon}).$ Indeed, if $\alpha>3/2,$ we can give a sense to \eqref{sum-regular} in $\tau(\mathcal{H}^{\varepsilon})$ for $0\leq \varepsilon\leq \alpha-3/2.$ To prove that, let us first notice by Fubini's Theorem in $\tau(\mathcal{H}^{-\frac{1}{2}-\varepsilon})$ that
\begin{align*}
   \sum_{n\in \mathbb{N}^*}\lambda \sum_{p\in \mathbb{N}^*\setminus\{n\}}\frac{b_{p}\varphi_p}{\lambda_n-\lambda_p+\lambda} = \sum_{p\in \mathbb{N}^*}b_{p}\varphi_p\lambda \sum_{n\in \mathbb{N}^*\setminus\{p\}}\frac{1}{\lambda_n-\lambda_p+\lambda}.
\end{align*}
And it holds that 
\begin{align*}
    \sum_{p\in \mathbb{N}^*}b_{p}\varphi_p\lambda \sum_{n\in \mathbb{N}^*\setminus\{p\}}\frac{1}{\lambda_n-\lambda_p+\lambda}=\sum_{p\in \mathbb{N}^*}\tau(p^{-\varepsilon}\varphi_p) \left(p^{\varepsilon}\lambda\sum_{n\in \mathbb{N}^*\setminus\{p\}}\frac{1}{\lambda_n-\lambda_p+\lambda}\right).
\end{align*}
Since $\tau(p^{-\varepsilon}\varphi_p)_{p\in \mathbb{N}^*}$ is a Riesz basis of $\tau(\mathcal{H}^{\varepsilon}),$ it suffices to show that \[\left(p^{\varepsilon}\lambda\displaystyle\sum_{n\in \mathbb{N}^*\setminus\{p\}}\frac{1}{\lambda_n-\lambda_p+\lambda}\right)_{p\in \mathbb{N}^*}\in l^2\] to conclude that the left hand side of \eqref{sum-regular} belongs to  $\tau(\mathcal{H}^{\varepsilon}).$ Using Lemma \ref{lem:lambda} and Lemma \ref{lem:tech}, we have that
\begin{align*}
    \left\|\left(p^{\varepsilon}\lambda\displaystyle\sum_{n\in \mathbb{N}^*\setminus\{p\}}\frac{1}{\lambda_n-\lambda_p+\lambda}\right)_{p\in \mathbb{N}^*}\right\|^2_{l^2}&=\sum_{p\in \mathbb{N}^*}p^{2\varepsilon}\lambda^2\left|\displaystyle\sum_{n\in \mathbb{N}^*\setminus\{p\}}\frac{1}{\lambda_n-\lambda_p+\lambda}\right|^2\\
    &\leq \frac{\lambda^2}{C(\lambda)^2}\sum_{p\in \mathbb{N}^*}p^{2\varepsilon}\left(\displaystyle\sum_{n\in \mathbb{N}^*\setminus\{p\}}\frac{1}{|\lambda_n-\lambda_p|}\right)^2\\
    &\lesssim\sum_{p\in \mathbb{N}^*} p^{2\varepsilon+2(1-\alpha)}\log^2(p),
\end{align*} 
and this converges for $\varepsilon \in (0,\alpha-3/2).$ So if $\alpha>3/2,$ the left hand side of \eqref{sum-regular} holds in $\tau(\mathcal{H}^{\varepsilon})$ for $\varepsilon \in (0,\alpha-3/2).$\\ 

Consider now the right hand side of \eqref{sum-regular}. We have that
\begin{align}
\label{eq:decompkn1}
   \sum_{n\in \mathbb{N}^*}k_{n}\tau q_{n}=\sum_{n\in \mathbb{N}^*}k_{n}n^{\varepsilon}\tau(n^{-\varepsilon} q_{n}). 
\end{align} 
Recalling that $(n^{-\varepsilon} q_{n})_{n\in \mathbb{N}^*}$ is a Riesz basis of $\mathcal{H}^{\varepsilon}$ for $\varepsilon \in (0,\alpha-3/2),$ it holds by the isomorphism property of $\tau$ that $\tau(n^{-\varepsilon} q_{n})_{n\in \mathbb{N}^*}$ is a Riesz basis of $\tau(\mathcal{H}^{\varepsilon}).$ Thus in view of \eqref{sum-regular} and \eqref{eq:decompkn1}, 
for any $\varepsilon \in (0,\alpha-3/2),$
\begin{align*}
    (k_n n^{\varepsilon})_{n\in \mathbb{N}^*}\in l^2.
\end{align*} 
In particular, we conclude that if $\alpha>3/2$, $(k_n)_{n\in \mathbb{N}^*}\in l^{\infty}.$
In view of \eqref{K_n-expression}, we have $K_n b_n=-(\lambda+k_n)$ and since $(k_n)_{n\in \mathbb{N}^*}$ belongs to $l^{\infty},$ we get that 
\begin{align}
    (K_n b_n)_{n\in \mathbb{N}^*}\in l^{\infty}\quad  \forall \alpha>3/2.\label{Kn-l-infty1}
\end{align} 
Consider now $1<\alpha\leq 3/2.$ In this case the gain of regularity in \eqref{sum-regular} is not enough to deduce the boundedness of $K_n b_n.$ However we are able to show that the equality \eqref{sum-regular} holds in $\tau(\mathcal{H}^{-\varepsilon}),$ for $\varepsilon>3/2-\alpha.$ Indeed, similarly to the previous case, the following holds
\begin{align*}
    \left\|\left(p^{-\varepsilon}\lambda\displaystyle\sum_{n\in \mathbb{N}^*\setminus\{p\}}\frac{1}{\lambda_n-\lambda_p+\lambda}\right)_{n\in \mathbb{N}^*}\right\|^2_{l^2}&=\sum_{p\in \mathbb{N}^*}p^{-2\varepsilon}\lambda^2\left|\displaystyle\sum_{n\in \mathbb{N}^*\setminus\{p\}}\frac{1}{\lambda_n-\lambda_p+\lambda}\right|^2\\
    &\leq \frac{\lambda^2}{C(\lambda)^2}\sum_{p\in \mathbb{N}^*}p^{-2\varepsilon}\left(\displaystyle\sum_{n\in \mathbb{N}^*\setminus\{p\}}\frac{1}{|\lambda_n-\lambda_p|}\right)^2\\
    &\lesssim\sum_{p\in \mathbb{N}^*} p^{-2\varepsilon+2(1-\alpha)}\log^2(p).
\end{align*} This converges for $\varepsilon>3/2-\alpha$ and then the left hand side of \eqref{sum-regular} belongs to $\tau(\mathcal{H}^{-\varepsilon}).$ 
Similarly as before and in view of \eqref{sum-regular}, 
\begin{align}
 (k_n n^{-\varepsilon})_{n\in\mathbb{N}^*}\in l^2 \quad \forall \varepsilon\in \left(\frac{3}{2}-\alpha, \alpha-\frac{1}{2}\right).\label{kn-regular1}  
\end{align}
We can see that we obtain $(\alpha-1)$ gain of regularity between \eqref{kn-regular1} and \eqref{kn-regular0}. So in the following, we will make an iterative principle to gain at each order this $(\alpha-1)$ regularity in order to prove Lemma \ref{Lemma-K_n}. So let us go back to the equation \eqref{TB=B-expression} and expressing $\tau q_n$
\begin{align*}
    \sum_{n\in \mathbb{N}^*}-K_n b_n\left(\frac{b_n\varphi_n}{\lambda}+\sum_{p\in \mathbb{N}^*\setminus\{n\}}\frac{b_p\varphi_p}{\lambda_n-\lambda_p+\lambda}\right)=\sum_{n\in \mathbb{N}^*}b_n\varphi_n.
\end{align*}
Replacing $-K_n b_n$ by $(\lambda+k_n)$ in the first term only and using Fubini's Theorem in $\tau(\mathcal{H}^{-\frac{1}{2}-\varepsilon})$ we get:
\begin{align*}
  \sum_{n\in \mathbb{N}^*}\frac{k_n b_n}{\lambda}\varphi_n-\sum_{p\in \mathbb{N}^*}b_p\varphi_p\sum_{n\in \mathbb{N}^*\setminus\{p\}}\frac{K_n b_n}{\lambda_n-\lambda_p+\lambda}=0.  
\end{align*}
Since $(\varphi_n)_{n\in \mathbb{N}^*}$ is a Riesz basis, we have by identification that
\begin{align*}
  \frac{k_m b_m}{\lambda}=b_m\sum_{n\in \mathbb{N}^*\setminus\{m\}}\frac{K_n b_n}{\lambda_n-\lambda_m+\lambda}\quad \forall m\in \mathbb{N}^*.  
\end{align*}
Thanks to the fact that $b_m\neq 0$ for all $m\in \mathbb{N}^*$ and $K_n b_n=-(\lambda+k_n),$ this implies that 
\begin{align*}
  k_m=-\lambda\sum_{n\in \mathbb{N}^*\setminus\{m\}}\frac{\lambda+k_n}{\lambda_n-\lambda_m+\lambda}\quad \forall m\in \mathbb{N}^*.  
\end{align*}
Let us now set $\lambda=e_n^0$ and $k_n=k_n^0.$ It holds that
\begin{align}
  k_m=-\lambda\sum_{n\in \mathbb{N}^*\setminus\{m\}}\frac{e_n^0+k_n^0}{\lambda_n-\lambda_m+\lambda}\quad \forall m\in \mathbb{N}^*.  \label{kn-def}
\end{align}
Thus $k_m$ can be rewritten as $k_m=e_m^1+k_m^1$ where
\begin{align*}
    e_m^1=-\lambda\sum_{n\in \mathbb{N}^*\setminus\{m\}}\frac{e_n^0}{\lambda_n-\lambda_m+\lambda}, \quad k_m^1=-\lambda\sum_{n\in \mathbb{N}^*\setminus\{m\}}\frac{k_n^0}{\lambda_n-\lambda_m+\lambda} \quad \forall m\in \mathbb{N}^*.
\end{align*}
And using Lemma \ref{lem:tech}, we have that
\begin{align}
    |e_m^1|\lesssim m^{1-\alpha}\log m+m^{-\alpha}\lesssim 1.\label{e-1}
\end{align} So, we focus on the regularity of $k_n^1.$ We define $\varepsilon_0:=\frac{3}{2}-\alpha$ and we have
\begin{align*}
    \|(m^{-\varepsilon} k_m^1)_m\|_{l^2}\lesssim \sum_{m\in \mathbb{N}^*}m^{-2\varepsilon}\left(\sum_{n\in \mathbb{N}^*\setminus\{m\}}\frac{k_n^0}{\lambda_n-\lambda_m+\lambda}\right)^2.
\end{align*}
Using Cauchy-Schwartz inequality and Lemma \ref{lem:lambda}, we get
\begin{align*}
    \|(m^{-\varepsilon} k_m^1)_m\|_{l^2}\lesssim \sum_{m\in \mathbb{N}^*}m^{-2\varepsilon}\left(\sum_{n\in \mathbb{N}^*\setminus\{m\}}\frac{|k_n^0|^2}{|\lambda_n-\lambda_m|}\right)\left(\sum_{n\in \mathbb{N}^*\setminus\{m\}}\frac{1}{|\lambda_n-\lambda_m|}\right).
\end{align*}
Then applying Lemma \eqref{lem:tech} and Fubini's \textcolor{black}{theorem}, we have
\begin{align*}
    \|(m^{-\varepsilon} k_m^1)_m\|_{l^2}&\lesssim \sum_{m\in \mathbb{N}^*}m^{-2\varepsilon+1-\alpha}\log m\left(\sum_{n\in \mathbb{N}^*\setminus\{m\}}\frac{|k_n^0|^2}{|\lambda_n-\lambda_m|}\right)\\
    &\lesssim \sum_{n\in \mathbb{N}^*}|k_n^0|^2\left(\sum_{m\in \mathbb{N}^*\setminus\{n\}}\frac{m^{-2\varepsilon+1-\alpha}\log m}{|\lambda_n-\lambda_m|}\right)\\
    &\lesssim \sum_{n\in \mathbb{N}^*}|k_n^0|^2n^{-2(\varepsilon-1+\alpha)+\sigma},
\end{align*} 
for any $\sigma>0$. As $k_n^0=k_n$ and in view of \eqref{kn-regular1}, for any $\varepsilon-(1-\alpha)\in \left(3/2-\alpha,\alpha-1/2\right)$ (i.e. $\varepsilon\in \left(5/2-2\alpha,1/2\right)$) there exists $\sigma>0$ such that this converges. Thus for $\alpha\in (1,3/2]$, the following holds
\begin{align}
    (n^{-\varepsilon} k_n^1)_{n\in \mathbb{N}^*}\in l^2, \quad \forall \varepsilon\in \left(\frac{5}{2}-2\alpha,\frac{1}{2}\right).\label{kn1-l2}
\end{align}
If $\alpha>\frac{5}{4},$ we have $\varepsilon_1=2\alpha-\frac{5}{2}>0$ and then $(n^{\delta} k_n^1)_{n\in \mathbb{N}^*}\in l^2$ for all $\delta\in[0,\varepsilon_1).$ This ensures that $(k_n^1)_{n\in \mathbb{N}^*}$ is uniformly bounded. And combining with \eqref{e-1}, it ensures that $(k_n)_{n\in \mathbb{N}^*}$ also is uniformly bounded. Therefore $(K_n b_n)_{n\in \mathbb{N}^*}$ is uniformly bounded because $K_n b_n=-(\lambda+k_n).$ \\
If $1<\alpha\leq\frac{5}{4},$ the uniform boundedness of $K_n b_n$ cannot be immediately deduced, but we need to iterate again. So using the definition \eqref{kn-def}, we have from $k_n^0=k_n,$ that
\begin{align*}
     k_m&=-\lambda\sum_{n\in \mathbb{N}^*\setminus\{m\}}\frac{e_n^0+k_n^0}{\lambda_n-\lambda_m+\lambda}\\
     &=-\lambda\sum_{n\in \mathbb{N}^*\setminus\{m\}}\frac{e_n^0}{\lambda_n-\lambda_m+\lambda}-\lambda\sum_{n\in \mathbb{N}^*\setminus\{m\}}\frac{k_n}{\lambda_n-\lambda_m+\lambda}\\
     &=-\lambda\sum_{n\in \mathbb{N}^*\setminus\{m\}}\frac{e_n^0}{\lambda_n-\lambda_m+\lambda}-\lambda\sum_{n\in \mathbb{N}^*\setminus\{m\}}\frac{1}{\lambda_n-\lambda_m+\lambda}\left(-\lambda\sum_{p\in \mathbb{N}^*\setminus\{n\}}\frac{e_p^0+k_p^0}{\lambda_p-\lambda_n+\lambda}\right).\\
\end{align*}
Referring to the definitions of $e_n^1$ and $k_n^1,$ it holds that
\begin{align}
 k_m&=e_m^1-\lambda\sum_{n\in \mathbb{N}^*\setminus\{m\}}\frac{e_n^1}{\lambda_n-\lambda_m+\lambda} -\lambda\sum_{n\in \mathbb{N}^*\setminus\{m\}}\frac{k_n^1}{\lambda_n-\lambda_m+\lambda}\nonumber\\
 &=e_m^1+e_m^2+k_m^2,\label{e1e2k2}
\end{align} where
\begin{align*}
  e_m^2=  -\lambda\sum_{n\in \mathbb{N}^*\setminus\{m\}}\frac{e_n^1}{\lambda_n-\lambda_m+\lambda}, \quad k_m^2=-\lambda\sum_{n\in \mathbb{N}^*\setminus\{m\}}\frac{k_n^1}{\lambda_n-\lambda_m+\lambda} \quad \forall m\in \mathbb{N}^*.
\end{align*}
Since $k_m=e_m^1+k_m^1,$ we have in view of \eqref{e1e2k2} that $k_m^1=e_m^2+k_m^2.$ This means that if $(e_m^2)_{m\in \mathbb{N}}$ and $(k_m^2)_{m\in \mathbb{N}}$ are uniformly bounded, then $(k_m^1)_{m\in \mathbb{N}}$ is and consequently $(k_m)_{m\in \mathbb{N}}.$ Let us first notice that combining \eqref{e-1} and Lemma \ref{lem:tech}, we have
\begin{align*}
    |e_m^2|\lesssim m^{1-\alpha}\log n+m^{-\alpha}\lesssim 1.
\end{align*}
For the $(k_m)_{m\in \mathbb{N}},$ we use again Lemma \eqref{lem:tech}  and Fubini's Theorem to have, similarly as previously:
\begin{align*}
    \|(m^{-\varepsilon} k_m^2)_m\|_{l^2}&\lesssim \sum_{m\in \mathbb{N}^*}m^{-2\varepsilon}\left(\sum_{n\in \mathbb{N}^*\setminus\{m\}}\frac{k_n^1}{\lambda_n-\lambda_m+\lambda}\right)^2\\
    &\lesssim \sum_{n\in \mathbb{N}^*}|k_n^1|^2n^{-2(\varepsilon-1+\alpha)+\sigma},
\end{align*}
for any $\sigma>0$.
In view of \eqref{kn1-l2}, there exists $\sigma>0$ such that this converges for any $\varepsilon -(1-\alpha)\in \left(\frac{5}{2}-2\alpha,\frac{1}{2}\right)$ meaning for $\varepsilon\in \left(\frac{7}{2}-3\alpha,\frac{3}{2}-\alpha\right).$ Thus, for $\alpha\in (1,3/2],$
\begin{align*}
    (n^{-\varepsilon}k_n^2)_{n\in \mathbb{N}^*}\in l^2, \quad \forall \varepsilon\in \left(\frac{7}{2}-3\alpha,\frac{3}{2}-\alpha\right).
\end{align*}

If $\alpha > 7/6,$ then $\varepsilon_2=3\alpha-\frac{7}{2}>0$ and $(n^{\delta}k_n^2)_{n\in \mathbb{N}^*}\in l^2,$ for any $\delta\in [0,\varepsilon_2).$ This ensures that $(k_n^2)_{n\in \mathbb{N}^*}$ is uniformly bounded and then $(k_n^2)_{n\in \mathbb{N}^*}$ is. Thus, $\left(K_n b_n=-(e_n^0+e^1_n+e_n^2+k_n^2)\right)_{n\in \mathbb{N}^*}$ is uniformly bounded.\\

If $1<\alpha\leq \frac{7}{6},$ we can continue the induction and have
$k_n^i=e_n^{i+1}+k_n^{i+1}$ for all $i\in \mathbb{N}$ where
\begin{align*}
  e_n^{i+1}=  -\lambda\sum_{m\in \mathbb{N}^*\setminus\{n\}}\frac{e_m^i}{\lambda_m-\lambda_n+\lambda}, \quad k_n^{i+1}=-\lambda\sum_{m\in \mathbb{N}^*\setminus\{n\}}\frac{k_m^i}{\lambda_m-\lambda_n+\lambda} \quad \forall n\in \mathbb{N}^*, i\in \mathbb{N}.
\end{align*}
And for any $n\in \mathbb{N}^*,$
\begin{align*}
    K_n b_n=-\left(k_n^i+\sum_{j=0}^{i}e_n^j\right), \quad \forall i\in \mathbb{N}
\end{align*}
Thus, based on the previous computations, we obtain by induction the following:
\begin{align}
    |e_n^i|\lesssim n^{1-\alpha}\log n+n^{-\alpha}\lesssim 1,\label{en-i}\\
    \|(n^{-\varepsilon}k_n^{i+1})_{n\in \mathbb{N}^*}\|_{l^2}\lesssim \sum_{n\in \mathbb{N}^*}|k_n^i|^2n^{-2(\varepsilon-1+\alpha)+\sigma}\log^2 n\label{nk-i+1}
\end{align} for any $\sigma>0$ and \eqref{nk-i+1} converges for $\varepsilon\in (-\varepsilon_i,-\varepsilon_{i-2})$ with $\varepsilon_i=(\alpha-1)i-\varepsilon_0$, \textcolor{black}{with $\varepsilon_{0}=1/2$,} for all $i\geq 2.$ It is worth stressing that there exists a finite $i_0\in \mathbb{N}$ such that $\varepsilon_{i_0}>0.$ In this case we have $(n^{\delta}k_n^{i_0+1})_{n\in \mathbb{N}^*}\in l^2$ for any $\delta \in [0,\varepsilon_{i_0})$ meaning that $(k_n^{i_0+1})_{n\in \mathbb{N}^*}\in l^{\infty}.$ Combining this with \eqref{en-i} we conclude that 
\begin{align}
    \left(K_n b_n=-\left(k_n^{i_0}+\sum_{j=0}^{i_0}e_n^j\right)\right)_{n\in \mathbb{N}^*}\in l^{\infty}\quad \forall \alpha\in (1, 3/2]. \label{Kn-l-infty2}
\end{align}
Therefore we conclude Lemma \ref{Lemma-K_n} from \eqref{Kn-l-infty1} and \eqref{Kn-l-infty2}.
\end{proof}
}
{\color{black}
\begin{rmk}
 The control feedback such defined is defined by its action on $(\varphi_{n})_{n\in\mathbb{N}^{*}}$ and one could wonder when this operator can expressed more simply. This could happen in particular cases where an explicit value of the sum can be derived such as in \cite{zhang2022internal}. In other cases, however $K$ might not have a more simple expression, this is a price to pay to have a feedback control $K$ that takes values in a finite-dimensional space rather than an infinite-dimensional space. 
\end{rmk}
}
\textcolor{black}{\begin{rmk}
    Another, but related, interesting question would be to see what would happen with a control feedback $\tilde{K}$ where the coefficients $k_{n}$ is replaced by $0$ after a certain step. This control would only require knowledge of $b_{n}$ and $\lambda_{n}$ up to a certain $N\in\mathbb{N}^{*}$. If the system is parabolic, thanks to the fact that high frequencies are stable, one can expect the system with control feedback law $\tilde{K}$ to still be stable. When the system is not parabolic however, the question is more complicated.
\end{rmk}}
\textcolor{black}{\begin{rmk}
    In \cite{Gagnon2025abstract}, the authors provide quantitative and sharp estimate for the control feedback $K$ in term of $\lambda$ when the operator $\mathcal{A}$ is self-adjoint. It would be interesting to understand if one can recover the same estimate in our case (at least in the case where we select $\gamma$ to be $0$) even if the system is not self-adjoint. 
\end{rmk}}

\subsection{Step 5: {\color{black} $T$ is bounded from $\mathcal{H}^{r}$ to $\mathcal{H}^{r}$ and satisfies the operator equality}}\label{sec:step5*}
In this section, we would like to prove that the \textcolor{black}{$F$-equivalence} transformation $T$ is bounded and the operator equality \eqref{eq:eqop} holds at least in some $\mathcal{H}^s$ space. More precisely, we prove the following Lemma:
\begin{lem}\label{Lemma-T-bounded}
    The operator $T$ given by \eqref{eq:defT} is a bounded operator from $\mathcal{H}^r$ to $\mathcal{H}^r$ for any $r\in(1/2-\alpha+\gamma,\alpha-1/2).$ Moreover, we have the following operator equality,
    \begin{align}
     T(\mathcal{A}+BK)=(\mathcal{A}-\lambda I)T\quad \text{in}\ \mathcal{L}(\mathcal{H}^{\alpha/2+s},\mathcal{H}^{-\alpha/2+s}), \quad \forall s\in \left(-\frac{\alpha-1}{2}+\gamma,\frac{\alpha-1}{2}-\gamma\right).  \label{opeq-space}
    \end{align}
\end{lem}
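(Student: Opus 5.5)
Write $T\varphi_n=-K_nb_n\,(b_n^{-1}\tau q_n)$ and split according to $K_nb_n=-(\lambda+k_n)$. Then $T$ is $\lambda$ times the operator $D:\varphi_n\mapsto b_n^{-1}\tau q_n$, plus a perturbation whose coefficients are $k_n$. By Lemma \ref{Lemma-K_n}, $(K_nb_n)$ is bounded, so boundedness of $T$ on $\mathcal{H}^r$ reduces to boundedness of $D$ on $\mathcal{H}^r$ composed with the bounded multiplier $\varphi_n\mapsto K_nb_n\varphi_n$. I will therefore study $D$ directly. Its matrix in the basis $(\varphi_p)$ is
\begin{equation*}
D\varphi_n=\frac{1}{\lambda}\varphi_n+\sum_{p\neq n}\frac{b_p/b_n}{\lambda_n-\lambda_p+\lambda}\varphi_p .
\end{equation*}
The diagonal part is harmless. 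Using \eqref{control_cond}, the off-diagonal part has entries bounded by $|b_p/b_n|\lesssim p^{\gamma}$ times $|\lambda_n-\lambda_p+\lambda|^{-1}$.

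For $a=\sum a_n n^{-r}\varphi_n$, the $\mathcal{H}^r$ norm squared of the off-diagonal part is
\begin{equation*}
\sum_p p^{2r}\Bigl|\sum_{n\neq p}\frac{(b_p/b_n)\,a_nn^{-r}}{\lambda_n-\lambda_p+\lambda}\Bigr|^2 .
\end{equation*}
Since $|b_p/b_n|\lesssim p^{\gamma}$, this is at most
\begin{equation*}
\sum_p p^{2(r+\gamma)}\Bigl(\sum_{n\neq p}\frac{|a_n|n^{-r}}{|\lambda_n-\lambda_p|}\Bigr)^2 ,
\end{equation*}
which is exactly the quantity in Lemma \ref{lemma-compact-sum} with the weight $p^{2r+2\varepsilon}$ replaced by $p^{2r+2\gamma}$. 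I would redo the Schur/Cauchy--Schwarz argument of that lemma, relying on Lemma \ref{lem:tech}, with the shift $\gamma$ in place of $\varepsilon$. This requires $\gamma<(\alpha-1)/2$, which is assumed, and a lower restriction on $r$. The lower restriction is the main obstacle. In the Cauchy--Schwarz step one sum is $\sum_{n\neq p} n^{-2r}w_n/|\lambda_n-\lambda_p|$ for suitable weights, and Lemma \ref{lem:tech} needs the exponent to be $<\alpha-1$ together with the summability of the resulting powers of $p$. Balancing the weights $n^{\theta}$ in the Schur test should produce the condition $r+\gamma>1/2-\alpha+\gamma$ at the low end, i.e. $r>1/2-\alpha+\gamma$, and $r+\gamma<\alpha-1/2+\ldots$ at the top. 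The upper end should be less restrictive because the factor is $p^{\gamma}$, not $n^{\gamma}$. I would first try the Schur test with weights $n^{\theta}$ and optimize $\theta$. If the log factors cause trouble, I would absorb them with an arbitrarily small loss, which is fine since the intervals are open. This gives $T\in\mathcal{L}(\mathcal{H}^r)$ for $r\in(1/2-\alpha+\gamma,\alpha-1/2)$.

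For the operator equality, I check it on each $\varphi_n$ and then extend by density and continuity. By construction of $h_n=T\varphi_n$ (see \eqref{T_expression}), $(\mathcal{A}-\lambda)T\varphi_n-T\mathcal{A}\varphi_n=K_nB$ holds coefficientwise in the weak sense. The normalization $TB=B$ (Lemma \ref{lem:regk}) turns $K_nB$ into $TBK(\varphi_n)$, giving $T(\mathcal{A}+BK)\varphi_n=(\mathcal{A}-\lambda)T\varphi_n$. It remains to see that both sides define bounded operators $\mathcal{H}^{\alpha/2+s}\to\mathcal{H}^{-\alpha/2+s}$. The map $\mathcal{A}$ sends $\mathcal{H}^{\alpha/2+s}\to\mathcal{H}^{-\alpha/2+s}$ by \eqref{ln+1_cond}, and $T$ is bounded on $\mathcal{H}^{\pm\alpha/2+s}$ by the first part, which requires $\pm\alpha/2+s$ to lie in the admissible range; this yields $|s|<(\alpha-1)/2-\gamma$. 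Finally, $BK$ is bounded because $K\in\mathcal{L}(\mathcal{H}^{1/2+\varepsilon})$ and $B\in\mathcal{H}^{-\alpha/2+s}$, since $b_n\lesssim n^{\gamma}$ and $s<(\alpha-1)/2-\gamma$. The identity on the dense span of $(\varphi_n)$ then extends to the whole space.
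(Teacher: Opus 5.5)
Your reduction is the paper's argument, reorganized. With $D=\tilde\tau$ of Lemma \ref{lem-tilde-tau}, you have $T=-D\,\mathrm{diag}(K_nb_n)=-\lambda^{-1}\mathrm{diag}(K_nb_n)-\tau S_c\,\mathrm{diag}(K_n)$, which is exactly the paper's $T_1+T_2$. Both arguments end at the weighted estimate of Lemma \ref{lemma-compact-sum} with $\varepsilon=\gamma$, and your operator-identity paragraph is also the paper's.

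\textbf{Gap at the upper end.} The step that fails is the upper end of the range, and your heuristic there is backwards: a factor $p^{\gamma}$ on the \emph{output} index is precisely what tightens the top. Test your own majorant on $a_n=\delta_{n1}$. It equals $\sum_{p\neq 1}p^{2(r+\gamma)}|\lambda_1-\lambda_p|^{-2}\asymp\sum_p p^{2(r+\gamma-\alpha)}$, which diverges once $r\ge\alpha-1/2-\gamma$, so no choice of Schur weights can reach $\alpha-1/2$.

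This is not a loss in the majorant. Take $b_p=p^{\gamma}$, which is allowed by \eqref{control-cond-general} with $\beta=0$. Take $n$ large enough that $K_n\neq0$; this is possible since $K_nb_n=-(\lambda+k_n)\to-\lambda$ by Lemma \ref{Lemma-K_n}. Then $T\varphi_n$ has coefficients of modulus $|K_n|p^{\gamma}/|\lambda_n-\lambda_p+\lambda|\gtrsim p^{\gamma-\alpha}$, so $T\varphi_n\notin\mathcal{H}^r$ for $r\ge\alpha-1/2-\gamma$. For such $B$ the first assertion of the lemma is false on $[\alpha-1/2-\gamma,\alpha-1/2)$ whenever $\gamma>0$.

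The paper's proof reaches that range only because Lemma \ref{lemma-compact-sum} is invoked with $\varepsilon=\gamma$ under the condition $\gamma<\min\{(\alpha-1)/2,\alpha+r-1/2\}$. That condition omits the constraint $r+\varepsilon<\alpha-1/2$, which the same one-column test ($S_c\varphi_1$ has coefficients $\asymp p^{-\alpha}$) shows is necessary.

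\textbf{What your approach does prove.} Once the Schur step is actually carried out, you get the corrected statement. Use Cauchy--Schwarz with weight $n^{\theta}$, Lemma \ref{lem:lambda}, and Lemma \ref{lem:tech} applied once in $n$ and once in $p$. This bounds your majorant by $\|a\|_{\mathcal{H}^r}^2$ whenever $\theta$ can be chosen in $(\max\{1-\alpha,\,2(r+\gamma)+2-2\alpha\},\,\min\{1,\,\alpha+2r\})$. Using $\gamma<(\alpha-1)/2$, this interval is nonempty exactly when $1/2-\alpha<r<\alpha-1/2-\gamma$.

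So the lower restriction is only $r>1/2-\alpha$. Your ``$r+\gamma>1/2-\alpha+\gamma$, i.e.\ $r>1/2-\alpha+\gamma$'' is an algebra slip, but a harmless one since it only shrinks the range. The upper restriction is $r<\alpha-1/2-\gamma$.

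This interval contains $(1/2-\alpha+\gamma,\alpha-1/2-\gamma)$, which is all the second part needs. Indeed, \eqref{opeq-space} for $|s|<(\alpha-1)/2-\gamma$ uses boundedness of $T$ on $\mathcal{H}^{\pm\alpha/2+s}$, i.e.\ exactly on that interval, and it is also the range used in Theorem \ref{Theo_main_general}. Your remark that boundedness of $T$ ``yields $|s|<(\alpha-1)/2-\gamma$'' is correct only for this corrected range; with the range you set out to prove, the upper limit on $s$ would come from $B\in\mathcal{H}^{-\alpha/2+s}$ alone.

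With the first assertion restricted to $r<\alpha-1/2-\gamma$, your proof, like the paper's, goes through.
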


\begin{proof}
We recall here that for any $r\in \left(\frac{1}{2}-\alpha, \alpha-\frac{1}{2}\right)$ --and therefore for any $r\in \left(\frac{1}{2}-\alpha+\gamma, \alpha-\frac{1}{2}\right)$-- the operator $T$ is defined as $T:\ n^{-r}\varphi_n\mapsto -n^{-r}K_n\tau q_n$ with $\tau$ an isomorphism from $\mathcal{H}^r$ to $\tau(\mathcal{H}^r)$ and $(K_n)_{n\in \mathbb{N}^*}$ uniformly bounded. Since the space $\tau(\mathcal{H}^r)$ is neither equal nor included in $\mathcal{H}^r$ a priori, what we are able to say is that $T$ is defined from $\mathcal{H}^r$ to $\tau(\mathcal{H}^r).$ Here we will exploit the regularity on $K_n b_n$ to first show that $T$ is well defined from $\mathcal{H}^r$ to itself and then deduce that it is bounded.\\

Let $r\in \left(\frac{1}{2}-\alpha+\gamma, \alpha-\frac{1}{2}\right)$ and 
 $a=\displaystyle\sum_{n\in \mathbb{N}^*}a_n n^{-r}\varphi_n\in \mathcal{H}^r,$ the following holds
\begin{align*}
    Ta&=-\sum_{n\in \mathbb{N}^*}a_n K_n n^{-r}\tau q_n\\
    &=-\sum_{n\in \mathbb{N}^*}a_n K_n n^{-r}\sum_{p\in \mathbb{N}^*}\frac{b_p\varphi_p}{\lambda_n-\lambda_p+\lambda}\\
    &=-\frac{1}{\lambda}\sum_{n\in \mathbb{N}^*}a_n K_n n^{-r}b_n\varphi_n-\sum_{n\in \mathbb{N}^*}a_n K_n n^{-r}\sum_{p\in \mathbb{N}^*\setminus\{n\}}\frac{b_p\varphi_p}{\lambda_n-\lambda_p+\lambda}\\
    &=-\frac{1}{\lambda}\sum_{n\in \mathbb{N}^*}a_n K_nb_n n^{-r}\varphi_n-\sum_{n\in \mathbb{N}^*}a_n K_n n^{-r}\tau S_c(\varphi_n).
\end{align*}
By setting 
\begin{align*}
    T_1 a =-\frac{1}{\lambda}\sum_{n\in \mathbb{N}^*}a_n K_nb_n n^{-r}\varphi_n, \quad T_2 a=-\sum_{n\in \mathbb{N}^*}a_n K_n n^{-r}\tau S_c(\varphi_n),
\end{align*} one has $Ta=T_1a+T_2a.$
Notice that since $(a_n)_{n\in \mathbb{N}^*}\in l^2$ and $(K_n b_n)_{n\in \mathbb{N}^*}\in l^{\infty},$ the term $T_1 a$ belongs to $\mathcal{H}^r$ and in particular we have:
\begin{align}
    \|T_1 a\|_{\mathcal{H}^r}\lesssim  \|f\|_{\mathcal{H}^r}.\label{T_1}
\end{align} 
So, it suffices to show that the second term $T_2 a$ belongs to $\mathcal{H}^r$ to conclude that $Ta\in \mathcal{H}^r$. 
We have the following claim.
\begin{claim}\label{Claim1}
    For any $r\in \left(\frac{1}{2}-\alpha,\alpha-\frac{1}{2}\right)$ and $s\geq \gamma$, the space $\tau(\mathcal{H}^{r+s})$ endowed with the norm
    \begin{equation}
    \label{eq:normspacetau}
        \|f\|_{\tau(\mathcal{H}^{r+s})} := \|\tau^{-1}f\|_{\mathcal{H}^{r+s}}
    \end{equation}
    is a Hilbert space, continuously embedded in $\mathcal{H}^{r}$.
\end{claim}
\begin{proof}
For any $q\in\mathbb{R}$, since $\tau$ is an isomorphism from the Hilbert space $\mathcal{H}^{q}$ into $\tau(\mathcal{H}^{q})$, the space $\tau(\mathcal{H}^{r+s})$ is a Hilbert space with the norm \eqref{eq:normspacetau}. Consider $g\in \tau(\mathcal{H}^{r+s}),$ there exists $f=\displaystyle\sum_{n\in \mathbb{N}^*}f_n n^{-r-s}\varphi_n\in \mathcal{H}^{r+s}$ such that $g=\tau(f)$, thus
\begin{align*}
    g
    =\sum_{n\in \mathbb{N}^*}(f_n b_n n^{-s}) n^{-r}\varphi_n.
\end{align*}
From \eqref{control-cond-general} (with $\beta=0$) and the fact that $\gamma\leq s$, we have $b_n
\leq c_2 n^{s},$ and we observe that
\begin{align*}
    \|g\|^2_{\mathcal{H}^r}=\|(f_n b_n n^{-s})_{n\in \mathbb{N}^*}\|^2_{l^2}=\sum_{n\in \mathbb{N}^*}|f_n b_n n^{-s}|^2\lesssim \sum_{n\in \mathbb{N}^*}|f_n|^2=\|g\|^2_{\tau(\mathcal{H}^{r+s})}.
\end{align*} 
and this concludes the Claim \ref{Claim1}.
\end{proof}
Going back to the norm of $T_2a$ now, we have
based on Claim \ref{Claim1}, 
\begin{align*}
\|T_2a\|_{\mathcal{H}^r}=\left\|\sum_{n\in \mathbb{N}^*}a_n K_n n^{-r}\tau S_c(\varphi_n)\right\|_{\mathcal{H}^r}\lesssim \left\|\sum_{n\in \mathbb{N}^*}a_n K_n \tau S_c(n^{-r}\varphi_n)\right\|_{\tau(\mathcal{H}^{r+\gamma})}.
\end{align*}
From \eqref{eq:sc} and Lemma \ref{lemma-compact-sum},  $S_c$ is a continuous operator from $\mathcal{H}^{r}$ to $\mathcal{H}^{r+\varepsilon}$ for any $\varepsilon\in[0,\min\{(\alpha-1)/2,\alpha+r-1/2\})$. Since $r\in(1/2-\alpha+\gamma, \alpha-1/2)$ and $\gamma\in(0,(\alpha-1)/2)$ by assumption, $\gamma$ belongs to $[0,\min\{(\alpha-1)/2,\alpha+r-1/2\})$ and $S_{c}$ is a continuous operator \textcolor{black}{from} $\mathcal{H}^{r}$ to $\mathcal{H}^{r+\gamma}$ and, using the isomorphism property of $\tau$ \textcolor{black}{and \eqref{eq:normspacetau},} one has
\begin{align*}
     \left\|\sum_{n\in \mathbb{N}^*}a_n K_n \tau S_c(n^{-r}\varphi_n)\right\|_{\tau(\mathcal{H}^{r+\gamma})}\lesssim \left\|\sum_{n\in \mathbb{N}^*}a_n K_n \tau(n^{-r} \varphi_n)\right\|_{\tau(\mathcal{H}^{r})}.
\end{align*}
Using that $(\tau(n^{-r} \varphi_n))_{n\in \mathbb{N}^*}$ is a Riesz basis in $\tau(\mathcal{H}^{r})$ and $(K_{n})_{n\in\mathbb{N}^{*}}$ is uniformly bounded, we get
\begin{align*}
    \left\|\sum_{n\in \mathbb{N}^*}a_n K_n \tau(n^{-r} \varphi_n)\right\|_{\tau(\mathcal{H}^{r})}\lesssim\sum_{n\in \mathbb{N}^*}|a_n K_n|^2\lesssim \sum_{n\in \mathbb{N}^*}|a_n|^2=\|a\|_{\mathcal{H}^r}.
\end{align*}
and combining this with \eqref{T_1}, it holds that,
\begin{align*}
    \|Ta\|_{\mathcal{H}^r}\lesssim \|a\|_{\mathcal{H}^r},
\end{align*}
and thus
$T$ is a bounded operator from $\mathcal{H}^r$ to itself.

    It remains now to prove the operator equality \eqref{opeq-space} to conclude the lemma. As, from Lemma \ref{lem:regk}, the equality \eqref{eq:tbb} holds in 
    $\tau(\mathcal{H}^{-\alpha/2+s+\gamma})$ for any $s\in (-(\alpha-1)/2,(\alpha-1)/2-\gamma)$, and in particular, from Claim \ref{Claim1}, it holds in $\mathcal{H}^{-\alpha/2+s}$.
    Thus proving \eqref{opeq-space} amounts to show that for any $s\in \left(-\frac{\alpha-1}{2}+\gamma,\frac{\alpha-1}{2}-\gamma\right)$
    \begin{align}
     T\mathcal{A}+BK=(\mathcal{A}-\lambda I)T\quad \text{in}\ \mathcal{L}(\mathcal{H}^{\alpha/2+s},\mathcal{H}^{-\alpha/2+s}). \label{opeq-space*}
    \end{align}
    Observe first that all terms make sense : indeed, $K: n^{-(\alpha/2+s)}\varphi_n\mapsto n^{-(\alpha/2+s)}K_n$ is a  bounded operator  from $\mathcal{H}^{\alpha/2+s}$ to $\mathbb{C}$ for any $s> -(\alpha-1)/2$. On the other hand, $B\in \mathcal{H}^{-\alpha/2}$ by assumption and in fact in $\mathcal{H}^{-\alpha/2+s}$ for any $s< (\alpha-1)/2-\gamma$ from \eqref{control-cond-general}
    So $B$ can be formally seen as an operator from $\mathbb{C}$ to $\mathcal{H}^{-\alpha/2+s}$ for any $s\in (-(\alpha-1)/2,(\alpha-1)/2-\gamma)$. Thus $BK$ is a bounded operator from $\mathcal{H}^{\alpha/2+s}$ to $\mathcal{H}^{-\alpha/2+s}$ for any $s\in (-(\alpha-1)/2,(\alpha-1)/2-\gamma).$ Similarly, one can show that $\mathcal{A}T$ and $T\mathcal{A}$ are bounded operators from $\mathcal{H}^{\alpha/2+s}$ to $\mathcal{H}^{-\alpha/2+s}$ for any $s\in (-(\alpha-1)/2+\gamma,(\alpha-1)/2-\gamma)$, 
    since $T$ is a bounded operator from $\mathcal{H}^{r}$ into itself for any $r\in(1/2-\alpha+\gamma, \alpha-1/2)$.\\
    
    To show \eqref{opeq-space*}, it suffices to check that it holds against $n^{-\alpha/2-s}\varphi_n$ for any $n\in \mathbb{N}^*$ and $s\in (-(\alpha-1)/2+\gamma,(\alpha-1)/2-\gamma)$ which in turn amounts to show that it holds against any $\varphi_n$ as the operators are linear. From the definition of $T$ (see \eqref{eq:defT}) we have (in $\mathcal{H}^{-\alpha/2+s})$
    \begin{align*}
        [T\mathcal{A}+BK-(\mathcal{A}-\lambda I)T]\varphi_n&=\lambda_n T\varphi_n+BK_n-(\mathcal{A}-\lambda I)T\varphi_n\\
        &=\lambda_n (-K_n)\sum_{p\in \mathbb{N}^*}\frac{b_p\varphi_p}{\lambda_n-\lambda_p+\lambda}+BK_n-(-K_n)\sum_{p\in \mathbb{N}^*}\frac{b_p(\lambda_p-\lambda)\varphi_p}{\lambda_n-\lambda_p+\lambda}\\
        &=(-K_n)\sum_{p\in \mathbb{N}^*}b_p\varphi_p+BK_n\\
        &=0.
    \end{align*}

\begin{rmk}
\label{rmk:extension}
    In fact, the operator equality \eqref{opeq-space*} also holds for $s\in\left(-\frac{\alpha-1}{2}, \textcolor{black}{\frac{\alpha-1}{2}-\gamma}\right).$ This is the consequence of the fact that for any $s\in\left(-\frac{\alpha-1}{2}, -\frac{\alpha-1}{2}+\gamma\right),$  \eqref{opeq-space*} holds in $\mathcal{L}(\mathcal{H}^{\alpha/2+s},\tau(\mathcal{H}^{\alpha/2+s}))$ and both $BK$ and $(\mathcal{A}-\lambda I)T$ belong to $\mathcal{L}(\mathcal{H}^{\alpha/2+s},\mathcal{H}^{-\alpha/2+s})$. As a consequence, one can show that $T$ is in fact a bounded operator from $\mathcal{H}^{r}$ to itself for any $r\in(1/2-\alpha,\alpha-1/2)$ (see Appendix \ref{app:regT}).
\end{rmk}
\end{proof}

\subsection{Step 6: $T$ is an isomorphism from $\mathcal{H}^{r}$ to itself}
\label{sec:step6}
 To show that $T$ is an isomorphism from $\mathcal{H}^{r}$ to itself, we first show that $\ker T=\{0\}$ in $\mathcal{H}^{-\alpha/2}$, then we deduce that $(K_{n}b_n)_{n\in\mathbb{N}}$ is bounded by below and we deduce that an intermediary operator ($\tilde{\tau}$ given below in Lemma \ref{lem-tilde-tau}) is an isomorphism and further that $T$ is an isomorphism from $\mathcal{H}^{r}$ into itself. In other words, we first show the following Lemma
\begin{lem}\label{ker T}
    $\ker T^*=\{0\}\quad \text{in} \ \mathcal{H}^{-\alpha/2}.$
\end{lem}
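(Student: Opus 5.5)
The plan is to combine a Fredholm argument with the invariance of $\ker T^{*}$ under $\mathcal{A}^{*}$ coming from the operator equality of Lemma \ref{Lemma-T-bounded}. The contradiction will then come from the normalization \eqref{eq:tbb}. Throughout, $T^{*}$ is the adjoint for the $\mathcal{H}^{0}$ duality, acting on the dual space $\mathcal{H}^{\alpha/2}$ of $\mathcal{H}^{-\alpha/2}$.

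\textbf{Step (a): $\ker T^{*}$ is finite-dimensional.} Note first that $r=-\alpha/2$ is admissible in Lemma \ref{Lemma-T-bounded}, since $-\alpha/2>1/2-\alpha+\gamma$ is equivalent to $\gamma<(\alpha-1)/2$. Use the splitting $T=T_{1}+T_{2}$ from the proof of Lemma \ref{Lemma-T-bounded}. By Lemma \ref{Lemma-K_n}, $K_{n}b_{n}=-(\lambda+k_{n})$ with $k_{n}n^{\varepsilon}$ bounded. Hence
\[
T_{1}=\mathrm{Id}+D,
\]
where $D:\varphi_{n}\mapsto (k_{n}/\lambda)\varphi_{n}$ is diagonal with entries tending to $0$, so $D$ is compact on $\mathcal{H}^{-\alpha/2}$.

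For $T_{2}$, I would run the Claim \ref{Claim1} argument with a small extra $\varepsilon$. By Lemma \ref{lemma-compact-sum}, $S_{c}$ maps $\mathcal{H}^{r}$ into $\mathcal{H}^{r+\gamma+\varepsilon}$ for some $\varepsilon>0$; this is possible because $\gamma$ is strictly below the bound $\min\{(\alpha-1)/2,\alpha+r-1/2\}$. Claim \ref{Claim1} then gives $\tau(\mathcal{H}^{r+\gamma+\varepsilon})\hookrightarrow\mathcal{H}^{r+\varepsilon}$. Hence $T_{2}$ maps $\mathcal{H}^{-\alpha/2}$ boundedly into $\mathcal{H}^{-\alpha/2+\varepsilon}$, and Lemma \ref{lem:embedding} makes it compact. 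So $T=\mathrm{Id}+\text{compact}$ on $\mathcal{H}^{-\alpha/2}$. In particular $T^{*}=\mathrm{Id}+\text{compact}$, and $\ker T^{*}$ is finite-dimensional.

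\textbf{Step (b): $\ker T^{*}$ is invariant under $\mathcal{A}^{*}$.} Taking adjoints in \eqref{opeq-space} gives, formally,
\[
(\mathcal{A}+BK)^{*}T^{*}=T^{*}(\mathcal{A}^{*}-\bar\lambda).
\]
So for $h\in\ker T^{*}$ we get $T^{*}(\mathcal{A}^{*}h)=\bar\lambda T^{*}h=0$, hence $\mathcal{A}^{*}h\in\ker T^{*}$.

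To make this rigorous I need $h$ regular enough that $\mathcal{A}^{*}h$ makes sense and \eqref{opeq-space} can be tested against it. I would bootstrap from $h=-(\text{compact})^{*}h$. The adjoint of the smoothing part gains $\varepsilon$ of regularity at each step, and each step stays inside the admissible range of $s$ in \eqref{opeq-space}. This puts $h$ into the $\mathcal{H}^{\alpha/2+s}$ spaces needed to pair with $\mathcal{H}^{-\alpha/2+s}$.

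\textbf{Step (c): conclusion.} Since $\mathcal{A}^{*}$ leaves the finite-dimensional space $\ker T^{*}$ invariant, if $\ker T^{*}\neq\{0\}$ it contains an eigenvector of $\mathcal{A}^{*}$. Eigenvalues are simple on $X_{i}$, so this eigenvector is a multiple of some $\widetilde{\varphi_{p}}$. Then for every $n$,
\[
0=\langle T\varphi_{n},\widetilde{\varphi_{p}}\rangle=-K_{n}\,\frac{b_{p}}{\lambda_{n}-\lambda_{p}+\lambda}.
\]
Since $b_{p}\neq0$ by \eqref{control-cond-general}, this forces $K_{n}=0$ for all $n$. Then $T=0$, which contradicts \eqref{eq:tbb}: we would have $TB=0\neq B$ (equivalently, $K_{n}b_{n}\to-\lambda\neq0$ by Lemma \ref{Lemma-K_n}). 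Hence $\ker T^{*}=\{0\}$.

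The main obstacle is Step (b), namely justifying the adjoint operator identity on $\ker T^{*}$ with the correct regularity. The index window of \eqref{opeq-space} is narrow when $\gamma>0$, so the bootstrap has to land in spaces where both $K$ and $B^{*}$ act. If it fails at the edge, I would use Remark \ref{rmk:extension} to enlarge the range of $s$.
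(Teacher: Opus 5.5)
\textbf{Gap in Step (b).} Your Steps (a) and (c) are sound. Step (a), writing $T=\mathrm{Id}+D+T_{2}$ with $D,T_{2}$ smoothing, is in fact a more direct route to Fredholmness than the paper's use of $\tilde\tau$. The problem is Step (b), and the bootstrap you propose there does not close it.

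In your setting $\ker T^{*}$ is a finite-dimensional subspace of $\mathcal{H}^{\alpha/2}$. Saying that $\mathcal{A}^{*}$ leaves it invariant therefore requires $\mathcal{A}^{*}h\in\mathcal{H}^{\alpha/2}$, i.e.\ $h\in\mathcal{H}^{3\alpha/2}$. The relation $h=-(D+T_{2})^{*}h$ cannot deliver this. It can only be iterated while the dual index stays in the window where $T$ is bounded and $T-\mathrm{Id}$ is smoothing, so it stops near $\mathcal{H}^{\alpha-1/2-\gamma}$. Nor can that relation do better in principle: for instance $T_{2}^{*}\varphi_{1}$ has $n$-th coefficient $-\overline{K_{n}}\,\overline{b_{1}}/\overline{(\lambda_{n}-\lambda_{1}+\lambda)}$, which decays only like $|K_{n}|n^{-\alpha}$. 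So at best $\mathcal{A}^{*}h\in\mathcal{H}^{-1/2-\gamma-\varepsilon}$, outside the space where you proved finite-dimensionality. Step (c) then has no finite-dimensional invariant space to work in. Pairing is not the real obstruction, and Remark~\ref{rmk:extension} does not help: the loss is $\alpha$ derivatives, not an $\varepsilon$ at the edge.

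\textbf{What is missing: scale-independence of the annihilator of $\mathrm{Ran}\,T$.}
\begin{enumerate}
\item \emph{Testing needs no extra regularity of $h$.} From \eqref{opeq-space} on $\varphi_{n}$ you get $\mathcal{A}T\varphi_{n}=(\lambda_{n}+\lambda)T\varphi_{n}+K_{n}B$. Hence, for $h\in\ker T^{*}\subset\mathcal{H}^{\alpha/2}$, $\langle T\varphi_{n},\mathcal{A}^{*}h\rangle=K_{n}\langle B,h\rangle=K_{n}\langle TB,h\rangle=0$ by \eqref{eq:tbb}. The sum $\sum_{p}b_{p}\overline{h_{p}}$ converges since $\gamma<(\alpha-1)/2$. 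So $g:=\mathcal{A}^{*}h\in\mathcal{H}^{-\alpha/2}$ annihilates $T(\mathcal{H}^{\alpha/2})$.
\item \emph{Bootstrap $g$, not $h$, upward by $\alpha$.} Your Step (a) computation shows that $D$ and $T_{2}$ map $\mathcal{H}^{r}$ into $\mathcal{H}^{r+\delta}$ for every $r\in[-\alpha/2-\delta,\alpha/2]$ when $\delta>0$ is small. So $g=-(D+T_{2})^{*}g$ gains $\delta$ per step, and by density $g$ keeps annihilating $T$ on the correspondingly larger space. After finitely many steps $g\in\mathcal{H}^{\alpha/2}$ and $g\in\ker T^{*}$.
\end{enumerate}

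\textbf{Comparison with the paper.} With this repair your argument is complete and genuinely different from the paper's. The paper avoids the unbounded $\mathcal{A}^{*}$ altogether. It first finds $\rho$ such that $\mathcal{A}+\rho I$ and $\mathcal{A}+BK+(\lambda+\rho)I$ are invertible from $\mathcal{H}^{\alpha/2}$ to $\mathcal{H}^{-\alpha/2}$, via an analytic-perturbation argument. It then uses invariance of $\ker T^{*}$ under the bounded resolvent $(\mathcal{A}^{*}+\bar\rho I)^{-1}$. Your corrected route replaces that construction with the regularity bootstrap and uses only $TB=B$.
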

\begin{proof}
For that aim, we will perform the proof originally used in \cite{Coron2018-rapid} for the Schrödinger equation and more recently in \cite{Gagnon2022-fredholm-laplacien,Gagnon2022-fredholm}. The idea is to prove the following steps 
\begin{itemize}
    \item There exists $\rho\in \mathbb{C}$ such that $\mathcal{A}+BK+\lambda I+\rho I$ and $\mathcal{A}+\rho I$ are invertible operator from $\mathcal{H}^{\alpha/2}$ to $\mathcal{H}^{-\alpha/2}.$
    \item For such $\rho$, if $\ker T^*\neq \{0\},$ then $(\mathcal{A}+\rho I)^{-1}$ has an eigenvector $h$ which belongs to $\ker T^*.$
    \item No eigenvector of $(\mathcal{A}+\rho I)^{-1}$ belong to $\ker T^*.$
\end{itemize}
From the two last steps, it holds that $\ker T^*=\{0\}$ in $\mathcal{H}^{-\alpha/2}.$ \textcolor{black}{Since the same argument is already presented in \cite{Coron2018-rapid,Gagnon2022-fredholm-laplacien,Gagnon2022-fredholm}, we postpone the rigorous proof} in Appendix \ref{sec-Appendix-kerT}.
\end{proof}

Now, we want to show the following Lemma:
\begin{lem} \label{lemma-T-invertible}
    For any $r\in (1/2-\alpha+\gamma,\alpha-1/2),$ $T$ is an isomorphism from $\mathcal{H}^r$ to itself.
\end{lem}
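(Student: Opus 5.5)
We establish Lemma \ref{lemma-T-invertible} in three stages: lower bound on $(K_nb_n)$, an intermediary Fredholm operator, then composition.

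\textbf{Stage 1: $K_nb_n\neq 0$ and $|K_nb_n|\ge c>0$.} Since $K_nb_n=-(\lambda+k_n)$ and Lemma \ref{Lemma-K_n} gives $|k_n|\lesssim n^{-\varepsilon}$, we have $K_nb_n\to-\lambda$. Hence only finitely many $K_nb_n$ could vanish, and it suffices to show that none does. If $K_{n_0}=0$, then $T\varphi_{n_0}=0$, so $\ker T\neq\{0\}$. By Step 5, $T$ is bounded on $\mathcal{H}^{-\alpha/2}$; we will show it is Fredholm of index $0$ there (Stage 2). Then $\ker T\neq\{0\}$ forces $\ker T^*\neq\{0\}$, contradicting Lemma \ref{ker T}. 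Consequently $(K_nb_n)$ is bounded above and below.

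\textbf{Stage 2: the operator $\tilde\tau:n^{-r}\varphi_n\mapsto b_n^{-1}n^{-r}\tau q_n$.} We have
\[
\tilde\tau\varphi_n=\lambda^{-1}\varphi_n+\sum_{p\neq n}\frac{b_p b_n^{-1}\varphi_p}{\lambda_n-\lambda_p+\lambda},
\]
so $\tilde\tau=\lambda^{-1}Id+\tilde S_c$. We will show that $\tilde S_c$ maps $\mathcal{H}^r$ to $\mathcal{H}^{r+\delta}$ for some $\delta>0$ whenever $r\in(1/2-\alpha+\gamma,\alpha-1/2)$, since $\tilde S_c=\tau S_c\,\mathrm{diag}(b_n^{-1})$. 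Here $\mathrm{diag}(b_n^{-1})$ is bounded on $\mathcal{H}^r$ because $|b_n|\ge c_1$. By Lemma \ref{lemma-compact-sum}, $S_c$ gains $\gamma+\delta$ derivatives for $\delta$ small, which is allowed since $\gamma<\min\{(\alpha-1)/2,\alpha+r-1/2\}$ with strict inequality. Finally $\tau$ loses at most $\gamma$, by Claim \ref{Claim1}. Thus $\tilde S_c$ is compact by Lemma \ref{lem:embedding}, and $\tilde\tau$ is Fredholm of index $0$ on $\mathcal{H}^r$. The same argument applies to $T=\tilde\tau\circ\mathrm{diag}(-K_nb_n)$ on $\mathcal{H}^{-\alpha/2}$, and the relevant range includes $-\alpha/2$ since $\gamma<(\alpha-1)/2$. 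This range condition needs checking for the Stage 1 argument.

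\textbf{Stage 3: injectivity and conclusion.} Write $T=\tilde\tau D$, where $D=\mathrm{diag}(-K_nb_n)$ is an isomorphism of every $\mathcal{H}^r$ by Stage 1. Then $T$ is Fredholm of index $0$ on $\mathcal{H}^r$, and it remains to show $\ker T=\{0\}$ in $\mathcal{H}^r$. For $r\ge -\alpha/2$ we have $\mathcal{H}^r\subset\mathcal{H}^{-\alpha/2}$, and on $\mathcal{H}^{-\alpha/2}$ the operator $T$ is Fredholm of index $0$ with $\ker T^*=\{0\}$. Hence $\ker T=\{0\}$ there, and therefore on $\mathcal{H}^r$. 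For $r<-\alpha/2$ we use a duality argument modelled on Step 3: a nontrivial kernel element in $\mathcal{H}^r$ pairs against the image of $T^*$ on $\mathcal{H}^{-r}$, which is dense because $T$ is injective on $\mathcal{H}^{-r}$ (the previous case). Alternatively, the equation $T\mathcal{A}+BK=(\mathcal{A}-\lambda)T$ with $T\varphi_n$ explicit shows that any kernel element is automatically more regular, so it lies in $\mathcal{H}^{-\alpha/2}$. Either route yields an isomorphism.

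The main obstacle is Stage 2, namely controlling $\tilde S_c$, because $\tau$ and $S$ do not commute. The exact exponent bookkeeping at the lower endpoint $1/2-\alpha+\gamma$ is where I expect difficulty. The gain from Lemma \ref{lemma-compact-sum} must strictly exceed the loss $\gamma$ coming from $b_p\lesssim p^{\gamma}$, which is precisely the condition $r>1/2-\alpha+\gamma$.
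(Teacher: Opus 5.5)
Your Stages 1 and 2 match the paper. The lower bound on $(K_nb_n)$ comes from $\ker T=\{0\}$ in $\mathcal{H}^{-\alpha/2}$, using Lemma \ref{ker T} and the index-$0$ property of $T$. The operator $\tilde\tau$ is written as $\lambda^{-1}Id$ plus a compact operator, with the same exponent bookkeeping. One point should be stated explicitly: at that stage the Fredholm property of $T=\tilde\tau\circ\mathrm{diag}(\lambda+k_n)$ rests on $\mathrm{diag}(k_n)$ being compact because $k_n\to 0$, not on invertibility of the diagonal.

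You diverge from the paper in Stage 3, and that is where a step fails as written. The paper shows that $\tilde\tau$ itself is injective. It uses the factorization $\tilde\tau=\tau\circ S\circ\mathrm{diag}(b_n^{-1})$, the analogue of the identity you wrote for $\tilde S_c$. Each factor is injective on $\mathcal{H}^r$; for $S$ this is Step 3, equivalently the fact that $(\tau(n^{-r}q_n))_n$ is a Riesz basis of $\tau(\mathcal{H}^r)\supset\mathcal{H}^r$. Hence $\tilde\tau$ is an isomorphism of every $\mathcal{H}^r$ in the range, and $T=\tilde\tau\circ D$ is an isomorphism as soon as $D$ is. There is no case split at $r=-\alpha/2$, and Lemma \ref{ker T} is used only for the lower bound.

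Your substitute, transferring $\ker T=\{0\}$ from $\mathcal{H}^{-\alpha/2}$, is fine for $r\ge-\alpha/2$. The case $r<-\alpha/2$ is not justified as written. Suppose $f\in\ker T\cap\mathcal{H}^r$ and you pair it with $T'g$ for $g\in\mathcal{H}^{-r}$, where $T'$ is the $\mathcal{H}^0$-transpose. To conclude $f=0$ you need $T'(\mathcal{H}^{-r})$ to be dense in $\mathcal{H}^{-r}$. By duality that is equivalent to the very statement $\ker T|_{\mathcal{H}^r}=\{0\}$ you are trying to prove. Injectivity of $T$ on $\mathcal{H}^{-r}$ only gives density of $T'(\mathcal{H}^{r})$ in $\mathcal{H}^{r}$, which is the wrong space.

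Your regularity alternative does not apply directly either. For such $r$, $K$ is not defined on $\mathcal{H}^r$, and $\mathcal{A}f\in\mathcal{H}^{r-\alpha}$ lies outside the range where $T$ is bounded. So the operator identity cannot be evaluated on $f$.

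There are two short repairs:
\begin{itemize}
\item[(i)] Closed range. $T$ is Fredholm on $\mathcal{H}^r$, so $T(\mathcal{H}^r)$ is closed. It contains $T(\mathcal{H}^{-\alpha/2})=\mathcal{H}^{-\alpha/2}$, which is dense in $\mathcal{H}^r$. Hence $T$ is onto $\mathcal{H}^r$, and index $0$ gives injectivity.
\item[(ii)] Duality, done correctly. $T$ is an isomorphism of $\mathcal{H}^{-r}$, so $T'$ is an isomorphism of $\mathcal{H}^{r}$. Then $T'$ is injective on $\mathcal{H}^{-r}\subset\mathcal{H}^{r}$. Since $T'$ has index $0$ on $\mathcal{H}^{-r}$, it is onto there, which gives the density you need.
\end{itemize}
With either fix your proof is complete, but the paper's factorization through $S$ is shorter and avoids the issue entirely.
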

Before proving this, let us first show the following:
\begin{lem}\label{lem-tilde-tau}
    For any $r\in (1/2-\alpha+\gamma,\alpha-1/2),$ the operator $\tilde \tau: n^{-r}\varphi_n\mapsto \frac{1}{b_n}n^{-r}\tau q_n$ is a Fredholm operator of index $0$ and an isomorphism from $\mathcal{H}^r$ to $\mathcal{H}^r.$
\end{lem}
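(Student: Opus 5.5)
We first write $\tilde\tau$ explicitly:
\begin{equation*}
\tilde\tau(n^{-r}\varphi_n)=\frac{1}{b_n}n^{-r}\tau q_n=n^{-r}\Big(\frac{\varphi_n}{\lambda}+\sum_{p\neq n}\frac{b_p}{b_n}\frac{\varphi_p}{\lambda_n-\lambda_p+\lambda}\Big).
\end{equation*}
Hence $\tilde\tau=\lambda^{-1}Id+\tilde\tau_c$ with $\tilde\tau_c(n^{-r}\varphi_n)=b_n^{-1}n^{-r}\tau S_c(\varphi_n)$. Equivalently $\tilde\tau_c=\tau S_c \tau_1^{-1}$, where $\tau_1:\varphi_n\mapsto b_n\varphi_n$ is the same diagonal multiplier as $\tau$. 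The plan is to show that $\tilde\tau_c$ is compact on $\mathcal{H}^r$, which gives the Fredholm index $0$ property, and then that $\ker\tilde\tau=\{0\}$.

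\textbf{Compactness.} For $a=\sum a_n n^{-r}\varphi_n\in\mathcal{H}^r$, the bound $|b_n|\ge c_1$ shows that $\tau^{-1}$ (diagonal, multiplier $1/b_n$) is bounded from $\mathcal{H}^r$ to $\mathcal{H}^r$. So $\tau^{-1}a\in\mathcal{H}^r$ with norm $\lesssim\|a\|_{\mathcal{H}^r}$. Since $r>1/2-\alpha+\gamma$, Lemma \ref{lemma-compact-sum} lets us choose $\varepsilon>0$ with $\gamma+\varepsilon<\min\{(\alpha-1)/2,\alpha+r-1/2\}$. Then $S_c\tau^{-1}a\in\mathcal{H}^{r+\gamma+\varepsilon}$. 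Next, $\tau$ maps $\mathcal{H}^{r+\gamma+\varepsilon}$ boundedly into $\mathcal{H}^{r+\varepsilon}$, because $|b_n|\le c_2n^\gamma$; this is Claim \ref{Claim1} with $s=\gamma$. Altogether $\tilde\tau_c$ is bounded from $\mathcal{H}^r$ into $\mathcal{H}^{r+\varepsilon}$, and Lemma \ref{lem:embedding} makes it compact on $\mathcal{H}^r$. Therefore $\tilde\tau$ is Fredholm of index $0$ on $\mathcal{H}^r$, and it remains to prove injectivity.

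\textbf{Injectivity.} Write $T=\tilde\tau\circ D$, where $D:\varphi_n\mapsto -K_nb_n\varphi_n$ is diagonal. Injectivity of $\tilde\tau$ should follow from the injectivity or density properties of $T$ established in Lemma \ref{ker T}, which is the step I expect to be the main obstacle. Suppose $\tilde\tau f=0$ with $f=\sum f_n\varphi_n\ne0$ in $\mathcal{H}^r$. Projecting on $\varphi_p$ gives $b_p\sum_n \frac{f_n/b_n}{\lambda_n-\lambda_p+\lambda}=0$ for every $p$. Since $b_p\neq0$, this says $\sum_n (f_n/b_n)q_n=0$, i.e. $S(\tau^{-1}f)=0$. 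Here $\tau^{-1}f\in\mathcal{H}^{r}\subset\mathcal{H}^{r'}$ for any $r'\in(1/2-\alpha,\min(r,\alpha-1/2))$. Step 3 shows that $S$ is an isomorphism on $\mathcal{H}^{r'}$, so $\tau^{-1}f=0$, hence $f=0$.

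This factorisation $\tilde\tau=\tau S\tau^{-1}$, which holds since $\tilde\tau(\varphi_n)=\tau S(\varphi_n/b_n)$, avoids Lemma \ref{ker T} entirely. The only delicate point is that $\tau S\tau^{-1}$ makes sense as an operator on $\mathcal{H}^r$, which is exactly what the compactness computation ensures. Combining the Fredholm index $0$ property with a trivial kernel, $\tilde\tau$ is an isomorphism of $\mathcal{H}^r$ for every $r\in(1/2-\alpha+\gamma,\alpha-1/2)$.
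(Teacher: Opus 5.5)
Your proof is correct and follows the paper's route. You use the same splitting $\tilde\tau=\lambda^{-1}Id+\tilde\tau_c$, with compactness coming from Lemma \ref{lemma-compact-sum} at order $\gamma+\varepsilon$ together with $|b_p|\le c_2p^{\gamma}$; your injectivity argument via $\tilde\tau=\tau S\tau^{-1}$ and Step 3 is what the paper expresses by saying that $(\tau(n^{-r}q_n))_{n}$ is a Riesz basis of $\tau(\mathcal{H}^r)$. You were right to drop the tentative appeal to Lemma \ref{ker T}, which would be circular since its proof uses that $\tilde\tau$ is Fredholm and an isomorphism, and the detour through $r'$ is unnecessary because $r$ itself already lies in $(1/2-\alpha,\alpha-1/2)$.
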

\begin{proof}[Proof of Lemma \ref{lem-tilde-tau}]
    Let $r\in (1/2-\alpha+\gamma,\alpha-1/2),$ and $f=\displaystyle\sum_{n\in \mathbb{N^*}}f_n n^{-r}\varphi_n\in \mathcal{H}^r.$ It holds that
    \begin{align*}
        \tilde \tau(f)&=\sum_{n\in \mathbb{N^*}}\frac{f_n}{b_n} n^{-r}\tau q_n\\
        &=\sum_{n\in \mathbb{N^*}}\frac{f_n}{b_n} n^{-r}\sum_{p\in \mathbb{N}^*}\frac{b_{p}\varphi_p}{\lambda_n-\lambda_p+\lambda}\\
        &=\frac{1}{\lambda}\sum_{n\in \mathbb{N^*}}f_n n^{-r}\varphi_n+\sum_{n\in \mathbb{N^*}}\frac{f_n}{b_n} n^{-r}\sum_{p\in \mathbb{N}^*\setminus\{n\}}\frac{b_{p}\varphi_p}{\lambda_n-\lambda_p+\lambda}\\
        &=\frac{1}{\lambda}f+\tilde\tau_c(f)
    \end{align*}
    where 
    \begin{align*}
        \tilde\tau_c(f):=\sum_{n\in \mathbb{N^*}}\frac{f_n}{b_n} n^{-r}\sum_{p\in \mathbb{N}^*\setminus\{p\}}\frac{b_{p}\varphi_p}{\lambda_n-\lambda_p+\lambda}.
    \end{align*}
    We first show that $\tilde \tau_c$ is a compact operator on $\mathcal{H}^r.$ Using Fubini's Theorem it holds that
    \begin{align*}
        \sum_{n\in \mathbb{N^*}}\frac{f_n}{b_n} n^{-r}\sum_{p\in \mathbb{N}^*\setminus\{n\}}\frac{b_{p}\varphi_p}{\lambda_n-\lambda_p+\lambda}=\sum_{p\in \mathbb{N^*}}\varphi_p\left(\sum_{n\in \mathbb{N}^*\setminus\{p\}}\frac{f_n}{b_n} n^{-r}\frac{b_{p}}{\lambda_n-\lambda_p+\lambda}\right).
    \end{align*}
    Let $\varepsilon>0$ to be selected. So we have
    \begin{align*}
        \|\tilde \tau_c(f)\|^2_{\mathcal{H}^{r+\varepsilon}}&=\left\|\sum_{p\in \mathbb{N^*}}\varphi_p\left(\sum_{n\in \mathbb{N}^*\setminus\{p\}}\frac{f_n}{b_n} n^{-r}\frac{b_{p}}{\lambda_n-\lambda_p+\lambda}\right)\right\|^2_{\mathcal{H}^{r+\varepsilon}}\\
        &=\sum_{p\in \mathbb{N^*}}p^{2r+2\varepsilon}|b_p|^2\left|\sum_{n\in \mathbb{N}^*\setminus\{p\}}\frac{f_n}{b_n} n^{-r}\frac{1}{\lambda_n-\lambda_p+\lambda}\right|^2.
    \end{align*}
    Since $(1/b_n)_{n\in \mathbb{N}^*}$ is uniformly bounded from \eqref{control-cond-general} (recall that here $\beta=0$), we get that
   \begin{align*}
    \|\tilde \tau_c(f)\|_{\mathcal{H}^{r+\varepsilon}}\lesssim  \sum_{p\in \mathbb{N^*}}p^{2r+2\varepsilon}|b_p|^2\left|\sum_{n\in \mathbb{N}^*\setminus\{p\}} \frac{f_n n^{-r}}{\lambda_n-\lambda_p+\lambda}\right|^2.
   \end{align*} 
   As $|b_p|\leq c_2 n^{\gamma}$, still from \eqref{control-cond-general} this yields
   \begin{align*}
    \|\tilde \tau_c(f)\|_{\mathcal{H}^{r+\varepsilon}}\lesssim  \sum_{p\in \mathbb{N^*}}p^{2r+2\varepsilon+2\gamma}\left|\sum_{n\in \mathbb{N}^*\setminus\{p\}} \frac{f_n n^{-r}}{\lambda_n-\lambda_p+\lambda}\right|^2.
   \end{align*}
   Since $r\in (1/2-\alpha+\gamma,\alpha-1/2),$ we have that $\gamma<\alpha+r-1/2$ and consequently $\gamma<\varepsilon_r:=\min\left\{\frac{\alpha-1}{2},\alpha+r-\frac{1}{2}\right\}$ since $\gamma<(\alpha-1)/2$ by assumption. So, considering for instance $\varepsilon=(\varepsilon_r-\gamma)/2,$ we obtain from Lemma \ref{lemma-compact-sum} that
   \begin{align*}
       \sum_{p\in \mathbb{N^*}}p^{2r+2\varepsilon+2\gamma}\left|\sum_{n\in \mathbb{N}^*\setminus\{p\}} \frac{f_n n^{-r}}{\lambda_n-\lambda_p+\lambda}\right|^2\lesssim \sum_{n\in \mathbb{N^*}}|f_n|^2.
   \end{align*}
   This ensures that
   \begin{align*}
      \|\tilde \tau_c(f)\|_{\mathcal{H}^{r+\varepsilon}}\lesssim \|f\|_{\mathcal{H}^r}, 
   \end{align*} 
   which in view of Lemma \ref{lem:embedding} concludes that $\tilde \tau_c$ is compact from $\mathcal{H}^r$ to $\mathcal{H}^r.$ Consequently the operator $\tilde \tau$ is a Fredholm operator of index $0.$ Thus, $\tilde \tau$ is an isomorphism from $\mathcal{H}^r$ to $\mathcal{H}^r$ if and only if $\ker \tilde \tau=\{0\}.$ From the expression of $\tilde{\tau}$,
   \begin{align*}
      f=\displaystyle\sum_{n\in \mathbb{N^*}}f_n n^{-r}\varphi_n\in \ker \tilde \tau\quad \Leftrightarrow \quad \sum_{n\in \mathbb{N^*}}\frac{f_n}{b_n} \tau(n^{-r} q_n)=0.
   \end{align*}
   The equality at the right hand side holds in $\mathcal{H}^r$ and in particular in $\tau(\mathcal{H}^r)$ since $\mathcal{H}^r\subset \tau(\mathcal{H}^r).$ So using the fact that $(\tau(n^{-r} q_n))_{n\in \mathbb{N}^*}$ is a Riesz basis of $\tau(\mathcal{H}^r),$ we have
   \begin{align*}
       \sum_{n\in \mathbb{N^*}}\frac{f_n}{b_n} \tau(n^{-r} q_n)=0&\quad \Leftrightarrow \quad \frac{f_n}{b_n}=0, \quad \forall n\in\mathbb{N}^*\\
       &\quad \Leftrightarrow \quad f_n=0\quad \forall n\in\mathbb{N}^*.
   \end{align*}
   Thus, we have shown that $\ker \tilde \tau=\{0\},$ hence, $\tilde \tau$ is an isomorphism from $\mathcal{H}^r$ to itself for any $r\in (1/2-\alpha+\gamma,\alpha-1/2).$
\end{proof}

{\color{black} We are now able to state the proof of Lemma \ref{lemma-T-invertible}.\\
\begin{proof}[Proof of Lemma \ref{lemma-T-invertible}]
In view of \eqref{eq:defT} and Lemma \ref{lem-tilde-tau}, we may rewrite now $T$ as follows
\begin{align*}
    T:n^{-r}\varphi_n\mapsto -K_n b_n \tilde\tau (n^{-r}\varphi_n).
\end{align*}
Based on Lemma \ref{lem-tilde-tau}, $\tilde \tau$ is an isomorphism from $\mathcal{H}^r$ to $\mathcal{H}^r.$ This ensures that $(\tilde{\tau} (n^{-r}\varphi_n))_{n\in \mathbb{N}^*}$ is a Riesz basis in $\mathcal{H}^r.$  Since $(n^{-r}\varphi_n)_{n\in \mathbb{N}^*}$ is also a Riesz basis in $\mathcal{H}^r$, and $(K_n b_n)_{n\in\mathbb{N}}\in l^{\infty}$ from Lemma \ref{Lemma-K_n}, we just have to prove that $(K_n b_n)_{n\in\mathbb{N}}$ is uniformly bounded from below to conclude that 
$T$ is an isomorphism in $\mathcal{H}^r$. 
From Lemma \ref{ker T}, $\ker T^{*}=\{0\}$ in $\mathcal{H}^{-\alpha/2}$. In fact, from the decomposition \eqref{K_n-expression} and the fact that $\tilde{\tau}$ is a Fredholm operator of order 0, we can deduce that $T$ is a Fredholm operator of order 0 from $\mathcal{H}^{-\alpha/2}$ into itself (see Lemma \ref{lem:Tfredholm} in Appendix \ref{sec-Appendix-kerT}). Hence, $\ker T=\{0\}$ in $\mathcal{H}^{-\alpha/2}$.
Let us assume by contradiction that there exists $n_0\in \mathbb{N}^*$ such that $K_{n_0} b_{n_0}=0.$ We have 
\begin{align*}
    T(n_0^{-r}\varphi_{n_0})=K_{n_0}b_{n_0}\tilde \tau (n_0^{-r}\varphi_{n_0})=0.
\end{align*}
This means that $n_0^{-r}\varphi_{n_0}\in \text{ker} T,$ and since $\text{ker} T=\{0\},$  $n_0^{-r}\varphi_{n_0}=0$ which is a contradiction. Therefore for any $n\in \mathbb{N}^*$  
\begin{align}
    K_n b_n\neq 0.\label{k_nb_n0}
\end{align}
Recall that from Lemma \eqref{Lemma-K_n}, 
\begin{align*}
    |K_n b_n|=|\lambda+k_n|,
\end{align*} and $(k_n n^{\varepsilon})_{n\in \mathbb{N}^*}\in l^{\infty}$ for some $\varepsilon>0,$ which means that $k_n\rightarrow 0$ when $n\rightarrow +\infty.$ Then, there exists $N(\lambda)>0$ such that for any $n\geq N(\lambda),$
\begin{align}
    |K_n b_n|\geq \frac{\lambda}{2}>0.\label{Knbn-up}
\end{align}
Combining \eqref{k_nb_n0} with \eqref{Knbn-up} it holds that for any $n\in \mathbb{N}^*,$
\begin{align*}
    |K_n b_n|\geq \min\left\{\frac{\lambda}{2}, \underset{n\leq N(\lambda)}{\min}|K_n b_n|\right\}>0.
\end{align*}
Therefore, $K_n b_n$ is bounded from below and this concludes
the proof of Lemma \ref{lemma-T-invertible}.
\end{proof}
}

\subsection{Step 7: Well-posedness and proof of Proposition \ref{th:wellposed}}
\label{sec:wellposed}
Since, we have built the isomorphism $T$ and the feedback law $K,$ for which \eqref{eq:tbb} and \eqref{eq:eqoplin}  hold, we need to show the well-posedness of the closed loop system \eqref{sysdepart1}. We will adapt the method given in \cite{Coron2022-stabilization} for the well-posedness of closed loop water tank or in \cite{Gagnon2022-fredholm} for the well-posedness of closed loop systems described by skew-adjoint operators. 

Since, this method relies on the semigroup theory of the operator $\mathcal{A}+BK$, we first provide a description and some properties about its domain:

\begin{lem}\label{Lemma-Dr}
    For any $r\in(1/2-\alpha +\gamma,\alpha-1/2-\gamma),$ the domain $D_r(\mathcal{A}+BK)$ of $\mathcal{A}+BK$ is a Hilbert space, dense in $\mathcal{H}^r$ and 
    \begin{equation}
        D_r(\mathcal{A}+BK)=T^{-1}(\mathcal{H}^{r+\alpha}).\label{D_r-expression}
    \end{equation}
\end{lem}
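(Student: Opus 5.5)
The plan is to define the domain through the operator equality and transport everything through $T$. First I fix the meaning of $D_r(\mathcal{A}+BK)$: it is the set of $u\in\mathcal{H}^r$ such that $(\mathcal{A}+BK)u\in\mathcal{H}^r$, where $\mathcal{A}+BK$ is understood as the bounded map $\mathcal{H}^{\alpha/2+s}\to\mathcal{H}^{-\alpha/2+s}$ from Lemma \ref{Lemma-T-bounded}. The space is endowed with the graph norm $\|u\|_{\mathcal{H}^r}+\|(\mathcal{A}+BK)u\|_{\mathcal{H}^r}$. Since $r+\alpha/2$ may not be attainable directly, I will rely on Remark \ref{rmk:extension} and Appendix \ref{app:regT}. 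These let $T$ act on $\mathcal{H}^{q}$ for all $q\in(1/2-\alpha,\alpha-1/2)$, which covers the range $r\pm\alpha/2$ needed after restricting $r$ to $(1/2-\alpha+\gamma,\alpha-1/2-\gamma)$.

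The core step is the identity \eqref{D_r-expression}. For the inclusion $T^{-1}(\mathcal{H}^{r+\alpha})\subset D_r$, take $u=T^{-1}v$ with $v\in\mathcal{H}^{r+\alpha}$. Since $T$ is an isomorphism of $\mathcal{H}^{r}$ (Lemma \ref{lemma-T-invertible}), we have $u\in\mathcal{H}^r$. I also need $u\in\mathcal{H}^{\alpha/2+s}$ for a suitable $s$, so that the operator equality \eqref{opeq-space} can be applied. From it,
\[
(\mathcal{A}+BK)u=T^{-1}(\mathcal{A}-\lambda I)v\in T^{-1}(\mathcal{H}^r)=\mathcal{H}^r .
\]
For the converse inclusion, take $u\in D_r$ and set $v=Tu\in\mathcal{H}^r$. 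Then
\[
(\mathcal{A}-\lambda)v=T(\mathcal{A}+BK)u\in\mathcal{H}^r,
\]
and since $(\mathcal{A}-\lambda)$ is an isomorphism from $\mathcal{H}^{r+\alpha}$ onto $\mathcal{H}^r$ (by \eqref{ln+1_cond} and $\lambda\notin\{\lambda_n\}$, which can be arranged by the choice of $\lambda$), we get $v\in\mathcal{H}^{r+\alpha}$.

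The two remaining properties follow from this characterization. $D_r$ is Hilbert because the map $u\mapsto Tu$ is an isomorphism from $D_r$ with its graph norm onto $\mathcal{H}^{r+\alpha}$: the graph norm is equivalent to $\|(\mathcal{A}-\lambda)Tu\|_{\mathcal{H}^r}\simeq\|Tu\|_{\mathcal{H}^{r+\alpha}}$. Density follows since $\mathcal{H}^{r+\alpha}$ is dense in $\mathcal{H}^r$ (finite combinations of the $\varphi_n$) and $T^{-1}$ is a homeomorphism of $\mathcal{H}^r$.

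The main obstacle is the regularity bookkeeping in the first inclusion. One must show that $T^{-1}$ maps $\mathcal{H}^{r+\alpha}$ into a space $\mathcal{H}^{\alpha/2+s}$ where \eqref{opeq-space} is valid, even though $r+\alpha$ typically exceeds $\alpha-1/2$, where $T$ is known to be an isomorphism. I would try to avoid this by working weakly. Write $u=T^{-1}v$ with $v\in\mathcal{H}^{r+\alpha}\subset\mathcal{H}^{q}$, where $q=\min(r+\alpha,\alpha-1/2-\gamma)-\delta$, so that $u\in\mathcal{H}^q$ by Lemma \ref{lemma-T-invertible}. One checks $q\ge \alpha/2+s$ for some admissible $s$, using $r>1/2-\alpha+\gamma$ and $\alpha>1$. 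Then the operator equality applies on $\mathcal{H}^{q}$, and the identity $(\mathcal{A}+BK)u=T^{-1}(\mathcal{A}-\lambda)v$ holds in $\mathcal{H}^{-\alpha/2+s}$. Because $T^{-1}(\mathcal{A}-\lambda)v$ lies in $\mathcal{H}^r$, so does $(\mathcal{A}+BK)u$.
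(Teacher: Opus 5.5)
Your argument is correct and follows essentially the paper's route. You identify the domain by transporting \eqref{opeq-space} through $T$, and density and completeness then follow from $T$ being an isomorphism of $\mathcal{H}^r$. Your norm equivalence $T:(D_r,\text{graph norm})\to\mathcal{H}^{r+\alpha}$ replaces the paper's Cauchy-sequence argument. Your exponent $q=\min(r+\alpha,\alpha-1/2-\gamma)-\delta$ is also tracked more carefully than in the paper, which asserts $f\in\mathcal{H}^{\alpha-1/2-\gamma-\varepsilon}$ even when $r+\alpha$ lies below that exponent.

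The one step you avoid by definition is the regularity $u\in\mathcal{H}^{\alpha/2+s}$. The paper takes $D_r=\{f\in\mathcal{H}^r:\ (\mathcal{A}+BK)f\in\mathcal{H}^r\}$ and, for $r\le 1/2+\gamma$ (e.g.\ $r=0$), recovers this regularity by a bootstrap. Since $B\in\mathcal{H}^{-1/2-\gamma-\varepsilon}$, one has $\mathcal{A}f=(\mathcal{A}+BK)f-BKf\in\mathcal{H}^{\min(r,-1/2-\gamma-\varepsilon)}$. Hence $f\in\mathcal{H}^{\min(r+\alpha,\,\alpha-1/2-\gamma-\varepsilon)}\subset\mathcal{H}^{\alpha/2+s}$ for some admissible $s$. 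You should add these two lines so that your $D_r$ coincides with the paper's.

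Two minor points. First, Appendix \ref{app:regT} is not needed: Lemma \ref{lemma-T-invertible} on $\mathcal{H}^r$, $\mathcal{H}^q$ and $\mathcal{H}^{-\alpha/2+s}$ is all you actually use, and $r+\alpha/2$ can exceed $\alpha-1/2$ anyway. Second, you should not modify the fixed $\lambda$ to avoid $\{\lambda_n\}$. Since $v\in\mathcal{H}^r$ and $(\mathcal{A}-\lambda)v\in\mathcal{H}^r$, \eqref{ln+1_cond} already forces $v\in\mathcal{H}^{r+\alpha}$, and the $\|u\|_{\mathcal{H}^r}$ term in the graph norm absorbs any kernel.
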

\begin{proof}
Recall that, what we call $D_r(\mathcal{A}+BK)$ for $r\in (1/2-\alpha +\gamma,\alpha-1/2-\gamma)$ is the following:
\begin{align*}
  D_r(\mathcal{A}+BK):=\{f\in \mathcal{H}^r, (\mathcal{A}+BK)f\in \mathcal{H}^r\}.  
\end{align*}
 We start by showing \eqref{D_r-expression}. For any $f\in D_r(\mathcal{A}+BK)\subset{\mathcal{H}^r},$ we have $\mathcal{A}f\in \mathcal{H}^{r-\alpha}$ and then by definition of $D_{r}(\mathcal{A}+BK)$ this implies that $BKf\in\mathcal{H}^{r-\alpha}$. As $r<\alpha-1/2-\gamma,$ there exist $\varepsilon >0$ such that $BKf\in\mathcal{H}^{-1/2-\gamma-\varepsilon}$. 
  This shows that $Kf\in \mathbb{C}$ because $B\in \mathcal{H}^{-1/2-\gamma-\varepsilon}$ for any $\varepsilon>0.$ Since $Kf\in \mathbb{C},$ it holds that $BKf\in\mathcal{H}^{-1/2-\gamma-\varepsilon}$
  for any $\varepsilon>0$. Combining this with the definition of $D_r(\mathcal{A}+BK)$ ensures that {\color{black} either
  \begin{itemize}
      \item $r< -1/2-\gamma-\varepsilon$ and $\mathcal{A}f\in \mathcal{H}^{r}$,
      \item or $r\geq -1/2-\gamma-\varepsilon$ and $\mathcal{A}f\in \mathcal{H}^{-1/2-\gamma-\varepsilon}$.
  \end{itemize}
  In the first case, this gives 
  $f\in \mathcal{H}^{r+\alpha}$ and since $r+\alpha \in (1/2,\alpha-1/2-\gamma-\varepsilon)\subset(1/2-\alpha+\gamma,\alpha-1/2)$, $\mathcal{H}^{r+\alpha}= T^{-1}(\mathcal{H}^{r+\alpha})$ from Lemma \ref{lemma-T-invertible} which ends the proof. In the second case,} 
  $\mathcal{A}f\in \mathcal{H}^{-1/2-\gamma-\varepsilon},$ for any $\varepsilon>0$. \textcolor{black}{If $\mathcal{A}$ is invertible, we get directly}
$f=\mathcal{A}^{-1}\mathcal{A}f\in \mathcal{H}^{\alpha-1/2-\gamma-\varepsilon}$ for any $\varepsilon>0.$ \textcolor{black}{If $\mathcal{A}$ is not invertible, then there exists a unique $n_{0}\in \mathbb{N}^{*}$ such that $\lambda_{n_{0}}=0$ and, defining $\mathcal{A}^{-1}$ on $\overline{\text{Span}_{n\neq n_{0}}(n^{-r}\varphi_{n})}$, we get $f=\langle f,n_{0}^{-r}\varphi_{n_{0}}\rangle_{\mathcal{H}^{r}}n_{0}^{-r}\varphi_{n_{0}}+\mathcal{A}^{-1}\mathcal{A}f\in \mathcal{H}^{\alpha-1/2-\gamma-\varepsilon}$. In both cases, t}his shows that $D_r(\mathcal{A}+BK)\subset\mathcal{H}^{\alpha-1/2-\gamma-\varepsilon}.$ So in view of the operator equality \eqref{opeq-space}, for any $\varepsilon\in (0,(\alpha-1)-2\gamma)$, $\alpha/2-1/2-\gamma-\varepsilon \in (\gamma-(\alpha-1)/2,(\alpha-1)/2-\gamma)$ and therefore
 it holds that
 \begin{align*}
     T(\mathcal{A}+BK)f=(\mathcal{A}-\lambda I)Tf\in \mathcal{H}^{-1/2-\gamma-\varepsilon},\quad \forall f\in D_r(\mathcal{A}+BK).
 \end{align*}
 Notice that by the property of $T$ and the definition of $D_r(\mathcal{A}+BK),$ $T(\mathcal{A}+BK)f\in \mathcal{H}^r$ for any $f\in D_r(\mathcal{A}+BK),$ and hence $(\mathcal{A}-\lambda I)Tf\in \mathcal{H}^{r}.$ So the operator equality holds in the following appropriate setting: for any $r\in (1/2-\alpha+\gamma,\alpha-1/2-\gamma),$
 \begin{align}
     T(\mathcal{A}+BK)f=(\mathcal{A}-\lambda I)Tf\in \mathcal{H}^{r},\quad \forall f\in D_r(\mathcal{A}+BK).\label{opeq-space-appro}
 \end{align}
 Consider now $f\in D_r(\mathcal{A}+BK),$ we get from \eqref{opeq-space-appro} that $(\mathcal{A}-\lambda I)Tf\in \mathcal{H}^r,$ and then $Tf\in \mathcal{H}^{r+\alpha}.$ This implies that $f\in T^{-1}(\mathcal{H}^{r+\alpha}),$ and thus show that $D_r(\mathcal{A}+BK)\subset T^{-1}(\mathcal{H}^{r+\alpha}).$\\
 Conversely, let us consider $f\in T^{-1}(\mathcal{H}^{r+\alpha})\subset \mathcal{H}^r.$ Then, since $r<\alpha-1/2-\gamma,$ there exists $\bar{\varepsilon} >0$ such that for any $\varepsilon\in (0,\bar{\varepsilon}),$ $f\in \mathcal{H}^{\alpha-1/2-\varepsilon-\gamma}.$ So by considering $\varepsilon$ small enough we can apply the operator equality \eqref{opeq-space} to $f$ and get 
 \begin{align}
    T(\mathcal{A}+BK)f=(\mathcal{A}-\lambda I)Tf.\label{eq:T-1}
 \end{align} Notice that $f\in T^{-1}(\mathcal{H}^{r+\alpha})$ implies that $Tf\in \mathcal{H}^{r+\alpha},$ and thus $(\mathcal{A}-\lambda I)Tf\in \mathcal{H}^{r}.$ This ensures from \eqref{eq:T-1} that $T(\mathcal{A}+BK)f\in \mathcal{H}^{r}.$ Since $T$ is an isomorphism from $\mathcal{H}^{r}$ to itself, it then holds that $(\mathcal{A}+BK)f\in \mathcal{H}^{r}.$ Hence $f\in D_r(\mathcal{A}+BK),$ meaning that $T^{-1}(\mathcal{H}^{r+\alpha})\subset D_r(\mathcal{A}+BK).$ This concludes the proof of \eqref{D_r-expression}.

 Since $T$ is an isomorphism from $\mathcal{H}^{r}$ to itself, and from the density of $\mathcal{H}^{r+\alpha}$ in $\mathcal{H}^{r},$ it immediately holds based on \eqref{D_r-expression} that $D_r(\mathcal{A}+BK)$ is dense in $\mathcal{H}^{r}$ \textcolor{black}{(note that for any $f\in\mathcal{H}^{r}$ there exists $g\in \mathcal{H}^{r}$ such that $f = T^{-1}g$).}

It remains now to prove that $D_r(\mathcal{A}+BK)$ is a Hilbert space to conclude Lemma \ref{Lemma-Dr}. For that, we just need to prove that $D_r(\mathcal{A}+BK)$ is complete since $D_r(\mathcal{A}+BK)\subset \mathcal{H}^r$ which is a Hilbert space. So, let consider any Cauchy sequence $(f_n)_n\subset D_r(\mathcal{A}+BK).$ It holds that
\begin{align*}
    \|f_n-f_p\|_{D_r(\mathcal{A}+BK)}\underset{n,p\rightarrow+\infty}{\longrightarrow} 0.
\end{align*}
Recall that $\|f_n-f_p\|_{D_r(\mathcal{A}+BK)}:=\|f_n-f_p\|_{\mathcal{H}^r}+\|(\mathcal{A}+BK)(f_n-f_p)\|_{\mathcal{H}^r},$ we have
\begin{align}
    \|f_n-f_p\|_{\mathcal{H}^r}+\|(\mathcal{A}+BK)(f_n-f_p)\|_{\mathcal{H}^r}\underset{n,p\rightarrow+\infty}{\longrightarrow} 0.\label{eq:cauchy}
\end{align}
Since $(f_n)_n\subset D_r(\mathcal{A}+BK),$ by using the operator equality \eqref{opeq-space-appro}, this implies that
\begin{align*}
    \|(\mathcal{A}-\lambda I)(Tf_n-Tf_p)\|_{\mathcal{H}^r}=\|T(\mathcal{A}+BK)(f_n-f_p)\|_{\mathcal{H}^r}\lesssim \|(\mathcal{A}+BK)(f_n-f_p)\|_{\mathcal{H}^r}\underset{n,p\rightarrow+\infty}{\longrightarrow} 0.
\end{align*}
This combining with \eqref{eq:cauchy}, ensures that 
\begin{align*}
  \|\mathcal{A}(Tf_n-Tf_p)\|_{\mathcal{H}^r}  \underset{n,p\rightarrow+\infty}{\longrightarrow} 0,\quad \text{and}\quad \|Tf_n-Tf_p\|_{\mathcal{H}^r}  \underset{n,p\rightarrow+\infty}{\longrightarrow} 0.
\end{align*} Thus, we obtain
\begin{align*}
   \|Tf_n-Tf_p\|_{\mathcal{H}^{r+\alpha}}  \underset{n,p\rightarrow+\infty}{\longrightarrow} 0.
\end{align*}
This shows that $(Tf_n)_n$ is a Cauchy sequence in $\mathcal{H}^{r+\alpha}.$  Then, since $\mathcal{H}^{r+\alpha}$ is complete,
there exists $g\in \mathcal{H}^{r+\alpha}$ such that $T f_n
\longrightarrow g$ in $\mathcal{H}^{r+\alpha}$.  As $T$ is an isomorphism on $\mathcal{H}^{r}$, and $\mathcal{H}^{r+\alpha}\subset \mathcal{H}^{r}$ there exists a unique $f\in \mathcal{H}^{r}$ such that $Tf=g$ and $\|T f_n-Tf\|_{\mathcal{H}^{r+\alpha}}
\longrightarrow 0,$ which in turn implies, from the compact embedding of $\mathcal{H}^{r+\alpha}$ in $\mathcal{H}^{r}$ and the isomorphism property of $T$ in $\mathcal{H}^{r}$
\begin{align}
    \|f_n-f\|_{\mathcal{H}^{r}}\underset{n\rightarrow+\infty}{\longrightarrow} 0.\label{eq:conv1}
\end{align}
We also have, \textcolor{black}{since $f\in T^{-1}\mathcal{H}^{r+\alpha}=D_{r}(\mathcal{A}+BK)$ and using \eqref{opeq-space-appro}}
\begin{align}
    \|(\mathcal{A}+BK)(f_n-f)\|_{\mathcal{H}^{r}}&\lesssim \|T(\mathcal{A}+BK)(f_n-f)\|_{\mathcal{H}^{r}}\nonumber\\
    &=\|(\mathcal{A}-\lambda I)T(f_n-f)\|_{\mathcal{H}^{r}}\lesssim \|T f_n-Tf\|_{\mathcal{H}^{r+\alpha}}\underset{n\rightarrow+\infty}{\longrightarrow} 0.\label{eq:conv2}
\end{align}
Combining \eqref{eq:conv1} and \eqref{eq:conv2}, it holds that:
\begin{align*}
    \|f_n-f\|_{D_r(\mathcal{A}+BK)}=\|f_n-f\|_{\mathcal{H}^{r}}+\|(\mathcal{A}+BK)(f_n-f)\|_{\mathcal{H}^{r}}\underset{n\rightarrow+\infty}{\longrightarrow} 0.
\end{align*}
Hence, $D_r(\mathcal{A}+BK)$ is complete and therefore a Hilbert space.
\end{proof}
As we know now that the domain $D_r(\mathcal{A}+BK)$ is a Hilbert space and it is not an empty set, we are able to state the following lemma:
\begin{lem}\label{Lemma-Dr-semigroup}
    The operator $\mathcal{A}+BK$ with domain $D_r(\mathcal{A}+BK)$ generates a $C^0$ semigroup on $\mathcal{H}^r.$
\end{lem}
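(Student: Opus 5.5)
The plan is to transport the semigroup of the target operator $\mathcal{A}-\lambda I$ back through $T$. Fix $r\in(1/2-\alpha+\gamma,\alpha-1/2-\gamma)$. Since $\mathcal{A}$ is diagonal in the Riesz basis $(n^{-r}\varphi_n)$ of $\mathcal{H}^r$, the operator $\mathcal{A}-\lambda I$ with domain $\mathcal{H}^{r+\alpha}$ generates the $C^0$ semigroup
\begin{equation*}
e^{t(\mathcal{A}-\lambda I)}:\sum_n f_n n^{-r}\varphi_n\mapsto \sum_n e^{(\lambda_n-\lambda)t}f_n n^{-r}\varphi_n .
\end{equation*}
It is bounded on $\mathcal{H}^r$ because $\mathrm{Re}\,\lambda_n$ is bounded from above; this follows from $\mathcal{A}$ generating a $C^0$ semigroup with finite growth bound, or directly from a Hille--Yosida argument. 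I would then define
\begin{equation*}
S(t):=T^{-1}e^{t(\mathcal{A}-\lambda I)}T.
\end{equation*}
By Lemma \ref{lemma-T-invertible}, $T$ is an isomorphism of $\mathcal{H}^r$, so $(S(t))_{t\geq0}$ is a $C^0$ semigroup on $\mathcal{H}^r$. The semigroup property, $S(0)=Id$ and strong continuity all follow immediately from conjugation, and $\|S(t)\|\leq \|T\|\|T^{-1}\|\|e^{t(\mathcal{A}-\lambda I)}\|$.

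The key step is to identify the generator $G$ of $S(t)$ with $\mathcal{A}+BK$ on $D_r(\mathcal{A}+BK)$. A standard conjugation argument shows that the generator of $S(t)$ is $T^{-1}(\mathcal{A}-\lambda I)T$ with domain $\{f\in\mathcal{H}^r:\ Tf\in\mathcal{H}^{r+\alpha}\}=T^{-1}(\mathcal{H}^{r+\alpha})$. Indeed, $(S(t)f-f)/t = T^{-1}\big((e^{t(\mathcal{A}-\lambda)}Tf-Tf)/t\big)$, and $T^{-1}$ is continuous on $\mathcal{H}^r$. By Lemma \ref{Lemma-Dr}, this domain equals $D_r(\mathcal{A}+BK)$. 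On this domain the operator equality \eqref{opeq-space-appro} gives
\begin{equation*}
T(\mathcal{A}+BK)f=(\mathcal{A}-\lambda I)Tf \quad\text{in } \mathcal{H}^r .
\end{equation*}
Applying $T^{-1}$ yields $(\mathcal{A}+BK)f=T^{-1}(\mathcal{A}-\lambda I)Tf=Gf$. Hence $\mathcal{A}+BK$ with domain $D_r(\mathcal{A}+BK)$ coincides with $G$ and generates $S(t)$.

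An alternative route is Hille--Yosida or Lumer--Phillips on the domain from Lemma \ref{Lemma-Dr}: closedness comes from the completeness shown there, density is also given there, and resolvent estimates come from $(\rho-\mathcal{A}-BK)^{-1}=T^{-1}(\rho+\lambda-\mathcal{A})^{-1}T$. The conjugation route avoids any resolvent computation, so I would use it.

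The main obstacle is the rigorous use of \eqref{opeq-space-appro}. It must hold exactly on $D_r(\mathcal{A}+BK)$, with both sides in $\mathcal{H}^r$, so that applying $T^{-1}$ (defined only on $\mathcal{H}^r$) is legitimate. This is exactly what Lemma \ref{Lemma-Dr} secures, by placing $D_r$ inside $\mathcal{H}^{\alpha-1/2-\gamma-\varepsilon}$, where \eqref{opeq-space} applies. The remaining point to check carefully is the identification of the domain of a conjugated generator, which is routine.
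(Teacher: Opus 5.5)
Your proposal is correct and follows essentially the same route as the paper. Both define $\mathcal{S}(t)=T^{-1}e^{t(\mathcal{A}-\lambda I)}T$, identify the generator's domain as $T^{-1}(\mathcal{H}^{r+\alpha})=D_r(\mathcal{A}+BK)$ via Lemma \ref{Lemma-Dr}, and then use \eqref{opeq-space-appro} to show that the generator is $\mathcal{A}+BK$.
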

\begin{proof}
    Let us first notice that for any $r\in (1/2-\alpha+\gamma,\alpha-1/2-\gamma),$ 
    $D_r(\mathcal{A}-\lambda I):=\{f\in \mathcal{H}^r,\ (\mathcal{A}-\lambda I)f\in \mathcal{H}^r\}=\mathcal{H}^{r+\alpha}.$ Since $\mathcal{A}$ generates a $C^0$ semigroup on $\mathcal{H}^r,$ then $\mathcal{A}-\lambda I$ generates also a $C^0$ semigroup which we denote by $e^{t(\mathcal{A}-\lambda I)}.$ Let us now define the following $C^0$ semigroup on $\mathcal{H}^r,$
    \begin{align}
    \label{eq:semi}
        \mathcal{S}(t):=T^{-1}e^{t(\mathcal{A}-\lambda I)}T, \quad \forall t\geq 0.
    \end{align}
    Since $T$ is an isomorphism \textcolor{black}{on $\mathcal{H}^{r}$}, \textcolor{black}{the fact that} $D_r(\mathcal{A}-\lambda I)=\mathcal{H}^{r+\alpha},$ and Lemma \ref{Lemma-Dr}, it holds that
    \begin{align*}
        \left\{f\in \mathcal{H}^r,\ \underset{t\rightarrow 0^+}{\lim}\frac{\mathcal{S}(t)f-f}{t}\ \text{exists in }\mathcal{H}^{r}\right\}
        &=\left\{f\in \mathcal{H}^r,\ \underset{t\rightarrow 0^+}{\lim}T^{-1}\left(\frac{e^{t(\mathcal{A}-\lambda I)}Tf-Tf}{t}\right)\ \text{exists in }\mathcal{H}^{r}\right\}\\
        &=\left\{(f=T^{-1}g)\in \mathcal{H}^r,\ \underset{t\rightarrow 0^+}{\lim}T^{-1}\left(\frac{e^{t(\mathcal{A}-\lambda I)}g-g}{t}\right)\ \text{exists in }\mathcal{H}^{r}\right\}\\
        &=\left\{(f=T^{-1}g)\in \mathcal{H}^r,\ \underset{t\rightarrow 0^+}{\lim}\left(\frac{e^{t(\mathcal{A}-\lambda I)}g-g}{t}\right)\ \text{exists in }\mathcal{H}^{r}\right\}\\
        &=\left\{(f=T^{-1}g),\ g\in D_r(A-\lambda I)=\mathcal{H}^{r+\alpha}\right\}\\
        &=T^{-1}(\mathcal{H}^{r+\alpha})=D_r(\mathcal{A}+BK).
    \end{align*}
    This shows that the domain of the operator which generates $\mathcal{S}(t)$ is $D_r(\mathcal{A}+BK).$ 
    
    Moreover, for any $f\in D_r(\mathcal{A}+BK),$ we have $Tf\in \mathcal{H}^{r+\alpha}$ and then it holds that
    \begin{align*}
     \underset{t\rightarrow 0^+}{\lim}\frac{\mathcal{S}(t)f-f}{t}&=\underset{t\rightarrow 0^+}{\lim}T^{-1}\left(\frac{e^{t(\mathcal{A}-\lambda I)}Tf-Tf}{t}\right) \\
     &=T^{-1}(\mathcal{A}-\lambda I)Tf.
    \end{align*}
    So using the operator equality \eqref{opeq-space-appro} \textcolor{black}{(since $f\in D_{r}(\mathcal{A}+BK)$)} we get that
    \begin{align*}
        \underset{t\rightarrow 0^+}{\lim}\frac{\mathcal{S}(t)f-f}{t}&=(\mathcal{A}+BK)f.
    \end{align*}
  Hence, the operator $\mathcal{A}+BK$  with domain $D_r(\mathcal{A}+BK)$ is the generator of $C^0$ semigroup $\mathcal{S}(t)$ and this concludes Lemma \ref{Lemma-Dr-semigroup}.
\end{proof}
\begin{proof}[Proof of Proposition \ref{th:wellposed}]
   Based on Lemma \ref{Lemma-Dr-semigroup}, we know that for any $i\in \{1,\cdots, m\},$ the operator $\mathcal{A}+B_iK_i$ generates a $C^0$ semigroup $\mathcal{S}_i(t)$ on $\textcolor{black}{\mathcal{H}^{r_{i}}_i}$ for any $\textcolor{black}{r_{i}}\in (1/2-\alpha_i+\gamma_{i},\alpha_i-1/2-\gamma_{i})$. Note that $(\mathcal{S}_{i}(t))_{t\geq 0}$ refers here to what was referred to as $(\mathcal{S}(t))_{t\geq 0}$ in the proof of \textcolor{black}{Lemma \ref{Lemma-Dr-semigroup}} since we do not drop the index $i$ anymore for the proof of Proposition \ref{th:wellposed}. Now, we would like to show that the operator $\mathcal{A}+BK=\mathcal{A}+\displaystyle\sum_{i=1}^mB_iK_i$ with domain $\textcolor{black}{D_{ \vec r}}(\mathcal{A}+BK):=\displaystyle\sum_{i=1}^m \textcolor{black}{D_{r_{i}}}(\mathcal{A}+B_iK_i)$ generates the $C^0$ semigroup 
   \begin{equation}
       \mathcal{S}(t) := \displaystyle\sum_{i=1}^m \mathcal{S}_i(t)
   \end{equation}
   on \textcolor{black}{$\mathcal{H}^{\vec{r}}:=\mathcal{H}^{r_{1}}_1+\cdots+\mathcal{H}^{r_{m}}_m.$} Recall that $\mathcal{S}_i(t)=T^{-1}_ie^{t(\mathcal{A}-\lambda I)}T_i,$ and since $T_i\varphi_n^j=0$ for any $i\neq j,$ it holds that $\mathcal{S}_i(t)\varphi_n^j=0$ for any $i\neq j.$ So for any $f^j\in \mathcal{H}^{r_j},$ we have that $\mathcal{S}_i(t)f^j=0$ for any $i\neq j.$ This ensures in particular that $\mathcal{S}_i(t)\mathcal{S}_j(s)f^k=0$ for any $i\neq j$, $k\in\{1,...,m\}$ and $s,t\geq 0$. Combining this with the fact that  $(\mathcal{S}_{i}(t))_{t\geq0}$ is a $C^{0}$ semigroup on $\mathcal{H}^{\textcolor{black}{r_{i}}}_{\textcolor{black}{i}},$ it holds that $\mathcal{S}(t)$ is a $C^0$ semi-group of $\textcolor{black}{\mathcal{H}^{\vec r}}$. Looking at its domain:
   \begin{align*}
       \left\{ f=\sum_{j=1}^m f^j\in \textcolor{black}{\mathcal{H}^{\vec r}}, \ \lim_{t\rightarrow 0}\frac{\mathcal{S}(t)f-f}{t}\ \text{exists}\right\}&=\left\{ f=\sum_{j=1}^m f^j\in \textcolor{black}{\mathcal{H}^{\vec r}}, \ \lim_{t\rightarrow 0}\frac{\sum_{i=1}^m \mathcal{S}_i(t)f-f}{t}\ \text{exists}\right\}\\
       &=\left\{ f=\sum_{j=1}^m f^j\in \textcolor{black}{\mathcal{H}^{\vec r}}, \ \lim_{t\rightarrow 0}\sum_{i=1}^m\frac{\mathcal{S}_i(t)f^i-f^i}{t}\ \text{exists}\right\}\\
       &=\left\{ f=\sum_{j=1}^m f^j\in \textcolor{black}{\mathcal{H}^{\vec r}}, \ \lim_{t\rightarrow 0}\frac{\mathcal{S}_i(t)f^i-f^i}{t}\ \text{exists}\right.\\
       &\left.\forall i\in \{1,\cdots,m\}\right\}\\
       &=\sum_{i=1}^m \textcolor{black}{D_{r_i}}(\mathcal{A}+B_iK_i)=\textcolor{black}{D_{\vec r}}(\mathcal{A}+BK).
   \end{align*}
   This shows that $\textcolor{black}{D_{\vec r}}(\mathcal{A}+BK)$ is the domain of the corresponding generator to $\mathcal{S}(t)$.
   Moreover, for any $f\in \textcolor{black}{D_{\vec r}}(\mathcal{A}+BK),$ there exists $(f^1,\cdots,f^m)\in \textcolor{black}{\mathcal{H}^{r}_1\times \cdots \times \mathcal{H}^{r}_m}$ such that $f=\sum_{i=1}^m f^i,$ and 
   \begin{align*}
       \lim_{t\rightarrow 0^+}\frac{ \mathcal{S}(t)f-f}{t}&=\lim_{t\rightarrow 0^+}\frac{\sum_{i=1}^m \mathcal{S}_i(t)f^i-\sum_{i=1}^m f^i}{t}\\
       &=\sum_{i=1}^m\lim_{t\rightarrow 0^+}\frac{\mathcal{S}_i(t)f^i-f^i}{t}\\
       &=\sum_{i=1}^m(\mathcal{A}+B_iK_i)f^i=(\mathcal{A}+BK)f
   \end{align*}
   Therefore, we conclude that the operator $\mathcal{A}+BK$ generates a $C^0$ semigroup on $\textcolor{black}{\mathcal{H}^{\vec r}}$ and this concludes the proof of Proposition \ref{th:wellposed}.
\end{proof}

\subsection{Proof of Theorem \ref{Theo_main_general}}
\label{sec:thm}
Theorem \ref{Theo_main} follows from all the previous steps and the following Lemma, which is an immediate consequence from the fact that $\mathcal{N}$ is countable.

\begin{lem}
\label{lem:lambda*}
    For any $\lambda_{0}>0$ there exists $\lambda>\lambda_{0}$ such that $\lambda\in\mathbb{R}_{+}\setminus\mathcal{N}$.
\end{lem}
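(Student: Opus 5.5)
The plan is to show that $\mathcal{N}$ is countable and has empty interior in $\mathbb{R}_{+}$, so that its complement in $(\lambda_0,+\infty)$ cannot be empty. By definition,
\begin{equation*}
\mathcal{N}=\{\lambda_{n}^{i}-\lambda_{p}^{i}\;|\;(n,p)\in(\mathbb{N}^{*})^{2},\; i\in\{1,\dots,m\}\}
\end{equation*}
is the image of the countable set $\{1,\dots,m\}\times\mathbb{N}^{*}\times\mathbb{N}^{*}$ under the map $(i,n,p)\mapsto\lambda_{n}^{i}-\lambda_{p}^{i}$. Hence $\mathcal{N}$ is at most countable, and so is $\mathcal{N}\cap\mathbb{R}$; the elements of $\mathcal{N}$ that are not real play no role, since we are looking for $\lambda\in\mathbb{R}_{+}$.

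Next, the interval $(\lambda_0,+\infty)$ (or $(\lambda_0,\lambda_0+1)$, if we want $\lambda$ bounded) is uncountable. A countable set cannot cover it, so $(\lambda_0,+\infty)\setminus\mathcal{N}$ is nonempty. Any $\lambda$ in this difference satisfies $\lambda>\lambda_0$ and $\lambda\in\mathbb{R}_{+}\setminus\mathcal{N}$, which is the claim. The same argument also shows that $\mathbb{R}_{+}\setminus\mathcal{N}$ is dense, and even of full measure, so $\lambda$ can be taken arbitrarily close to any prescribed value.

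There is no real obstacle here. The only point that needs care is that $\mathcal{N}$ may contain complex numbers. This does not matter, because we only need countability of $\mathcal{N}\cap\mathbb{R}_{+}$, which follows from the countability of $\mathcal{N}$ itself. This $\lambda$ is then the one fed into Proposition \ref{prop:main}, and from there into Theorem \ref{Theo_main_general}.
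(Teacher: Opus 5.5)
Your proof is correct and is the same argument as the paper's: the paper simply states that the lemma is an immediate consequence of the countability of $\mathcal{N}$. You spell this out as the countable image of $\{1,\dots,m\}\times\mathbb{N}^{*}\times\mathbb{N}^{*}$ failing to cover the uncountable interval $(\lambda_0,+\infty)$. The ``empty interior'' claim you announce at the start is never used separately; countability alone suffices.
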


\begin{proof}[Proof of Theorem \ref{Theo_main_general}]
Let $\lambda_{0}>0$, and $\lambda>0$ given by Lemma \ref{lem:lambda}. From Proposition \ref{prop:main} there exists $K_{i}\in\mathcal{L}(\mathcal{H}^{\beta_{i}+1/2+\varepsilon};\mathbb{C})$ and $T_{i}\in\mathcal{L}(\mathcal{H}^{\textcolor{black}{\beta_{i}+}r_{i}})$ such that $T_{i}$ is an isomorphism from $\mathcal{H}^{\textcolor{black}{\beta_{i}+}r_{i}}$ into itself for any $\textcolor{black}{r_{i}\in(1/2-\alpha_{i}+\gamma_{i},\alpha_{i}-1/2-\gamma_{i})}$ and maps the system \eqref{sysdepart1} to \eqref{sysarrive1}. Setting
\begin{equation}
K = (K_{1},...,K_{m})^{T},\;\;T=T_{1}+...+T_{m},
\end{equation}
we know from Proposition \ref{th:wellposed} that the system \eqref{closed-loop-system} is well-posed in \textcolor{black}{$\mathcal{H}^{\vec{\beta}+\vec{r}}$} for any \textcolor{black}{$\vec{r}$} satisfying \eqref{eq:condr}. It remains to show that $T$ is an isomorphism from $\textcolor{black}{\mathcal{H}^{\vec{\beta}+\vec{r}}}$ to itself which maps the system \eqref{closed-loop-system} to \eqref{target-system} and that, as a consequence, the system \eqref{closed-loop-system} is exponentially stable. $T$ belongs clearly to $\mathcal{L}(\textcolor{black}{\mathcal{H}^{\vec{\beta}+\vec{r}}})$. We define $T^{-1}:=(T_{1}^{-1}+...+T_{m}^{-1})\in\mathcal{L}(\textcolor{black}{\mathcal{H}^{\vec{\beta}+\vec{r}}})$. Recall that $T_{i}\varphi_{n}^{j} = 0$, $T_{i}^{-1}\varphi_{n}^{j} = 0$ and $K_{i}\varphi_{n}^{j}=0$ for any $j\neq i$ and $n\in\mathbb{N}^{*}$. Thus, for any $f = \sum\limits_{i=1}^{m}\sum\limits_{n\in\mathbb{N}^{*}}f_{n}^{i}\varphi_{n}^{i}\in\textcolor{black}{\mathcal{H}^{\vec{\beta}+\vec{r}}}$, 
\begin{equation}
    T^{-1}Tf = T^{-1}\left(\sum\limits_{i=1}^{m}\sum\limits_{n\in\mathbb{N}^{*}}f_{n}^{i}T_{i}\varphi_{n}^{i}\right) = \sum\limits_{i=1}^{m}\sum\limits_{n\in\mathbb{N}^{*}}f_{n}^{i}\varphi_{n}^{i} = f,
\end{equation}
hence $T$ is an isomorphism from $\textcolor{black}{\mathcal{H}^{\vec{\beta}+\vec{r}}}$ into itself with inverse $T^{-1}$.  To show that $T$ maps \eqref{closed-loop-system} to \eqref{target-system} it suffices to show that the operator equality \eqref{opeq-space} --which currently holds in $\mathcal{L}(\mathcal{H}_{i}^{\alpha_{i}/2+s_{i}},\mathcal{H}_{i}^{-\alpha_{i}/2+s_{i}})$ with $T_{i}$, $B_{i}$ and $K_{i}$ for any $i\in\{1,...,m\}$-- in fact also holds in 
$\mathcal{L}(\mathcal{H}^{\vec \alpha/2+\vec s},\mathcal{H}^{-\vec \alpha/2+\vec s})$ with $T$, $B$ and $K$ where
\begin{equation}
\label{eq:api}
    \alpha = (\alpha_{1},...,\alpha_{m}),\;\;s = (s_{1},...,s_{m}),\;\;\gamma=(\gamma_{1},...,\gamma_{m}),\;\;s_{i}\in\left(-\frac{(\alpha_{i}-1)}{2}+\gamma_{i},\frac{\alpha_{i}-1}{2}-\gamma_{i}\right).
\end{equation}
This is a consequence of the fact that, by definition, $T_{i}\varphi_{n}^{j}=0$, $K_{i}\varphi_{n}^{j}=0$ for any $i\neq j$ and any $n\in\mathbb{N}^{*}$: let $f = \sum_{i=1}^{m}f^{i}$ with $f^{i} = \sum_{n\in\mathbb{N}}f_{n}^{i}\varphi_{n}^{i}$ and consider its smooth approximation $\tilde{f}^{i} = \sum_{n=1}^{N}f_{n}^{i}\varphi_{n}^{i}$. Observe that $K\tilde{f} =(K_{1}\tilde{f}^{1},...,K_{m}\tilde{f}^{m})^{T}$ and $BK\tilde{f} = \sum\limits_{i=1}^{m}B_{i}K_{i}\tilde{f}^{i}$, thus we have
\begin{equation}
    T(\mathcal{A}+BK)\tilde{f} = T\left(\sum\limits_{i=1}^{m}\mathcal{A}\tilde{f}^{i}+B_{i}K_{i}\tilde{f}^{i}\right) = \sum\limits_{i=1}^{m} T_{i}\mathcal{A}\tilde{f}^{i}+B_{i}K_{i}\tilde{f}^{i}
\end{equation}
where we used that, for any $i\in\{1,...,m\}$ and $s\in\mathbb{R}$, we have $\mathcal{A} \in\mathcal{L}(\mathcal{H}_{i}^{s+\alpha_{i}},\mathcal{H}^{s}_{i})$. Thus using \eqref{opeq-space} for each $i\in\{1,...,m\}$,
\begin{equation}
T(\mathcal{A}+BK)\tilde{f} = \sum\limits_{i=1}^{m} T_{i}(\mathcal{A}+B_{i}K_{i})\tilde{f}^{i} = \sum\limits_{i=1}^{m} (\mathcal{A}-\lambda) T_{i}\tilde{f}_{i} = (\mathcal{A}-\lambda) \sum\limits_{i=1}^{m} T_{i}\tilde{f}_{i}=(\mathcal{A}-\lambda I)T \tilde{f}.
\end{equation}
Note that one can pass to the limit (in $\mathcal{H}^{-\vec \alpha/2+s}$) $N\rightarrow +\infty$ in each of this terms since $f\in \mathcal{H}^{\textcolor{black}{\vec \alpha/2}+s}$ and the operator equality \eqref{opeq-space} holds in $\mathcal{L}(\mathcal{H}_{i}^{\textcolor{black}{\alpha_{i}/2}+s},\mathcal{H}_{i}^{-\alpha_{i}/2+s})$ for any $i\in\{1,...,m\}$. As a consequence, 
\begin{equation}
\label{eq:opeq-spacetot}
T(\mathcal{A}+BK) = (\mathcal{A}-\lambda I)T\text{ in }\mathcal{L}(\mathcal{H}^{\vec \alpha/2+s},\mathcal{H}^{-\vec \alpha/2+s})\text{ with \eqref{eq:api}}.
\end{equation}

Finally it remains to show that for any $\mu>0$, $\lambda_{0}$ can be chosen large enough such that the system \eqref{closed-loop-system} is exponentially stable with decay rate $\mu$. Since $\mathcal{A}$ generates a $C^{0}$ semigroup with finite growth bound on $\textcolor{black}{\mathcal{H}^{\vec{\beta}+\vec{r}}}$ (see Lemma \ref{lem:semigroup} in Appendix \ref{app:semigroup}), there exists $M\geq 1$ and $\omega\in\mathbb{R}$ such that for any $f\in\textcolor{black}{\mathcal{H}^{\vec{\beta}+\vec{r}}}$,
\begin{equation}
\|e^{t\mathcal{A}}f\|_{\textcolor{black}{\mathcal{H}^{\vec{\beta}+\vec{r}}}}\leq M e^{\omega t}\|f\|_{\textcolor{black}{\mathcal{H}^{\vec{\beta}+\vec{r}}}},\;\forall\;t\in[0,+\infty),
\end{equation}
and in particular for $\lambda>\lambda_{0}>\mu+\omega$,
\begin{equation}
\|e^{t(\mathcal{A}-\lambda I)}f\|_{\textcolor{black}{\mathcal{H}^{\vec{\beta}+\vec{r}}}}\leq M e^{-\mu t}\|f\|_{\textcolor{black}{\mathcal{H}^{\vec{\beta}+\vec{r}}}},\;\forall\;t\in[0,+\infty).
\end{equation}
Let $u_{0}\in \textcolor{black}{\mathcal{H}^{\vec{\beta}+\vec{r}}}$, the (unique) solution in $C^{0}([0,+\infty);\textcolor{black}{\mathcal{H}^{\vec{\beta}+\vec{r}}})$ to the closed loop system \eqref{closed-loop-system} is $u\;:\;t\rightarrow \mathcal{S}(t)u_{0}$ and, using \eqref{eq:semi}
\begin{equation}
    \|\mathcal{S}(t)u_{0}\|_{\textcolor{black}{\mathcal{H}^{\vec{\beta}+\vec{r}}}}=
    \|T^{-1}e^{t(\mathcal{A}-\lambda I)}T u_{0}\|_{\textcolor{black}{\mathcal{H}^{\vec{\beta}+\vec{r}}}}\leq \|T^{-1}\|_{\mathcal{L}(\textcolor{black}{\mathcal{H}^{\vec{\beta}+\vec{r}}})}\|T\|_{\mathcal{L}(\textcolor{black}{\mathcal{H}^{\vec{\beta}+\vec{r}}})}Me^{-\mu t}\|u_{0}\|_{\textcolor{black}{\mathcal{H}^{\vec{\beta}+\vec{r}}}},\;\forall\;t\in[0,+\infty).
\end{equation}
This ends the proof of Theorem \ref{Theo_main}.
\end{proof}
\label{sec:step7}

\section{Conclusion and perspectives}
\label{sec:perspectives}
 In this paper, we extended the $F$-equivalence approach (\textcolor{black}{sometimes called} generalized backstepping), to a wide class of linear systems. This includes systems that are not necessarily skew-adjoint or self-adjoint provided that the differential operator involved has a Riesz basis of eigenvectors (hence not necessarily orthonormal) and is of order larger than one. We obtain conditions that are less restrictive than the ones proposed in \cite{Gagnon2022-fredholm,Gagnon2022-fredholm-laplacien} and we show that we can get nice rapid stabilization properties even when the system is not exactly controllable and the control operator is not admissible \textcolor{black}{in the space of stabilization}. 
We illustrate these results with the rapid stabilization of several examples such as the Schrödinger equation, a general diffusion equation, \textcolor{black}{Burgers equation}, and a system that is neither self nor skew adjoint. For Schrödinger equation and Burgers' equation, in particular, we obtain less restrictive conditions on the control operator than what was obtained in \cite{Coron2018-rapid,nguyen2024rapid,Gagnon2022-fredholm-laplacien}.
This result makes the $F$-equivalence one step closer to a general theory. Nevertheless, it is still a young approach and there are many questions that remain open and that would be interesting to consider
\begin{itemize}
    \item If $\mathcal{A}$ only has a generalized basis of eigenvectors (and not a Riesz basis of eigenvectors which we assume here), is it still possible to extend this method? This is interesting in particular for systems of PDE, where $\mathcal{A}$ does not always have a Riesz basis of eigenvectors.
    \item What happens if the eigenvalues $\lambda_{n}$ are infinite-dimensional? In particular, this could be useful for multidimensional systems. While Theorem \ref{Theo_main} and \ref{Theo_main_general} are not limited to 1D systems, in several multidimensional systems the eigenvalues $\lambda_{n}$ may have infinite multiplicity (think of the heat equation for instance).
       What happens if $\alpha=1$ ? In this case, only a few $F$-equivalence results exist and only in particular cases \cite{zhang2022internal,Coron2022-stabilization}, while there is a large literature on the controllability and stabilization of several such systems, for instance the wave equations \cite{jaffard1998singular,valein2009stabilization}.
   \item Can these linear results be extended to (very) nonlinear (i.e. quasi-linear) systems? This is all the more important that for infinite-dimensional systems the stability of the linearized system does not necessarily imply the stability of the nonlinear system, even locally \cite{coron2015dissipative} when they are at least quasi-linear, and there is so far no general method for stabilizing these systems.
    \item Is it possible to further relax the conditions on $B$? 
    \item Can the method be used for systems with a bilinear control (such as in \cite{alabau2021superexponential}) ?
    \item \textcolor{black}{What happens if we have only partial knowledge of the eigenvectors and eigenvalues and we can only construct in practice $\hat{K}$ that depends only on a finite-number of eigenvalues and $b_{n}$? If the system is parabolic, because high frequencies are stable, one can expect the system with control feedback law $\hat{K}$ to still be stable. When the system is not parabolic however, the question is more complicated.}
\end{itemize}

\section*{Acknowledgments}
The authors wish to thank the PEPS program JCJC 2022 of CNRS-INSMI as well as the Young Talent Program France-China (JTFC 2024) and the ANR StarPDE (ANR-24-ERCS-0010) for their support as well as the institute of mathematics of Peking University for their hospitality and particularly the NSF for China under grant 12301562.
\appendix
\section{Appendix}

\subsection{Proof of Lemma \ref{lem:structure}}
\label{app:Riesz}

We start by proving that $\|\cdot\|_{\mathcal{H}^{r}}$ is a norm on $\mathcal{H}^{r}$.  Let $f\in\mathcal{H}^{r}$, by definition there exists $(f_{n})_{n\in\mathbb{N}^{*}}\in l^{2}$ 
    such that $f = \sum\limits_{n\in\mathbb{N}^{*}}f_{n} \varphi_{n}$ 
    and therefore $\|f\|_{\mathcal{H}^{r}}$ is well-defined. 
    Besides, since $(\varphi_{n})_{n\in\mathbb{N}}$ is a Riesz basis, $f = 0$ is equivalent to $f_{n}=0$ for any $n\in\mathbb{N}^{*}$ and consequently to $\|f\|_{\mathcal{H}^{r}}=0$. Also for any $\mu\in\mathbb{C}$
    \begin{equation}
        \|\mu f\|_{\mathcal{H}^{r}} =\| \sum\limits_{n\in\mathbb{N}^{*}}\mu f_{n}\varphi_{n}\|_{\mathcal{H}^{r}} = 
|\mu|\left(\sum\limits_{n\in\mathbb{N}^{*}}n^{2r}|f_{n}|^{2}\right)^{2} = |\mu|\| f\|_{\mathcal{H}^{r}}.
    \end{equation}
    Finally, since $(\sum_{n}n^{2r}|f_{n}+g_{n}|^{2})^{1/2}\leq (\sum_{n}n^{2r}|f_{n}|^{2})^{1/2}+(\sum_{n}n^{2r}|g_{n}|^{2})^{1/2}$ for any $(g_{n})_{n\in\mathbb{N}^{*}}\in l^{2}$, $\|\cdot\|_{\mathcal{H}^{r}}$ is indeed a norm. One can easily check that it derives from the inner product given by \eqref{eq:inner}. 
    Since there is an isomorphism between $l^{2}(\mathbb{N}^{*})$ and $\mathcal{H}^{r}$ given by 
    \begin{equation}
       \iota: (f_{n})_{n\in\mathbb{N}^{*}}\mapsto \sum\limits_{n\in\mathbb{N}^{*}}f_{n}n^{-r}\varphi_{n},
    \end{equation}
    such that for any $(f_{n})_{n\in\mathbb{N}^{*}}$
        $\|\iota (f_{n})_{n\in\mathbb{N}^{*}}\|_{\mathcal{H}^{r}}=\|f_{n}\|_{l^{2}}$,
    then $\mathcal{H}^{r}$ is also complete and hence a Banach space. Finally, since $\|\cdot\|_{\mathcal{H}^{r}}$ is associated to the inner product $\langle \cdot , \cdot\rangle_{\mathcal{H}^{r}}$ it is a Hilbert space.

\subsection{$(q_n)_{n\in \mathbb{N}^*}$ is either ${\mathcal{H}^0}$-dense or $\omega$ independent in ${\mathcal{H}^0}$}\label{sec-appendix-q}
The following argument is classical. It was first provided in \cite{Coron2018-rapid} for the Schrödinger equation, and recently in \cite{Gagnon2022-fredholm-laplacien} and \cite{Gagnon2022-fredholm} for the heat equation and skew-adjoint operators respectively. We detail it here for completeness. We assume that $\lambda_n\neq 0.$ The proof can be easily adapted to the case where $\lambda_1=0,$ since the resolvent $\mathcal{A}_{\lambda}$ defined below is well defined and invertible on ${\mathcal{H}^0}.$

Recall that $q_n\in \mathcal{H}^{\alpha-1/2-\varepsilon}$ for any $\varepsilon>0.$ We define $r_n:=(\lambda_n+\lambda/2)^{-1}$ and we obtain by definition of $q_n$ in \eqref{q_n} that
\begin{equation*}
    (\mathcal{A}-\lambda-\lambda_n)q_n=-\sum_{p\in \mathbb{N}^*}\varphi_p=h,\quad \text{in}\ \mathcal{H}^{\alpha-1/2-\varepsilon}
\end{equation*} which becomes by assuming without loss of generality that $\lambda/2$ belongs to the resolvent set of $\mathcal{A},$
\begin{align*}
    \mathcal{A}_{\lambda}q_n=r_n q_n-r_n\mathcal{A}_{\lambda}h\quad \text{in}\ \mathcal{H}^{\alpha-1/2-\varepsilon}
\end{align*} where
\begin{align*}
    \mathcal{A}_{\lambda}:=(\mathcal{A}+\lambda/2)^{-1}.
\end{align*}
Let us now assume that the family $(q_n)_{n\in \mathbb{N}^*}$ is not $\omega$ independent in ${\mathcal{H}^0}$. Then, there exists non trivial $(c_n)_{n\in \mathbb{N}^*}\in l^2$  such that 
\begin{align*}
    \sum_{n\in \mathbb{N}^*}c_n q_n=0\quad \text{in}\ {\mathcal{H}^0}.
\end{align*}
By applying $\mathcal{A}_{\lambda}$ to this equation, we get
\begin{align*}
    \sum_{n\in \mathbb{N}^*}c_n r_n q_n=\left(\sum_{n\in \mathbb{N}^*}c_n r_n \right)\mathcal{A}_{\lambda}h\quad \text{in}\ {\mathcal{H}^0}
\end{align*} which is well defined since $(r_n)_n\in l^2.$
Applying again $\mathcal{A}_{\lambda},$ we get
\begin{align*}
    \sum_{n\in \mathbb{N}^*}c_n r_n^2 q_n=\left(\sum_{n\in \mathbb{N}^*}c_n r_n^2 \right)\mathcal{A}_{\lambda}h+\left(\sum_{n\in \mathbb{N}^*}c_n r_n \right)\mathcal{A}_{\lambda}^2h\quad \text{in}\ {\mathcal{H}^0}
\end{align*}
By induction, it holds that
\begin{align}
    \sum_{n\in \mathbb{N}^*}c_nr_n^m q_n=\sum_{i=1}^{m}\left(\sum_{n\in \mathbb{N}^*}c_nr_n^{m+1-i}\right)\mathcal{A}_{\lambda}^i h=\sum_{i=1}^{m}C_{m+1-i} \mathcal{A}_{\lambda}^i h\quad \text{in}\ {\mathcal{H}^0},\label{S-induc}
\end{align} where 
\begin{align*}
    C_j:=\sum_{n\in \mathbb{N}^*}c_nr_n^j<+\infty, \quad j\in \mathbb{N}^*.
\end{align*}
Let us now distinguish two cases:
\begin{itemize}
    \item The $\{C_m\}$ are identically zero.
    Then we define the complex variable function 
    \begin{align*}
        G(z):=\sum_{n\in \mathbb{N}^*}c_nr_ne^{r_n z}.
    \end{align*} This function is holomorphic and since $\{C_m\}$ are identically zero, we have
    \begin{align*}
        G^{(m)}(0)=0, \quad \forall m\in \mathbb{N}. 
    \end{align*} 
    Thus $G\equiv 0,$ and therefore $c_n=0$ for all $n\in \mathbb{N}^*.$ This is in contradiction with the definition of $(c_n)_{n\in \mathbb{N}^*}.$ Hence, $(q_n)_{n\in \mathbb{N}^*}$ is $\omega$ independent in $\mathcal{H}^0.$
    \item The $\{C_m\}$ are not identically zero. Let us denote $m_0:=\inf\{n\in \mathbb{N}^*, C_n\neq 0\}.$ Then, starting with $m=m_0,$ we have based on \eqref{S-induc} by induction 
    \begin{align}
        \mathcal{A}_{\lambda}^m\in \text{span}\{q_n\}_{n\in \mathbf{N}^*},\quad \forall m\in \mathbb{N}^*.
    \end{align}
    Suppose that the family $(q_n)_{n\in \mathbb{N}^*}$ is not 
    ${\mathcal{H}^0}$-dense. Then there exists a nonzero $d=\displaystyle\sum_{n\in \mathbb{N}^*}d_n\varphi_n\in {\mathcal{H}^0}$ such that 
    \begin{align*}
        \langle g,d\rangle_{\mathcal{H}^0}=0, \quad \forall g\in \text{span}\{q_n\}_{n\in \mathbb{N}^*}
    \end{align*} which in particular yields,
    \begin{align*}
        \langle \mathcal{A}_{\lambda}^m h,d\rangle_{\mathcal{H}^0}=0, \quad \forall m\in \mathbb{N}^*.
    \end{align*} 
    Noticing that $h=-\displaystyle\sum_{n\in \mathbb{N}^*}\varphi_n\in \mathcal{H}^{-1},$ we get that 
    \begin{align}
        \sum_{n\in \mathbb{N}^*}\bar d_n r_n^m=0,\quad \forall m\in \mathbb{N}^*.\label{S1}
    \end{align}
    We define the complex variable function,
    \begin{align*}
        \tilde G(z):=\sum_{n\in \mathbb{N}^*}\bar d_nr_ne^{r_n z}, \quad \forall z\in \mathbb{C}.
    \end{align*}
    This function is holomorphic and from \eqref{S1}, $\tilde G^{(m)}(0)=0$ for all $m\in \mathbb{N}^*.$ Thus $\tilde G\equiv 0,$ and further $d_n=0$ for any $n\in \mathbb{N}^*$ which is a contradiction. Therefore the family $(q_n)_{n\in \mathbb{N}^*}$ is either ${\mathcal{H}^0}$-dense or $\omega$ independent.
\end{itemize}

{\color{black}
\subsection{Extended regularity of $T$}
\label{app:regT}

 From Lemma \ref{Lemma-T-bounded} and the first part of Remark \ref{rmk:extension}, we can deduce the following:
\begin{lem}
    \label{Lemma-T-bounded-bis}
   For any $r\in (1/2-\alpha,\alpha-1/2),$ $T$ is a bounded operator from $\mathcal{H}^r$ to itself.  
\end{lem}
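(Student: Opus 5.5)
The idea is to lower the regularity index with the operator equality: boundedness of $T$ on $\mathcal{H}^{r+\alpha}$, which is already known, will give boundedness on $\mathcal{H}^{r}$. Lemma \ref{Lemma-T-bounded} covers $r\in(1/2-\alpha+\gamma,\alpha-1/2)$, so only $r\in(1/2-\alpha,1/2-\alpha+\gamma]$ remains. Fix such an $r$ and some $\mu\in\mathbb{C}$ with $\mu\neq\lambda_n$ and $\mu+\lambda\neq\lambda_n$ for all $n$. Then $(\mathcal{A}-\mu)^{-1}$ is an isomorphism from $\mathcal{H}^{q}$ onto $\mathcal{H}^{q+\alpha}$ for every $q$, and $\mathcal{A}-\lambda-\mu$ is bounded from $\mathcal{H}^{q+\alpha}$ to $\mathcal{H}^{q}$, both by \eqref{ln+1_cond}.

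First I rewrite the equality $T\mathcal{A}+BK=(\mathcal{A}-\lambda)T$ as
\begin{equation*}
T=(\mathcal{A}-\lambda-\mu)\,T\,(\mathcal{A}-\mu)^{-1}-B\,K(\mathcal{A}-\mu)^{-1}.
\end{equation*}
I will check this identity directly on each $\varphi_n$, so that no function space is needed at this point. We have $(\mathcal{A}-\mu)^{-1}\varphi_n=(\lambda_n-\mu)^{-1}\varphi_n$. By the computation at the end of the proof of Lemma \ref{Lemma-T-bounded}, applying $\mathcal{A}-\lambda-\mu$ to $T\varphi_n$ produces $(\lambda_n-\mu)T\varphi_n+K_nB$, since $\sum_p b_p\varphi_p=B$. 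Dividing by $\lambda_n-\mu$ and subtracting $BK_n/(\lambda_n-\mu)$ returns $T\varphi_n$. The identity therefore holds on $\mathrm{Span}(\varphi_n)$, which is dense in $\mathcal{H}^r$.

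Next I bound each term on the right-hand side in $\mathcal{L}(\mathcal{H}^r)$.
\begin{itemize}
\item[(i)] $(\mathcal{A}-\mu)^{-1}$ maps $\mathcal{H}^r$ into $\mathcal{H}^{r+\alpha}$. Since $r+\alpha\in(1/2,1/2+\gamma]\subset(1/2-\alpha+\gamma,\alpha-1/2)$, because $\gamma<(\alpha-1)/2$, Lemma \ref{Lemma-T-bounded} makes $T$ bounded on $\mathcal{H}^{r+\alpha}$. Then $\mathcal{A}-\lambda-\mu$ brings it back to $\mathcal{H}^r$, so the first term is bounded.
\item[(ii)] $K(\mathcal{A}-\mu)^{-1}$ is a bounded functional on $\mathcal{H}^r$, because $K\in\mathcal{L}(\mathcal{H}^{1/2+\varepsilon};\mathbb{C})$ by Lemma \ref{Lemma-K_n} and $r+\alpha>1/2$.
\item[(iii)] $B\in\mathcal{H}^{-1/2-\gamma-\varepsilon}$ for every $\varepsilon>0$ by \eqref{control-cond-general}. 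We have $r\le 1/2-\alpha+\gamma<-1/2-\gamma$, which is equivalent to $2\gamma<\alpha-1$. Hence $B\in\mathcal{H}^r$, and the rank-one operator $BK(\mathcal{A}-\mu)^{-1}$ is bounded on $\mathcal{H}^r$.
\end{itemize}

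Finally, $T$ coincides on a dense subspace with a bounded operator on $\mathcal{H}^r$, so it extends boundedly to all of $\mathcal{H}^r$. The main point of care is that this extension agrees with the original series definition \eqref{eq:defT} of $T$. I expect this to be the main obstacle, and I would handle it as follows. The series defines $T$ as a bounded map from $\mathcal{H}^r$ into the larger space $\tau(\mathcal{H}^r)$, and by Claim \ref{Claim1} the norm of $\mathcal{H}^r$ controls that of $\tau(\mathcal{H}^r)$ in the relevant direction. Two continuous maps into $\tau(\mathcal{H}^r)$ that agree on $\mathrm{Span}(\varphi_n)$ must coincide. So the extension is $T$ itself, and $T\in\mathcal{L}(\mathcal{H}^r)$ for every $r\in(1/2-\alpha,\alpha-1/2)$.
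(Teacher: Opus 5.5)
Your argument is correct and is essentially the paper's: both write $T$ on $\mathcal{H}^r$ as $(\mathcal{A}-\lambda)T\mathcal{A}^{-1}-BK\mathcal{A}^{-1}$, using boundedness of $T$ on $\mathcal{H}^{r+\alpha}$ from Lemma \ref{Lemma-T-bounded} and $B\in\mathcal{H}^r$ because $r<-1/2-\gamma$; you shift by $\mu$ where the paper splits off the mode with $\lambda_{n_0}=0$. Your check of the identity on each $\varphi_n$, followed by the density and identification step, makes explicit what the paper obtains by invoking Remark \ref{rmk:extension}. One correction: the embedding $\mathcal{H}^r\hookrightarrow\tau(\mathcal{H}^r)$ you need comes from the lower bound $|b_n|\geq c_1$, not from Claim \ref{Claim1}, which gives the other kind of embedding, $\tau(\mathcal{H}^{r+s})\hookrightarrow\mathcal{H}^r$ for $s\geq\gamma$.
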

\begin{proof}
From Lemma \ref{Lemma-T-bounded}, the only cases that need to be dealt with are $r\in(1/2-\alpha,1/2-\alpha+\gamma]$. Let $r\in (1/2-\alpha,-1/2-\gamma)$, and set $f\in \mathcal{H}^{r}$. Since the eigenvalues are simple (recall that we are working in the whole proof on one of the subspaces of the decomposition \eqref{eq:sumHi}) there exists at most one $n_{0}\in\mathbb{N}^{*}$ such that $\lambda_{n_{0}}=0$. Assume that this is the case (the proof is the same otherwise, with $\langle f, \varphi_{n_{0}}\rangle_{\mathcal{H}^{0}}=0$ instead), 
\begin{equation}
    f = \langle f, \varphi_{n_{0}}\rangle_{\mathcal{H}^{0}}\varphi_{n_{0}}+f^{1},
\end{equation}
where $f_{1}\in \{f\in\mathcal{H}^{r}\;|\;\langle f,\varphi_{n_{0}}\rangle =0\}$. Note that one can define $\mathcal{A}^{-1}$ on this space and $g:=\mathcal{A}^{-1}f^{1}\in \mathcal{H}^{r+\alpha}$. Since $r+\alpha \in(1/2,\alpha-1/2-\gamma)$, one can apply \eqref{opeq-space*} to $g$ and have, using the definition of $g$,
\begin{equation}
Tf^{1} = -BK\mathcal{A}^{-1}f^{1} +(\mathcal{A}-\lambda I)T\mathcal{A}^{-1}f^{1},
\end{equation}
and this equality holds in $\mathcal{H}^{r}$. In particular
\begin{equation}
\begin{split}
    \|Tf^{1}\|_{\mathcal{H}^{r}}\leq &\left(\|B\|_{\mathcal{H}^{r}}\|K\|_{\mathcal{L}(\mathcal{H}^{r+\alpha},\mathbb{R})}\|\mathcal{A}^{-1}\|_{\mathcal{L}(\mathcal{H}^{r},\mathcal{H}^{r+\alpha})}\right.\\
    &\left.+\|(\mathcal{A}-\lambda I)\|_{\mathcal{L}(\mathcal{H}^{r+\alpha},\mathcal{H}^{r})}\|T\|_{\mathcal{H}^{r+\alpha}}\|\mathcal{A}^{-1}\|_{\mathcal{L}(\mathcal{H}^{r},\mathcal{H}^{r+\alpha})}
    \right)\|f^{1}\|_{\mathcal{H}^{r}}.
    \end{split}
\end{equation}
where we used Lemma \ref{Lemma-T-bounded} the fact that $T$ is bounded on $\mathcal{H}^{r+\alpha}$ since $r+\alpha\in(1/2,\alpha-1/2-\gamma)$, as well as the fact that $r< -1/2-\gamma$ and therefore $B\in\mathcal{H}^{r}$. Since $X$ is continuously embedded in $\mathcal{H}^{r}$ for $r<-1/2-\gamma$, $\|T \varphi_{n_{0}}\|_{\mathcal{H}^{r}}\leq \|T \varphi_{n_{0}}\|_{X}$, hence, there exists a constant $C$ (possibly depending on $r$) such that
\begin{equation}
    \|T f\|_{\mathcal{H}^{r}}\leq C\left(n_{0}^{r}|\langle f, \varphi_{n_{0}}\rangle_{\mathcal{H}^{0}}|+\|f^{1}\|_{\mathcal{H}^{r}}\right)\leq C\|f\|_{\mathcal{H}^{r}}.
\end{equation}
Thus $T$ is bounded from $\mathcal{H}^{r}$ into itself for any $r\in(1/2-\alpha,-1/2-\gamma)$ and since $-1/2-\gamma >1/2-\alpha-\gamma$ since $\gamma\in[0,(\alpha-1)/2)$, this ends the proof of the Lemma.
\end{proof}
}

\subsection{Proof of Lemma \ref{ker T}}\label{sec-Appendix-kerT}
We proceed as presented in Section \ref{sec:step6}. 
{\color{black} Before doing that let us first show that 
\begin{lem}
\label{lem:Tfredholm}
    For any $r\in (1/2-\alpha+\gamma, \alpha-1/2),$ the operator $T$ is a Fredholm operator of index $0$ from $\mathcal{H}^{r}$ to itself. 
\end{lem}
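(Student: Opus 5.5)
The plan is to use the factorization of $T$ from Step 6. On the Riesz basis $(n^{-r}\varphi_n)_{n\in\mathbb{N}^*}$ of $\mathcal{H}^r$ we have
\begin{equation*}
T(n^{-r}\varphi_n)=-K_nb_n\,\tilde\tau(n^{-r}\varphi_n),
\end{equation*}
so $T=\tilde\tau\circ D$, where $D:n^{-r}\varphi_n\mapsto -K_nb_n\,n^{-r}\varphi_n$ is a diagonal operator.

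By \eqref{K_n-expression}, $-K_nb_n=\lambda+k_n$, hence $D=\lambda\,Id+D_c$ with $D_c:n^{-r}\varphi_n\mapsto k_n n^{-r}\varphi_n$. Lemma \ref{Lemma-K_n} gives $k_n n^{\varepsilon}$ bounded for some $\varepsilon>0$, so $k_n\to0$. A diagonal operator on a Riesz basis whose coefficients tend to $0$ is compact, since it is the norm limit of its finite-rank truncations: the tail has norm $\sup_{n>N}|k_n|\to0$. Hence $D$ is a compact perturbation of $\lambda\,Id$, and as $\lambda\neq0$ it is Fredholm of index $0$ on $\mathcal{H}^r$. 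No restriction on $r$ arises here, because $D$ acts diagonally with the same coefficients in every $\mathcal{H}^r$.

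Next, Lemma \ref{lem-tilde-tau} gives, for $r\in(1/2-\alpha+\gamma,\alpha-1/2)$, the decomposition $\tilde\tau=\lambda^{-1}Id+\tilde\tau_c$ with $\tilde\tau_c$ compact on $\mathcal{H}^r$. Expanding the product,
\begin{equation*}
T=\tilde\tau D=(\lambda^{-1}Id+\tilde\tau_c)(\lambda\,Id+D_c)=Id+\big(\lambda^{-1}D_c+\lambda\tilde\tau_c+\tilde\tau_cD_c\big).
\end{equation*}
The bracket is a sum of compact operators, because $D_c$ and $\tilde\tau_c$ are compact and $\tilde\tau_cD_c$ is compact composed with bounded. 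So $T$ is the identity plus a compact operator, and the Riesz–Schauder/Fredholm alternative gives that $T$ is Fredholm of index $0$ on $\mathcal{H}^r$. Alternatively one can invoke that a product of index-$0$ Fredholm operators is Fredholm of index $0$. Boundedness of $T$ on $\mathcal{H}^r$ is already known from Lemma \ref{Lemma-T-bounded}, and it is consistent with this formula.

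The main point to check is that the factorization $T=\tilde\tau D$ is legitimate as an identity of bounded operators on all of $\mathcal{H}^r$, not only on the basis vectors. Both sides are bounded and linear on $\mathcal{H}^r$: $D$ by boundedness of $(K_nb_n)$, $\tilde\tau$ by Lemma \ref{lem-tilde-tau}, and $T$ by Lemma \ref{Lemma-T-bounded}. They agree on the dense span of $(n^{-r}\varphi_n)$, so they agree everywhere by continuity.

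Note that the lower bound on $K_nb_n$ is not needed here, which matters since that bound is itself derived from this lemma. For the use in Step 6 at $r=-\alpha/2$, observe that $-\alpha/2\in(1/2-\alpha+\gamma,\alpha-1/2)$ because $\gamma<(\alpha-1)/2$, so the lemma applies there.
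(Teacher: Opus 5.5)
Your proof is correct. It uses the same ingredients as the paper: the factorization of $T$ through $\tilde\tau$, and the splitting $-K_nb_n=\lambda+k_n$ with $k_n\to0$. The difference is that you place the diagonal factor on the other side.

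The paper writes $T=\lambda\tilde\tau+k\circ\tilde\tau$, where $k$ is diagonal in the family $(n^{-r}\tilde\tau\varphi_n)_{n\in\mathbb{N}^*}$. To define $k$ and prove it compact, the paper needs this family to be a Riesz basis of $\mathcal{H}^r$ and of $\mathcal{H}^{r+\varepsilon}$. That is the isomorphism half of Lemma~\ref{lem-tilde-tau}, resting on the injectivity of $\tilde\tau$. The paper then uses the decay of $k_nn^{\varepsilon}$ together with the compact embedding of Lemma~\ref{lem:embedding}.

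Your factorization $T=\tilde\tau\circ D$ has $D$ diagonal in the original orthonormal basis $(n^{-r}\varphi_n)$ of $\mathcal{H}^r$. It is the same operator identity, since $k\circ\tilde\tau=\tilde\tau\circ D_c$. What changes is what must be verified: you only need that $\tilde\tau-\lambda^{-1}Id$ is compact and that $k_n\to0$, so the injectivity of $\tilde\tau$ is never used.

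As a bonus, your expansion shows that $T-Id$ is compact on $\mathcal{H}^r$. Since both factors $\tilde\tau$ and $D$ are bounded on $\mathcal{H}^r$, it also yields the boundedness of $T$ there directly.
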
 
\begin{proof}
   Looking at the definition of $T$ given by \eqref{eq:decompT2} and \eqref{K_n-expression}, we have for any $n\in\mathbb{N}^*$
 \begin{align}
 T\varphi_{n} &= \frac{\lambda+k_{n}}{b_{n}}\sum\limits_{p\in\mathbb{N}^{*}}\frac{b_p \varphi_{p}}{\lambda_{n}-\lambda_{p}+\lambda}\nonumber\\
 &= \lambda\tilde \tau \varphi_{n}+k\circ\tilde\tau \varphi_n,\label{T-fred}
 \end{align}
 where $\tilde\tau$ is given by Lemma \ref{lem-tilde-tau} and $k$ is defined on $\mathcal{H}^r$ by $k:n^{-r}\tilde \tau \varphi_n\mapsto k_n n^{-r}\tilde \tau \varphi_n$. Notice that the operator $k$ is a compact operator. Indeed, let $f\in \mathcal{H}^r.$ Since, $\tilde \tau$ is an isomorphism from $\mathcal{H}^r$ to itself, it holds that $(n^{-r}\tilde \tau \varphi_n)_{n\in \mathbb{N}^*}$ is a Riesz basis of $\mathcal{H}^r$ and in particular there exists $(f_n)_{n\in \mathbb{N}^*}\in l^2$ such that $f=\displaystyle\sum_{n\in \mathbb{N}^*}f_n n^{-r}\tilde \tau \varphi_n.$ Let $\varepsilon>0$ to be chosen, 
 \begin{align*}
     k(f)=\sum_{n\in \mathbb{N}^*}k_n f_n n^{-r}\tilde \tau \varphi_n=\sum_{n\in \mathbb{N}^*}k_n n^{\varepsilon} f_n n^{-r-\varepsilon}\tilde \tau \varphi_n.
 \end{align*}
 Since from Lemma \ref{Lemma-K_n}, there exists $\varepsilon>0$ such that $(k_n n^{\varepsilon})_{n\in \mathbb{N}^*}\in l^{\infty}$ , we have that $(f_n k_n n^{\varepsilon})_{n\in \mathbb{N}^*}\in l^{2}.$ And this ensures that $k(f)\in \mathcal{H}^{r+\varepsilon}$ and, in particular, 
 \begin{align*}
     \|k(f)\|_{\mathcal{H}^{r+\varepsilon}}=\left\|\sum_{n\in \mathbb{N}^*}k_n n^{\varepsilon} f_n n^{-r-\varepsilon}\tilde \tau \varphi_n\right\|_{\mathcal{H}^{r+\varepsilon}}\lesssim \|k_n n^{\varepsilon} f_n\|_{l^2}\lesssim \|f_n\|_{l^2}\lesssim\|f\|_{\mathcal{H}^{r}}.
 \end{align*}
 Thus, $k$ is compact in $\mathcal{H}^r$ from Lemma \ref{lem:embedding}. Combining this with the fact that $\tilde \tau$ is a Fredholm operator of index $0$ (Lemma \ref{lem-tilde-tau}) and thanks to the expression \eqref{T-fred}, $T$ is a Fredholm operator of index $0$ in $\mathcal{H}^r$ thanks to  \cite[p. 169]{Brezis2011-functional}. 
\end{proof}
}

\begin{itemize}
    \item Let us define $z:=\lambda+\rho,$ where $\rho\in \mathbb{C}$. We assume without loss of generality that the operator $\mathcal{A}$ is invertible. Otherwise, since its spectrum is countable, there exists sufficiently small $\delta\neq 0$ such that $\mathcal{A}+\delta I$ is invertible. So showing that $\mathcal{A}+BK+zI$ is invertible amounts to show that $I+\mathcal{A}^{-1}BK+z\mathcal{A}^{-1}$ is invertible. We now consider the following two distinct cases:
    \begin{enumerate}
        \item [(a)] $K(\mathcal{A}^{-1}B)\neq -1.$\\
        We can check that for any $f\in\mathcal{H}^{\alpha/2},$ the function $\varphi\in \mathcal{H}^{\alpha/2}$ defined by
        \begin{align*}
            \varphi:=f-\frac{\mathcal{A}^{-1}B(Kf)}{1+K(\mathcal{A}^{-1}B)}
        \end{align*} is the unique solution to 
        \begin{align*}
            (I+\mathcal{A}^{-1}BK)\varphi=f.
        \end{align*} 
        Thus, the operator $I+\mathcal{A}^{-1}BK$ is invertible. Since $\mathcal{A}$ is a differential operator, then $\mathcal{A}^{-1}$ is a continuous operator. And then, thanks to the openness of invertible operators, there exists $\varepsilon>0$ such that for any $|z|<\varepsilon$
        \begin{align}
        I+\mathcal{A}^{-1}BK+z\mathcal{A}^{-1}
        \end{align} is invertible in $\mathcal{H}^{-\alpha/2}.$
        \item[(b)] $K(\mathcal{A}^{-1}B)= -1.$\\
        For any $v\in \mathcal{H}^{\alpha/2},$ it holds that
        \begin{align*}
            (I+\mathcal{A}^{-1}BK)v=0&\Leftrightarrow v=-\mathcal{A}^{-1}B(Kv)\in \text{span}\{\mathcal{A}^{-1}B\}.
        \end{align*} This ensures that $0$ is an eigenvalue of $I+\mathcal{A}^{-1}BK$ with multiplicity $1$ and the corresponding eigenspace is generated by $\mathcal{A}^{-1}B.$ Thus, there exist small open neighborhoods $\Omega$ and $\tilde \Omega$ of $0$ in $\mathbb{C}$ satisfying:
        \begin{align}
            &(I+\mathcal{A}^{-1}BK+z\mathcal{A}^{-1})y(z)=\lambda(z)y(z)\label{I-appendix}\\
            &y:z\mapsto y(z)\in \mathcal{H}^{\alpha/2},\ \text{is holomorphic}\\
            &\lambda:z\mapsto \lambda(z)\in \tilde\Omega,\ \text{is holomorphic}\\
            &\lambda(0)=0, \quad y_0:=y(0)=\mathcal{A}^{-1}B        
            \end{align} in such fashion that for any $z\in \Omega,$ $\lambda(z)$ is the unique eigenvalue inside $\tilde \Omega.$ Since $\lambda(0)=0,$ either $\lambda$ is identically $0$ in $\Omega$ or there exists small neighborhood $\omega$ such that $\lambda(z)\neq 0$ for any $z\in \omega\smallsetminus\{0\}.$ Let us show that $\lambda$ is not identically $0.$ For that, we assume by contradiction that it is. From the holomorphy property of $y,$ there exists a sequence $(y_k)_{k\in \mathbb{N}^*}\in \mathcal{H}^{\alpha/2}$ such that
            \begin{align*}
                y(z)=\sum_{k\in \mathbb{N}^*}y_k z^k,
            \end{align*} with $y_0=\mathcal{A}^{-1}B.$
    Then, in view of \eqref{I-appendix} and from assumption that $\lambda\equiv0,$ it holds that
    \begin{align*}
       (I+\mathcal{A}^{-1}BK+z\mathcal{A}^{-1}) \sum_{k\in \mathbb{N}^*}y_k z^k=0, \ \text{in}\ \mathcal{H}^{\alpha/2}
    \end{align*} which ensures by the unicity of the development in entire function that
    \begin{align}
        (I+y_0K)y_k+\mathcal{A}^{-1}y_{k-1}=0,\ \text{in}\ \mathcal{H}^{\alpha/2}, \ \forall k\in \mathbb{N}^*.\label{I-induc}
    \end{align}
    By applying $K$ to this equation and noticing that $Ky_0=-1,$ we get
    \begin{align*}
        K(\mathcal{A}^{-1}y_{k-1})=0, \quad \forall k\in \mathbb{N}^*\Leftrightarrow K(\mathcal{A}^{-1}y_{k})=0, \quad \forall k\in \mathbb{N}
    \end{align*}
    Applying now $K\mathcal{A}^{-1}$ to the same equation \eqref{I-induc}, we get
    \begin{align*}
        K(\mathcal{A}^{-1}y_k)+K(\mathcal{A}^{-1}y_0)(Ky_k)+K(\mathcal{A}^{-2}y_{k-1})=0, \quad \forall k\in \mathbb{N}^*
    \end{align*} and then
    \begin{align*}
        K(\mathcal{A}^{-2}y_{k-1})=0, \quad \forall k\in \mathbb{N}^*\Leftrightarrow K(\mathcal{A}^{-2}y_{k})=0, \quad \forall k\in \mathbb{N}
    \end{align*}
    By induction, applying $K\mathcal{A}^{-(n-1)}$ to \eqref{I-induc}, we get that
    \begin{align*}
        K(\mathcal{A}^{-n}y_k)=0,\quad \forall k\in \mathbb{N}, n\in\mathbb{N}^*.
    \end{align*}
    In particular
    \begin{align*}
        K(\mathcal{A}^{-n}y_0)=0,\quad \forall n\in\mathbb{N}^*,
    \end{align*} which implies that 
    \begin{align*}
        \sum_{m\in\mathbb{N}^*}\frac{b_m K_m}{\lambda_m^l}=0, \quad \forall l\geq 2.
    \end{align*}
    Using the holomorphic function technique in Appendix \ref{sec-appendix-q}, we get that $b_mK_m=0,$ which is a contradiction. Therefore, there exists $\varepsilon>0$ such that $\lambda(z)\neq 0$ for any $z\in \Omega$ and $|z|<\varepsilon.$ Since $\lambda(z)$ is the unique eigenvalue inside $\tilde \Omega,$ $I+\mathcal{A}^{-1}BK+z\mathcal{A}^{-1}$ is invertible.
    \end{enumerate}
    In the two cases, there exists at least a sequence of $(z_k)_{k\in \mathbb{N}}$ converging to $0$ such that $I+\mathcal{A}^{-1}BK+z_k\mathcal{A}^{-1}$ is invertible from $\mathcal{H}^{\alpha/2}$ to $\mathcal{H}^{-\alpha/2}.$  Since the spectrum of $\mathcal{A}+\rho I$ is discrete, we can find $\rho:=z_k-\lambda$ such that both $\mathcal{A}+\rho I$ and $\mathcal{A}+BK+\lambda I+\rho I=\mathcal{A}(I+\mathcal{A}^{-1}BK+z_k \mathcal{A}^{-1})$ are invertible operators from $\mathcal{H}^{\alpha/2}$ to $\mathcal{H}^{-\alpha/2}$.
    \item We assume here that $\ker T^*\neq \{0\}$ and we want to show that $(\mathcal{A}^*+\bar \rho I)^{-1}$ has an eigenvector $h$ in $\ker T^*$ with $\rho$ defined above. Recall the operator equality
    \begin{align*}
        T(\mathcal{A}+BK)=(\mathcal{A}-\lambda I)T.
    \end{align*} Based on Lemma \ref{Lemma-T-bounded}, this equality holds at least where the operator are seen as acting  on $\mathcal{H}^{\alpha/2}$ to $\mathcal{H}^{-\alpha/2}.$ Then, for any $\rho$ as in above, the following holds:
    \begin{align*}
        T(\mathcal{A}+BK+\lambda I+\rho I)=(\mathcal{A}+\rho I)T.
    \end{align*} Since, $\mathcal{A}+BK+\lambda I+\rho I$ and $\mathcal{A}+\rho I$ are invertible, we get
    \begin{align}
        (\mathcal{A}+\rho I)^{-1}T=T(\mathcal{A}+BK+\lambda I+\rho I)^{-1}.\label{A+r}
    \end{align}
    As $\ker T^*\neq \{0\},$ we can select $h\neq 0$ such that $h\in \ker T^*$ and $h\in \mathcal{H}^{-\alpha/2}.$ So, from \eqref{A+r}, for any $\varphi\in \mathcal{H}^{-\alpha/2},$ it holds that
    \begin{align*}
        0&=\langle (\mathcal{A}+\rho I)^{-1}T\varphi-T(\mathcal{A}+BK+\lambda I+\rho I)^{-1}\varphi,h\rangle_{\mathcal{H}^{-\alpha/2}}\\
        &=\langle \varphi, T^*(\mathcal{A}^*+\bar \rho I)^{-1}h\rangle_{\mathcal{H}^{-\alpha/2}}-\langle (\mathcal{A}+BK+\lambda I+\rho I)^{-1}\varphi,T^*h\rangle_{\mathcal{H}^{-\alpha/2}} \\
        &=\langle \varphi, T^*(\mathcal{A}^*+\bar \rho I)^{-1}h\rangle_{\mathcal{H}^{-\alpha/2}}.
    \end{align*} This ensures that $T^*(\mathcal{A}^*+\bar \rho I)^{-1}h=0$ in $\mathcal{H}^{-\alpha/2},$ and thus $(\mathcal{A}^*+\bar \rho I)^{-1}h\in \ker T^*.$ We have then shown that for any $f\in \ker T^*,$ $(\mathcal{A}^*+\bar \rho I)^{-1}f\in \ker T^*.$\\
    Because $\ker T^*$ is of finite dimension (recall that $T$ is Fredholm, hence $T^*$ is) and not reduced to $\{0\}$ therefore the restriction of $(\mathcal{A}^*+\bar \rho I)^{-1}$ to $\ker T^*$ belongs to $\mathcal{L}(\ker T^*)$ and is an operator on a space of finite-dimension, therefore   
    there exists an eigenfunction $h\in \ker T^*$ of $(\mathcal{A}^*+\bar\rho I)^{-1}$, associated to an eigenvalue $\mu\neq 0$ (since the operator $(\mathcal{A}+\bar \rho I)^{-1}$ is invertible). Then we have
    \begin{align*}
        (\mathcal{A}+\bar \rho I)^{-1}h=\mu h,
    \end{align*} which in particular implies that
    \begin{align}
        \mathcal{A}^*h=\frac{1-\bar \rho \mu}{\mu}h,\label{Eig}
    \end{align} and $h\in \mathcal{H}^{\alpha/2}.$ This ensures that $h$ is an eigenfunction of $\mathcal{A}^*$ associated to the eigenvalue $(1-\bar \rho \mu)/\mu.$
    As the subspaces of $\mathcal{H}^{-\alpha/2}$ which are the  eigenspaces of $\mathcal{A}^*$  have dimension $1,$ the dimension of eigenspace associated to $(1-\bar \rho \mu)/\mu$ is one. Since $(n^{\alpha/2}\varphi_n)_{n\in \mathbb{N}^*}$ forms a Riesz basis of $\mathcal{H}^{-\alpha/2},$ there exists $n_0\in\mathbb{N}^*$ such that
    $\text{Eig}((1-\bar \rho \mu)/\mu)=\text{span}
    \{n_0^{\alpha/2}\varphi_{n_0}\}.$ In view of \eqref{Eig}, $h\in \text{Eig}((1-\bar \rho \mu)/\mu)$, then there exists $C>0$ such that
    $h=Cn_0^{\alpha/2}\varphi_{n_0}.$ So, we have finally shown that if $\ker T^*\neq \{0\},$ there exist $n_0\in \mathbb{N}^*, C>0$ such that $Cn_0^{\alpha/2} \varphi_{n_0}\in \ker T^*$ and $Cn_0^{\alpha/2} \varphi_{n_0}$ is an eigenfunction of $(\mathcal{A}^*+\bar\rho I).$
    \item From what is done above, if $\ker T^*\neq \{0\},$ there exist $n_0\in \mathbb{N}^*, C>0$ such that $Cn_0^{\alpha/2} \varphi_{n_0}\in \ker T^*.$ So for any $\varphi\in \mathcal{H}^{-\alpha/2},$
    \begin{align*}
        0=\langle T\varphi,n_0^{\alpha/2} \varphi_{n_0}\rangle_{\mathcal{H}^{-\alpha/2}}=\langle \varphi,T^*(n_0^{\alpha/2} \varphi_{n_0})\rangle_{\mathcal{H}^{-\alpha/2}}
    \end{align*}
    In particular for $\varphi=B=\displaystyle\sum_{n\in \mathbb{N}^*}b_n \varphi_n$ we have
    \begin{align*}
        0=\langle TB,n_0^{\alpha/2} \varphi_{n_0}\rangle_{\mathcal{H}^{-\alpha/2}}
    \end{align*} which combining with the fact that $TB=B$ in $\mathcal{H}^{-\alpha/2},$ ensures that
    \begin{align*}
        0=\langle B ,n_0^{\alpha/2} \varphi_{n_0}\rangle_{\mathcal{H}^{-\alpha/2}}=\frac{b_{n_0}}{n_0^{\alpha/2}}.
    \end{align*}
    This is a contradiction due to assumption \eqref{control_cond}. Hence, $\ker T^*=\{0\}.$
\end{itemize}

\subsection{$\mathcal{A}$ generates a semigroup on $\mathcal{H}^{\vec r}$}
\label{app:semigroup}
We show the following
\begin{lem}
\label{lem:semigroup}
Under the assumption of Theorem \ref{Theo_main_general}, $\mathcal{A}$ generates a $C^{0}$ semigroup on $\mathcal{H}^{\vec r}$ for any $r=(r_{1},...,r_{m})\in \mathbb{R}^{m}$. 
\end{lem}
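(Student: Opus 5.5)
We prove Lemma \ref{lem:semigroup} by exhibiting the semigroup explicitly through the spectral decomposition, rather than by appealing to Hille--Yosida. Fix $\vec r=(r_1,\dots,r_m)$. Since $(\varphi_n^i)_{(i,n)}$ is a Riesz basis of $X$ consisting of eigenvectors, $e^{t\mathcal{A}}$ must act diagonally: $e^{t\mathcal{A}}\varphi_n^i=e^{\lambda_n^i t}\varphi_n^i$. We therefore define, for $f=\sum_i\sum_n f_n^i\varphi_n^i\in\mathcal{H}^{\vec r}$,
\begin{equation*}
S(t)f:=\sum_{i=1}^m\sum_{n\in\mathbb{N}^*}e^{\lambda_n^i t}f_n^i\varphi_n^i .
\end{equation*}
We then show that $(S(t))_{t\geq 0}$ is a $C^0$ semigroup on $\mathcal{H}^{\vec r}$ with generator $\mathcal{A}$, whose domain is $\mathcal{H}^{\vec r+\vec\alpha}$. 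We work componentwise on each $\mathcal{H}^{r_i}_i$ and then sum, exactly as in the proof of Proposition \ref{th:wellposed}; the decomposition \eqref{eq:sumHi} is invariant, so the components do not interact.

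\textbf{Step 1 (uniform bound on the real parts).} This is the main point. On $X=\mathcal{H}^0$, the operator $\mathcal{A}$ generates a $C^0$ semigroup by hypothesis, so $\|e^{t\mathcal{A}}\|\leq Me^{\omega t}$. Since $e^{t\mathcal{A}}\varphi_n^i=e^{\lambda_n^it}\varphi_n^i$ and the Riesz basis vectors have norms bounded above and below (Lemma \ref{lem:equivnorm}), we get $|e^{\lambda_n^i t}|\leq C'Me^{\omega t}$ for all $t\geq 0$. Letting $t\to\infty$ forces $\sup_{i,n}\mathrm{Re}\,\lambda_n^i\leq\omega$. The growth assumption \eqref{ln+1_cond} alone would not give this; it is the semigroup hypothesis on $X$ that supplies it.

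\textbf{Step 2 (bound and semigroup law).} In the weighted norm, $\|S(t)f\|_{\mathcal{H}^{r_i}_i}^2=\sum_n n^{2r_i}e^{2\mathrm{Re}\lambda_n^i t}|f_n^i|^2\leq e^{2\omega t}\|f\|^2_{\mathcal{H}^{r_i}_i}$. So each $S(t)$ is bounded, with growth bound $\omega$ independent of $\vec r$. The identities $S(0)=I$ and $S(t+s)=S(t)S(s)$ follow coefficientwise.

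\textbf{Step 3 (strong continuity).} We have $\|S(t)f-f\|^2=\sum n^{2r_i}|e^{\lambda_n^it}-1|^2|f_n^i|^2$. Each term tends to $0$ as $t\to0^+$ and is dominated by $(e^{\omega}+1)^2n^{2r_i}|f_n^i|^2$ for $t\leq1$, so dominated convergence gives the limit $0$.

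\textbf{Step 4 (identifying the generator).} Let $f\in\mathcal{H}^{r_i+\alpha_i}_i$. The difference quotient coefficients satisfy $\bigl|\tfrac{e^{\lambda t}-1}{t}-\lambda\bigr|\leq 2|\lambda|e^{|\omega|}$ for $t\leq1$. Combined with $|\lambda_n^i|\lesssim n^{\alpha_i}$ from \eqref{ln+1_cond}, dominated convergence shows that $\tfrac{S(t)f-f}{t}\to\mathcal{A}f$ in $\mathcal{H}^{r_i}_i$.

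Conversely, suppose the limit exists in $\mathcal{H}^{r_i}_i$. Its coefficients must then be $\lambda_n^if_n^i$, and this forces $\sum n^{2r_i}|\lambda_n^i|^2|f_n^i|^2<\infty$. By the lower bound in \eqref{ln+1_cond}, this means $f\in\mathcal{H}^{r_i+\alpha_i}_i$ (the finitely many small $n$ are harmless). Hence the generator is $\mathcal{A}$ with domain $\mathcal{H}^{\vec r+\vec\alpha}$.

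Summing over $i$ gives the semigroup on $\mathcal{H}^{\vec r}$ with $\|S(t)\|\leq Ce^{\omega t}$, which is the finite growth bound used in Section \ref{sec:thm}.
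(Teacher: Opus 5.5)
Your proof is correct and follows the same route as the paper. You use the diagonal semigroup $\varphi_n^i\mapsto e^{\lambda_n^i t}\varphi_n^i$ on each $\mathcal{H}^{r_i}_i$, bound $\mathrm{Re}\,\lambda_n^i$ uniformly via the semigroup hypothesis on $X$, and sum over $i$; your $t\to\infty$ argument, which removes the constant $M$, makes explicit a step the paper only asserts. The only difference is that the paper obtains strong continuity and the generator with domain $\mathcal{H}^{r+\alpha_i}_i$ by conjugating with the isometry $\tau_r:\varphi_n^i\mapsto n^{-r}\varphi_n^i$ from $\mathcal{H}^0_i$ onto $\mathcal{H}^r_i$ (which commutes with $\mathcal{A}$ and satisfies $S_{i,r}\tau_r=\tau_r S_{i,0}$), whereas you check both directly by dominated convergence, which is equally valid and slightly more self-contained.
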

This is due to the intrinsic link between the definition of the spaces $\mathcal{H}^{\vec r}$ and $\mathcal{A}$ (see Section \ref{sec:notations}). We explicit this below. Given the definition of $\mathcal{H}^{\vec r}$ it suffices to show that for any $i\in\{1,...,m\}$ and $r\in\mathbb{R}$, $\mathcal{A}$ generates a $C^{0}$ semigroup on $\mathcal{H}^{r}_{i}$.
We define for any $t\geq0$
\begin{equation}
\begin{split}
     \mathcal{H}^{r}_{i}&\rightarrow \mathcal{H}^{r}_{i}\\
    \mathcal{S}_{i,r}(t) \;:\; \sum\limits_{n\in\mathbb{N}^{*}}f_{n}\varphi_{n}^{i} &\mapsto\sum\limits_{n\in\mathbb{N}^{*}}e^{\lambda_{n}^{i}t}f_{n}\varphi_{n}^{i}.
    \end{split}
\end{equation}
Since $\mathcal{A}$ generates a $C^{0}$ semigroup with finite growth bound, there exists $\omega>0$ such that $|e^{\lambda_{n}^{i} t}|\leq e^{\omega t}$ for any $n\in\mathbb{N}^{*}$ and therefore $S_{i,r}(t)\in\mathcal{L}(\mathcal{H}^{r})$ for any $t\geq 0$. Clearly, $S_{i,r}(0) = Id$ and for $(t,s)\in\mathbb{R}_{+}$, 
    $S_{i,r}(t+s) = S_{i,r}(t)S_{i,r}(s)$. 
    Finally, using the fact that $\tau_{r}: \varphi_{n}^{i}\rightarrow n^{-r}\varphi_{n}^{i}$ is an isomorphism from $\mathcal{H}^{0}_{i}$ to $\mathcal{H}^{r}_{i}$ which commutes with $\mathcal{A}$ and satisfies
    \begin{equation}
        S_{i,r}\tau_{r} = \tau_{r} S_{i,0},
    \end{equation}
    and the fact that $\mathcal{A}$ is the infinitesimal generator of $S_{i,0}$ on $\mathcal{H}_{i}^{0}$, we deduce directly that $S_{i,r}$ is a $C^{0}$ semigroup and that $\mathcal{A}$ is the infinitesimal generator of $S_{i,r}$ with domain $D_{r}(\mathcal{A}):=\tau_{r}(D_{0}(\mathcal{A})) = \mathcal{H}^{r+\alpha_{i}}_{i}$.

\subsection{Extension to the case $\beta\neq 0$}
\label{app:beta}
Suppose that Proposition \ref{prop:main} holds for $\beta=0$, then it also holds for $\beta\neq0$ exactly as in \cite{Gagnon2022-fredholm}. To do so, it suffices to define $\tilde B= \sum_n n^{\beta} b_n \varphi_n$, apply Proposition \ref{prop:main} with $\tilde{B}$ to obtain a feedback operator $\tilde{K}$ and an isomorphism $\tilde{T}$. Then Proposition \ref{prop:main} holds for $\beta\neq 0$ with $T:= M^{-1} \tilde T M$, and $K = \tilde K M$ where $M$ is the isomorphism from $\mathcal{H}^{\beta+ s}$ to $\mathcal{H}^{s}$ for any $s\in\mathbb{R}$ defined by
\begin{equation}
    M : n^{-\beta} \varphi_n\mapsto \varphi_n.
\end{equation}
We refer to \cite{Gagnon2022-fredholm} for more details.

\subsection{Proof of Corollary \ref{corollary-diffusion}}
\begin{proof}
    The corresponding operator $\mathcal{A}$ for the system \eqref{eq:sysdiff1} is the following Sturm-Liouville operator
    \begin{align*}
        \mathcal{A}:=\partial_x(a\partial_x)+b,
    \end{align*} 
    defined on the domain
    \begin{equation}
     D(\mathcal{A}) =\{f\in H^{2}([0,L])\;|\; 
         c_{1}f(0) = -c_{2}f'(0),\;\;
    c_{3}f(L) = -c_{4}f'(L)
     \}.
    \end{equation}

    This is classically a self-adjoint operator and the family $(u_n)_{n\in\mathbb{N}^*}$ of its eigenvectors forms an orthonormal basis in $L^2(0,L)$ and its eigenvalues $(\lambda_n)_{n\in \mathbb{N}^*}$ are simple, real, discrete and forms a non-decreasing sequence 
    \cite{Fulton1994-eigenvalue}.

    We choose $\textcolor{black}{X}=L^{2}(0,L)$. Since the eigenvalues are simple $\textcolor{black}{X}_{1}=\textcolor{black}{X}:=L^{2}$.

     Let us now determine the eigenvalues of the operator $\mathcal{A}$ and show that they verify the assumptions \eqref{ln+1_cond} and \eqref{ln-lp_cond}. For any eigenpair $(\lambda_n,u_n)$ of $\mathcal{A},$ it holds that
     \begin{align*}
         \mathcal{A}u_n=\lambda_n u_n
    &\Leftrightarrow\partial_x(a\partial_x u_n)+bu_n=\lambda_n u_n\\
      &\Leftrightarrow\partial_x(a\partial_x u_n)+(b-\lambda_n)u_n=0.
     \end{align*}
     By the following change of variables
     \begin{align}
     \label{change-variables}
         y(x):=\int_0^x \frac{1}{\sqrt{a(s)}}ds, \quad \varphi_n(y):=a(x)^{1/4}u_n(x),\quad \forall x\in [0,L],
     \end{align} we get that
     \begin{align*}
         \partial_x(a(x)\partial_x u_n(x))+(b(x)-\lambda_n)u_n(x)=0,\quad \forall x\in [0,L]\quad \Leftrightarrow \quad &\partial_y^2 \varphi_n(y)+Q(y)\varphi_n(y)=\lambda_n \varphi_n(y),\\
         &\forall y\in [0,M],
     \end{align*} where
     \begin{align}
         Q(y):=b(x(y))-\frac{\partial_y^2 [a(x(y))^{1/4}]}{a(x(y))^{1/4}}, \quad M:=\int_0^L \frac{1}{\sqrt{a(s)}}ds.\label{Q-M}
     \end{align} With the change of variables \eqref{change-variables} and \eqref{eq:bcsturm}, we have the following boundary conditions on $\varphi_n$:
     \begin{align}
         \begin{split}
            \tilde c_1\varphi_n(0) + \tilde c_2\partial_{y}\varphi_n(0)=0\\
    \tilde c_3\varphi_n(M) + \tilde c_4\partial_{y}\varphi_n(M)=0,
         \end{split}
     \end{align} where
     \begin{align*}
         \tilde c_1:=c_{1}a(0)^{-1/4}-\frac{c_{2}a'(0)}{4a(0)^{5/4}}, \ \tilde{c}_2:=\frac{c_2}{a(0)^{3/4}},\ \tilde c_3:=c_{3}a(L)^{-1/4}-\frac{c_{4}a'(L)}{4a(L)^{5/4}}, \ \tilde{c}_4:=\frac{c_4}{a(L)^{3/4}}.
     \end{align*}
     Note that since $a$ has only positive values, $\tilde{c}_{1}^{2}+\tilde{c}_{2}^{2}>0$ and $\tilde{c}_{3}^{2}+\tilde{c}_{4}^{2}>0$.
     Let us define the following operator
      \begin{align*}
          \tilde {\mathcal{A}}:=\partial_y^2+Q
      \end{align*} on the domain
      \begin{align*}
          D(\tilde{\mathcal{A}}):=\{f\in H^2([0,M]) \;|\; \tilde c_1f(0)+\tilde c_2 \partial_y f(0)=0, \;\;  \tilde c_3f(M)+\tilde c_4 \partial_y f(M)=0\}.
      \end{align*} The eigenvalues $\lambda_n$ are invariant under the change of variables \eqref{change-variables}. In \textcolor{black}{other} words,  any eigenvalue $\lambda_n$ of $\mathcal{A}$ is also an eigenvalue of the operator $\tilde{\mathcal{A}}$ and conversely, only 
     the eigenfunctions change. Thus, it suffices to know the eigenvalues of $\tilde{\mathcal{A}}$ to know the ones of $\mathcal{A}.$ From \cite{Fulton1994-eigenvalue}, the eigenvalues satisfy the following asymptotic estimate
     \begin{align}
         \lambda_n= -cn^2+O(1)\quad \forall n\in \mathbb{N}^*,\label{ln-expression}
     \end{align} where $c$ is a positive constant independent of $n.$ Thus, the assumption \eqref{ln+1_cond} immediately holds for $\alpha_i=\alpha=2$. 
     
     Notice that the assumption \eqref{ln-lp_cond} holds for any $n=p\in \mathbb{N}^*.$ 
     
     Then, we consider now, $n\neq p\in \mathbb{N}^*,$ and we have
     \begin{align*}
         |\lambda_n-\lambda_p|&=|cn^2-cp^2+O(1)|.
     \end{align*} 
Since
     \begin{equation}         \lim\limits_{p^{2}+n^{2}\rightarrow+\infty, p\neq n} |n^{2}-p^{2}| \geq \lim\limits_{p^{2}+n^{2}\rightarrow+\infty} |p+n| =   +\infty,
     \end{equation} 
     there exists $\eta>0$ such that
     \begin{equation}
         |\lambda_n-\lambda_p|\geq \eta|n^{2}-p^{2}|, \forall n,p\geq 1.
     \end{equation}
     
     This ensures that
     \begin{align*}
         \forall n,p\geq 1, \quad |\lambda_n-\lambda_p|&\geq \eta|(n+p)(n-p)|\\
         &\geq \eta n|n-p|,
     \end{align*}
      which makes \eqref{ln-lp_cond} fulfilled for $\alpha_i=\alpha=2$. 
\end{proof}

\subsection{Proof of Corollary \ref{cor:burgers}}\label{sec:appendix-burgers}

Corollary \ref{cor:burgers} follows from using the same transformation and feedback as in the linear system and dealing with the nonlinearities as in \cite{Gagnon2022-fredholm-laplacien}. Let consider first the system \eqref{eq:burgers} without its nonlinear part $u\partial_x u.$ Then, it becomes exactly the heat equation \eqref{eq:heat}  and in view of Corollary \ref{cor:expstab} ($\alpha_i=\alpha=2,$ $B_i=\phi_i,$ $r_i=r,$ $\gamma_i=\gamma$) since \eqref{eq:condburgers} holds, there exist an invertible operator $T\in \mathcal{L}(L^2(\mathbb{T}),L^2(\mathbb{T}))$ and feedback operator $K\in \mathcal{L}(H^{3/4}(\mathbb{T}),\mathbb{R}^2)$
which transform the linear system \eqref{eq:heat} to \eqref{eq:heatstab} and then make it exponentially stable in $L^2(\mathbb{T})$.
Due to \eqref{eq:condburgers}, $\phi=(\phi_1,\phi_2)\in H^{-1}(\mathbb{T}).$ This combining with the fact that $K\in \mathcal{L}(H^{3/4}(\mathbb{T}),\mathbb{R}^2),$ allows to apply \cite[Lemma 5.5]{Gagnon2022-fredholm-laplacien}, to the nonlinear closed loop system
\begin{align}
    \partial_t u-\Delta u+u\partial_x u=\phi K(u),\label{eq:burgerclosed}
\end{align} and then concludes that it has a unique solution $u$ for any initial condition $u_0$ in $L^2(\mathbb{T}).$ In particular there exists $\omega\in \mathbb{R}$ such that,
\begin{align*}
    u\in C^0([0,+\infty);L^2(\mathbb{T}))\cap L^2_{loc}((0,+\infty),H^1(\mathbb{T}))\cap H^1_{loc}((0,+\infty);H^{-1}(\mathbb{T})),
\end{align*} and 
\begin{align}
    \|u(t,\cdot)\|_{L^2(\mathbb{T})}\leq e^{\omega t}\|u_0\|_{L^2(\mathbb{T})}, \quad \forall t\geq 0.\label{ini-burg}
\end{align}
Let $\tau>0$ and $u_0\in L^2(\mathbb{T})$ with $\|u_0\|_{L^2(\mathbb{T}}\leq \delta,$ ($\delta$ will be chosen later). Applying the transformation $T$ to \eqref{eq:burgerclosed} yields
\begin{align}
    \partial_t Tu-T\Delta u+T(u\partial_x u)=T\phi K(u).\label{eq:applyT}
\end{align}
As \eqref{eq:condburgers} holds, $\phi\in H^{-1}(\mathbb{T}),$ and by construction the operators $T$ and $K$ satisfy the operator equality \eqref{opeq-space} as
\begin{align*}
    T\Delta +T\phi K=(\Delta-\lambda I)T.
\end{align*} 
At least, this equality holds in $\mathcal{L}(H^{1}(\mathbb{T}), H^{-1}(\mathbb{T})).$ Since $u\in C^0([0,\tau); L^2(\mathbb{T}))\cap L^{2}([0,\tau);H^{1}(\mathbb{T}))$ 
then the following equality makes sense in $L^{2}([0,\tau);H^{-1}(\mathbb{T}))$
\begin{align*}
    T\Delta u+T\phi K(u)=\Delta Tu -\lambda Tu.
\end{align*} Then, it follows from \eqref{eq:applyT} that for almost every $t\in (0,\tau),$
\begin{align*}
    \partial_t Tu(t,\cdot)-\Delta Tu(t,\cdot) +\lambda Tu(t,\cdot)+T(u(t,\cdot)\partial_x u(t,\cdot))=0\ \text{in}\  H^{-1}(\mathbb{T}).
\end{align*} By setting $z(t,\cdot):=Tu(t,\cdot)\in H^1(\mathbb{T}),$ we get for almost every $t\in (0,\tau)$ that
\begin{align*}
    \partial_t z(t,\cdot)-\Delta z(t,\cdot) +\lambda z(t,\cdot)+T(T^{-1}z(t,\cdot)\partial_x T^{-1}z(t,\cdot))=0\ \text{in}\  H^{-1}(\mathbb{T}).
\end{align*}
Thus, we have
\begin{align*}
    \frac{1}{2}\partial_t \|z(t,\cdot)\|^2_{L^2(\mathbb{T})}&=\langle z(t, \cdot), \partial_t z(t,\cdot)\rangle\\
    &=\langle z, \Delta z -\lambda z-T(T^{-1}z\partial_x T^{-1}z)\rangle
\end{align*}
Since $\langle z, \Delta z\rangle=-\langle \nabla z, \nabla z\rangle=-\|\nabla z\|^2_{L^2(\mathbb{T})}=\|z\|^2_{L^2(\mathbb{T})}-\|z\|^2_{H^1(\mathbb{T})},$ it follows that
\begin{align*}
\frac{1}{2}\partial_t \|z(t,\cdot)\|^2_{L^2(\mathbb{T})}&= \|z\|^2_{L^2(\mathbb{T})}-\|z\|^2_{H^1(\mathbb{T})}-\lambda\|z\|_{L^2(\mathbb{T})}-\langle z, T(T^{-1}z\partial_x T^{-1}z)\rangle   \\
&\leq -(\lambda-1)\|z\|^2_{L^2(\mathbb{T})}-\|z\|^2_{H^1(\mathbb{T})}+C\|z\|_{L^2(\mathbb{T})}\|T^{-1}z\partial_x T^{-1}z\|_{L^2(\mathbb{T})}\\
& \leq -(\lambda-1)\|z\|^2_{L^2(\mathbb{T})}-\|z\|^2_{H^1(\mathbb{T})}+\frac{C}{2}\|z\|_{L^2(\mathbb{T})}\|\partial_x ((T^{-1}z)^2)\|_{L^2(\mathbb{T})}\\
&\leq -(\lambda-1)\|z\|^2_{L^2(\mathbb{T})}-\|z\|^2_{H^1(\mathbb{T})}+\frac{C}{2}\|z\|_{L^2(\mathbb{T})}\| (T^{-1}z)^2\|_{H^1(\mathbb{T})}\\
&\leq -(\lambda-1)\|z\|^2_{L^2(\mathbb{T})}-\|z\|^2_{H^1(\mathbb{T})}+\frac{C}{2}
\|z\|_{L^2(\mathbb{T})}\|z\|^2_{H^1(\mathbb{T})},
\end{align*}
where $C$ is a constant that can change between lines but does not depend on $z$ or $t$.
Thus, considering $\|z\|_{L^2(\mathbb{T})}$ small enough on $[0,\tau)$ (for instance if $\sup_{[0,\tau)}\|z(t,\cdot)\|_{L^2(\mathbb{T})}\leq 1/C$), we get that
\begin{align*}
 \frac{1}{2}\partial_t \|z(t,\cdot)\|^2_{L^2(\mathbb{T})} \leq -(\lambda-1)\|z(t,\cdot)\|^2_{L^2(\mathbb{T})}, 
\end{align*} and in particular
\begin{align*}
    \|z(t,\cdot)\|_{L^2(\mathbb{T})} \leq e^{-(\lambda-1)t}\|z_0\|_{L^2(\mathbb{T})}, \quad \forall t\in [0,\tau)
\end{align*} with $\|z\|_{L^2(\mathbb{T})}$ small enough. Using the isomorphism property of $T,$ we get,
\begin{align}
    \|u(t,\cdot)\|_{L^2(\mathbb{T})} \lesssim e^{-(\lambda-1)t}\|u_0\|_{L^2(\mathbb{T})}, \quad \forall t\in [0,\tau)\label{eq:stab-burg-estim}
\end{align} with $\|u\|_{L^2(\mathbb{T})}$ small enough. Since the solution $u$ is unique and satisfies \eqref{ini-burg}, it suffices to consider $\|u_0\|$ sufficiently small to have $\|u\|_{L^2(\mathbb{T})}$ small enough. Thus, there exists $\delta_0(\tau,\lambda)$ such that for any $\delta\in (0,\delta_0),$ we have $\|u_0\|_{L^2(\mathbb{T})}$ and the solution of \eqref{eq:burgerclosed} satisfies the exponential stability estimate \eqref{eq:stab-burg-estim}. Actually, using a classical argument (see for instance \cite[Section 6.2]{Gagnon2022-fredholm-laplacien}),  the estimate \eqref{eq:stab-burg-estim} can be extended to $[0,+\infty)$ in the following sense: there exists small $\delta_{1}(\lambda)>0$ such that for any $\delta\in[0,\delta_{1})$, if $\|u_0\|_{L^2(\mathbb{T})}\leq \delta$ then
\begin{align*}
    \|u(t,\cdot)\|_{L^2(\mathbb{T})} \lesssim e^{-(\lambda-1)t}\|u_0\|_{L^2(\mathbb{T})}, \quad \forall t\in [0,+\infty).
\end{align*} This concludes the proof.

\bibliography{References}

@article{kunisch2025frequency,
  title={Frequency-domain criterion on the stabilizability for infinite-dimensional linear control systems},
  author={Kunisch, Karl and Wang, Gengsheng and Yu, Huaiqiang},
  journal={Journal de Math{\'e}matiques Pures et Appliqu{\'e}es},
  volume={196},
  pages={103690},
  year={2025},
  publisher={Elsevier}
}

@article{astrom1995pid,
  title={PID controllers: theory, design, and tuning},
  author={Astrom, Karl J},
  journal={The international society of measurement and control},
  year={1995}
}

@article{kellett2019feedback,
  title={Feedback, dynamics, and optimal control in climate economics},
  author={Kellett, Christopher M and Weller, Steven R and Faulwasser, Timm and Gr{\"u}ne, Lars and Semmler, Willi},
  journal={Annual Reviews in Control},
  volume={47},
  pages={7--20},
  year={2019},
  publisher={Elsevier}
}

@article{delle2019feedback,
  title={Feedback control algorithms for the dissipation of traffic waves with autonomous vehicles},
  author={Delle Monache, Maria Laura and Liard, Thibault and Rat, Ana{\"\i}s and Stern, Raphael and Bhadani, Rahul and Seibold, Benjamin and Sprinkle, Jonathan and Work, Daniel B and Piccoli, Benedetto},
  journal={Computational Intelligence and Optimization Methods for Control Engineering},
  pages={275--299},
  year={2019},
  publisher={Springer}
}

@ARTICLE{Goatin_stabilization_2024,
  author={Goatin, Paola},
  journal={IEEE Control Systems Letters}, 
  title={Dissipation of Stop-and-Go Waves in Traffic Flows Using Controlled Vehicles: A Macroscopic Approach}, 
  year={2024},
  volume={},
  number={},
  pages={1-1},
  doi={10.1109/LCSYS.2024.3401092}}

@article{hayat2023dissipation,
  title={Dissipation of traffic jams using a single autonomous vehicle on a ring road},
  author={Hayat, Amaury and Piccoli, Benedetto and Truong, Sydney},
  journal={SIAM Journal on Applied Mathematics},
  volume={83},
  number={3},
  pages={909--937},
  year={2023},
  publisher={SIAM}
}

@article{xiang2024quantitative,
  title={Quantitative rapid and finite time stabilization of the heat equation},
  author={Xiang, Shengquan},
  journal={ESAIM: Control, Optimisation and Calculus of Variations},
  volume={30},
  pages={40},
  year={2024},
  publisher={EDP Sciences}
}

@article{Coron2022-stabilization,
	title={Stabilization of the linearized water tank system},
	author={Coron, Jean-Michel and Hayat, Amaury and Xiang, Shengquan and Zhang, Christophe},
	journal={Archive for Rational Mechanics and Analysis},
	volume={244},
	number={3},
	pages={1019--1097},
	year={2022},
	publisher={Springer}
}

@article{Coron2014-local,
	title={Local rapid stabilization for a {K}orteweg-de {V}ries equation with a {N}eumann boundary control on the right},
	author={Coron, Jean-Michel and L{\"u}, Qi},
	journal={Journal de Math{\'e}matiques Pures et Appliqu{\'e}es},
	volume={102},
	number={6},
	pages={1080--1120},
	year={2014},
	publisher={Elsevier}
}

@article{breiten2014riccati,
  title={Riccati-based feedback control of the monodomain equations with the Fitzhugh--Nagumo model},
  author={Breiten, Tobias and Kunisch, Karl},
  journal={SIAM Journal on Control and Optimization},
  volume={52},
  number={6},
  pages={4057--4081},
  year={2014},
  publisher={SIAM}
}

@article{coron2017finite,
  title={Finite-time boundary stabilization of general linear hyperbolic balance laws via Fredholm backstepping transformation},
  author={Coron, Jean-Michel and Hu, Long and Olive, Guillaume},
  journal={Automatica},
  volume={84},
  pages={95--100},
  year={2017},
  publisher={Elsevier}
}

@article{deutscher2019fredholm,
  title={Fredholm backstepping control of coupled linear parabolic PDEs with input and output delays},
  author={Deutscher, Joachim and Gabriel, Jakob},
  journal={IEEE Transactions on Automatic Control},
  volume={65},
  number={7},
  pages={3128--3135},
  year={2019},
  publisher={IEEE}
}

@article{alabau2021superexponential,
  title={Superexponential stabilizability of evolution equations of parabolic type via bilinear control},
  author={Alabau-Boussouira, Fatiha and Cannarsa, Piermarco and Urbani, Cristina},
  journal={Journal of Evolution Equations},
  volume={21},
  pages={941--967},
  year={2021},
  publisher={Springer}
}

@article{redaud2022stabilizing,
  title={Stabilizing output-feedback control law for hyperbolic systems using a Fredholm transformation},
  author={Redaud, Jeanne and Auriol, Jean and Niculescu, Silviu-Iulian},
  journal={IEEE transactions on automatic control},
  volume={67},
  number={12},
  pages={6651--6666},
  year={2022},
  publisher={IEEE}
}

@article{hu2015control,
  title={Control of homodirectional and general heterodirectional linear coupled hyperbolic PDEs},
  author={Hu, Long and Di Meglio, Florent and Vazquez, Rafael and Krstic, Miroslav},
  journal={IEEE Transactions on Automatic Control},
  volume={61},
  number={11},
  pages={3301--3314},
  year={2015},
  publisher={IEEE}
}

@article{raymond2007feedback,
  title={Feedback boundary stabilization of the three-dimensional incompressible Navier--Stokes equations},
  author={Raymond, J-P},
  journal={Journal de math{\'e}matiques pures et appliqu{\'e}es},
  volume={87},
  number={6},
  pages={627--669},
  year={2007},
  publisher={Elsevier}
}

@article{badra2014fattorini,
  title={On the Fattorini criterion for approximate controllability and stabilizability of parabolic systems},
  author={Badra, Mehdi and Takahashi, Tak{\'e}o},
  journal={ESAIM: Control, Optimisation and Calculus of Variations},
  volume={20},
  number={3},
  pages={924--956},
  year={2014},
  publisher={EDP Sciences}
}

@article{banks1984linear,
  title={The linear regulator problem for parabolic systems},
  author={Banks, HT and Kunisch, Karl},
  journal={SIAM Journal on Control and Optimization},
  volume={22},
  number={5},
  pages={684--698},
  year={1984},
  publisher={SIAM}
}

@article{CoronLu15,
  title={Fredholm transform and local rapid stabilization for a {K}uramoto-{S}ivashinsky equation},
  author={Coron, Jean-Michel and L{\"u}, Qi},
  journal={Journal of Differential Equations},
  volume={259},
  number={8},
  pages={3683--3729},
  year={2015},
  publisher={Elsevier}
}

@article{slemrod1974note,
  title={A note on complete controllability and stabilizability for linear control systems in {H}ilbert space},
  author={Slemrod, Marshall},
  journal={SIAM Journal on Control},
  volume={12},
  number={3},
  pages={500--508},
  year={1974},
  publisher={SIAM}
}

@article{badra2020local,
  title={Local feedback stabilization of time-periodic evolution equations by finite dimensional controls},
  author={Badra, Mehdi and Mitra, Debanjana and Ramaswamy, Mythily and Raymond, Jean-Pierre},
  journal={ESAIM: Control, Optimisation and Calculus of Variations},
  volume={26},
  pages={101},
  year={2020},
  publisher={EDP Sciences}
}

@article{badra2011stabilization,
  title={Stabilization of parabolic nonlinear systems with finite dimensional feedback or dynamical controllers: Application to the Navier--Stokes system},
  author={Badra, Mehdi and Takahashi, Tak{\'e}o},
  journal={SIAM journal on control and optimization},
  volume={49},
  number={2},
  pages={420--463},
  year={2011},
  publisher={SIAM}
}

@incollection{jakubczyk1993remarks,
  title={Remarks on equivalence and linearization of nonlinear systems},
  author={Jakubczyk, B},
  booktitle={Nonlinear Control Systems Design 1992},
  pages={143--147},
  year={1993},
  publisher={Elsevier}
}

@article{fliess1999lie,
  title={A lie-backlund approach to equivalence and flatness of nonlinear systems},
  author={Fliess, Michel and L{\'e}vine, Jean and Martin, Philippe and Rouchon, Pierre},
  journal={IEEE Transactions on automatic control},
  volume={44},
  number={5},
  pages={922--937},
  year={1999},
  publisher={IEEE}
}

@inproceedings{martin2001flat,
  title={Flat systems, equivalence and feedback},
  author={Martin, Philippe and Murray, Richard M and Rouchon, Pierre},
  booktitle={Advances in the control of nonlinear systems},
  pages={5--32},
  year={2001},
  organization={Springer}
}

@article{cheng2005feedback,
  title={On feedback equivalence to port controlled Hamiltonian systems},
  author={Cheng, Daizhan and Astolfi, Alessandro and Ortega, Romeo},
  journal={Systems \& control letters},
  volume={54},
  number={9},
  pages={911--917},
  year={2005},
  publisher={Elsevier}
}

@article{nguyen2024stabilization,
  title={Stabilization of control systems associated with a strongly continuous group},
  author={Nguyen, Hoai-Minh},
  journal={arXiv preprint arXiv:2402.07560},
  year={2024}
}

@misc{CoronCDF,
  author       = {Coron, Jean-Michel},
  title        = {Stabilisation en temps fini},
  year         = {2017},
  month        = {jan},
  howpublished = {Collège de France seminar},
}

@article{zhang2022internal,
  title={Internal rapid stabilization of a 1-{D} linear transport equation with a scalar feedback},
  author={Zhang, Christophe},
  journal={Mathematical Control and Related Fields},
  volume={12},
  number={1},
  pages={169--200},
  year={2022},
  publisher={Mathematical Control and Related Fields}
}

@article{Gagnon2022-fredholm,
	title={Fredholm backstepping for critical operators and application to rapid stabilization for the linearized water waves},
	author={Gagnon, Ludovick and Hayat, Amaury and Xiang, Shengquan and Zhang, Christophe},
	journal={to appear in Ann. Inst. Fourier},
	year={2022}
}

@article {backsteppingadapt,
    AUTHOR = {Coron, Jean-Michel and d'Andr\'{e}a-Novel, Brigitte},
     TITLE = {Stabilization of a rotating body beam without damping},
   JOURNAL = {IEEE Trans. Automat. Control},
  FJOURNAL = {Institute of Electrical and Electronics Engineers.
              Transactions on Automatic Control},
    VOLUME = {43},
      YEAR = {1998},
    NUMBER = {5},
     PAGES = {608--618},
      ISSN = {0018-9286},
   MRCLASS = {93C85 (74H55 74M05 93D15)},
  MRNUMBER = {1618052},
       DOI = {10.1109/9.668828},
       URL = {https://doi.org/10.1109/9.668828},
}

@article {backstepping1,
    AUTHOR = {Byrnes, Christopher I. and Isidori, Alberto},
     TITLE = {New results and examples in nonlinear feedback stabilization},
   JOURNAL = {Systems Control Lett.},
  FJOURNAL = {Systems \& Control Letters},
    VOLUME = {12},
      YEAR = {1989},
    NUMBER = {5},
     PAGES = {437--442},
      ISSN = {0167-6911},
   MRCLASS = {93C10 (93D15)},
  MRNUMBER = {1005310},
MRREVIEWER = {Lin Huang},
       DOI = {10.1016/0167-6911(89)90080-7},
       URL = {https://doi.org/10.1016/0167-6911(89)90080-7},
}

@article{backstepping2,
  title={Adaptive techniques for mechanical systems},
  author={Koditschek, Daniel E},
  journal={Proc. 5th. Yale University
Conference},
PAGES = {pp. 259–265},
  year={1987}
}

@article {backstepping3,
    AUTHOR = {Tsinias, John},
     TITLE = {Sufficient {L}yapunov-like conditions for stabilization},
   JOURNAL = {Math. Control Signals Systems},
  FJOURNAL = {Mathematics of Control, Signals, and Systems},
    VOLUME = {2},
      YEAR = {1989},
    NUMBER = {4},
     PAGES = {343--357},
      ISSN = {0932-4194},
   MRCLASS = {93D15 (93C10)},
  MRNUMBER = {1015672},
MRREVIEWER = {A. Halanay},
       DOI = {10.1007/BF02551276},
       URL = {https://doi.org/10.1007/BF02551276},
}

@article {BBK2003,
    AUTHOR = {Bo\v{s}kovi\'{c}, Dejan M. and Balogh, Andras and Krsti\'{c}, Miroslav},
     TITLE = {Backstepping in infinite dimension for a class of parabolic
              distributed parameter systems},
   JOURNAL = {Math. Control Signals Systems},
  FJOURNAL = {Mathematics of Control, Signals, and Systems},
    VOLUME = {16},
      YEAR = {2003},
    NUMBER = {1},
     PAGES = {44--75},
      ISSN = {0932-4194},
   MRCLASS = {93D15 (93B51 93C20)},
  MRNUMBER = {1993488},
MRREVIEWER = {Emmanuelle Cr\'{e}peau},
}

@article{BaloghKrstic,
  title={Infinite dimensional backstepping-style feedback transformations for a heat equation with an arbitrary level of instability},
  author={Balogh, Andras and Krstic, Miroslav},
  journal={European journal of control},
  volume={8},
  number={2},
  pages={165--175},
  year={2002},
  publisher={Elsevier}
}

@book {Kato-book,
    AUTHOR = {Kato, Tosio},
     TITLE = {Perturbation theory for linear operators},
    SERIES = {Classics in Mathematics},
      NOTE = {Reprint of the 1980 edition},
 PUBLISHER = {Springer-Verlag, Berlin},
      YEAR = {1995},
     PAGES = {xxii+619},
      ISBN = {3-540-58661-X},
   MRCLASS = {47A55 (46-00 47-00)},
  MRNUMBER = {1335452},
}

@article{beauchard2014local,
  title={Local controllability of 1{D} {S}chr{\"o}dinger equations with bilinear control and minimal time},
  author={Beauchard, Karine and Morancey, Morgan},
  journal={Mathematical Control and Related Fields},
  volume={4},
  number={2},
  pages={125--160},
  year={2014}
}

@article{beauchard2010local,
  title={Local controllability of 1{D} linear and nonlinear {S}chr{\"o}dinger equations with bilinear control},
  author={Beauchard, Karine and Laurent, Camille},
  journal={Journal de math{\'e}matiques pures et appliqu{\'e}es},
  volume={94},
  number={5},
  pages={520--554},
  year={2010},
  publisher={Elsevier}
}

@book{KrsticBook,
	Author = {Krstic, Miroslav and Smyshlyaev, Andrey},
	Doi = {10.1137/1.9780898718607},
	Isbn = {978-0-89871-650-4},
	Mrclass = {93C20 (35B37 74M05 76D55 93C40)},
	Mrnumber = {2412038},
	Pages = {x+192},
	Publisher = {Society for Industrial and Applied Mathematics (SIAM), Philadelphia, PA},
	Series = {Advances in Design and Control},
	Title = {Boundary {C}ontrol of {PDE}s: A {C}ourse on {B}ackstepping {D}esigns},
	Url = {http://dx.doi.org/10.1137/1.9780898718607},
	Volume = {16},
	Year = {2008}}

@article{vest2013rapid,
  title={Rapid stabilization in a semigroup framework},
  author={Vest, Ambroise},
  journal={SIAM Journal on Control and Optimization},
  volume={51},
  number={5},
  pages={4169--4188},
  year={2013},
  publisher={SIAM}
}

@article {Coron2018-rapid,
    AUTHOR = {J.-M. Coron and L. Gagnon and M. Morancey},
     TITLE = {Rapid stabilization of a linearized bilinear 1-{D}
              {S}chr\"odinger equation},
   JOURNAL = {J. Math. Pures Appl. (9)},
  FJOURNAL = {Journal de Math\'ematiques Pures et Appliqu\'ees. Neuvi\`eme S\'erie},
    VOLUME = {115},
      YEAR = {2018},
     PAGES = {24--73},
      ISSN = {0021-7824},
   MRCLASS = {35Q55 (93B52 93D15)},
  MRNUMBER = {3808341},
       DOI = {10.1016/j.matpur.2017.10.006},
       URL = {https://doi.org/10.1016/j.matpur.2017.10.006},
}

@article{urquiza2005rapid,
  title={Rapid exponential feedback stabilization with unbounded control operators},
  author={Urquiza, Jose Manuel},
  journal={SIAM journal on control and optimization},
  volume={43},
  number={6},
  pages={2233--2244},
  year={2005},
  publisher={SIAM}
}

@book{zabczyk2020mathematical,
  title={Mathematical control theory},
  author={Zabczyk, Jerzy},
  year={2020},
  publisher={Springer}
}

@article{pouchol2018global,
  title={Global stability with selection in integro-differential {L}otka-{V}olterra systems modelling trait-structured populations},
  author={Pouchol, Camille and Tr{\'e}lat, Emmanuel},
  journal={Journal of Biological Dynamics},
  volume={12},
  number={1},
  pages={872--893},
  year={2018},
  publisher={Taylor \& Francis}
}

@article{ma2023feedback,
  title={Feedback law to stabilize linear infinite-dimensional systems},
  author={Ma, Yaxing and Wang, Gengsheng and Yu, Huaiqiang},
  journal={Mathematical Control and Related Fields},
  volume={13},
  number={3},
  pages={1160--1183},
  year={2023},
  publisher={Mathematical Control and Related Fields}
}

@article{liu2022characterizations,
  title={Characterizations of complete stabilizability},
  author={Liu, Hanbing and Wang, Gengsheng and Xu, Yashan and Yu, Huaiqiang},
  journal={SIAM Journal on Control and Optimization},
  volume={60},
  number={4},
  pages={2040--2069},
  year={2022},
  publisher={SIAM}
}

@article{slemrod1972linear,
  title={The linear stabilization problem in {H}ilbert space},
  author={Slemrod, Marshall},
  journal={Journal of Functional Analysis},
  volume={11},
  number={3},
  pages={334--345},
  year={1972},
  publisher={Elsevier}
}

@article{nguyen2024rapid,
  title={Rapid stabilization and finite time stabilization of the bilinear {S}chr\" odinger equation},
  author={Nguyen, Hoai-Minh},
  journal={arXiv preprint arXiv:2405.10002},
  year={2024}
}

@Book{LT1,
 Author = {Lasiecka, Irena and Triggiani, Roberto},
 Title = {Control theory for partial differential equations: continuous and approximation theories. 1: {Abstract} parabolic systems},
 FSeries = {Encyclopedia of Mathematics and Its Applications},
 Series = {Encycl. Math. Appl.},
 ISSN = {0953-4806},
 Volume = {74},
 ISBN = {0-521-43408-4},
 Year = {2000},
 Publisher = {Cambridge: Cambridge University Press},
 Language = {English},
 zbMATH = {1447315},
 Zbl = {0942.93001}
}

@article{valein2009stabilization,
  title={Stabilization of the wave equation on 1-{D} networks},
  author={Valein, Julie and Zuazua, Enrique},
  journal={SIAM Journal on control and Optimization},
  volume={48},
  number={4},
  pages={2771--2797},
  year={2009},
  publisher={SIAM}
}

@article{dehman2006stabilization,
  title={Stabilization and control for the nonlinear {S}chr{\"o}dinger equation on a compact surface},
  author={Dehman, B and G{\'e}rard, P and Lebeau, G},
  journal={Mathematische Zeitschrift},
  volume={254},
  number={4},
  pages={729--749},
  year={2006},
  publisher={Springer}
}

@article{jaffard1998singular,
  title={Singular internal stabilization of the wave equation},
  author={Jaffard, St{\'e}phane and Tucsnak, Marius and Zuazua, Enrique},
  journal={journal of differential equations},
  volume={145},
  number={1},
  pages={184--215},
  year={1998},
  publisher={Elsevier}
}

@article{zuazua1990uniform,
  title={Uniform stabilization of the wave equation by nonlinear boundary feedback},
  author={Zuazua, Enrike},
  journal={SIAM Journal on Control and Optimization},
  volume={28},
  number={2},
  pages={466--477},
  year={1990},
  publisher={SIAM}
}

@article{machtyngier1994stabilization,
  title={Stabilization of the {S}chrodinger equation},
  author={Machtyngier, Elaine and Zuazua, Enrique},
  journal={Portugaliae Mathematica},
  volume={51},
  number={2},
  pages={243--256},
  year={1994},
  publisher={Lisboa, Gazeta de matematica [etc.]}
}

@inproceedings{coron2015stabilization,
  title={Stabilization of control systems and nonlinearities},
  author={Coron, Jean-Michel},
  booktitle={Proceedings of the 8th International Congress on Industrial and Applied Mathematics},
  pages={17--40},
  year={2015},
  organization={Higher Ed. Press Beijing}
}

@book{barbu2018controllability,
  title={Controllability and stabilization of parabolic equations},
  author={Barbu, Viorel},
  year={2018},
  publisher={Springer}
}

@book{coron2007control,
  title={Control and nonlinearity},
  author={Coron, Jean-Michel},
  number={136},
  year={2007},
  publisher={American Mathematical Soc.}
}

@article{gagnon2021fredholm,
  title={A Fredholm transformation for the rapid stabilization of a degenerate parabolic equation},
  author={Gagnon, Ludovick and Lissy, Pierre and Marx, Swann},
  journal={SIAM Journal on Control and Optimization},
  volume={59},
  number={5},
  pages={3828--3859},
  year={2021},
  publisher={SIAM}
}

@article{lissy2023rapid,
  title={Rapid stabilization of a degenerate parabolic equation using a backstepping approach: the case of a boundary control acting at the degeneracy},
  author={Lissy, Pierre and Moreno, Claudia},
  journal={Mathematical Control and Related Fields},
  pages={0--0},
  year={2023},
  publisher={Mathematical Control and Related Fields}
}

@article{brunovsky1970classification,
  title={A classification of linear controllable systems},
  author={Brunovsk{\`y}, Pavol},
  journal={Kybernetika},
  volume={6},
  number={3},
  pages={173--188},
  year={1970},
  publisher={Institute of Information Theory and Automation AS CR}
}

@article{HS,
  title={A quadratic {L}yapunov function for {S}aint-{V}enant equations with arbitrary friction and space-varying slope},
  author={Hayat, Amaury and Shang, Peipei},
  journal={Automatica},
  volume={100},
  pages={52--60},
  year={2019},
  publisher={Elsevier}
}

@book{tucsnak2009observation,
  title={Observation and control for operator semigroups},
  author={Tucsnak, Marius and Weiss, George},
  year={2009},
  publisher={Springer Science \& Business Media}
}

@book{lions1971optimal,
  title={Optimal control of systems governed by partial differential equations},
  author={Lions, Jacques Louis},
  volume={170},
  year={1971},
  publisher={Springer}
}

@Book{LT2,
 Author = {Lasiecka, Irena and Triggiani, Roberto},
 Title = {Control theory for partial differential equations: continuous and approximation theories. 2: {Abstract} hyperbolic-like systems over a finite time horizon},
 FSeries = {Encyclopedia of Mathematics and Its Applications},
 Series = {Encycl. Math. Appl.},
 ISSN = {0953-4806},
 Volume = {75},
 ISBN = {0-521-58401-9},
 Year = {2000},
 Publisher = {Cambridge: Cambridge University Press},
 Language = {English},
 zbMATH = {1535627},
 Zbl = {0961.93003}
}

@book{BastinCoronBook,
	Author = {Bastin, Georges and Coron, Jean-Michel},
	Publisher = {Springer International},
	Series = {Number 88 in Progress in Nonlinear Differential Equations and Their Applications},
	Title = {Stability and Boundary Stabilisation of 1-{D} Hyperbolic Systems},
	Year = {2016}}

@article{viel1997global,
  title={Global stabilization of exothermic chemical reactors under input constraints},
  author={Viel, F and Jadot, Fabrice and Bastin, Georges},
  journal={Automatica},
  volume={33},
  number={8},
  pages={1437--1448},
  year={1997},
  publisher={Elsevier}
}

@article{caponigro2013sparse,
  title={Sparse stabilization and optimal control of the {C}ucker-{S}male model},
  author={Caponigro, Marco and Fornasier, Massimo and Piccoli, Benedetto and Tr{\'e}lat, Emmanuel},
  journal={Mathematical Control and Related Fields},
  volume={3},
  number={4},
  pages={447--466},
  year={2013}
}

@article{hansen1997new,
  title={New results on the operator {C}arleson measure criterion},
  author={Hansen, Scott and Weiss, George},
  journal={IMA Journal of Mathematical Control and Information},
  volume={14},
  number={1},
  pages={3--32},
  year={1997},
  publisher={OUP}
}

@book{PazyBook,
    AUTHOR = {Pazy, A.},
     TITLE = {Semigroups of linear operators and applications to partial
              differential equations},
    SERIES = {Applied Mathematical Sciences},
    VOLUME = {44},
 PUBLISHER = {Springer-Verlag, New York},
      YEAR = {1983},
     PAGES = {viii+279},
      ISBN = {0-387-90845-5},
   MRCLASS = {47D05 (34Gxx 35Fxx 35Gxx 47H20)},
  MRNUMBER = {710486},
MRREVIEWER = {H. O. Fattorini},
       DOI = {10.1007/978-1-4612-5561-1},
       URL = {https://doi.org/10.1007/978-1-4612-5561-1},
}

@article{komornik1997rapid,
  title={Rapid boundary stabilization of linear distributed systems},
  author={Komornik, Vilmos},
  journal={SIAM journal on control and optimization},
  volume={35},
  number={5},
  pages={1591--1613},
  year={1997},
  publisher={SIAM}
}

@article{coron2015dissipative,
  title={Dissipative boundary conditions for nonlinear 1-{D} hyperbolic systems: sharp conditions through an approach via time-delay systems},
  author={Coron, Jean-Michel and Nguyen, Hoai-Minh},
  journal={SIAM Journal on Mathematical Analysis},
  volume={47},
  number={3},
  pages={2220--2240},
  year={2015},
  publisher={SIAM}
}

@article{weiss2011eigenvalues,
  title={Eigenvalues and eigenvectors of semigroup generators obtained from diagonal generators by feedback},
  author={Weiss, George and Xu, Cheng-Zhong},
  journal={Communications in Information and Systems},
  volume={11},
  number={1},
  pages={71--104},
  year={2011},
  publisher={International Press of Boston}
}

@article{russell1994general,
  title={A general necessary condition for exact observability},
  author={Russell, David L and Weiss, George},
  journal={SIAM Journal on Control and Optimization},
  volume={32},
  number={1},
  pages={1--23},
  year={1994},
  publisher={SIAM}
}

@article{trelat2019characterization,
  title={Characterization by observability inequalities of controllability and stabilization properties},
  author={Tr{\'e}lat, Emmanuel and Wang, Gengsheng and Xu, Yashan},
  journal={Pure and Applied Analysis},
  volume={2},
  number={1},
  pages={93--122},
  year={2019},
  publisher={Mathematical Sciences Publishers}
}

@article{Gagnon2022-fredholm-laplacien,
	title={Fredholm transformation on {L}aplacian and rapid stabilization for the heat equation},
	author={Gagnon, Ludovick and Hayat, Amaury and Xiang, Shengquan and Zhang, Christophe},
	journal={Journal of Functional Analysis},
	volume={283},
	number={12},
	pages={109664},
	year={2022},
	publisher={Elsevier}
}

@article{Fulton1994-eigenvalue,
  title={Eigenvalue and eigenfunction asymptotics for regular {S}turm-{L}iouville problems},
  author={Fulton, Charles T and Pruess, Steven A},
  journal={Journal of Mathematical Analysis and Applications},
  volume={188},
  number={1},
  pages={297--340},
  year={1994},
  publisher={Elsevier}
}

@article{Feki2014-riesz,
  title={On a {R}iesz basis of eigenvectors of a nonself-adjoint analytic operator and applications},
  author={Feki, Ines and Jeribi, Aref and Sfaxi, Ridha},
  journal={Linear and Multilinear Algebra},
  volume={62},
  number={8},
  pages={1049--1068},
  year={2014},
  publisher={Taylor \& Francis}
}

@article{Aimar1999-On-an-unconditional,
  title={On an unconditional basis of generalized eigenvectors of the nonself-adjoint {G}ribov operator in {B}argmann space},
  author={Aimar, Marie-Therese and Intissar, Abdelkader and Jeribi, Aref},
  journal={Journal of mathematical analysis and applications},
  volume={231},
  number={2},
  pages={588--602},
  year={1999},
  publisher={Elsevier}
}

@article{Gribov1968-A-reggeon,
  title={A reggeon diagram technique},
  author={Gribov, VN},
  journal={Sov. Phys. JETP},
  volume={26},
  number={2},
  pages={414--423},
  year={1968}
}

@book{Intissar1997-analyse,
  title={Analyse fonctionnelle \& th{\'e}orie spectrale},
  author={Intissar, A},
  year={1997},
  publisher={C{\'e}padues-{\'e}ditions}
}

@book{Brezis2011-functional,
  title={Functional analysis, {S}obolev spaces and partial differential equations},
  author={Brezis, Haim},
  volume={2},
  number={3},
  year={2011},
  publisher={Springer}
}

@article{Kong1996-eigenvalues,
  title={Eigenvalues of regular {S}turm-{L}iouville problems},
  author={Kong, Q and Zettl, A},
  journal={Journal of differential equations},
  volume={131},
  number={1},
  pages={1--19},
  year={1996},
  publisher={Elsevier}
}

@article{Bailey1996-regular,
  title={Regular and singular {S}turm-{L}iouville problems with coupled boundary conditions},
  author={Bailey, PB and Everitt, WN and Zettl, A},
  journal={Proceedings of the Royal Society of Edinburgh Section A: Mathematics},
  volume={126},
  number={3},
  pages={505--514},
  year={1996},
  publisher={Royal Society of Edinburgh Scotland Foundation}
}

@article{Moller1999-unboundedness,
  title={On the unboundedness below of the {S}turm—{L}iouville operator},
  author={M{\"o}ller, Manfred},
  journal={Proceedings of the Royal Society of Edinburgh Section A: Mathematics},
  volume={129},
  number={5},
  pages={1011--1015},
  year={1999},
  publisher={Royal Society of Edinburgh Scotland Foundation}
}

@book{Weidmann2006-spectral,
  title={Spectral theory of ordinary differential operators},
  author={Weidmann, Joachim},
  volume={1258},
  year={2006},
  publisher={Springer}
}

@article{Gagnon2025abstract,
  title={An abstract setting for the Fredholm backstepping transformation: self-adjoint case},
  author={Gagnon, Ludovick and Hayat, Amaury and Marx, Swann and Zhang, Christophe and Xiang, Shengquan},
  year={2025}
}

@article{Boulard2025f,
  title={F-equivalence for parabolic systems and applications to the stabilization of nonlinear PDE},
  author={Boulard, Vincent and Hayat, Amaury},
  journal={arXiv preprint arXiv:2508.21605},
  year={2025}
}

@inproceedings{Rebarber2007frequency,
  title={Frequency domain methods for proving the uniform stability of vibrating systems},
  author={Rebarber, Richard},
  booktitle={Analysis and Optimization of Systems: State and Frequency Domain Approaches for Infinite-Dimensional Systems: Proceedings of the 10th International Conference Sophia-Antipolis, France, June 9--12, 1992},
  pages={366--377},
  year={2007},
  organization={Springer}
}

@article{Pruss1984spectrum,
  title={On the spectrum of $C_0$-semigroups},
  author={Pr{\"u}ss, Jan},
  journal={Transactions of the American Mathematical Society},
  volume={284},
  number={2},
  pages={847--857},
  year={1984}
}

@article{Huang1985characteristic,
  title={Characteristic conditions for exponential stability of linear dynamical systems in {H}ilbert spaces},
  author={Huang, Falun},
  journal={Ann. of Diff. Eqs.},
  volume={1},
  pages={43--56},
  year={1985}
}
\bibliographystyle{plain}
\end{document}